\def\blfootnote{\gdef\@thefnmark{*}\@footnotetext}
\newtheorem{theorem}{Theorem}[section]
\newtheorem{lemma}[theorem]{Lemma}
\newtheorem{definition}[theorem]{Definition}
\newtheorem{proposition}[theorem]{Proposition}
\newtheorem{corollary}[theorem]{Corollary}
\newtheorem{remark}[theorem]{Remark}
\newtheorem{example}[theorem]{Example}
\DeclareMathOperator{\diam}{diam}
\DeclareMathOperator{\vol}{vol}
\def \L{\Lambda}
\newcommand{\ttt}{q}
\newcommand{\bpsi}{\overline{\psi}}
\newcommand{\cs}{{cs}}
\newcommand{\Ecs}{E^{cs}}
\newcommand{\C}{\mathcal{C}}
\newcommand{\CB}{C_{\mathrm{B}}}
\newcommand{\RR}{\mathbb{R}}
\newcommand{\NN}{\mathbb{N}}
\newcommand{\ph}{\varphi}
\newcommand{\eps}{\epsilon}
\newcommand{\htop}{h_\mathrm{top}}
\newcommand{\Vl}{V_\mathrm{loc}}
\newcommand{\ulim}{\varlimsup}
\newcommand{\llim}{\varliminf}
\newcommand{\Zspan}{Z^{\mathrm{span}}}
\newcommand{\Zsep}{Z^{\mathrm{sep}}}
\newcommand{\vv}{\mathbf{v}}
\newcommand{\ww}{\mathbf{w}}
\newcommand{\Lcs}{L}
\newcommand{\Kcs}{K^{cs}}
\newcommand{\Ku}{K^u}
\DeclareMathOperator{\Int}{int}
\DeclareMathOperator{\Jac}{Jac}
\numberwithin{figure}{section}
\numberwithin{equation}{section}
\title{Equilibrium measures for some partially hyperbolic systems}
\author{Vaughn Climenhaga }
\address{Department of Mathematics \\ University of Houston \\ Houston, TX 77204, USA}
\email{climenha@math.uh.edu}
\urladdr{https://www.math.uh.edu/~climenha/}
\author{Yakov Pesin}
\address{Department of Mathematics \\ Pennsylvania State University \\ University Park, PA 16802, USA}
\email{pesin@math.psu.edu}
\urladdr{http://www.math.psu.edu/pesin/}
\author{Agnieszka Zelerowicz}
\address{Department of Mathematics \\ University of Maryland \\ College Park, MD 20742, USA}
\email{azelerow@umd.edu}
\begin{document}

\date{\today}

\begin{abstract}
We study thermodynamic formalism for topologically transitive partially hyperbolic systems in which the center-stable bundle satisfies a bounded expansion property, and show that every potential function satisfying the Bowen property has a unique equilibrium measure.  Our method is to use tools from geometric measure theory to construct a suitable family of reference measures on unstable leaves as a dynamical analogue of Hausdorff measure, and then show that the averaged pushforwards of these measures converge to a measure that has the Gibbs property and is the unique equilibrium measure.
\end{abstract}

\thanks{V.~C.\ is partially supported by NSF grant DMS-1554794.  Ya. P. and A. Z. were partially supported by NSF grant DMS-1400027.}

\maketitle

\setcounter{tocdepth}{1}
\tableofcontents



\part{Results and applications}

\section{Introduction}

Consider a dynamical system $f\colon X\to X$, where $X$ is a compact metric space and $f$ is continuous.  Given a continuous \emph{potential function} $\ph\colon X\to \RR$, the \emph{topological pressure} of $\ph$ is the supremum of $h_\mu(f) + \int\ph\,d\mu$ taken over all $f$-invariant Borel probability measures on $X$, where $h_\mu$ denotes the measure-theoretic entropy.  A measure achieving this supremum is called an \emph{equilibrium measure}, and one of the central questions of thermodynamic formalism is to determine when $(X,f,\ph)$ has a unique equilibrium measure.

When $f\colon M\to M$ is a diffeomorphism and $X\subset M$ is a topologically transitive locally maximal hyperbolic set, so that the tangent bundle splits as $E^u\oplus E^s$, with $E^u$ uniformly expanded and $E^s$ uniformly contracted by $Df$, it is well-known that every H\"older continuous potential function has a unique equilibrium measure \cite{Bow08}.  In the specific case when $X$ is a hyperbolic attractor and 
$\ph=-\log|\det(Df|E^u)|$ is the \emph{geometric potential}, this equilibrium measure is the unique \emph{Sinai--Ruelle--Bowen (SRB) measure}, which is also characterized by the fact that its conditional measures along unstable leaves are absolutely continuous with respect to leaf volume.

In this paper we study the case when uniform hyperbolicity is replaced by partial hyperbolicity.  Here the analogue of SRB measures are the \emph{$u$-measures}\footnote{In \cite{PS82} these are called ``$u$-Gibbs measures''; here we use the terminology from \cite{PC10}.} constructed by the second author and Sinai in \cite{PS82} using a geometric construction based on pushing forward leaf volume and averaging.  We follow this approach, replacing leaf volume with a family of reference measures defined using a Carath\'eodory dimension structure.  We give conditions on $f$ and 
$\ph$ under which the averaged pushforwards of these reference measures converge to the unique equilibrium measure, and describe various examples that satisfy these conditions. Roughly speaking, our conditions are
\begin{itemize}
\item the map $f$ is topologically transitive and partially hyperbolic with an integrable center-stable bundle along which expansion under $f^n$ is uniformly bounded independently of $n$ (``Lyapunov stability''), and 
\item the potential $\ph$ satisfies a leafwise \emph{Bowen property} that uniformly bounds the difference between Birkhoff sums along nearby trajectories, independently of the trajectory length.
\end{itemize}
See \S\ref{def} and \S\ref{assumptions} for precise statements of the conditions. The reference measures are defined in \S\ref{car-struc}, and our main results appear in \S\ref{sec:main-results}. We highlight some examples to which our results apply (see \S\ref{sec:applications} for more):
\begin{itemize}
\item time-$1$ maps of Anosov flows, with H\"older potentials given by integrating some function along the flow through time $1$;
\item frame flows with similar `averaged' potentials that are constant on fibers $SO(n-1)$.
\end{itemize}
We also refer to the survey paper \cite{CPZ} for a general overview of how our techniques work in the uniformly hyperbolic setting.

Before giving precise definitions we stop to recall some known results from the literature.  In uniform hyperbolicity, the general existence and uniqueness result can be established by various methods; most relevant for our purposes are the approaches that proceed by building reference measures on stable and/or unstable leaves, which take the place of leaf volume when the potential is not geometric.  Such measures were first constructed by Sinai (in discrete time) \cite{yS68} and Margulis (in continuous time) \cite{gM70} when $\ph=0$.  In the setting of uniform hyperbolicity, closest to our approach is the work of Hamenst\"adt \cite{uH89} for geodesic flows; see Hasselblatt \cite{bH89} for the Anosov flow case, and \cite{uH97} for an extension to nonzero potential functions.  In these papers the leaf measures are constructed as Hausdorff measure for an appropriate metric, which is similar to our construction in \S\ref{car-struc} below.  A different construction based on Markov partitions can be found in work of Haydn \cite{nH94} and Leplaideur \cite{Lep}.

Now we briefly survey some known results on thermodynamic formalism in partial hyperbolicity. The first remark is that whenever $f$ is $C^\infty$, the entropy map $\mu\mapsto h_\mu(f)$ is upper semi-continuous \cite{sN89} and thus existence of an equilibrium measure is guaranteed by weak*-compactness of the space of $f$-invariant Borel probability measures; however, this nonconstructive approach does not address uniqueness or describe how to produce an equilibrium measure.

Even without the $C^\infty$ assumption, the expansivity property would be enough to guarantee that the entropy map is upper semi-continuous, and thus gives existence; moreover, for expansive systems the construction in the proof of the variational principle \cite[Theorem 9.10]{pW82} actually produces an equilibrium measure. Partially hyperbolic systems are not expansive in general, but when the center direction is one-dimensional, they are \emph{entropy-expansive} \cite[\S5.3]{CY05}; this property, introduced by Bowen in \cite{rB72}, also suffices to guarantee that the standard construction produces an equilibrium measure, and continues to hold when the center direction admits a dominated splitting into one-dimensional sub-bundles \cite{EnExp}. On the other hand, when the center direction is multi-dimensional and admits no such splitting, there are (many) examples with positive tail entropy, for which the system is not even asymptotically entropy-expansive; such examples were constructed 
in \cite{DF11,BF13} following ideas from \cite{DN05}.



For results on SRB measures in partial hyperbolicity, we refer to \cite{BV00,ABV00,BDP02,BDPP08,CY05}. For the broader class of equilibrium measures, existence and uniqueness questions have been studied for certain classes of partially hyperbolic systems. In general one should not expect uniqueness to hold without further conditions; see \cite{HHTU12} for an open set of topologically mixing partially hyperbolic diffeomorphisms in three dimensions with more than one \emph{measure of maximal entropy (MME)} -- that is, multiple equilibrium measures for the potential 
$\ph=0$. Some results on existence and uniqueness of an MME are available when the partially hyperbolic system is semi-conjugated to the uniformly hyperbolic one; see \cite{BFSV12,rU12}. For some partially hyperbolic systems obtained by starting with an Anosov system and making a perturbation that is $C^1$-small except in a small neighborhood where it may be larger and is given by a certain bifurcation, uniqueness results can be extended to a class of nonzero potential functions \cite{CFT,CFT2}.

The largest set of results is available for the examples known as ``partially hyperbolic horseshoes'': existence of equilibrium measures was proved by Leplaideur, Oliveira, and Rios \cite{LOR11}; examples of rich phase transitions were given by D\'iaz, Gelfert, and Rams \cite{DGR11,DG12,DGR14}; and uniqueness for certain classes of H\"older continuous potentials was proved by Arbieto and Prudente \cite{AP12} and Ramos and Siqueira \cite{RS17}. A related class of partially hyperbolic skew-products with non-uniformly expanding base and uniformly contracting fiber was studied by Ramos and Viana \cite{RV17}.  We point out that our results study a class of systems, rather than specific examples, and that we establish uniqueness results, rather than the phase transition results that have been the focus of much prior work.

Another class of partially hyperbolic examples is obtained by considering the time-$1$ map of an Anosov flow; see \S\ref{sec:app}, where we describe two arguments, one based on our main result, and one using a general argument communicated to us by F.\ Rodriguez Hertz for deducing uniqueness for the map from uniqueness for the flow.  We also study the time-$1$ map for frame flows in negative curvature, which are partially hyperbolic. In this latter setting, equilibrium measures for the flow were recently studied by Spatzier and Visscher \cite{SV}, but the general argument for deducing uniqueness for the map from uniqueness for the flow (see \S\ref{sec:time1}) may not apply. In both settings, the class of potential functions to which our results apply includes all scalar multiples of the geometric potential, whose equilibrium measures are precisely the $u$-measures from \cite{PS82}.

In \S\ref{def} we give background definitions and describe the classes of systems we will study. In \S\ref{car-struc} we recall the general notion of a Carath\'eodory dimension structure and use it to define a family of reference measures on unstable leaves.  In \S\ref{sec:main-results} we describe the class of potential functions that we will consider, and formulate our main results. In \S\ref{sec:particular-potentials} we apply our results to two particular potentials: $\ph=0$, which gives existence and uniqueness of the MME, and the \emph{geometric potential} $\ph=-\log|\det(Df|E^u)|$, which gives existence and uniqueness of the SRB measure (assuming the case of a partially hyperbolic attractor).
In fact, our results apply to the one-parameter family of \emph{geometric $q$-potentials} $\ph_q=-q\log|\det(Df|E^u)|$ for all $q\in\mathbb{R}$. In \S\ref{sec:app} we apply our results to some particular dynamical systems: the time-$1$ map of an Anosov flow, the time-$1$ map of the frame flow, and partially hyperbolic diffeomorphisms whose central foliation is absolutely continuous and has compact leaves The proofs are given in \S\S\ref{sec:measures}--\ref{sec:proof-of-main}. In Appendix \ref{appendix} we gather some proofs of background results that we do not claim are new, but which seemed worth proving here for completeness.


\subsection*{Acknowledgement} 
This work had its genesis in workshops at ICERM (Brown University) and ESI (Vienna) in March and April 2016, respectively.  We are grateful to both institutions for their hospitality and for creating a productive scientific environment. We are also grateful to the anonymous referee for a careful reading and for multiple comments that improved the exposition.

\section{Preliminaries}\label{def}
  
\subsection{Partially hyperbolic sets}\label{sec:def-ph}
Let $M$ be a compact smooth connected Riemannian manifold, $U\subset M$ an open set and $f\colon U\to M$ a diffeomorphism onto its image. 
Let $\L\subset U$ be a compact $f$-invariant subset on which $f$ is  \emph{partially hyperbolic in the broad sense}:
 that is
\begin{itemize}
\item the tangent bundle over $\L$ splits into two invariant and continuous subbundles $T_{\L}M = \Ecs \oplus E^u$;
\item there is a Riemannian metric $\|\cdot\|$ on $M$ and numbers $0<\nu<\chi$ with $\chi>1$ such that for every $x\in\L$
\begin{equation}\label{dominated-splitting}
\begin{aligned}
\|Df_x v\|\le\nu\|v\| & \text{ for } v\in \Ecs(x),\\
\|Df_x v\|\ge\chi\|v\| & \text{ for } v\in E^u(x).
\end{aligned}
\end{equation}
\end{itemize}

\begin{remark}
One could replace the requirement that $\chi>1$ with the condition that $\nu<1$ (and make the corresponding edit to \ref{C1} below); to apply our results in this setting it suffices to replace $f$ with $f^{-1}$, and so we limit our discussion to the case when $E^u$ is uniformly expanding.
\end{remark}

The above splitting of the tangent bundle is only defined over $\Lambda$ and may not extend to $U$ as a continuous and invariant splitting.  However, there are continuous cone families $\Kcs$ and $\Ku$ defined on all of $U$ such that $\Ku$ is $Df$-invariant and $\Kcs$ is $Df^{-1}$-invariant.  A curve $\gamma$ in $U$ will be called a \emph{$cs$-curve} if all its tangent vectors lie in $\Kcs$; define a \emph{$u$-curve} similarly.  Our main results in \S\ref{sec:main-results} will require the following two conditions (among others):
\begin{enumerate}[label=\upshape{(C\arabic{*})}]
\item\label{C1} for every $\theta>0$ there is $\delta>0$ such that if $\gamma$ is a curve in $U$ with length $\leq \delta$, and $n\geq 0$ is such that $f^n\gamma$ is a $cs$-curve in $U$, then the length of $f^n\gamma$ is $\leq \theta$;
\item\label{C2} $f|\L$ is topologically transitive.
\end{enumerate}

\begin{remark}\label{rmk:Lyap-stab}
Condition \ref{C1} is called \emph{Lyapunov stability} in \cite{HHU} and is related to the condition of \emph{topologically neutral center} in \cite{BZ}. It is automatically satisfied if $\nu\leq 1$, where $\nu$ is the number in \eqref{dominated-splitting}.
More generally, \ref{C1} holds if there is $\Lcs>0$ such that for every $x\in \L$, $v\in \Ecs(x)$, and $n\in \NN$, we have $\|Df_x^n v\| \leq \Lcs \|v\|$.  However, \ref{C1} can be true even if this condition fails; see \cite[Proposition 2.3]{BZ} for an example, using ideas from \cite{BCW09}. 
See Example \ref{eg:no-C1} for an illustration of how our results can fail if Conditions \ref{C2} and \ref{C3} (below) are satisfied but \ref{C1} does not hold.
\end{remark}


Since the distributions $\Ecs$ and $E^u$ are continuous on $\L$, there is $\kappa>0$ such that $\angle (E^u(x),\Ecs(x))>\kappa$ for all $x\in\L$; we also have $p\geq 1$ such that $\dim E^u(x)=p$ and 
$\dim \Ecs(x)=\dim M-p$ for all $x\in\L$.

\subsection{Local product structure and rectangles}\label{sec:lps}

The following well known result describes existence and some properties of local unstable manifolds;
for a proof, see \cite[\S4]{P} or \cite[Theorem 6.2.8 and \S6.4]{Kat}.

\begin{proposition}\label{prop:local-mfds}
There are numbers $\tau>0$, $\lambda \in (\chi^{-1}, 1)$, $C_1>0$, $C_2>0$, and for every $x\in\L$ a $C^{1+\alpha}$ local manifold $\Vl^{u}(x)\subset M$ such that
\begin{enumerate}[label=\textup{(\arabic{*})}]
\item $T_y \Vl^u(x) = E^u(y)$ for every $y\in \Vl^u(x) \cap \L$;
\item $\Vl^u(x)=\exp_x\{v+\psi^u_x(v): v\in B_x^u(0,\tau)\},$
where $B_x^u(0,\tau)$ is the ball in $E^u(x)$ centered at zero of radius $\tau>0$ and 
$\psi^{u}_x\colon B_x^{u}(0,\tau)\to \Ecs(x)$ is a $C^{1+\alpha}$ function;
\item\label{leaves-contract} 
For every $n\ge 0$ and $y\in\Vl^u(x) \cap \L$,
$d(f^{-n}(x),f^{-n}(y))\le C_1\lambda^{n}d(x,y)$;  
\item\label{unif-holder} $|D\psi^{u}_x|_\alpha\le C_2$.
\end{enumerate}
\end{proposition}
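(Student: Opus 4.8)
The plan is to prove this by the classical Hadamard graph-transform construction, carried out uniformly in $x\in\L$. For each $x\in\L$ fix the splitting $T_xM=E^u(x)\oplus\Ecs(x)$ and use the exponential map to identify a $\tau_0$-ball around $x$ in $M$ with a neighborhood of $0$ in $T_xM$; writing $f_x:=\exp_{f(x)}^{-1}\circ f\circ\exp_x$, the family $\{f_x\}_{x\in\L}$ is, by compactness of $M$ and the $C^{1+\alpha}$-regularity of $f$, uniformly $C^{1+\alpha}$ on a uniform neighborhood of $0$, with $D(f_x)_0=Df_x$ respecting the splitting and obeying \eqref{dominated-splitting}. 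Since $E^u,\Ecs$ vary continuously over the compact set $\L$, I would shrink $\tau_0$ so that on $B(0,\tau_0)$ each $f_x$ differs from its linear part $Df_x$ by a map of arbitrarily small $C^1$-norm; then $f_x$ carries the graph of a $1$-Lipschitz section $\psi\colon B_x^u(0,r)\to\Ecs(x)$ to the graph of a section of Lipschitz constant $<1$ over $B_{f(x)}^u(0,\chi_0 r)$ for a fixed $\chi_0\in(1,\chi)$, so that, after restriction to a fixed small radius $\tau$, we obtain a graph transform $\G_x$ from sections over $B_x^u(0,\tau)$ to sections over $B_{f(x)}^u(0,\tau)$.

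Next I would fix a constant $a>0$ and a small $\alpha>0$, and consider for each $x$ the complete metric space
\[
\mathcal S_x=\{\psi\in C^{1+\alpha}(B_x^u(0,\tau),\Ecs(x)):\psi(0)=0,\ \mathrm{Lip}(\psi)\le 1,\ |D\psi|_\alpha\le a\},
\]
and check that for $\tau$ small and $a$ large one has $\G_x(\mathcal S_x)\subseteq\mathcal S_{f(x)}$. The domination $\nu<\chi$ from \eqref{dominated-splitting} enters twice here: it drives the slope $\|D\psi\|$ towards $0$ under $\G_x$ (so the Lipschitz bound is preserved), and, together with the smallness of $\tau$, it keeps $|D\psi|_\alpha$ below $a$ --- the latter resting on a bunching-type inequality relating $\chi$, $\nu$, $\alpha$, and the $C^{1+\alpha}$-norm of $f$. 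Then, setting $\psi_x^{(n)}:=\G_{f^{-1}x}\circ\cdots\circ\G_{f^{-n}x}(0)$ and using that all these sections vanish at the origin (so successive comparisons take place on balls of radius $\tau\chi_0^{-k}$), I would show $\|\psi_x^{(n+1)}-\psi_x^{(n)}\|_{C^0}\le C\sigma^n$ for a fixed $\sigma\in(0,1)$; the fiber-contraction theorem of Hirsch and Pugh, or a direct estimate on the pair $(\psi,D\psi)$, upgrades this to convergence in $C^1$, and the uniform a priori bound $|D\psi_x^{(n)}|_\alpha\le a$ passes to the limit $\psi^u_x\in\mathcal S_x$. Setting $\Vl^u(x):=\exp_x(\mathrm{graph}\,\psi^u_x)$ then yields (2) and \eqref{unif-holder} with $C_2=a$; uniformity of $\tau$ and $C_2$ over $x\in\L$ is automatic, since every estimate used only the uniform constants of \eqref{dominated-splitting} and the uniform $C^{1+\alpha}$-size of $f$.

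For the dynamical conclusions: the construction gives $f^{-1}(\Vl^u(x))\subseteq\Vl^u(f^{-1}x)$, so each backward iterate $f^{-n}(\Vl^u(x))$ is again the graph of a $1$-Lipschitz section near $f^{-n}x\in\L$ and hence lies in a fixed neighborhood of $\L$ on which, by continuity of $Df$ and \eqref{dominated-splitting}, the cone field $\Ku$ is $Df^{-1}$-invariant with $\|Df_z^{-1}w\|\le\lambda\|w\|$ for $w\in\Ku(z)$ and some fixed $\lambda\in(\chi^{-1},1)$. Since $T\Vl^u\subseteq\Ku$, a $C^1$ curve $\gamma\subseteq\Vl^u(x)$ joining $x$ to $y$ satisfies $\mathrm{length}(f^{-n}\gamma)\le\lambda^n\,\mathrm{length}(\gamma)$, while the bounded slope of $\psi^u_x$ makes $\mathrm{length}(\gamma)\le C_1d(x,y)$; together these give \eqref{leaves-contract}. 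Finally, for $y\in\Vl^u(x)\cap\L$, conclusion (1) follows from $T_y\Vl^u(x)=Df^m_{f^{-m}y}\big(T_{f^{-m}y}\Vl^u(f^{-m}x)\big)\subseteq Df^m_{f^{-m}y}(\Ku(f^{-m}y))$ for all $m\ge0$, combined with the standard fact that $\bigcap_{m\ge0}Df^m_{f^{-m}y}(\Ku(f^{-m}y))=E^u(y)$ for $y\in\L$; both spaces being $p$-dimensional, they coincide. Alternatively one may simply invoke the Hadamard--Perron theorem as in \cite[Theorem 6.2.8 and \S6.4]{Kat} or \cite[\S4]{P}, the only extra point to check being the uniformity of $\tau,\lambda,C_1,C_2$, immediate from compactness of $\L$ and the uniformity in \eqref{dominated-splitting}.

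The step I expect to be the main obstacle is the invariance $\G_x(\mathcal S_x)\subseteq\mathcal S_{f(x)}$ together with $C^{1+\alpha}$ --- rather than merely $C^1$ --- convergence: the $C^0$ existence and the Lipschitz bound come from a routine contraction argument, but keeping the $\alpha$-Hölder seminorm of the derivative uniformly bounded, as conclusion \eqref{unif-holder} demands, forces the delicate balancing of $\chi$, $\nu$, $\alpha$, and the $C^{1+\alpha}$-norm of $f$ that dictates how small $\tau$ (and $\alpha$) must be. A secondary nuisance is that $E^u,\Ecs$ are only continuous, so the charts $\exp_x$ need not depend smoothly on $x$; but each step of the transform involves only the single uniformly $C^{1+\alpha}$ map $f_{f^{-k}x}$, so continuity of the splitting suffices for the uniform estimates.
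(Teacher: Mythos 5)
Correct. The paper offers no proof of its own here---it cites \cite[\S4]{P} and \cite[Theorem 6.2.8 and \S6.4]{Kat}---and your Hadamard graph-transform sketch, together with the uniformity-over-$\L$ check you explicitly flag, is exactly the argument those references supply.
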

The number $\tau$ is the \emph{size} of the local manifolds and will be fixed at a sufficiently small value to guarantee various estimates.  Using \ref{C1}, the arguments in \cite[Theorem 6.2.8 and \S6.4]{Kat} also give the following result for the center-stable direction.

\begin{theorem}\label{thm:Wcs}
Let $f\colon U\to M$ be a diffeomorphism onto its image and $\Lambda\subset U$ a compact $f$-invariant subset admitting a splitting $\Ecs\oplus E^u$ satisfying condition \ref{C1} as above.  Then $\Ecs$ 
can be uniquely integrated to a continuous lamination with $C^1$ leaves, which can be described as follows:
There is $\tau>0$ such that for every $x\in\L$ there is a local manifold $\Vl^\cs(x)\subset M$ satisfying
\begin{enumerate}
\item $T_y \Vl^\cs(x) = \Ecs(y)$ for every $y\in \Vl^\cs(x) \cap \L$;
\item $\Vl^\cs(x)=\exp_x\{v+\psi^\cs_x(v): v\in B_x^\cs(0,\tau)\},$
where $B_x^\cs(0,\tau)$ is the ball in $\Ecs(x)$ centered at zero of radius $\tau>0$ and $\psi^\cs_x\colon B_x^\cs(0,\tau)\to E^u(x)$ is a $C^{1}$ function.
\end{enumerate}
Moreover, there is $r_0>0$ such that for every $y\in B(x,r_0)\cap\L$ we have that $\Vl^u(x)$ and $\Vl^\cs(y)$ intersect at exactly one point which we denote by $[x,y]$.
\end{theorem}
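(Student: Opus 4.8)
The plan is to adapt the classical Hadamard--Perron and stable-manifold construction of \cite[Theorem 6.2.8 and \S6.4]{Kat}, with the domination inequality \eqref{dominated-splitting} playing the role of hyperbolicity and Condition \ref{C1} compensating for the fact that $\Ecs$ need not be uniformly contracted when $\nu\ge1$. The new feature compared with the uniformly hyperbolic case is that a dominated splitting need not be integrable at all; Condition \ref{C1} is precisely what restores unique integrability of $\Ecs$, and most of the work lies in extracting this from \ref{C1}.

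First I would fix a small $\tau>0$ and, for each $x\in\L$, pass to an exponential chart at $x$ adapted to $\Ecs(x)\oplus E^u(x)$, in which $f$ is $C^1$-close to $Df_x$; the latter multiplies $\Ecs$-vectors by at most $\nu$ and $E^u$-vectors by at least $\chi>\nu$ by \eqref{dominated-splitting}. Working along a forward orbit segment $x,f(x),\dots,f^N(x)$, I would consider the space of graphs $\{v+\psi(v):v\in B^\cs_{f^k x}(0,\tau)\}$ over $\Ecs(f^k x)$ with $\psi(0)=0$ and slope bounded by the aperture of $\Kcs$, together with the backward graph transform that, using $Df^{-1}$-invariance of $\Kcs$, sends a graph at level $k+1$ to the part of its $f^{-1}$-preimage that is a graph at level $k$. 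A direct computation shows this transform contracts the slope by a factor of order $\nu/\chi<1$ and contracts the $C^0$-distance between graphs by a factor of order $\chi^{-1}<1$, so iterating it from the trivial graph $\Ecs(f^N x)$ and letting $N\to\infty$ produces a limit graph $\Vl^\cs(x)$; the standard bootstrap then shows $\Vl^\cs(x)$ is the graph of a $C^1$ function depending continuously on $x$, with $T_y\Vl^\cs(x)=\Ecs(y)$ whenever $y\in\Vl^\cs(x)\cap\L$, giving conclusions (1) and (2). This is the first place \ref{C1} is needed: since $\nu$ may exceed $1$, the $\Ecs$-direction of the chart can be contracted by $f$, and a priori the preimage graphs could shrink in size; but a point lying over $v\in B^\cs_x(0,\tau)$ is joined to $x$ by a $cs$-curve whose forward images are $cs$-curves, hence stay of length at most the chart size by \ref{C1}, so the forward orbit never leaves the charts and the transform is well defined on graphs of the fixed size $\tau$.

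Next I would identify the leaves intrinsically, which yields uniqueness of the integration --- and hence a genuine continuous lamination rather than merely a family of graphs. I expect to show that for a suitable $r_1\in(0,\tau)$ and every $x\in\L$ one has $\Vl^\cs(x)=\{y\in B(x,r_1):d(f^nx,f^ny)\le r_1\text{ for all }n\ge0\}$. The inclusion ``$\subseteq$'' again uses \ref{C1}: a curve inside $\Vl^\cs(x)$ from $x$ to $y$ has $cs$-curve forward images (as $Df$ preserves $\Ecs$), so these stay short by \ref{C1} and the orbit of $y$ shadows that of $x$. The inclusion ``$\supseteq$'' is a cone argument: if $y$ does not lie on $\Vl^\cs(x)$ then, by forward invariance of $\Ku$, the $E^u$-component of the displacement between the two orbits is expanded by at least $\chi>1$ at each step, so it leaves the $r_1$-ball. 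The right-hand side is visibly independent of the construction and maps correctly under $f$, hence is the unique local integral manifold of $\Ecs$ through $x$; continuity of $x\mapsto\Vl^\cs(x)$ then follows from continuity of $\Ecs$ together with this description and compactness of $\L$.

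Finally, for the local product structure I would take $x\in\L$ and $y\in B(x,r_0)\cap\L$ with $r_0$ small relative to $\tau$: then $\Vl^u(x)$ is a $C^{1+\alpha}$ graph over $B^u_x(0,\tau)$ by Proposition \ref{prop:local-mfds} and $\Vl^\cs(y)$ is a $C^1$ graph over $B^\cs_y(0,\tau)$, and $E^u(x)$ is uniformly transverse to $\Ecs(y)$ (their angle is at least $\kappa/2$ once $r_0$ is small, by continuity of the bundles); writing the intersection as a fixed-point equation and applying the contraction mapping principle yields the unique point $[x,y]$. I expect the main obstacle to be the intrinsic characterization of the leaves together with the uniform size control in the graph transform: in the uniformly hyperbolic case both are automatic from exponential contraction of the stable bundle, whereas here $\Ecs$ is only dominated, so the argument genuinely rests on \ref{C1}, and the delicate part is choosing the constants ($\tau$, the cone apertures, $r_0$, $r_1$, and the $\delta$--$\theta$ relationship in \ref{C1}) consistently so that all of the above fit together.
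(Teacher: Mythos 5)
Your proposal takes essentially the same approach as the paper, which proves this theorem by citing the Hadamard--Perron/graph-transform construction of \cite[Theorem 6.2.8 and \S6.4]{Kat} and then remarking that Condition~\ref{C1} substitutes for uniform contraction of $\Ecs$ to guarantee that forward orbits of points on the candidate leaf stay inside the local charts, so that the abstract construction yields genuine local manifolds. Your explicit write-up of the graph transform, your use of \ref{C1} to keep orbits in the charts, and your intrinsic characterization of the leaves (together with the cone/transversality argument for $[x,y]$) are exactly the standard machinery the paper's citation and subsequent remark are pointing to.
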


\begin{remark}
The argument in \cite{Kat} constructs the local leaves by first writing a sequence of maps $f_m$ on Euclidean space that (in a neighborhood of the origin) correspond to local coordinates around the orbit of $x$, and then producing the corresponding manifolds for this sequence.  As remarked in the paragraphs following \cite[Theorem 6.2.8]{Kat}, in order to go from the local result to the manifold itself, one needs to know that there is some neighborhood $D_r$ of the origin such that the leaf is ``determined by the action of $f_m$ on $D_r$ only''; see also the ``note of caution'' following \cite[Corollary 6.2.22]{Kat}.  In \cite[Theorem 6.4.9]{Kat} 
this condition is guaranteed by assuming that $\Ecs$ is uniformly contracting, so that points on the local leaf in Euclidean coordinates have orbits staying inside $D_r$, and thus represent true dynamical behavior on $M$.  In our setting, this same fact is guaranteed by \ref{C1}.
\end{remark}

Finally, we require the set $\L$ to have the following product structure, so that the point $[x,y]$  lies in $\L$:
\begin{enumerate}[label=\upshape{(C\arabic{*})}]\setcounter{enumi}{2}
\item\label{C3} $\L=\left(\bigcup_{x\in\L}\,\Vl^u(x)\right)\cap\left(\bigcup_{x'\in\L}\,\Vl^\cs(x')\right)$.
\end{enumerate}

\begin{remark}\label{rmk:unif-hyp}
If the number $\nu$ from \eqref{dominated-splitting} satisfies $\nu<1$, so that the set $\L$ is uniformly hyperbolic and $\Ecs$ is uniformly contracted under $f$, then conditions \ref{C1}--\ref{C3} hold whenever $\L$ is locally maximal for $f$; in particular, our results apply to every topologically transitive locally maximal hyperbolic set.
\end{remark}

Given $x\in \L$ and $r\in (0,\tau)$, we write $B_\L(x,r) = B(x,r) \cap \L$ for convenience.  We also write
\[
B^u(x,r) = B(x,r) \cap \Vl^u(x), \qquad
B_\L^u(x,r) = B^u(x,r) \cap \L,
\]
and similarly with $u$ replaced by $\cs$.

\begin{definition}\label{def:rectangle}
A closed set $R\subset \L$ is called a \emph{rectangle} if $[x,y] = \Vl^u(x) \cap \Vl^\cs(y)$ exists and is contained in $R$ for every $x,y\in R$.  
\end{definition}

One can easily produce rectangles by fixing $x\in \L$, $\delta>0$ sufficiently small, and putting
\begin{equation}\label{rectangle}
R=R(x,\delta):=\bigg(\bigcup_{y\in \overline{B_\L^\cs(x,\delta)}} \Vl^u(y)\bigg)\cap\bigg(\bigcup_{z\in \overline{B_\L^u(x,\delta)}} \Vl^\cs(z)\bigg).
\end{equation}
Note that the intersection of two rectangles is either empty or is itself a rectangle.  The following result is standard: for completeness, we give a proof in Appendix \ref{appendix}.

\begin{lemma}\label{lem:rectangle-partition}
For every $\epsilon>0$ and every Borel measure $\mu$ on $\L$ (whether invariant or not), there is a finite set of rectangles $R_1,\dots, R_N \subset \L$ satisfying the following properties:
\begin{enumerate}[label=\textup{(\arabic{*})}]
\item each $R_i$ is the closure of its (relative) interior;
\item $\L = \bigcup_{i=1}^N R_i$, and the (relative) interiors of the $R_i$ are disjoint;
\item $\mu(\partial R_i) = 0$ for all $i$;
\item $\diam R_i < \epsilon$ for all $i$.
\end{enumerate}
\end{lemma}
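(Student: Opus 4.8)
\noindent\emph{Proof plan.} I would build the partition in two steps: first produce a \emph{finite cover} of $\L$ by rectangles of the form \eqref{rectangle} whose relative interiors already cover $\L$, which have diameter $<\epsilon$, and whose relative boundaries are all contained in one closed $\mu$-null set $\Gamma$; and then \emph{cut} these rectangles against one another to obtain a partition. Since the cut pieces will be rectangles contained in the original ones, property (4) will persist automatically; they will have relative boundaries still inside $\Gamma$, so (3) will persist; and a final trimming step will be needed to restore (1). The two steps and the obstacle are as follows.

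\smallskip
\noindent\emph{Step 1: a good finite cover.} Using Proposition~\ref{prop:local-mfds} and Theorem~\ref{thm:Wcs} — all of whose constants ($\tau,C_1,C_2,r_0$, the angle bound, and the moduli of continuity of $x\mapsto\psi^u_x,\psi^\cs_x$ and of $[\cdot,\cdot]$) are uniform over $x\in\L$ — there is a uniform $c>1$ with $B_\L(x,\delta/c)\subseteq\Int_\L R(x,\delta)\subseteq R(x,\delta)\subseteq\overline{B(x,c\delta)}$ for all $x\in\L$ and all small $\delta$. I would fix $\delta_0$ with $c\delta_0<\epsilon$ and, by compactness, points $x_1,\dots,x_k$ with $\bigcup_j B_\L(x_j,\delta_0/(2c))=\L$, so that for any $s\in[\delta_0/2,\delta_0]$ the rectangles $R(x_j,s)$ have diameter $<\epsilon$ and their relative interiors cover $\L$. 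Writing $\pi^u_x,\pi^\cs_x$ for the two coordinate projections of the local product structure at $x$, one has $R(x,s)=(\pi^\cs_x)^{-1}\overline{B_\L^\cs(x,s)}\cap(\pi^u_x)^{-1}\overline{B_\L^u(x,s)}$ and hence $\partial R(x,s)\subseteq S^u_x(s)\cup S^\cs_x(s)$, where $S^u_x(s):=(\pi^u_x)^{-1}\{d(\cdot,x)=s\}$ and $S^\cs_x(s):=(\pi^\cs_x)^{-1}\{d(\cdot,x)=s\}$. The key observation is that a point $w\in\L$ lies in $S^u_x(s)$ (resp.\ $S^\cs_x(s)$) for the single value $s=d(\pi^u_x w,x)$ (resp.\ $s=d(\pi^\cs_x w,x)$); so Tonelli with counting measure in $s$ gives $\sum_{s\in(0,\delta_0]}\mu(S^u_x(s))\le\mu(\L)<\infty$, whence $\mu(S^u_x(s))=\mu(S^\cs_x(s))=0$ for all but countably many $s$. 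Intersecting these co-countable sets of admissible radii over $x\in\{x_1,\dots,x_k\}$, I would fix a common admissible $s$, set $R_i:=R(x_i,s)$, and put $\Gamma:=\bigcup_{i=1}^k\big(S^u_{x_i}(s)\cup S^\cs_{x_i}(s)\big)$, a closed $\mu$-null set with $\partial R_i\subseteq\Gamma$. Finally, since $\overline{B^u_\L(x,s)}$ and $\overline{B^\cs_\L(x,s)}$ are closures of relatively open sets they equal the closures of their relative interiors, and by the local product structure so does $R_i$.

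\smallskip
\noindent\emph{Step 2: cutting into a partition.} This is the refinement procedure familiar from the construction of Markov partitions (cf.\ \cite{Bow08}). The basic move: given a rectangle $Q$ and a sub-rectangle $T\subseteq Q$, choose a common center $z\in T$ and write $Q=[A,B]$, $T=[A',B']$ with $A'\subseteq A$, $B'\subseteq B$ closed ($A=\Vl^u(z)\cap Q$, etc.); then
\[
Q \;=\; T\;\cup\;[\,\overline{A\setminus A'},\,B\,]\;\cup\;[\,A',\,\overline{B\setminus B'}\,]
\]
is a union of three rectangles, each contained in $Q$, with pairwise disjoint relative interiors (any two of them meet only inside $\partial T$, which is nowhere dense), and with relative boundaries contained in $\partial Q\cup\partial T$ together with the extended slices through the relative boundary of $A'$ and of $B'$. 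I would apply this repeatedly — cutting each current piece $P$ by each nonempty $P\cap R_i$ — which terminates after finitely many steps (the pieces are always rectangles built from the finitely many intersections $\bigcap_{i\in I}R_i$), producing a finite family $\mathcal P$ of rectangles with $\bigcup\mathcal P=\L$, each $P\in\mathcal P$ contained in some $R_i$, each $\partial P\subseteq\Gamma$ (the slices introduced by cutting $P$ by $R_i$ lie in $S^u_{x_i}(s)\cup S^\cs_{x_i}(s)\subseteq\Gamma$), and with $\bigcup_{P}\Int_\L P$ covering $\L$ up to a finite union of piece-boundaries, hence up to a set nowhere dense in $\L$. Then I would replace each $P$ by $\widehat P:=\overline{\Int_\L P}$: in local product coordinates $\widehat P=[\overline{\Int A_P},\overline{\Int B_P}]$ is again a rectangle; it is the closure of an open set, so satisfies (1); it has the same relative interior as $P$, so the $\widehat P$ have pairwise disjoint relative interiors; $\bigcup_P\widehat P=\overline{\bigcup_P\Int_\L P}=\L$; $\diam\widehat P\le\diam R_i<\epsilon$; and $\mu(\partial\widehat P)\le\mu(\partial P)\le\mu(\Gamma)=0$.

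\smallskip
\noindent\emph{Main obstacle.} The delicate points will be (1) and (3) in Step~2. Cutting rectangles produces pieces that are intersections and set-differences of closed sets, and such pieces generically acquire lower-dimensional ``whiskers'', so they need not equal the closure of their relative interior; the trimming $P\mapsto\overline{\Int_\L P}$ repairs this, but one must check it produces rectangles, does not enlarge relative interiors (which would break disjointness) and does not shrink the union (which would break (2) — this is why Step~1 is arranged so the interiors of the $R_i$ already cover $\L$). Dually, cutting extends the boundary slices of the $R_i$ beyond the $R_i$ themselves, so one cannot simply write $\partial P\subseteq\bigcup_i\partial R_i$; this is why Step~1 must make the \emph{full} radius-$s$ slices $S^u_{x_i}(s),S^\cs_{x_i}(s)$, and hence $\Gamma$, $\mu$-null, so that $\partial P\subseteq\Gamma$ throughout. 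Once this bookkeeping is set up — termination of the iterated cutting, and the fact that pieces arising from different $R_i$ fit together with disjoint interiors — the remaining verifications are routine, while Step~1 itself is essentially mechanical given the observation that each point lies on $S^u_x(s)$, resp.\ $S^\cs_x(s)$, for only one radius $s$.
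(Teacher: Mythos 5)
Your proof is correct in outline and follows the same two-step shape as the paper's --- cover $\Lambda$ by small rectangles with $\mu$-null boundaries, then refine to a partition --- but the refinement is carried out by a genuinely different device. The paper is very terse at exactly this point: it forms $R_I=\bigcap_{i\in I}R(x_i,\delta_i)$ over all nonempty $I$ and asserts these give the partition, but these sets nest ($R_{\{1,2\}}\subset R_{\{1\}}$) so their relative interiors are not disjoint, and the actual Venn cells $\bigcap_{i\in I}R_i\setminus\bigcup_{j\notin I}R_j$ are generally not rectangles. Your three-piece cutting move is precisely the device that supplies the missing subdivision, and your strengthened Step~1 --- choosing a common radius $s$ so that the \emph{full} slices $S^u_{x_i}(s),S^\cs_{x_i}(s)$, not just $\partial R_i$ (which is only a proper subset of their union), are $\mu$-null --- is exactly what Step~2 requires, since cutting creates new boundary slices that extend past $\partial R_i$; the paper's continuity-point argument gives only $\mu(\partial R(x_i,\delta_i))=0$, which would not suffice for that. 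Two loose ends to tighten when you write it out: (i) the cutting has to be seeded, since the starting cover $\{R_i\}$ already has overlapping interiors; a clean bookkeeping is to cut each $R_i$ by all the others into a partition $\mathcal{P}^{(i)}$ of $R_i$ and then keep from $\mathcal{P}^{(i)}$ only those pieces $P$ with $\Int P\cap\bigcup_{j<i}\Int R_j=\emptyset$, which is well-defined because after cutting by $R_j$ each piece's interior is either contained in or disjoint from $\Int R_j$; and (ii) ``any two of them meet only inside $\partial T$'' overstates slightly: the second and third pieces of a cut meet in $[\partial_A A',\overline{B\setminus B'}]$, which need not lie in $\partial T$, but it is nowhere dense, which is all the disjoint-interiors conclusion actually uses.
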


We refer to $\mathcal{R} = \{R_1,\dots, R_N\}$ as a \emph{partition by rectangles}.  Note that even though the rectangles may overlap, the fact that the boundaries are $\mu$-null implies that there is a full $\mu$-measure subset of $\L$ on which $\mathcal{R}$ is a genuine partition.  In particular, when $\mu$ is $f$-invariant, we can use a partition by rectangles for the computation of measure-theoretic entropy.

We end this section with one more definition.

\begin{definition}\label{def:holonomy}
Given a rectangle $R$ and points $y,z\in R$, the \emph{holonomy map} $\pi_{yz} \colon V_R^u(y) \to V_R^u(z)$ is defined by 
\begin{equation}\label{eqn:holonomy}
\pi_{yz}(x) = \Vl^u(z) \cap \Vl^\cs(x) = [z,x].
\end{equation}
\end{definition}

The holonomy map is a homeomorphism between $V_R^u(y)$ and $V_R^u(z)$.
One can define a holonomy map between $V_R^\cs(y)$ and $V_R^\cs(z)$ in the analogous way, sliding along unstable leaves; if need be, we will denote this map by $\pi_{yz}^u$ and the holonomy map from Definition \ref{def:holonomy} by $\pi_{yz}^\cs$.  The notation $\pi_{yz}$ without a superscript will always refer to $\pi_{yz}^\cs$.

\subsection{Measures with local product structure}

We recall some facts about measurable partitions and conditional measures; see \cite{Roh52} or \cite[\S5.3]{EW11} for proofs and further details.
Given a measure space $(X,\mu)$, a partition $\xi$ of $X$, and $x\in X$, write $\xi(x)$ for the partition element containing $x$.  The partition is said to be \emph{measurable} if it can be written as the limit of a refining sequence of finite partitions.  In this case there exists a system of conditional measures $\{\mu^\xi_x\}_{x\in X}$ such that:
\begin{enumerate}
\item each $\mu_x^\xi$ is a probability measure on $\xi(x)$;
\item if $\xi(x) = \xi(y)$, then $\mu_x^\xi = \mu_y^\xi$;
\item for every $\psi\in L^1(X,\mu)$, we have
\begin{equation}\label{eqn:conditionals}
\int_X \psi\,d\mu = \int_X \int_{\xi(x)} \psi\,d\mu_x^\xi \,d\mu(x).
\end{equation}
\end{enumerate}
Moreover, the system of conditional measures is unique mod zero: if $\hat\mu_x^\xi$ is any other system of measures satisfying the conditions above, then $\mu_x^\xi = \nu_x^\xi$ for $\mu$-a.e.\ $x$.

We will be most interested in the following example: Given a rectangle $R\subset \L$ and a point $x\in R$, we consider the measurable partition $\xi$ of $R$ by unstable sets of the form
\[
V_R^u(x) = \Vl^u(x) \cap R.
\]
Let $\{\mu_x^u\}_{x\in R}$ denote the corresponding system of conditional measures; given a partition element $V = V_R^u(x)$, we may also write $\mu_V = \mu_x^u$.  
Let $\tilde{\mu}$ denote the factor-measure on $R/\xi$ defined by $\tilde{\mu}(E) = \mu(\bigcup_{V\in E} V)$.  Then for every $\psi\in L^1(R,\mu)$, we have
\begin{equation}\label{gibbs-split}
\int_R\psi\,d\mu=\int_{R/\xi}\int_{V}\psi(z)\,d\mu_V(z)\,d\tilde{\mu}(V). 
\end{equation}
Note that the factor space $R/\xi$ can be identified with the set 
$V_R^\cs(x) = \Vl^\cs(x) \cap R$ for any $x\in R$, so that the measure $\tilde{\mu}$ can be viewed as a measure on this set, in which case \eqref{gibbs-split} becomes
\begin{equation}\label{eqn:R-split}
\int_R \psi\,d\mu = \int_{V_R^\cs(x)} \int_{V_R^u(y)} \psi(z) \,d\mu_y^u(z) \,d\tilde\mu(y).
\end{equation}
Note that the conditional measures $\mu_x^u$ depend on the choice of rectangle $R$, although this is not reflected in the notation.  In fact the ambiguity only consists of a normalizing constant, as the following lemma shows.
\begin{lemma}\label{lem:compare-cond}
Let $\mu$ be a Borel measure on $\L$, and let $R_1,R_2 \subset \L$ be rectangles.  Let $\{\mu_x^1\}_{x\in R_1}$ and $\{\mu_x^2\}_{x\in R_2}$ be the corresponding systems of conditional measures on the unstable sets $V_{R_j}^u(x)$.  Then for $\mu$-a.e.\ $x\in R_1\cap R_2$, the measures $\mu_x^1$ and $\mu_x^2$ are scalar multiples of each other when restricted to $V_{R_1}^u(x) \cap V_{R_2}^u(x)$.
\end{lemma}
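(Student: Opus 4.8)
The plan is to reduce everything to the standard fact that conditional measures localize under restriction to a measurable subset. We may assume $\mu(R_1\cap R_2)>0$, since otherwise there is nothing to prove; set $A=R_1\cap R_2$, which (being a nonempty intersection of rectangles) is again a rectangle. The key elementary observation is that for every $x\in A$,
\[
V_{R_1}^u(x)\cap A \;=\; \Vl^u(x)\cap R_1\cap R_2 \;=\; V_{R_2}^u(x)\cap A \;=:\; V_A^u(x),
\]
so that the partition $\xi_A$ of $A$ into the sets $V_A^u(x)$ is simultaneously the restriction to $A$ of the partition $\xi_1$ of $R_1$ into unstable sets and of the partition $\xi_2$ of $R_2$ into unstable sets. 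In particular $\xi_A$ is the restriction of a measurable partition to a measurable subset, hence measurable, and it carries a system of conditional measures for $\mu|_A$ that is unique mod zero.

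Next I would invoke the localization property of conditional measures (which is standard; see \cite{Roh52} or \cite[\S5.3]{EW11}): if $\xi$ is a measurable partition of a space carrying a measure $\mu$, with conditional measures $\{\mu_x^\xi\}$, and $A$ is a measurable subset with $\mu(A)>0$, then $\mu_x^\xi(A)>0$ for $\mu$-a.e.\ $x\in A$, and for such $x$ the conditional measure of $\mu|_A$ on the element $\xi(x)\cap A$ of the restricted partition $\xi|_A$ is the normalized restriction $\mu_x^\xi(A)^{-1}\,\mu_x^\xi|_{\xi(x)\cap A}$. (The positivity statement holds because the set where $\mu_x^\xi(A)=0$ is $\xi$-saturated, so its intersection with $A$ has $\mu$-measure $\int \mu_x^\xi(A)\,d\mu=0$; the formula for the restricted conditionals is then obtained by checking the defining identity \eqref{eqn:conditionals} and appealing to uniqueness mod zero.)

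Applying this with $(\mu,\xi)=(\mu|_{R_1},\xi_1)$ and then with $(\mu,\xi)=(\mu|_{R_2},\xi_2)$, and using that $\xi_1|_A=\xi_2|_A=\xi_A$, we find that for $\mu$-a.e.\ $x\in A$ both $\mu_x^1(A)^{-1}\,\mu_x^1|_{V_A^u(x)}$ and $\mu_x^2(A)^{-1}\,\mu_x^2|_{V_A^u(x)}$ equal the conditional measure of $\mu|_A$ on $\xi_A(x)=V_A^u(x)$. By uniqueness mod zero of conditional measures, these two normalized restrictions agree for $\mu$-a.e.\ $x\in A$, which says exactly that $\mu_x^1$ and $\mu_x^2$ are positive scalar multiples of one another on $V_{R_1}^u(x)\cap V_{R_2}^u(x)=V_A^u(x)$, as claimed.

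I expect the only point requiring genuine care to be the localization property itself — in particular the verification that $\mu_x^\xi(A)>0$ for a.e.\ $x\in A$ and that the normalized restrictions satisfy the disintegration identity defining conditional measures — but this is entirely standard and already contained in the references cited in this subsection. Everything else is a matter of tracking the single set-theoretic identity $V_{R_1}^u(x)\cap A=V_{R_2}^u(x)\cap A$ and invoking uniqueness of conditional measures.
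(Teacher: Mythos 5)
Your proof is correct and takes essentially the same approach as the paper's: the paper's first case ($R_1\subset R_2$) is precisely a direct verification of the localization property you invoke, and both proofs then apply that property twice — once with respect to each of $R_1$ and $R_2$ — to the common intersection $R_1\cap R_2$. The only difference is one of packaging: you cite the localization of conditional measures as a known fact, whereas the paper spells out the disintegration computation (defining $c(y)=\mu_y^2(R_1)$ and checking $\tilde\mu^1\sim\tilde\mu^2$ with derivative $c$) explicitly.
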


See Appendix \ref{appendix} for a proof of Lemma \ref{lem:compare-cond} and the next lemma, which relies on it.

\begin{lemma}\label{lem:cond-inv}
If $\mu$ is an $f$-invariant Borel measure on $\L$, then for $\mu$-a.e.\ $x\in \L$ and any choice of two rectangles containing $x$ and $f(x)$, the corresponding systems of conditional measures are such that $f_* \mu_x^u$ is a scalar multiple of $\mu_{f(x)}^u$ on the intersection of the corresponding unstable sets.
\end{lemma}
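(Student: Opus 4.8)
The plan is to deduce this from the fact that $f$ preserves the two laminations — the subbundles $E^u$ and $\Ecs$ are $Df$-invariant, hence also $Df^{-1}$-invariant, so $f^{\pm1}$ maps sufficiently small rectangles to rectangles and maps unstable partitions to unstable partitions — together with the naturality of disintegrations under the measure-isomorphism induced by $f$ and the rectangle-independence recorded in Lemma~\ref{lem:compare-cond}. The ``$\mu$-a.e.\ $x$'' and ``for any rectangle'' clauses are reconciled in the standard way: fix a countable family $\mathcal{F}$ of rectangles of the form \eqref{rectangle} rich enough that every rectangle contains, around any interior point, a sub-rectangle from $\mathcal{F}$; after deleting the countably many $\mu$-null sets $\partial R$ and $f^{-1}(\partial R)$ for $R\in\mathcal{F}$ (and the zero-measure members of $\mathcal{F}$), it suffices to prove the conclusion for pairs drawn from $\mathcal{F}$, since for general $R_1\ni x$, $R_2\ni f(x)$ one then chains the proportionalities given by Lemma~\ref{lem:compare-cond}. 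So fix $R_1,R_2\in\mathcal{F}$ with $x\in\Int R_1$, $f(x)\in\Int R_2$ and, shrinking $R_1$ via Lemma~\ref{lem:compare-cond} if needed, with $\diam R_1$ so small that $f(V_{R_1}^u(y))\subseteq \Vl^u(f(y))$ for all $y\in R_1$. Since $f|_\L$ is a homeomorphism, $f(\Int R_1)\cap\Int R_2$ is a neighborhood of $f(x)$, and as $\diam R(f(x),\delta)\to 0$ we may pick $\delta$ with $R_3 := R(f(x),\delta)$ a rectangle satisfying $f(x)\in R_3\subseteq f(R_1)\cap R_2$. Then $R_0:=f^{-1}(R_3)\subseteq R_1$ is a rectangle containing $x$ (here one checks, using invariance of the laminations and the smallness of $R_3$, that $f^{-1}$ sends the small rectangle $R_3$ to a rectangle), and $f$ restricts to a homeomorphism $R_0\to R_3$ carrying the partition $\xi_0$ of $R_0$ into unstable sets onto the partition $\xi_3$ of $R_3$ into unstable sets, with $f(V_{R_0}^u(y))=V_{R_3}^u(f(y))$.

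Since $\mu$ is $f$-invariant and $R_0=f^{-1}(R_3)$, we have $f_*(\mu|_{R_0})=\mu|_{R_3}$; after normalizing to probability measures $\bar\mu_0,\bar\mu_3$, the map $f$ is an isomorphism $(R_0,\bar\mu_0)\to(R_3,\bar\mu_3)$ taking $\xi_0$ to $\xi_3$. I claim $f_*\mu_x^0=\mu_{f(x)}^3$ for $\mu$-a.e.\ $x\in R_0$. Indeed, $\{f_*\mu_{f^{-1}w}^0\}_{w\in R_3}$ is a family of probability measures with $f_*\mu_{f^{-1}w}^0$ carried by $\xi_3(w)=f(\xi_0(f^{-1}w))$ and depending only on $\xi_3(w)$, and for every $\psi\in L^1(R_3,\bar\mu_3)$, formula \eqref{eqn:conditionals} and the change of variables $w=f(y)$ give
\[
\int\psi\,d\bar\mu_3=\int(\psi\circ f)\,d\bar\mu_0=\int_{R_0}\int_{\xi_0(y)}(\psi\circ f)\,d\mu_y^0\,d\bar\mu_0(y)=\int_{R_3}\int_{\xi_3(w)}\psi\,d(f_*\mu_{f^{-1}w}^0)\,d\bar\mu_3(w).
\]
Hence $\{f_*\mu_{f^{-1}w}^0\}$ is a system of conditional measures for $\bar\mu_3$ over $\xi_3$, so by uniqueness mod zero it equals $\{\mu_w^3\}$, which is the claim.

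To conclude, Lemma~\ref{lem:compare-cond} applied to $R_0\subseteq R_1$ gives $\mu_x^0=c_1(x)\,\mu_x^1$ on $V_{R_0}^u(x)\cap V_{R_1}^u(x)$ for $\mu$-a.e.\ $x$, and applied to $R_3\subseteq R_2$ gives $\mu_{f(x)}^3=c_2(x)\,\mu_{f(x)}^2$ on $V_{R_3}^u(f(x))\cap V_{R_2}^u(f(x))$, with $c_1(x),c_2(x)>0$; combining with the claim above,
\[
f_*\mu_x^1=c_1(x)^{-1}\,f_*\mu_x^0=c_1(x)^{-1}\,\mu_{f(x)}^3=c_1(x)^{-1}c_2(x)\,\mu_{f(x)}^2
\]
on the intersection of the corresponding unstable sets $f(V_{R_1}^u(x))$ and $V_{R_2}^u(f(x))$, as asserted. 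The main obstacle is not the logical core — the displayed computation is a routine uniqueness-of-disintegration argument — but the bookkeeping flagged above: assembling a single $\mu$-conull set on which the statement holds for all rectangles at once, and the standard but delicate verification that $f^{\pm1}$ carries a sufficiently small rectangle, and its unstable partition, to a rectangle, which requires tracking the expansion of $f$ along $\Vl^u$ against the fixed size $\tau$ of the local manifolds from Proposition~\ref{prop:local-mfds} and Theorem~\ref{thm:Wcs}.
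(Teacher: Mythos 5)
Your proof is correct and takes essentially the same approach as the paper: push forward the disintegration via the change of variables $\int_{f(R)}\psi\,d\mu = \int_R \psi\circ f\,d\mu$, invoke a.e.-uniqueness of conditional measures to get $f_*\mu_x^u = \mu_{f(x)}^u$ for the pair $(R, f(R))$, then reduce to arbitrary rectangle pairs via Lemma~\ref{lem:compare-cond}. The paper's proof is terser — it simply takes $R$ small enough that $f(R)$ is a rectangle and appeals to Lemma~\ref{lem:compare-cond} without spelling out the countable-family bookkeeping — but the content is the same.
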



For the next definition, we recall that two measures $\nu,\mu$ are said to be \emph{equivalent} if $\nu\ll\mu$ and $\mu\ll\nu$; in this case we write $\nu\sim\mu$.  Also, given a rectangle $R$, a point $p\in R$, and measures $\nu^u_p,\nu^\cs_p$ on $V_R^u(p),V_R^\cs(p)$ respectively, we can define a measure $\nu = \nu^u_p\otimes \nu^\cs_p$ on $R$ by $\nu([A,B]) = \nu^u_p(A) \nu^\cs_p(B)$ for  $A\subset V_R^u(p)$ and $B\subset V_R^\cs(p)$.  The following lemma is proved in Appendix \ref{appendix}.


\begin{lemma}\label{lem:lps}
Let $R$ be a rectangle and $\mu$ a measure with $\mu(R)>0$.  Then the following are equivalent.
\begin{enumerate}[label=\textup{(\arabic{*})}]
\item\label{cs-ac} $(\pi_{yz}^\cs)_* \mu_y^u \ll \mu_z^u$ for $\mu$-a.e.\ $y,z\in R$.
\item\label{cs-sim} $(\pi_{yz}^\cs)_* \mu_y^u \sim \mu_z^u$ for $\mu$-a.e.\ $y,z\in R$.
\item\label{u-ac} $(\pi_{yz}^u)_* \mu_y^\cs \ll \mu_z^\cs$ for $\mu$-a.e.\ $y,z\in R$.
\item\label{u-sim} $(\pi_{yz}^u)_* \mu_y^\cs \sim \mu_z^\cs$ for $\mu$-a.e.\ $y,z\in R$.
\item\label{prod-sim-0} there exist $p\in R$ and measures $\tilde\mu_p^u,\tilde\mu_p^\cs$ on $V_R^u(p),V_R^\cs(p)$ such that $\mu|_R \sim \tilde\mu_p^u \otimes \tilde\mu_p^\cs$.
\item\label{prod-sim} $\mu|_R \sim \mu_y^u \otimes \mu_y^\cs$ for $\mu$-a.e.\ $y\in R$.
\end{enumerate}
\end{lemma}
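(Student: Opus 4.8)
The plan is to establish the cycle of implications $\ref{prod-sim}\Rightarrow\ref{prod-sim-0}\Rightarrow\ref{cs-ac}\Rightarrow\ref{cs-sim}\Rightarrow\ref{prod-sim}$, together with the symmetric triple $\ref{cs-sim}\Leftrightarrow\ref{u-sim}$ and $\ref{u-ac}\Rightarrow\ref{u-sim}$ which follows by swapping the roles of $\Vl^u$ and $\Vl^\cs$ and using the identity $\pi_{zy}^u\circ\pi_{yz}^\cs = \mathrm{id}$ on the relevant sets. The implications $\ref{cs-sim}\Rightarrow\ref{cs-ac}$ and $\ref{prod-sim}\Rightarrow\ref{prod-sim-0}$ are trivial, so the content is in $\ref{cs-ac}\Rightarrow\ref{cs-sim}$, $\ref{cs-ac}\Rightarrow\ref{prod-sim}$, and $\ref{prod-sim-0}\Rightarrow\ref{cs-ac}$. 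First I would fix, for a $\mu$-positive-measure set of base points, a reference leaf $V_R^u(p)$ and transport all conditionals $\mu_y^u$ to it via the holonomies $\pi_{yp}^\cs$, writing $\hat\mu_y := (\pi_{yp}^\cs)_*\mu_y^u$; the disintegration formula \eqref{eqn:R-split} then expresses $\mu|_R$ as $\int \hat\mu_y\,d\tilde\mu(y)$ pushed back to the respective leaves.

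For $\ref{cs-ac}\Rightarrow\ref{cs-sim}$, I would use the cocycle property of holonomies: if $(\pi_{yz}^\cs)_*\mu_y^u\ll\mu_z^u$ for a.e.\ pair, then applying this with the roles of $y$ and $z$ reversed gives $(\pi_{zy}^\cs)_*\mu_z^u\ll\mu_y^u$, and since $\pi_{zy}^\cs = (\pi_{yz}^\cs)^{-1}$ on $V_R^u(z)\cap V_R^u(y)$ (up to the usual mod-zero caveats about where the holonomies are defined), pushing the second relation forward by $\pi_{yz}^\cs$ yields $\mu_z^u \ll (\pi_{yz}^\cs)_*\mu_y^u$ on the overlap, giving equivalence. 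The measurability needed to pick a full-measure set of pairs where both relations hold simultaneously comes from Fubini on $R\times R$ with the measure $\tilde\mu\otimes\tilde\mu$ (identifying the factor space with $V_R^\cs(p)$ as in the discussion preceding the lemma).

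For $\ref{cs-ac}\Rightarrow\ref{prod-sim}$, fix a typical $p$ and note $\ref{cs-ac}$ says $(\pi_{yp}^\cs)_*\mu_y^u\ll\mu_p^u$ for a.e.\ $y$, i.e.\ $\hat\mu_y\ll\mu_p^u$; write $\rho_y := d\hat\mu_y/d\mu_p^u$. Then for $\psi\geq 0$ on $R$, equation \eqref{eqn:R-split} becomes $\int_R\psi\,d\mu = \int_{V_R^\cs(p)}\int_{V_R^u(p)}\psi([y,x])\,\rho_y(x)\,d\mu_p^u(x)\,d\tilde\mu(y)$, and swapping the order of integration (Fubini, which requires joint measurability of $(x,y)\mapsto\rho_y(x)$ — available because the conditional measures can be chosen measurably in the base point) exhibits $\mu|_R$ as having a density with respect to $\mu_p^u\otimes\tilde\mu$; since $\tilde\mu$ is, up to the holonomy identification, a version of $\mu_p^\cs$, and since the marginal of $\mu$ on the $\cs$-direction is $\tilde\mu$, one checks the resulting product measure is equivalent (not merely absolutely continuous) to $\mu|_R$ by verifying the density is a.e.\ positive — this follows because $\mu_p^u$ is, conversely, absolutely continuous with respect to $\hat\mu_y$ for a.e.\ $y$ once we also invoke the equivalence from the previous paragraph. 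Finally, for $\ref{prod-sim-0}\Rightarrow\ref{cs-ac}$: from $\mu|_R\sim\tilde\mu_p^u\otimes\tilde\mu_p^\cs$, the essential uniqueness of conditional measures (from the Rohlin theory recalled above) forces $\mu_y^u$ to be, for a.e.\ $y$, equivalent to the image of $\tilde\mu_p^u$ under $\pi_{py}^\cs$; composing two such identifications for $y$ and $z$ gives $(\pi_{yz}^\cs)_*\mu_y^u\sim\mu_z^u$, which is stronger than $\ref{cs-ac}$.

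I expect the main obstacle to be the bookkeeping around \emph{mod-zero} issues and joint measurability: the conditionals $\mu_y^u$ are only defined $\tilde\mu$-a.e., the holonomies $\pi_{yz}^\cs$ are genuine homeomorphisms only on the rectangle where everything is defined, and to run Fubini on $R\times R$ one must know that a suitably chosen version of $(x,y)\mapsto d\hat\mu_y/d\mu_p^u(x)$ is jointly measurable. This is handled in the standard way — choosing a Borel version of the disintegration, and passing to a full-measure invariant subset of $R$ on which the partition by rectangles is a genuine partition (as noted in the discussion after Lemma \ref{lem:rectangle-partition}) — but it is the step that requires care rather than ideas. Everything else is soft measure theory built on the disintegration \eqref{eqn:R-split} and the cocycle identity for holonomies.
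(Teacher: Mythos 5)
Your proposal uses essentially the same ideas as the paper's proof: the ``inverse holonomy'' observation that absolute continuity in both directions gives equivalence (the paper phrases this as ``$y,z$ play symmetric roles in \ref{cs-ac}''), the disintegration formula \eqref{eqn:R-split} together with a Radon--Nikodym density to pass from \ref{cs-sim} to the product structure \ref{prod-sim-0}/\ref{prod-sim}, and essential uniqueness of conditional measures to go back from \ref{prod-sim-0} to \ref{cs-sim}. The paper organizes this as \ref{cs-ac}$\Leftrightarrow$\ref{cs-sim}, \ref{u-ac}$\Leftrightarrow$\ref{u-sim} (both by symmetry), and \ref{cs-sim}$\Leftrightarrow$\ref{prod-sim-0}$\Leftrightarrow$\ref{prod-sim} with the proof for \ref{u-sim} stated to be similar, rather than as your cycle, but the substance is the same. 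One small caution: the identity you invoke to tie in \ref{u-ac}--\ref{u-sim}, namely $\pi_{zy}^u\circ\pi_{yz}^\cs=\mathrm{id}$, does not parse, since $\pi_{yz}^\cs\colon V_R^u(y)\to V_R^u(z)$ and $\pi_{zy}^u\colon V_R^\cs(z)\to V_R^\cs(y)$ have incompatible domains and codomains; the correct mechanism, which you also name, is simply to run the entire argument again with the roles of $\Vl^u$ and $\Vl^\cs$ swapped and use that \ref{prod-sim-0} is manifestly $u$-$\cs$ symmetric, which is exactly what the paper means by ``the proof for \ref{u-sim} is similar.''
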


\begin{definition}\label{def:lps}
A measure $\mu$ on $\L$ has \emph{local product structure} if there is $\sigma>0$
such that for any rectangle $R \subset \L$ with $\mu(R)>0$ and $\diam(R) < \sigma$, one (and hence all) of the conditions in Lemma \ref{lem:lps} holds.
\end{definition}


\subsection{Equilibrium measures}\label{def.2}

Let $\varphi\colon \L \to \RR$ be a continuous function, which we call a \emph{potential function}. Given an integer $n\ge 0$, the \emph{dynamical metric of order $n$} is
\begin{equation}\label{eqn:dn}
d_n(x,y) = \max\{d(f^kx,f^ky) : 0\leq k< n\}
\end{equation}
and for each $r>0$, the associated \emph{Bowen balls} are given by
\begin{equation}\label{eqn:Bowen-balls}
B_n(x,r)=\{y\colon d_n(x,y)< r\}.
\end{equation}
A set $E\subset\Lambda$ is said to be \emph{$(n,r)$-separated} if $d_n(x,y)\geq r$ for all $x\ne y\in E$.
Given $X\subset M$, a set $E\subset X$ is said to be \emph{$(n,r)$-spanning} for $X$ if
$X\subset \bigcup_{x\in E} B_n(x,r)$.

Let $S_n\ph(x)=\sum_{k=0}^{n-1} \ph(f^k x)$ denote the $n$th \emph{Birkhoff sum} along the orbit of $x$.  The \emph{partition sums} of $\ph$ on a set $X\subset M$ are the following quantities:
\begin{equation}\label{eqn:part-sum}
\begin{aligned}
\Zspan_n(X,\ph,r) &:= \inf \Big\{ \sum_{x\in E} e^{S_n\ph(x)} : E\subset X
\text{ is $(n,r)$-spanning for }X\Big\}, \\
\Zsep_n(X,\ph,r) &:= \sup \Big\{\sum_{x\in E} e^{S_n\ph(x)} : E\subset X \text{
is $(n,r)$-separated}\Big\}.
\end{aligned}
\end{equation}
The \emph{topological pressure} of $\ph$ on $X=\L$ is given by \begin{equation}\label{eqn:pressure}
P(\ph) = \lim_{r\to 0} \ulim_{n\to\infty} \frac 1n \log \Zspan_n(\Lambda,\ph,r)
= \lim_{r\to 0} \ulim_{n\to\infty} \frac 1n \log\Zsep_n(\Lambda,\ph,r);
\end{equation}
see \cite[Theorem 9.4]{pW82} for a proof that the limits are equal, and that one gets the same value if $\ulim$ is replaced by $\llim$.

Denote by $\mathcal{M}(f)$ the set of $f$-invariant Borel probability measures on $\L$. 
The \emph{variational principle} \cite[Theorem 9.10]{pW82} establishes that 
\begin{equation}\label{eqn:var}
P(\varphi)=  \sup_{\mu\in\mathcal{M}(f)}  \left\{  h_{\mu}(f) + \int \varphi\, d\mu   \right\}.
\end{equation}
We call a measure $\mu\in\mathcal{M}(f)$ an \emph{equilibrium measure} for $\varphi$ if it achieves the supremum in \eqref{eqn:var}. (Such a measure is also often referred to as an \emph{equilibrium state}.)

We say that a measure $\mu\in\mathcal{M}(f)$ is a \emph{Gibbs measure} (or that $\mu$ has the \emph{Gibbs property}) with respect to $\ph$ if for every small $r>0$ there is $Q=Q(r)>0$ such that for every $x\in \L$ and $n\in \NN$, we have
\begin{equation}\label{gibbs2}
Q^{-1}\leq \frac{\mu(B_n(x,r))}{\exp(-P(\varphi)n+S_n\ph(x))}\leq Q.
\end{equation}
A straightforward computation with partition sums shows that every Gibbs measure for $\ph$ is an equilibrium measure for $\ph$; however, the converse is not true in general, and there are examples of systems and potentials with equilibrium measures that do not satisfy the Gibbs property.

\section{Carath\'eodory dimension structure}\label{car-struc} 

We recall the \emph{Carath\'eodory dimension construction} described in \cite[\S10]{pes97}, which generalizes the definition of Hausdorff dimension and measure.

\subsection{Carath\'eodory dimension and measure} A \emph{Carath\'eodory dimension structure}, or \emph{$C$-structure}, on a set $X$ is given by the following data.
\begin{enumerate}[label=(\arabic{*})]
\item An indexed collection of subsets of $X$, denoted 
$\mathcal{F}=\{U_s\colon s\in\mathcal{S}\}$.
\item Functions $\xi,\eta,\psi\colon \mathcal{S}\to[0,\infty)$ satisfying the following conditions:
\begin{enumerate}[label=\upshape{(H\arabic{*})}]
\item\label{H1} if $U_s=\emptyset$, then $\eta(s)=\psi(s)=0$; if $U_s\ne\emptyset$, then 
$\eta(s)>0$ and $\psi(s)>0$;\footnote{In \cite{pes97}, Condition \ref{H1} includes the requirement that there is $s_0\in\mathcal{S}$ such that $U_{s_0} = \emptyset$, but this can safely be omitted as long as we define $m_C(\emptyset,\alpha)=0$, since we can always formally enlarge our collection by adding the empty set, without changing any of the definitions below.}
\item\label{H2} for any $\delta>0$ one can find $\varepsilon>0$ such that $\eta(s)\leq\delta$ for any $s\in\mathcal{S}$ with $\psi(s)\le\varepsilon$;
\item\label{H3} for any $\eps>0$ there exists a finite or countable subcollection 
$\mathcal{G}\subset\mathcal{S}$ that covers $X$ (meaning that 
$\bigcup_{s\in\mathcal{G}}U_s\supset X$) and has 
$\psi(\mathcal{G}):=\sup\{\psi(s)\colon\in\mathcal{S}\}\le\eps$.
\end{enumerate}
\end{enumerate}
Note that no conditions are placed on $\xi$.

The $C$-structure $(\mathcal{S},\mathcal{F},\xi,\eta,\psi)$ determines a one-parameter family of outer measures on $X$ as follows. Fix a nonempty set $Z\subset X$ and consider some 
$\mathcal{G}\subset\mathcal{S}$ that covers $Z$ as in \ref{H3}. 
Interpreting $\psi(\mathcal{G})$ as the largest \emph{size} of sets in the cover, we can define for each $\alpha\in \RR$ an outer measure on $X$ by
\begin{equation}\label{eqn:mCZa}
m_C(Z,\alpha):=\lim_{\eps\to 0}\inf_{\mathcal{G}}\sum_{s\in\mathcal{G}}\xi(s)\eta(s)^\alpha,
\end{equation}
where the infimum is taken over all finite or countable $\mathcal{G}\subset\mathcal{S}$ covering $Z$ with $\psi(\mathcal{G})\le\eps$.  Defining $m_C(\emptyset,\alpha) := 0$, this gives an outer measure by \cite[Proposition 1.1]{pes97}.
The measure induced by $m_C(\cdot,\alpha)$ on the $\sigma$-algebra of measurable sets is the \emph{$\alpha$-Carath\'eodory measure}; it need not be $\sigma$-finite or non-trivial.

\begin{proposition}[{\cite[Proposition 1.2]{pes97}}]\label{prop:C-dim}
For any set $Z\subset X$ there exists a critical value $\alpha_C\in\mathbb{R}$ such that $m_C(Z,\alpha)=\infty$ for $\alpha<\alpha_C$ and $m_C(Z,\alpha)=0$ for 
$\alpha>\alpha_C$.
\end{proposition}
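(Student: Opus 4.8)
The plan is to run the standard ``critical exponent'' argument from dimension theory in the abstract setting of a $C$-structure. The entire proof reduces to one estimate, which I will call the \emph{jump lemma}: if $m_C(Z,\alpha)<\infty$ for some $\alpha\in\RR$, then $m_C(Z,\beta)=0$ for every $\beta>\alpha$. Granting this, I would set
\[
\alpha_C := \inf\{\alpha\in\RR : m_C(Z,\alpha)=0\}
\]
(with the conventions $\inf\emptyset=+\infty$ and $\inf\RR=-\infty$), and then read off the two required properties by elementary manipulations with this infimum, as described in the last paragraph.

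To prove the jump lemma, fix $\alpha$ with $m_C(Z,\alpha)<\infty$, fix $\beta>\alpha$, and let $\delta\in(0,1)$ be arbitrary. By \ref{H2} there is $\eps_\delta>0$ such that $\eta(s)\le\delta$ whenever $\psi(s)\le\eps_\delta$. Hence, for any $\eps\le\eps_\delta$ and any finite or countable cover $\mathcal{G}\subset\mathcal{S}$ of $Z$ with $\psi(\mathcal{G})\le\eps$ (we may take every $U_s$, $s\in\mathcal{G}$, nonempty, so that $0<\eta(s)\le\delta<1$), the factorization $\eta(s)^{\beta}=\eta(s)^{\alpha}\eta(s)^{\beta-\alpha}$ together with $\eta(s)^{\beta-\alpha}\le\delta^{\beta-\alpha}$ gives
\[
\sum_{s\in\mathcal{G}}\xi(s)\eta(s)^{\beta}\le\delta^{\beta-\alpha}\sum_{s\in\mathcal{G}}\xi(s)\eta(s)^{\alpha}.
\]
Taking the infimum over all such $\mathcal{G}$, and using that the inner infimum in \eqref{eqn:mCZa} is nondecreasing as $\eps$ decreases — so its value at our fixed $\eps$ is at most its limit $m_C(Z,\alpha)$ — we obtain $\inf_{\psi(\mathcal{G})\le\eps}\sum_{s\in\mathcal{G}}\xi(s)\eta(s)^{\beta}\le\delta^{\beta-\alpha}m_C(Z,\alpha)$ for every $\eps\le\eps_\delta$. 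Letting $\eps\to0$ gives $m_C(Z,\beta)\le\delta^{\beta-\alpha}m_C(Z,\alpha)$; since $\delta\in(0,1)$ was arbitrary, $\beta-\alpha>0$, and $m_C(Z,\alpha)<\infty$, letting $\delta\to0$ forces $m_C(Z,\beta)=0$.

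Finally I would deduce the dichotomy. If $\alpha>\alpha_C$, then by definition of the infimum there is $\alpha'$ with $\alpha_C\le\alpha'<\alpha$ and $m_C(Z,\alpha')=0<\infty$; the jump lemma then yields $m_C(Z,\alpha)=0$. If $\alpha<\alpha_C$, then $m_C(Z,\alpha)\ne0$ by the definition of $\alpha_C$, and $m_C(Z,\alpha)$ cannot be finite either, for otherwise the jump lemma would give $m_C(Z,\beta)=0$ for every $\beta$ in the nonempty interval $(\alpha,\alpha_C)$, contradicting the fact that $\alpha_C$ is a lower bound for $\{\alpha:m_C(Z,\alpha)=0\}$; hence $m_C(Z,\alpha)=\infty$, which is exactly what is claimed. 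The only point requiring genuine care is the order of limits in the jump lemma: one must fix $\delta$, send $\eps\to0$ first (this is where the monotonicity of the inner infimum and the finiteness hypothesis $m_C(Z,\alpha)<\infty$ enter), and only then send $\delta\to0$. Conditions \ref{H1} and \ref{H3} are used only to ensure that the objects in \eqref{eqn:mCZa} are well defined — \ref{H3} supplies covers of arbitrarily small size, and \ref{H1} keeps $\eta,\psi$ strictly positive on the nonempty sets of a cover; if one additionally insists on $\alpha_C\in\RR$ rather than merely $\alpha_C\in[-\infty,+\infty]$, some further boundedness hypothesis on $\xi$ (as in the setting of \cite{pes97}) is needed.
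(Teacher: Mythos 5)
Your proof is correct, and it is in fact the standard argument: the paper does not prove this proposition but simply cites \cite[Proposition~1.2]{pes97}, and the proof there is exactly the ``jump lemma'' you prove (if $m_C(Z,\alpha)<\infty$ then $m_C(Z,\beta)=0$ for $\beta>\alpha$), obtained from the factorization $\eta(s)^\beta=\eta(s)^\alpha\eta(s)^{\beta-\alpha}$ together with \ref{H2}, with the limits taken in the order you describe ($\eps\to0$ first, then $\delta\to0$). Your side remark about when one actually gets $\alpha_C\in\RR$ rather than $\alpha_C\in[-\infty,+\infty]$ is a fair observation; in the application in \S\ref{car.measure} the identification $\dim_C X=P(\ph)$ from Theorem~\ref{thm:finite}\ref{fin1} renders the question moot.
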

We call $\dim_CZ =\alpha_C$ the \emph{Carath\'eodory dimension} of the set $Z$ associated to the $C$-structure $(\mathcal{S},\mathcal{F},\xi,\eta,\psi)$. By Proposition \ref{prop:C-dim}, $\alpha=\dim_C X$ is the only value of $\alpha$ for which \eqref{eqn:mCZa} can possibly produce a non-zero finite measure on $X$, though it is still possible that $m_C(X,\dim_C X)$ is equal to $0$ or $\infty$.

\subsection{A $C$-structure on local unstable leaves}\label{car.measure} Given a potential 
$\ph$, a number $r>0$, and a point $x\in\L$, we define a $C$-structure on 
$X=\Vl^u(x)\cap\L$ in the following way.
For our index set we put $\mathcal{S}=X\times \NN$, and to each
$s = (x,n)\in X\times\NN$, we associate the $u$-Bowen ball
\begin{equation}\label{eqn:Bowen-u-ball}
U_s = B^u_n(x,r)=B_n(x,r)\cap \Vl^u(x);
\end{equation}
then $\mathcal{F}$ is the collection of all such balls.  Set 
\begin{equation}\label{eqn:PC2}
\xi(x,n)=e^{S_n\ph(x)}, \quad \eta(x,n)=e^{-n}, \quad \psi(x,n)=\tfrac 1n.
\end{equation}
It is easy to see that $(\mathcal{S},\mathcal{F},\xi,\eta,\psi)$ satisfies \ref{H1}--\ref{H3} and defines a $C$-structure, whose associated outer measure is given by
\begin{equation}\label{eqn:mCP2}
m_C(Z,\alpha)=\lim_{N\to\infty}\inf_\mathcal{G}\sum_{(x,n)\in\mathcal{G}}e^{S_n\ph(x)} 
e^{-n\alpha},
\end{equation}
where the infimum is taken over all $\mathcal{G} \subset \mathcal{S}$ such that 
$\bigcup_{(x,n)\in\mathcal{G}}B^u_n(x,r)\supset Z$ and $n\geq N$ for all 
$(x,n)\in\mathcal{G}$.

Given $x\in \Lambda$ and the corresponding $X = \Vl^u(x) \cap \L$, we are interested in computing 
\begin{enumerate}
\item the Carath\'eodory dimension of $X$, as determined by this $C$-structure; 
\item the (outer) measure on $X$ defined by \eqref{eqn:mCP2} at $\alpha=\dim_CX$.
\end{enumerate}
We settle the first problem in Theorem  \ref{thm:finite} below in which we prove (among other things) that for small $r$, under the assumptions \ref{C1}--\ref{C3} on the map $f$ and some regularity assumptions on the potential function $\varphi$ (see Section \ref{assumptions}), the $C$-structure defined on $X=\Vl^u(x)\cap\L$ as above satisfies $\dim_CX= P(\ph)$ for every $x\in\L$.
This allows us to consider the outer measure on $X$ given by 
\begin{equation}\label{car0}
m_x^\C(Z):=m_C(Z,P(\ph))=\lim_{N\to\infty}\inf\sum_ie^{-n_iP(\ph)}e^{S_{n_i}\varphi(x_i)},
\end{equation}
where the infimum is taken over all collections $\{B^u_{n_i}(x_i,r)\}$ of $u$-Bowen balls with $x_i\in \Vl^u(x)\cap\L$, $n_i\ge N$, which cover $Z$; for convenience we write $\C=(\ph,r)$ to keep track of the data on which the reference measure depends. We use the same notation $m_x^\C$ for the corresponding Carath\'eodory measure on $X$  obtained by restricting to the $\sigma$-algebra of $m_x^\C$-measurable sets.

One must do some work to show that this outer measure is finite and nonzero; we do this in \S\S\ref{sec:Borel}--\ref{sec:finite}. We also show that this measure is Borel; that is, that every Borel set is $m_x^\C$-measurable.

\section{Main results}\label{sec:main-results}

\subsection{Assumptions on the map and the potential}\label{assumptions}

As in \S\ref{def}, let $f\colon U\to M$ be a diffeomorphism onto its image, where $M$ is a compact smooth Riemannian manifold and $U\subset M$ is open, and suppose that $\L\subset U$ is a compact $f$-invariant set on which $f$ is partially hyperbolic in the broad sense, with $T_\L M = \Ecs \oplus E^u$.  Suppose moreover that \ref{C1} and \ref{C2} are satisfied, so that $\Ecs$ is Lyapunov stable and $f|\L$ is topologically transitive.  Finally, suppose that the local product structure condition \ref{C3} is satisfied. 

Let $\tau>0$ be the size of the local manifolds in Proposition \ref{prop:local-mfds} and Theorem \ref{thm:Wcs}.
A potential function $\ph\colon \L\to\RR$ is said to have the \emph{$u$-Bowen property} if there exists $Q_u>0$ such that for every $x\in \L$, $n\geq 0$, and $y\in B_n^u(x,\tau) \cap \L$, we have $|S_n \ph(x) - S_n\ph(y)| \leq Q_u$.  Similarly, we say that $\ph$ has the \emph{$cs$-Bowen property}
if there exist $Q_\cs>0$ and $r_0'>0$ such that for every $x\in \L$, $n\geq 0$, and $y\in B_\L^\cs(x,r_0')$, we have $|S_n \ph(x) - S_n\ph(y)| \leq Q_\cs$.  Let $\CB(\L)$ be the set of all functions $\ph\colon \L\to \RR$ that satisfy both the $u$- and $cs$-Bowen properties.

\begin{remark}\label{rmk:unif-hyp-2}
As mentioned in Remark \ref{rmk:unif-hyp}, all of the conditions in the first paragraph above are satisfied if $\L$ is a transitive locally maximal hyperbolic set for $f$.  Moreover, in this case it follows from \cite[Lemma 6.6]{CPZ} that $\CB(\L)$ contains every H\"older continuous potential function.
\end{remark}

\subsection{Statements of main results} 

From now on we fix $\L$, $f$, and $\ph\in \CB(\L)$ as described above.
Our first result, which we prove in \S\ref{sec:measures}, shows that the measure $m_x^\C$ defined in \eqref{car0} is finite and nonzero.

\begin{theorem}\label{thm:finite}
Fix $0<r<\tau/3$.  There is $K>0$ such that for every $x\in\Lambda$, the following are true.
\begin{enumerate}[label=\upshape{(\arabic{*})}]
\item\label{fin1} For the $C$-structure defined on $X=\Vl^u(x)\cap\L$ by $u$-Bowen balls $B^u_n(x,r)$ and \eqref{eqn:PC2}, we have $\dim_CX= P(\ph)$ for every $x\in\L$. 
\item\label{fin2} $m_x^\C$ is a Borel measure on $X:=\Vl^u(x)\cap\Lambda$.
\item\label{fin3} $m_x^\C(\Vl^u(x)\cap \Lambda)\in [K^{-1},K]$.
\item\label{fin4} If $\Vl^u(x)\cap\Vl^u(y)\cap\Lambda\ne\emptyset$, then $m_x^\C$ and $m_y^\C$ agree on the intersection.
\end{enumerate}
\end{theorem}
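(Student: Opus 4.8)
\medskip
\noindent\emph{Proof strategy.} The plan is to treat the four assertions in the order \ref{fin4}, \ref{fin2}, \ref{fin1}, \ref{fin3}, as the first two are comparatively soft. For \ref{fin4}, if $\Vl^u(x)$ and $\Vl^u(y)$ share a point of $\L$, then by the standard uniqueness of local unstable manifolds these two $C^{1+\a}$ discs coincide on a relative neighborhood of that point; since $r<\tau/3$, every $u$-Bowen ball $B_n^u(w,r)$ with $w$ in the common part has diameter $\le 2r<\tau$ and is therefore the same set whether computed in $\Vl^u(x)$ or in $\Vl^u(y)$, while its weight $e^{S_n\ph(w)}$ does not depend on the leaf. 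Restricting the covers in \eqref{car0} to balls meeting the set $Z\subset\Vl^u(x)\cap\Vl^u(y)\cap\L$ being measured shows that $m_x^\C$ and $m_y^\C$ agree on $Z$. For \ref{fin2} I would verify that $m_x^\C$ is a \emph{metric} outer measure and invoke Carath\'eodory's criterion. The geometric input is that $u$-Bowen balls shrink: by the uniform contraction of $f^{-1}$ along unstable leaves (Proposition~\ref{prop:local-mfds}\ref{leaves-contract}) one has $\diam B_n^u(w,r)\le 2C_1\lambda^{\,n-1}r$, so if $d(A,B)=\rho>0$ and $N$ is chosen with $2C_1\lambda^{N-1}r<\rho$, then no ball of generation $\ge N$ in an admissible cover of $A\cup B$ meets both $A$ and $B$; splitting such a cover gives $m_x^\C(A\cup B)\ge m_x^\C(A)+m_x^\C(B)$, and the reverse inequality is subadditivity.

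For \ref{fin1} and \ref{fin3}, set $X=\Vl^u(x)\cap\L$ and introduce the \emph{leafwise partition sums}
\begin{equation}\label{eqn:leaf-sums}
\Gamma_n^u(x,r)=\inf_{E}\sum_{y\in E}e^{S_n\ph(y)},\qquad
\Lambda_n^u(x,r)=\sup_{E}\sum_{y\in E}e^{S_n\ph(y)},
\end{equation}
the infimum over $(n,r)$-spanning subsets $E$ of $X$ and the supremum over $(n,r)$-separated ones. Since $X\subset\Vl^u(x)$, an $(n,r)$-spanning subset of $X$ yields a single-generation cover of $X$ by $u$-Bowen balls; plugging a near-optimal such cover into \eqref{car0} gives $m_x^\C(X)\le\Gamma_N^u(x,r)e^{-NP(\ph)}$ for every $N$, and running the same estimate at general $\a$ gives $\dim_CX\le\limsup_n\tfrac1n\log\Gamma_n^u(x,r)$. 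This last quantity is $\le P(\ph)$ because $\Gamma_n^u(x,r)\le\Lambda_n^u(x,r)\le\Zsep_n(\L,\ph,r)$ (a maximal $(n,r)$-separated subset of $X$ is $(n,r)$-spanning for $X$, and any $(n,r)$-separated subset of $X$ is one in $\L$) together with the definition of $P(\ph)$; thus $\dim_CX\le P(\ph)$ already. For all the \emph{lower} bounds I would use the mass distribution principle: it suffices to construct, for each $x$, a Borel probability measure $\mu_x$ on $X$ and a constant $C_1$ \emph{uniform in $x$} with
\begin{equation}\label{eqn:mdp}
\mu_x\big(B_n^u(y,r)\big)\le C_1\,e^{S_n\ph(y)-nP(\ph)}\qquad\text{for all }y\in X,\ n\ge 1,
\end{equation}
since then every cover of $Z\subset X$ by $u$-Bowen balls $\{B_{n_i}^u(x_i,r)\}$ satisfies $\mu_x(Z)\le C_1\sum_i e^{S_{n_i}\ph(x_i)-n_iP(\ph)}$, whence $m_x^\C(X)\ge C_1^{-1}$ and $m_C(X,\a)=\infty$ for $\a<P(\ph)$, giving $\dim_CX\ge P(\ph)$.

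\medskip
\noindent\emph{The main obstacle} is the two-sided comparison
\begin{equation}\label{eqn:asymp}
C_0^{-1}\,e^{nP(\ph)}\ \le\ \Gamma_n^u(x,r)\ \le\ \Lambda_n^u(x,r)\ \le\ C_0\,e^{nP(\ph)}\qquad(n\ge 1),
\end{equation}
with $C_0$ uniform in $x$: both the uniform upper bound in \ref{fin3} and the construction of $\mu_x$ rest on it. The middle inequality is trivial. The right-hand inequality reduces, via $\Lambda_n^u(x,r)\le\Zsep_n(\L,\ph,r)$, to $\Zsep_n(\L,\ph,r)\le C_0e^{nP(\ph)}$; since we only assume the $u$- and $cs$-Bowen properties (not specification on $\L$), I would prove the requisite almost-supermultiplicativity of the partition sums by hand, gluing orbit segments along unstable leaves using the local product structure \ref{C3} and the Bowen estimates on $\Ecs$- and $E^u$-holonomies. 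The left-hand inequality is proved by the same mechanism together with topological transitivity: using \ref{C2}, \ref{C1} and the uniform expansion of $E^u$, one shows that the forward iterates $f^N(X)$ become $r$-dense in $\L$ once $N\ge N_0$, uniformly in $x$, as in \cite{PS82}; a near-optimal $(n,r)$-separated subset of $\L$ can then be transported into $f^N(X)$ and pulled back to $X$ with $S_n\ph$ distorted by at most $Q_u$ via the $u$-Bowen property, yielding $\Lambda_n^u(x,r)\gtrsim e^{nP(\ph)}$ uniformly. Establishing this specification-type gluing together with the density of iterated unstable leaves — and hence \eqref{eqn:asymp} — is where essentially all of the work in \S\ref{sec:measures} lies.

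\medskip
\noindent Granting \eqref{eqn:asymp}, the rest is assembly. Part \ref{fin1} is complete: the upper bound was shown above, the lower bound follows from \eqref{eqn:mdp}. For \ref{fin3}, the upper bound is $m_x^\C(X)\le\Gamma_N^u(x,r)e^{-NP(\ph)}\le C_0$. For the lower bound I would take $\mu_x$ to be a weak$^*$ limit point of $\mu_n:=\Sigma_n^{-1}\sum_{y\in E_n}e^{S_n\ph(y)}\delta_y$, where $E_n\subset X$ is a near-maximal $(n,3r)$-separated set and $\Sigma_n=\sum_{y\in E_n}e^{S_n\ph(y)}\asymp e^{nP(\ph)}$ by \eqref{eqn:asymp}; the estimate \eqref{eqn:mdp} for the $\mu_m$ with $m\ge n$ follows from the standard count of points of an $(m,3r)$-separated set inside a Bowen ball $B_n^u(y,r)$, whose weighted sum is $\lesssim e^{S_n\ph(y)}\Zsep_{m-n}(\L,\ph,r)\lesssim e^{S_n\ph(y)}e^{(m-n)P(\ph)}$, so $\mu_m(B_n^u(y,r))\lesssim e^{S_n\ph(y)-nP(\ph)}$; a short argument with a slightly enlarged radius, using \ref{fin2}, passes this to the weak$^*$ limit. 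Finally $m_x^\C$ is finite and nonzero by \ref{fin3}, completing the proof of Theorem~\ref{thm:finite}.
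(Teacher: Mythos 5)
Your decomposition into the four statements and your treatment of \ref{fin4} (uniqueness of local unstable manifolds together with the leaf-independence of the weights $e^{S_n\ph}$) and \ref{fin2} (metric outer measure via Carath\'eodory's criterion, using that $u$-Bowen balls shrink along unstable leaves) match the paper's arguments, as does the upper bound in \ref{fin1} and \ref{fin3} via single-generation spanning covers. You rightly identify the uniform two-sided estimate $C_0^{-1}e^{nP(\ph)}\le\Zsep_n(B_\L^u(x,r_1),\ph,r_2)\le C_0e^{nP(\ph)}$---the paper's Proposition~\ref{prop:uniform}---as the crux of the whole theorem, but your sketch of it has two misdirections that would need repair. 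First, reducing the upper bound to a uniform \emph{global} estimate $\Zsep_n(\L,\ph,r)\le C_0e^{nP(\ph)}$ is not a simplification: the paper never proves or uses this, and such a bound, uniform in $n$, is (via $cs$-holonomies and the Bowen properties) of a difficulty comparable to the leafwise bound you are after. The paper instead proves the leafwise estimate directly: the lower bound comes from the submultiplicativity in Lemma~\ref{lem:submult} combined with the correct growth rate in Lemma~\ref{lem:P-on-W}, while the upper bound comes from an inductive construction at the end of the proof of Proposition~\ref{prop:uniform} that manufactures long separated sets inside a single unstable ball from transported copies of a given one. Second, your density claim---that $f^N(X)$ is $r$-dense in $\L$ for all $N\ge N_0$---is stronger than what transitivity \ref{C2} actually gives; Lemma~\ref{lem:leaf-iterates} produces only that for each target $y$ there is \emph{some} $k\le n_1$, with $n_1$ uniform, such that $f^k(B_\L^u(x,\delta))\cap B^\cs(y,\delta)\ne\emptyset$, and it is this weaker statement that feeds into Lemma~\ref{lem:comparison} and the rest of \S\ref{sec:measures}.

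Separately, your mass distribution principle route for the lower bounds in \ref{fin1} and \ref{fin3}---constructing for each $x$ an auxiliary probability measure $\mu_x$ on $X$ satisfying a Frostman-type bound $\mu_x(B_n^u(y,r))\lesssim e^{S_n\ph(y)-nP(\ph)}$ and applying it to an arbitrary admissible cover---is a legitimate alternative to what the paper does. The paper instead takes an arbitrary admissible cover of $B_\L^u(x,r_1/2)$ by $u$-Bowen balls, fills each ball with a maximal $(N,r)$-separated set (bounded above via Lemma~\ref{lem:sep-in-Bn}), observes that the union $(N,r)$-spans $B_\L^u(x,r_1/2)$ so that its weighted sum is bounded below by Proposition~\ref{prop:uniform}, and divides. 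Both routes rest on precisely the same uniformity input; yours trades the direct counting argument for a weak$^*$ compactness step (which does go through via the portmanteau inequality for relatively open $u$-Bowen balls), and is a conceptually attractive variant once Proposition~\ref{prop:uniform} is in hand.
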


\begin{definition}\label{def:u-gibbs}
Consider a family of measures $\{\mu_x : x\in \Lambda\}$ such that $\mu_x$ is supported on $\Vl^u(x)$. We say that this family has the \emph{$u$-Gibbs property}\footnote{Note that this is a different notion than the idea of \emph{$u$-Gibbs state} from \cite{PS82}.} with respect to the potential function $\ph\colon \Lambda\to \RR$ if there is $Q_0=Q_0(r)>0$ such that for all $x\in \Lambda$ and $n\in \NN$, we have
\begin{equation}\label{eqn:u-gibbs}
Q_0^{-1} \leq \frac{\mu_x(B_n^u(x,r))}{e^{-nP(\ph) + S_n\ph(x)}} \leq Q_0.
\end{equation}
\end{definition}

The following two results are proved in \S\ref{sec:scaling}: the first establishes the scaling properties of the measures $m_x^\C$ under iteration by $f$, which then leads to the $u$-Gibbs property. 
\begin{theorem}\label{thm:Gibbs}
For every $x\in\Lambda$, we have $f^*m_{f(x)}^\C:= m_{f(x)}^\C\circ f\ll m_x^\C$, with Radon--Nikodym derivative $e^{P(\ph)-\ph}$.   
\end{theorem}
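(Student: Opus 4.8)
\textbf{Proof proposal for Theorem \ref{thm:Gibbs}.}

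The plan is to unwind the definition \eqref{car0} of $m_x^\C$ and track how a cover of a set $Z \subset \Vl^u(f(x)) \cap \L$ by $u$-Bowen balls corresponds, after applying $f^{-1}$, to a cover of $f^{-1}Z$. The key observation is a compatibility between $u$-Bowen balls under the dynamics: since $f$ sends $\Vl^u(x)$ into $\Vl^u(f(x))$ (up to taking $\tau$ small and staying in the local-manifold regime, using Proposition \ref{prop:local-mfds}\ref{leaves-contract}), one has $f(B^u_{n+1}(x,r)) = B^u_n(f(x),r) \cap f(\Vl^u(x))$, so that $u$-Bowen balls of order $n+1$ around points of $\Vl^u(x)$ map exactly to $u$-Bowen balls of order $n$ around their images in $\Vl^u(f(x))$. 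Thus a cover $\{B^u_{n_i}(y_i,r)\}$ of $Z$ with $y_i \in \Vl^u(f(x)) \cap \L$ and $n_i \geq N$ pulls back to the cover $\{B^u_{n_i+1}(f^{-1}y_i,r)\}$ of $f^{-1}Z$, with $f^{-1}y_i \in \Vl^u(x) \cap \L$ and $n_i + 1 \geq N+1$, and conversely any cover of $f^{-1}Z$ by Bowen balls of order $\geq N+1$ arises this way (after possibly discarding the irrelevant order-$N$-or-less pieces, which we may do since $N \to \infty$).

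Next I would compare the weights. For a ball $B^u_{n+1}(f^{-1}y, r)$ appearing in the pulled-back cover, the Carath\'eodory weight is
\begin{equation*}
e^{-(n+1)P(\ph)} e^{S_{n+1}\ph(f^{-1}y)} = e^{-P(\ph)} e^{\ph(f^{-1}y)} \cdot e^{-nP(\ph)} e^{S_n\ph(y)},
\end{equation*}
using the cocycle identity $S_{n+1}\ph(f^{-1}y) = \ph(f^{-1}y) + S_n\ph(y)$. The factor $e^{-P(\ph)} e^{\ph(f^{-1}y)}$ is precisely the value at $y$ of the proposed Radon--Nikodym derivative $e^{P(\ph) - \ph}$ composed appropriately with $f$; more precisely, one should check that $f^* m_{f(x)}^\C(Z) = m_{f(x)}^\C(f(Z))$ for $Z \subset \Vl^u(x)\cap\L$, and that this equals $\int_Z e^{P(\ph) - \ph}\, dm_x^\C$. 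Taking infima over covers and letting $N \to \infty$ converts the multiplicative relation between weights into the measure identity, provided the extra factor can be pulled out of the infimum. Since $\ph$ is continuous on the compact set $\L$, on any fixed Bowen ball of small radius $r$ the oscillation of $e^{-P(\ph)+\ph}$ is controlled, so one gets the identity up to a factor $(1 + o(1))$ as $r$ shrinks within each ball; to get the exact Radon--Nikodym derivative one works with a fixed $r$ but notes that the weight attached to $B^u_{n+1}(f^{-1}y,r)$ differs from $e^{-P(\ph)+\ph(f^{-1}y)}$ times the weight of $B^u_n(y,r)$ by \emph{exactly} the factor $e^{-P(\ph)+\ph(f^{-1}y)}$ — there is no error term at all in this step, because the cocycle identity is exact. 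Hence one obtains directly that for every Borel $Z \subset \Vl^u(x)\cap\L$,
\begin{equation*}
m_{f(x)}^\C(f(Z)) = \int_Z e^{P(\ph) - \ph(z)} \, dm_x^\C(z),
\end{equation*}
which is exactly the assertion that $f^* m_{f(x)}^\C \ll m_x^\C$ with derivative $e^{P(\ph)-\ph}$.

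The main obstacle I anticipate is not the algebra of weights — that is exact and clean — but the geometric bookkeeping needed to justify the claimed bijection between covers: one must be careful that $f$ maps $\Vl^u(x)$ into (a neighborhood of) $\Vl^u(f(x))$ so that the relevant $u$-Bowen balls genuinely correspond, that the radius $r < \tau/3$ is small enough that this stays within the regime where Proposition \ref{prop:local-mfds} and Theorem \ref{thm:Wcs} apply uniformly, and that the restriction $n_i \geq N$ in the definition interacts correctly with the shift $n \mapsto n+1$ (this is harmless since we take $N\to\infty$). One should also invoke Theorem \ref{thm:finite}\ref{fin1} to know that $P(\ph)$ is the correct exponent $\dim_C$ on \emph{both} leaves $\Vl^u(x)\cap\L$ and $\Vl^u(f(x))\cap\L$, so that $m_x^\C$ and $m_{f(x)}^\C$ are the genuine critical Carath\'eodory measures and the comparison is between the right objects; Theorem \ref{thm:finite}\ref{fin3} then guarantees both are finite and nonzero, ruling out degenerate situations. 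Finally, one should remark that the same argument with $x$ replaced by $f^{-1}x$ (or by iterating) gives the scaling of $m_x^\C$ under $f^n$, which is what feeds into the subsequent derivation of the $u$-Gibbs property.
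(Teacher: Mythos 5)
You have the right geometric picture and the right algebra at the level of individual balls: $f$ carries $B^u_{n+1}(z,r)$ exactly onto $B^u_n(f(z),r)$, and the cocycle identity $S_{n+1}\ph(z) = \ph(z) + S_n\ph(f(z))$ converts the Carath\'eodory weight of $B^u_{n+1}(z,r)$ into $e^{\ph(z)-P(\ph)}$ times the weight of $B^u_n(f(z),r)$, with no error. The bookkeeping of cover orders ($n\mapsto n+1$, $N\to\infty$) is also harmless, as you say. All of that matches the paper.

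The gap is in the jump from the exact per-ball relation to the measure identity. When you pull back a cover $\{B^u_{n_i}(y_i,r)\}$ of $f(Z)$ to a cover $\{B^u_{n_i+1}(z_i,r)\}$ of $Z$ with $z_i = f^{-1}(y_i)$, the total weight transforms as
\begin{equation*}
\sum_i e^{-n_i P(\ph)} e^{S_{n_i}\ph(y_i)} \;=\; \sum_i e^{P(\ph)-\ph(z_i)}\, e^{-(n_i+1)P(\ph)} e^{S_{n_i+1}\ph(z_i)},
\end{equation*}
and the factor $e^{P(\ph)-\ph(z_i)}$ is \emph{different for each $i$}. So it cannot be pulled out of the infimum, and the resulting infimum is not, on its face, either $e^{P(\ph)-\ph}\cdot m_x^\C(Z)$ or $\int_Z e^{P(\ph)-\ph}\,dm_x^\C$; the exactness of the cocycle identity does not help because the obstruction is across balls, not within a single ball. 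Your sentence about oscillation ``within each ball'' misses the point: the issue is that the reference points $z_i$ range over all of $Z$ (and slightly beyond). Note also that you cannot let $r\to 0$, since the whole construction fixes $r$ once and for all.

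To close the gap, you need precisely the simple-function approximation that the paper carries out: decompose $Z$ (or $f^{-1}(A)$ in the paper's notation) into level sets $E_i^T = \{z : a_i^T \leq e^{P(\ph)-\ph(z)} < a_{i+1}^T\}$ so that the factor is nearly constant on each $E_i^T$, and then use uniform continuity of $\ph$ together with $\diam B^u_N(y,r) \to 0$ (the quantity $R_N$ in the paper) to ensure that the reference points $z_i$ of a cover of $E_i^T$ of order $\geq N$ have $\ph(z_i)$ within $R_N$ of the values taken on $E_i^T$. Taking $T\to\infty$ and $N\to\infty$ then produces the two inequalities $m_{f(x)}^\C(A) \lessgtr \int_{f^{-1}(A)} e^{P(\ph)-\ph}\,dm_x^\C$. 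Once that is inserted, your argument coincides with the paper's.
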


In the uniformly hyperbolic setting a similar result was obtained by Leplaideur in \cite{Lep}.

\begin{corollary}\label{cor:Gibbs}
The family of measures 
$\{m_x^\C\}_{x\in \Lambda}$ has the $u$-Gibbs property. In particular, for every relatively open $U\subset \Vl^u(x) \cap \L$, we have $m_x^\C(U)>0$.
\end{corollary}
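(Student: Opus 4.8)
The plan is to transport the $u$-Bowen ball $B_n^u(x,r)$ forward under $f^{n-1}$, use the scaling law of Theorem~\ref{thm:Gibbs} to see how $m_x^\C$ changes, replace $S_{n-1}\ph$ by $S_{n-1}\ph(x)$ using the $u$-Bowen property of $\ph$, and then pin down the mass of the transported ball by trapping it between positive constants coming from Theorem~\ref{thm:finite}\ref{fin3}. Iterating Theorem~\ref{thm:Gibbs} gives, for any $A\subset\Vl^u(x)$ small enough that $f^k(A)\subset\Vl^u(f^k(x))$ for $0\le k\le m$,
\[
m_{f^m(x)}^\C\big(f^m(A)\big)=\int_A e^{mP(\ph)-S_m\ph}\,dm_x^\C .
\]
Applying this with $m=n-1$ and $A=B_n^u(x,r)$ — legitimate for $r$ small, since then $f^k(B_n^u(x,r))\subset B^u(f^k(x),r)\subset\Vl^u(f^k(x))$ for $0\le k\le n-1$ — and using $|S_{n-1}\ph(y)-S_{n-1}\ph(x)|\le Q_u+2\|\ph\|_\infty$ for $y\in B_n^u(x,r)$ (from the $u$-Bowen property), one gets
\[
m_{f^{n-1}(x)}^\C\big(f^{n-1}(B_n^u(x,r))\big)\asymp e^{(n-1)P(\ph)-S_{n-1}\ph(x)}\,m_x^\C(B_n^u(x,r)),
\]
with implied constants depending only on $Q_u$ and $\|\ph\|_\infty$. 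Since the ratio of $e^{(n-1)P(\ph)-S_{n-1}\ph(x)}$ to $e^{nP(\ph)-S_n\ph(x)}$ equals $e^{P(\ph)-\ph(f^{n-1}(x))}$ and is therefore bounded above and below, \eqref{eqn:u-gibbs} follows once $m_{f^{n-1}(x)}^\C(f^{n-1}(B_n^u(x,r)))$ is bounded above and below by positive constants independent of $x,n$ (the cases $n=0,1$ being immediate from $m_x^\C(\Vl^u(x)\cap\L)\in[K^{-1},K]$).

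The upper bound is immediate: $f^{n-1}(B_n^u(x,r))\subset\Vl^u(f^{n-1}(x))$, so its $m_{f^{n-1}(x)}^\C$-mass is at most $K$. For the lower bound, put $\rho_0:=r/(2C_1)$ with $C_1,\lambda$ as in Proposition~\ref{prop:local-mfds}. If $w\in B^u(f^{n-1}(x),\rho_0)\cap\L$, then by Proposition~\ref{prop:local-mfds}\ref{leaves-contract} the iterates $f^{-j}(w)$ lie on $\Vl^u(f^{n-1-j}(x))$ and satisfy $d(f^{n-1-j}(x),f^{-j}(w))\le C_1\lambda^j\rho_0<r$ for $0\le j\le n-1$, so $f^{-(n-1)}(w)\in B_n^u(x,r)\cap\L$; hence $f^{n-1}(B_n^u(x,r))\supset B^u(f^{n-1}(x),\rho_0)\cap\L$. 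Thus everything reduces to the uniform estimate: \emph{there is $c>0$ with $m_y^\C(B^u(y,\rho_0)\cap\L)\ge c$ for all $y\in\L$.}

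To prove this I would run the same transport idea in reverse over a \emph{fixed} number of steps. Let $D:=\sup_{z\in\L}\diam(\Vl^u(z)\cap\L)<\infty$ (finite since local unstable manifolds have uniform size $\tau$ and uniform $C^{1+\alpha}$ bounds), and fix $n_1\in\NN$ with $C_1\lambda^{n_1}D<\rho_0$; here $n_1$ depends only on $C_1,\lambda,\rho_0,D$. Given $y\in\L$, set $z:=f^{-n_1}(y)\in\L$. Since $\Vl^u(y)\subset f^{n_1}(\Vl^u(z))$, the set $A:=f^{-n_1}(\Vl^u(y)\cap\L)$ is contained in $\Vl^u(z)\cap\L$, and Proposition~\ref{prop:local-mfds}\ref{leaves-contract} gives $d(z,a)\le C_1\lambda^{n_1}D<\rho_0$ for every $a\in A$, so $A\subset B^u(z,\rho_0)\cap\L$. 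Applying the iterated scaling law with $m=n_1$ and this $A$, and using $|S_{n_1}\ph|\le n_1\|\ph\|_\infty$,
\[
m_y^\C(\Vl^u(y)\cap\L)=\int_A e^{n_1P(\ph)-S_{n_1}\ph}\,dm_z^\C\le e^{n_1(|P(\ph)|+\|\ph\|_\infty)}\,m_z^\C(B^u(z,\rho_0)\cap\L).
\]
Since $m_y^\C(\Vl^u(y)\cap\L)\ge K^{-1}$ by Theorem~\ref{thm:finite}\ref{fin3}, and $z$ ranges over all of $\L$ as $y$ does, this gives the estimate with $c:=K^{-1}e^{-n_1(|P(\ph)|+\|\ph\|_\infty)}$. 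Finally, the last assertion follows from the $u$-Gibbs property just proved: given nonempty relatively open $U\subset\Vl^u(x)\cap\L$, choose $z\in U$ and $\rho>0$ with $B^u(z,\rho)\cap\L\subset U$; for $n$ large one has $B_n^u(z,r)\cap\L\subset B^u(z,\rho)\cap\L$ because $f$ uniformly expands distances on local unstable manifolds (Proposition~\ref{prop:local-mfds}\ref{leaves-contract}), so by Theorem~\ref{thm:finite}\ref{fin4} and the lower $u$-Gibbs bound, $m_x^\C(U)\ge m_x^\C(B_n^u(z,r)\cap\L)=m_z^\C(B_n^u(z,r)\cap\L)\ge Q_0^{-1}e^{-nP(\ph)+S_n\ph(z)}>0$.

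The main obstacle is the uniform lower bound $m_y^\C(B^u(y,\rho_0)\cap\L)\ge c$: Theorems~\ref{thm:finite} and \ref{thm:Gibbs} directly control only whole local unstable leaves and the scaling of $m^\C$ under $f$, not the mass of a small ball. The key point is that a $\rho_0$-ball can be realized as the $f^{-n_1}$-image of an entire local unstable leaf for a number of steps $n_1$ that is fixed (independent of $y$), so that the scaling law, whose distortion over $n_1$ iterates is bounded, transfers the uniform lower bound for whole leaves down to small balls. The remaining points — the relation between forward Bowen balls $B_n^u(x,r)$ and genuine balls $B^u(f^k(x),\cdot)$ under iteration, and the degenerate cases $n=0,1$ — are routine once $r$ is chosen sufficiently small.
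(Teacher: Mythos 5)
Your proof is correct and is essentially the paper's argument, differing only in how the lower bound is packaged. Both proofs rest on the iterated scaling law from Theorem~\ref{thm:Gibbs} plus the two-sided bound $m_y^\C(\Vl^u(y)\cap\L)\in[K^{-1},K]$. For the upper bound you and the paper do the same thing. For the lower bound the paper is more direct: it picks a \emph{fixed} $k$ with $\delta\lambda^{-k}>\tau$ so that $f^{-(n+k)}(\Vl^u(f^{n+k}(x)))\subset B_n^u(x,\delta)$, then applies the $(n+k)$-fold scaling law once, getting $K^{-1}\le m_{f^{n+k}(x)}^\C(\Vl^u(f^{n+k}(x)))\le e^{Q_u}e^{k(P(\ph)+\|\ph\|)}e^{nP(\ph)-S_n\ph(x)}m_x^\C(B_n^u(x,\delta))$ in one stroke. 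You instead iterate only $n-1$ steps, sandwich $B^u(f^{n-1}(x),\rho_0)\cap\L\subset f^{n-1}(B_n^u(x,r))\subset \Vl^u(f^{n-1}(x))$, and then prove a separate sub-lemma that $m_y^\C(B^u(y,\rho_0)\cap\L)$ is uniformly bounded below — a sub-lemma you establish by a backward-iteration argument over a fixed number $n_1$ of steps, which is really the same mechanism the paper applies directly. So the total count of ``extra iterations'' matches ($k$ vs.\ $n_1$), but your version carries some additional bookkeeping ($D<\infty$, $\Vl^u(y)\subset f^{n_1}(\Vl^u(z))$, the $\rho_0$-sandwich). Everything checks out; your derivation of the final ``in particular'' clause, which the paper leaves implicit, is also correct.
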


Given a rectangle $R$ and points $y,z\in R$, let $\pi_{yz} \colon V_R^u(y) \to V_R^u(z)$ be the holonomy map from Definition \ref{def:holonomy}.  We say that $\pi_{yz}$ is \emph{absolutely continuous with respect to the system of measures $m^\C_{y}$} if the pullback measure $\pi_{yz}^*m^\C_z$ is equivalent to the measure $m^\C_y$ for every $y,z$.\footnote{This looks similar to the notion of local product structure in Lemma \ref{lem:lps}, but the difference here is that the system of measures $m_y^\C$ are not assumed to arise as conditional measures for some Borel measure on $\L$.} In this case the Jacobian of $\pi_{yz}$ is the function $\Jac\pi_{yz}\colon V_R^u(y) \to (0,\infty)$ defined by the following Radon--Nikodym derivative:
\[
\Jac\pi_{yz}=\frac{d\pi_{yz}^*m^\C_z}{dm^\C_y}.
\]

\begin{theorem}\label{thm:holonomy}
The holonomy map is absolutely continuous with respect to the system of measures $m^\C_y$. Moreover, there are $C,\sigma>0$
such that for every
rectangle $R$ with $\diam(R)<\sigma$, and every $y,z\in R$, the Jacobian of 
$\pi_{zy}$ satisfies $C^{-1}\le\Jac\pi_{zy}(x)\le C$ for $m_y^\C$-a.e.\ $x$.
\end{theorem}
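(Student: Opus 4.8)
The plan is to deduce absolute continuity of the holonomy maps from the scaling relation in Theorem~\ref{thm:Gibbs} together with the $u$-Gibbs property in Corollary~\ref{cor:Gibbs}. The key observation is that the holonomy $\pi_{zy}$ is ``shadowed'' by the dynamics: if $x\in V_R^u(z)$ and $x'=\pi_{zy}(x)=[y,x]\in V_R^u(y)$, then $x$ and $x'$ lie on the same center-stable leaf, and since $\Ecs$ is Lyapunov stable by \ref{C1}, the forward orbits of $x$ and $x'$ stay uniformly close for \emph{all} time, while the backward orbits contract (Proposition~\ref{prop:local-mfds}\ref{leaves-contract} is not quite what is needed since $x,x'$ need not be on the same unstable leaf, but their $cs$-distance is controlled). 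Concretely, I would first establish the following ``bounded distortion along holonomies'' estimate: there is $Q_1>0$ such that for every $n\geq 0$, every rectangle $R$ of small diameter, and every $x\in V_R^u(z)$, writing $x'=\pi_{zy}(x)$, one has $f^n x$ and $f^n x'$ lying within distance $\tau$ along a $cs$-curve, hence $|S_n\ph(x)-S_n\ph(x')|\leq Q_\cs$ by the $cs$-Bowen property (after possibly iterating to bring the points within the scale $r_0'$ and absorbing a constant), and moreover $\pi_{zy}$ maps $B_n^u(x,r)\cap V_R^u(z)$ into something comparable to $B_n^u(x',Cr)\cap V_R^u(y)$ and contains $B_n^u(x',C^{-1}r)\cap V_R^u(y)$ for a uniform $C$. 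This last point is where \ref{C1} enters decisively: the images $f^n(\text{Bowen ball})$ on the $u$-leaf through $z$ and through $y$ are matched up by a holonomy whose distortion on $u$-leaves is bounded because the $cs$-direction connecting them does not expand.

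\textbf{Next I would} combine this with the measure estimates. By Corollary~\ref{cor:Gibbs} and the inner/outer regularity of the Carath\'eodory measures, for any relatively open $U\subset V_R^u(z)$ we can cover $U$ efficiently by $u$-Bowen balls $B_{n_i}^u(x_i,r)$ with $x_i\in U$; applying $\pi_{zy}$ and using the previous step, the images cover $\pi_{zy}(U)$ by sets comparable to $u$-Bowen balls $B_{n_i}^u(x_i',r)$ with $|S_{n_i}\ph(x_i)-S_{n_i}\ph(x_i')|\leq Q_1$. Plugging into the definition \eqref{car0} of $m^\C$ as an infimum of sums $\sum_i e^{-n_iP(\ph)+S_{n_i}\ph(x_i')}$, the bounded difference in Birkhoff sums yields $m_y^\C(\pi_{zy}(U))\leq e^{Q_1}K' m_z^\C(U)$ for a uniform constant absorbing the comparability of Bowen balls (the factor $K'$ coming from the need to inflate/deflate the radius $r$, handled via Theorem~\ref{thm:finite} applied at scales $Cr$ and $C^{-1}r$, all of which lie below $\tau/3$ after shrinking $\sigma$). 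Running the same argument with $\pi_{yz}=\pi_{zy}^{-1}$ gives the reverse inequality. Hence $\pi_{zy}^* m_z^\C$ and $m_y^\C$ assign comparable mass (within a factor $C=e^{Q_1}K'$) to every relatively open set, which by regularity of Borel measures on the compact metric space $V_R^u(y)$ upgrades to: the two measures are equivalent and $C^{-1}\leq \Jac\pi_{zy}=\frac{d\pi_{zy}^*m_z^\C}{dm_y^\C}\leq C$ at $m_y^\C$-a.e.\ point.

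\textbf{The main obstacle} I anticipate is the first step---making precise the claim that $\pi_{zy}$ distorts $u$-Bowen balls by a bounded amount uniformly in $n$. Points $x$ and $x'=\pi_{zy}(x)$ lie on a common $cs$-leaf but on different $u$-leaves, so I cannot directly invoke the $u$-leaf contraction of Proposition~\ref{prop:local-mfds}; instead I need to argue that the $cs$-curve joining $f^n x$ to $f^n x'$ has length $\leq\theta$ for all $n$, which follows from \ref{C1} applied to the curve joining $x$ to $x'$ inside the rectangle provided $\diam(R)$ (hence the initial $cs$-curve length) is $\leq\delta(\theta)$---this is exactly the content of condition \ref{C1}, and is why $\sigma$ must be chosen small depending on $\tau$ and $r_0'$. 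Given this, the matching of $u$-Bowen balls under $\pi_{zy}$ reduces to the uniform H\"older control \ref{unif-holder} on the unstable holonomy together with the fact that $f^{-n}$ contracts along $u$-leaves at rate $\lambda^n$, so the ``error'' introduced by the holonomy at time $n$ is comparable to the $cs$-distance $\leq\theta$, independent of $n$; choosing $\theta$ small relative to $r$ then gives the desired comparability of Bowen-ball radii. The $cs$-Bowen property of $\ph$ then supplies the bounded difference of Birkhoff sums, and everything assembles into the stated Jacobian bounds.
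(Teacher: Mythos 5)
Your proposal follows essentially the same route as the paper's proof: use \ref{C1} to show that the holonomy $\pi_{yz}$ sends a $u$-Bowen ball $B_{n}^u(x,r)$ into a $u$-Bowen ball around $\pi_{yz}(x)$ of only slightly larger radius (since the $cs$-curve joining $f^n x$ to $f^n(\pi_{yz}x)$ stays uniformly short for all $n$), then use the $u$- and $cs$-Bowen properties to bound $|S_n\ph(x)-S_n\ph(\pi_{yz}x)|$ by $Q_u+Q_\cs$, and compare the covering sums appearing in the definition~\eqref{car0} of $m^\C$ directly. This is exactly the mechanism the paper uses.

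The one point you do not resolve correctly is the ``radius inflation'' step. After applying the holonomy, the image $\pi_{yz}(B_n^u(x_i,r)\cap\L)$ lies inside $f^{-n_i}(B_\L^u(f^{n_i}(\pi_{yz}x_i),2r))$, which is not itself one of the Bowen balls $B_{n_i}^u(\cdot,r)$ allowed in the covering sum for $m_z^\C$. You propose to handle this by invoking Theorem~\ref{thm:finite} at scales $Cr$ and $C^{-1}r$, but that theorem only gives bounds $m_x^\C(X)\in[K^{-1},K]$ for the measure built with a \emph{fixed} $r<\tau/3$; it does not compare the Carath\'eodory measures built with different $r$'s, which are a priori different measures. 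The paper avoids the comparison entirely: it covers $B_\L^u(f^{n_i}(\pi_{yz}x_i),2r)$ by a uniformly bounded number $k$ of balls $B_\L^u(x_i^j,r)$ (a purely geometric fact, as in Lemma~\ref{lem:bounded-cover}), pulls back by $f^{-n_i}$ to get $k$ genuine Bowen balls $B_{n_i}^u(f^{-n_i}(x_i^j),r)$, and only then compares covering sums; the cost is a multiplicative factor $k$, not a change of $r$. Substituting this covering trick for your appeal to Theorem~\ref{thm:finite} closes the gap and makes your argument identical to the paper's.
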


The measure $m^\C_x$ can be extended to a measure on $\L$ by taking $m^\C_x(A):=m^\C_x(A\cap \Vl^u(x))$ for any Borel set $A\subset\L$. We consider the evolution of (the normalization of) this measure by the dynamics; that is the sequence of measures 
\begin{equation}\label{seq-meas}
\mu_n:=\frac{1}{n}\sum_{k=0}^{n-1} \frac{f^k_{*}m^\C_x}{m_x^\C(\Vl^u(x))}. 
\end{equation}
Our main result is the following, which we prove in \S\ref{sec:proof-of-main}.

\begin{theorem}\label{thm:main}
Under the conditions in \S\ref{assumptions}, the following are true.
\begin{enumerate}[label=\upshape{(\arabic{*})}]
\item\label{m1} For every $x\in \Lambda$, the sequence of measures 
from \eqref{seq-meas} is weak* convergent as $n\to\infty$ to a limiting probability measure $\mu_\ph$, which is independent of $x$.
\item\label{m2} The measure $\mu_\ph$ is ergodic, gives positive weight to every open set in $\L$, has the Gibbs property \eqref{gibbs2}, and is the unique equilibrium measure for 
$(\Lambda,f,\ph)$.  
\item\label{m3}
For every rectangle $R\subset \Lambda$ with $\mu_\ph(R)>0$, the conditional measures 
$\mu_y^u$ generated by $\mu_\ph$ on unstable sets $V_R^u(y)$ are equivalent for 
$\mu_\ph$-almost every $y\in R$ to the reference measures $m_y^\C|_{V_R^u(y)}$.  Moreover, there exists $C_0>0$, independent of $R$ and $y$, such that for 
$\mu_\ph$-almost every $y\in R$ we have
\begin{equation}\label{eqn:mu-m}
C_0^{-1}\leq \frac{d\mu_y^u}{dm_y^\C}(z) m_y^\C(R) \leq C_0 \text{ for $\mu_y^u$-a.e.}\ z\in V_R^u(y).
\end{equation}
\item\label{m4} 
The measure $\mu_\ph$ has local product structure as in Definition \ref{def:lps}.
\end{enumerate}
\end{theorem}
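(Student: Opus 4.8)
The plan is to build the equilibrium measure $\mu_\ph$ from the reference measures $m_x^\C$ via the averaging scheme in \eqref{seq-meas}, and then read off all four conclusions from the properties of $m_x^\C$ established in Theorems \ref{thm:finite}, \ref{thm:Gibbs}, \ref{thm:holonomy} and Corollary \ref{cor:Gibbs}. First I would fix $x\in\Lambda$ and extract a weak* limit point $\mu_\ph$ of the sequence $\mu_n$; such a limit point is automatically $f$-invariant by the standard Krylov--Bogolyubov argument (the difference $\mu_n - f_*\mu_n$ is $O(1/n)$ in any continuous test function, using that $m_x^\C(\Vl^u(x))$ is bounded above and below by Theorem \ref{thm:finite}\ref{fin3}). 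The key estimate is that the measures $f^k_* m_x^\C$ are ``uniformly Gibbs along unstable Bowen balls'': combining Theorem \ref{thm:Gibbs} (which gives $f^*m_{f(x)}^\C \ll m_x^\C$ with Radon--Nikodym derivative $e^{P(\ph)-\ph}$, hence by iteration $f^k_* m_x^\C$ has controlled density) with Corollary \ref{cor:Gibbs} (the $u$-Gibbs property \eqref{eqn:u-gibbs}), one obtains constants $Q$, independent of $n$, $k$, $x$, such that the $\mu_n$-measure of any Bowen ball $B_n(y,r)$ is squeezed between $Q^{-1}$ and $Q$ times $e^{-nP(\ph)+S_n\ph(y)}$. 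Passing to the weak* limit along a subsequence (after the usual care with open versus closed Bowen balls, shrinking $r$ slightly) yields that $\mu_\ph$ itself satisfies the Gibbs property \eqref{gibbs2}.

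Once $\mu_\ph$ is Gibbs, the remaining statements follow from fairly standard thermodynamic arguments. A Gibbs measure is an equilibrium measure (as noted after \eqref{gibbs2}), so $\mu_\ph$ is an equilibrium state; uniqueness I would get by the classical argument that any two Gibbs measures for the same potential are equivalent, hence (being both invariant and, as one checks, ergodic) equal --- here ergodicity of $\mu_\ph$ is needed and comes from topological transitivity \ref{C2} together with the Gibbs property and the local product structure, via a Hopf-type argument. Because the limit is unique, convergence holds for the full sequence $\mu_n$ and is independent of the choice of $x$ (any two choices give limit points that are both Gibbs, hence equal). Positivity of $\mu_\ph$ on open sets is immediate from the Gibbs bound together with transitivity (every open set contains a Bowen ball, up to iterating), proving \ref{m1} and \ref{m2}.

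For \ref{m3}, the idea is to disintegrate $\mu_\ph$ over a rectangle $R$ along the partition into unstable sets $V_R^u(y)$ and identify the conditionals $\mu_y^u$. The natural candidate is $m_y^\C|_{V_R^u(y)}$, suitably normalized. I would argue as follows: using the invariance of $\mu_\ph$ and the scaling relation of Theorem \ref{thm:Gibbs} (which matches the scaling of $\mu_\ph$ forced by the Gibbs property), one shows that the conditional measures $\mu_y^u$ transform under $f$ in the same way as the $m_y^\C$ (cf. Lemma \ref{lem:cond-inv}), and that both assign comparable mass to Bowen balls; a density-point / martingale argument then gives that $d\mu_y^u/dm_y^\C$ exists and is bounded above and below, with the normalization constant governed by $m_y^\C(R)$, yielding \eqref{eqn:mu-m} with a uniform $C_0$. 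Finally \ref{m4} follows from \ref{m3} combined with Theorem \ref{thm:holonomy}: the reference measures $m_y^\C$ have absolutely continuous holonomies with uniformly bounded Jacobians, so the conditionals $\mu_y^u$ (being uniformly equivalent to them) also have absolutely continuous holonomies, which is condition \ref{cs-ac} in Lemma \ref{lem:lps}, i.e. local product structure in the sense of Definition \ref{def:lps}.

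The main obstacle I anticipate is \ref{m3} --- pinning down the conditional measures of $\mu_\ph$ and getting the \emph{uniform} bound $C_0$ independent of the rectangle. The weak* convergence in \ref{m1} gives control of $\mu_\ph$ on Bowen balls but not directly on the geometric leafwise sets that appear in a disintegration, so one has to pass carefully between the two; the comparison of conditionals for different rectangles (Lemmas \ref{lem:compare-cond}, \ref{lem:cond-inv}) and the uniformity in Theorem \ref{thm:finite}\ref{fin4} and Theorem \ref{thm:holonomy} are exactly the tools needed, but assembling them into a clean uniform estimate, rather than one that degrades with $\diam R$, is the delicate point. A secondary technical nuisance, as always with Gibbs-measure arguments, is the bookkeeping around open versus closed Bowen balls and the order of the limits $r\to 0$ and $n\to\infty$ when transferring the Gibbs bound from the $\mu_n$ to $\mu_\ph$.
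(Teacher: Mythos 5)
Your proposal assembles the right ingredients (Theorems~\ref{thm:finite}, \ref{thm:Gibbs}, \ref{thm:holonomy}, Corollary~\ref{cor:Gibbs}, a Hopf argument, a Bowen-type uniqueness argument), but the logical order you propose --- first Gibbs, then conditionals --- inverts the order the argument actually has to run in, and that inversion creates a genuine gap.

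The problem is in the step ``one obtains constants $Q$ $\dots$ such that the $\mu_n$-measure of any Bowen ball $B_n(y,r)$ is squeezed between $Q^{-1}$ and $Q$ times $e^{-nP(\ph)+S_n\ph(y)}$.'' The $u$-Gibbs property in Corollary~\ref{cor:Gibbs} controls $m_z^\C$ on \emph{unstable} Bowen balls $B_m^u(z,r)$, i.e.\ on slices of a single unstable leaf. Each summand $f_*^k m_x^\C$ is supported on the single (long, immersed) leaf $f^k(\Vl^u(x)\cap\L)$, so $f_*^k m_x^\C(B_m(y,r))$ is the $m^\C$-mass of the intersection of that leaf with the full Bowen ball --- a union of $u$-Bowen balls whose \emph{number} depends on how many times the leaf recurs through $B_m(y,r)$, and that count is not bounded uniformly in $k$ (nor is it nonzero for all $k$, so neither the upper nor the lower bound holds summand by summand). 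To convert the leafwise $u$-Gibbs estimate into the full Gibbs estimate \eqref{gibbs2} you need to control the transverse ($cs$-direction) distribution of mass, which is precisely what Statement~\ref{m3} encodes. This is why the paper proves \ref{m3} \emph{first} (Lemma~\ref{lem:mu-m}, via a standard-pairs decomposition of $f_*^k m_x^\C$ and the shrinking-rectangle characterization of conditional measures in Proposition~\ref{prop:conditionals}), and only then deduces the Gibbs property, through Lemma~\ref{lem:pre-gibbs}, which disintegrates $\mu$ over a rectangle $R_n(x,\delta)$ using \eqref{eqn:mu-m} and Theorem~\ref{thm:holonomy} to collapse the transverse integral. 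Your ``pass to the weak* limit on Bowen balls'' route does not have this input available and cannot produce the two-sided bound.

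A second, smaller gap is in uniqueness. The argument ``any two Gibbs measures for the same potential are equivalent, hence equal'' only rules out other \emph{Gibbs} equilibrium measures; in this partially hyperbolic setting it is not automatic that every equilibrium measure is Gibbs, and that is exactly what Bowen's argument must establish. The paper's Proposition~\ref{prop:unique} instead shows directly that any ergodic $\nu\perp\mu_\ph$ has $h_\nu(f)+\int\ph\,d\nu<P(\ph)$, by disintegrating $\nu$ along the unstable partition, using the unstable-entropy identity of~\cite{HH} (Proposition~\ref{prop:condentropy}) in place of full expansivity, and exploiting the mutual singularity of $\nu_x^u$ and $m_x^\C$ (Lemma~\ref{lem:setB}) --- again an input that requires \ref{m3} and the $u$-Gibbs estimate for $m_x^\C$, not just the Gibbs property of $\mu_\ph$. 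Your sketch of \ref{m3} via a ``density-point/martingale argument'' is in the right spirit (the paper's Proposition~\ref{prop:conditionals} is exactly a martingale-type statement), and \ref{m4} from \ref{m3} plus Theorem~\ref{thm:holonomy} is correct as you state it; but to make the whole proof close you should reorganize so that \ref{m3} is established first for an arbitrary weak* limit, and then everything else (Gibbs, full support, ergodicity, uniqueness, and finally convergence of the full sequence) is derived from it, as in \S\ref{sec:proof-of-main}.
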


In the uniformly hyperbolic setting Statement \ref{m4} was proved by Leplaideur in \cite{Lep}.

\section{Applications}\label{sec:applications}
\subsection{Particular potentials}\label{sec:particular-potentials}

In this section we use our main results to establish existence and uniqueness of equilibrium measures for some particular potentials.

\subsubsection{Measures of maximal entropy  (MME)}\label{sec:mme} 

As in \S\ref{def}, let $f\colon U\to M$ be a diffeomorphism onto its image, where $M$ is a compact smooth Riemannian manifold and $U\subset M$ is open, and suppose that $\L\subset U$ is a compact $f$-invariant set on which $f$ is partially hyperbolic in the broad sense with $T_\L M = \Ecs \oplus E^u$.  Suppose moreover that Conditions \ref{C1} and \ref{C2} are satisfied, so that 
$\Ecs$ is Lyapunov stable and $f|\L$ is topologically transitive.  Finally, suppose that the local product structure  condition \ref{C3} is satisfied.

We observe that a constant potential function always satisfies the $u$- and $cs$-Bowen properties, and thus our construction produces a unique MME which we denote by 
$\mu_0$. To describe this measure given $x\in \L$, define an outer measure on 
$X=\Vl^u(x)\cap\L$ by 
\begin{equation}\label{carMME}
m_x^h(Z):=\lim_{N\to\infty}\inf\sum_ie^{-n_ih_{\text{top}}},
\end{equation}
where $h_{\text{top}}$ denotes the topological entropy of $f$ on $\L$ and the infimum is taken over all collections $\{B^u_{n_i}(x_i,r)\}$ of $u$-Bowen balls with 
$x_i\in \Vl^u(x)\cap\L$, $n_i\ge N$, which cover $Z$. We use the same notation $m_x^h$ for the corresponding Carath\'eodory measure on $X$.
Then the following is an immediate consequence of Theorem \ref{thm:main}.

\begin{theorem}\label{thm:MME}
The following statements hold:
\begin{enumerate}[label=\upshape{(\arabic{*})}]
\item\label{m1h} For every $x\in \Lambda$, the sequence of measures 
$\mu_n:=\frac1n\sum_{k=0}^{n-1} f_*^k m_x^h / m_x^h(\Vl^u(x))$ converges in the  weak$^*$ topology as $n\to\infty$ to $\mu_0$ (independently of $x$).
\item\label{m2h} $\mu_0$ is ergodic and is fully supported on $\L$. 
\item $\mu_0$ has the Gibbs property: for every small $r>0$ there is $Q=Q(r)>0$ such that for every $x\in\L$ and $n\in\NN$, 
\begin{equation}\label{gibbs2h}
Q^{-1}\leq \frac{\mu(B_n(x,r))}{\exp(-n\htop(f))}\le Q.
\end{equation}
\item\label{m3h}
For every rectangle $R\subset\L$ with $\mu_0(R)>0$, the conditional measures 
$\mu_y^u$ generated by $\mu_0$ on unstable sets $V_R^u(y)$ are equivalent for 
$\mu_0$-almost every $y\in R$ to the reference measures $m_y^h|_{V_R^u(y)}$;  moreover, there exists $C_0>0$, independent of $R$ and $y$, such that for 
$\mu_0$-almost every $y\in R$ we have
\begin{equation}\label{eqn:mu-mh}
C_0^{-1}\leq \frac{d\mu_y^u}{dm_y^h}(z) m_y^h(R) \leq C_0 \text{ for $\mu_y^u$-a.e.}\ z\in V_R^u(y).
\end{equation}
\item\label{m4h} 
$\mu_0$ has local product structure as in Definition \ref{def:lps}.
\end{enumerate}
\end{theorem}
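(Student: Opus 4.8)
The plan is to obtain Theorem~\ref{thm:MME} as the special case of Theorem~\ref{thm:main} in which $\ph$ is the constant potential $0$. The first step is to check that $\ph\equiv 0$ meets all the standing hypotheses of \S\ref{assumptions}. Conditions \ref{C1}--\ref{C3} on $(\L,f)$ are assumed here verbatim, and since $S_n\ph\equiv 0$ for every $n$ we have $|S_n\ph(x)-S_n\ph(y)|=0$, so $\ph\equiv 0$ satisfies the $u$- and $cs$-Bowen properties with $Q_u=Q_\cs=1$ (and any $r_0'>0$); thus $\ph\equiv 0\in\CB(\L)$ and Theorem~\ref{thm:main} applies.

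The second step is to translate the objects in Theorem~\ref{thm:main} into those in Theorem~\ref{thm:MME}. By the variational principle \eqref{eqn:var} --- equivalently, directly from \eqref{eqn:pressure}, since $\Zsep_n(\L,0,r)$ is just the maximal cardinality of an $(n,r)$-separated set --- we have $P(0)=\htop$, the topological entropy of $f$ on $\L$. Inserting $\ph\equiv 0$ into the $C$-structure data \eqref{eqn:PC2} gives $\xi(x,n)=e^{S_n 0(x)}=1$, so formulas \eqref{eqn:mCP2} and \eqref{car0} collapse to exactly \eqref{carMME}; in other words, the reference measure $m_x^\C$ with $\C=(0,r)$ coincides with $m_x^h$. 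Likewise the Gibbs inequality \eqref{gibbs2} with $S_n\ph\equiv 0$ and $P(\ph)=\htop$ is precisely \eqref{gibbs2h}, the $u$-Gibbs inequality \eqref{eqn:u-gibbs} becomes $Q_0^{-1}\le\mu_x(B_n^u(x,r))/e^{-n\htop}\le Q_0$, and the bound \eqref{eqn:mu-m} in Theorem~\ref{thm:main}\ref{m3} becomes \eqref{eqn:mu-mh}.

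Given these identifications, each assertion of Theorem~\ref{thm:MME} is the corresponding assertion of Theorem~\ref{thm:main} applied to $\ph\equiv 0$, with $\mu_\ph$ renamed $\mu_0$: part \ref{m1} yields \ref{m1h}; part \ref{m2} yields \ref{m2h} (ergodicity and full support), item~(3) of Theorem~\ref{thm:MME} (the Gibbs property \eqref{gibbs2h}), and uniqueness of the equilibrium measure for $(\L,f,0)$ --- which, since an equilibrium measure for the zero potential is by definition a measure of maximal entropy, means $\mu_0$ is the unique MME; part \ref{m3} yields \ref{m3h}; and part \ref{m4} yields \ref{m4h}. There is no genuine obstacle to this argument: the only point requiring any care is the bookkeeping that matches $P(0)$ with $\htop$ and the phrase ``equilibrium measure for $\ph\equiv 0$'' with ``measure of maximal entropy,'' after which Theorem~\ref{thm:MME} is a direct restatement of Theorem~\ref{thm:main}.
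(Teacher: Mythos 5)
Your argument is correct and is exactly the paper's own approach: the text around Theorem~\ref{thm:MME} simply observes that a constant potential trivially satisfies the $u$- and $cs$-Bowen properties and then invokes Theorem~\ref{thm:main}, as you do. Your identification of $m_x^\C$ with $m_x^h$ via \eqref{eqn:PC2}--\eqref{carMME}, of $P(0)$ with $\htop$, and of ``equilibrium measure for $\ph\equiv 0$'' with ``MME'' is the same bookkeeping the paper leaves implicit.
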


\subsubsection{The geometric $q$-potential}\label{sec:geometric-potential}
We consider the family of geometric $q$-potentials,
\[
\ph_q(x) = -q\log \det Df|E^u(x),\quad q\in\mathbb{R}.     
\]
In order to apply our results to this family, we need to verify the $u$- and $cs$-Bowen properties. In general, they may not be satisfied in our setting and therefore we shall impose the following additional requirements:

\begin{enumerate}[label=\upshape{(A\arabic{*})}]
\item\label{A1} The partially hyperbolic set $\L\subset U$ is an \emph{attractor} for $f$; that is, $\overline{f(U)}\subset U$ and $\L:=\bigcap_{n\geq 0}f^n(U)$.
\item\label{A2} There is $\sigma>0$ such that for every rectangle $R\subset\L$ with $\diam(R)<\sigma$, the holonomy maps between local unstable leaves are uniformly absolutely continuous with respect to leaf volume $\vol_x$; that is, there exists $C>0$ such that for every $y,z\in R$, the Jacobian of $\pi_{zy}$ with respect to leaf volumes $\vol_y$ and $\vol_z$ satisfies $C^{-1}\le\Jac\pi_{zy}(x)\le C$ for $\vol_y$-a.e.\ $x$.
\end{enumerate}
For $x\in\L$ and $q\in\mathbb{R}$ define an outer measure on $X=\Vl^u(x)\cap\L$ by 
\begin{equation}\label{cart}
m_x^q(Z):=m_C(Z,P(\ph_q))=\lim_{N\to\infty}\inf\sum_ie^{-n_iP(\ph_q)}e^{S_{n_i}\varphi_q(x_i)},
\end{equation}
where the infimum is taken over all collections $\{B^u_{n_i}(x_i,r)\}$ of $u$-Bowen balls with $x_i\in \Vl^u(x)\cap\L$, $n_i\ge N$, which cover $Z$.  We use the same notation $m_x^q$ for the corresponding Carath\'eodory measure on $X$.

\begin{theorem}\label{thm:tpotential}
If $\L$ is a partially hyperbolic attractor for a diffeomorphism $f$ satisfying Conditions \ref{C1}--\ref{C3} and \ref{A1}--\ref{A2}, then for every $q\in\mathbb{R}$ the following statements hold:
\begin{enumerate}[label=\upshape{(\arabic{*})}]
\item\label{m1t} For every $x\in\Lambda$, the sequence of measures 
$\mu_n:=\frac1n\sum_{k=0}^{n-1} f_*^k m_x^q / m_x^q(\Vl^u(x))$ converges in the  weak$^*$ topology as $n\to\infty$ to a probability measure $\mu_{\ph_q}$ (independently of $x$).
\item\label{m2t} The measure $\mu_{\ph_q}$ is ergodic, gives positive weight to every open set in $\L$, has the Gibbs property \eqref{gibbs2}, and is the unique equilibrium measure for $(\Lambda,f,\ph_q)$.
\item\label{m3t} For every rectangle $R\subset \Lambda$ with  $\mu_{\ph_q}(R)>0$, the conditional measures $\mu_y^u$ generated by $\mu_{\ph_q}$ on unstable sets $V_R^u(y)$ are equivalent for $\mu_{\ph_q}$-almost every $y\in R$ to the reference measures $m_y^q|_{V_R^u(y)}$.  Moreover, there exists $C_0>0$, independent of $R$ and $y$, such that for $\mu_{\ph_q}$-almost every $y\in R$ we have
\begin{equation}\label{eqn:mu-mt}
C_0^{-1}\le\frac{d\mu_y^u}{dm_y^q}(z)m_y^q(R)\le C_0\text{ for $\mu_y^u$-a.e.}\ z\in V_R^u(y).
\end{equation}
\item\label{m4t} 
The measure $\mu_{\ph_q}$ has local product structure as in Definition \ref{def:lps}.
\end{enumerate}
\end{theorem}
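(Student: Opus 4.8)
The plan is to obtain Theorem~\ref{thm:tpotential} as a direct application of Theorem~\ref{thm:main} to the potential $\ph_q$. Since $E^u$ is $Df$-invariant, the cocycle relation gives $S_n\ph_q(x)=-q\log\det Df^n|E^u(x)$, so the outer measure $m_x^q$ defined in \eqref{cart} is exactly the measure $m_x^\C$ of \eqref{car0} for the data $\C=(\ph_q,r)$; by Theorem~\ref{thm:finite} (with $r$ small) it is Borel, finite and nonzero, with $\dim_C\big(\Vl^u(x)\cap\L\big)=P(\ph_q)$, as soon as $\ph_q\in\CB(\L)$, and then statements \ref{m1t}--\ref{m4t} are literally statements \ref{m1}--\ref{m4} of Theorem~\ref{thm:main}. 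So the whole content is to verify that $\ph_q$ has both the $u$-Bowen and the $cs$-Bowen properties; this is the only place Assumptions \ref{A1}--\ref{A2} are used.

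For the $u$-Bowen property I would run the standard bounded-distortion argument along unstable leaves. The uniform $C^{1+\alpha}$ regularity of the local unstable manifolds (Proposition~\ref{prop:local-mfds}\ref{unif-holder}) together with the regularity of $f$ shows that $z\mapsto\log\det Df|E^u(z)$ is Hölder continuous along unstable leaves, with exponent $\beta>0$ and constant $L$ uniform over leaves. Given $x\in\L$, $n\ge0$, and $y\in B_n^u(x,\tau)\cap\L$, I would write $f^kx$ and $f^ky$ as backward iterates of $f^{n-1}x$ and $f^{n-1}y$ and use the leaf contraction of Proposition~\ref{prop:local-mfds}\ref{leaves-contract} with $d(f^{n-1}x,f^{n-1}y)<\tau$ to get $d(f^kx,f^ky)\le C_1\lambda^{\,n-1-k}\tau$ for $0\le k<n$; summing the resulting geometric series of Hölder bounds yields $|S_n\ph_q(x)-S_n\ph_q(y)|\le|q|L(C_1\tau)^\beta/(1-\lambda^\beta)$, a bound independent of $x,y,n$.

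The $cs$-Bowen property is the real difficulty, and I expect it to be the main obstacle. If $y$ lies on the center-stable leaf of $x$, then by \ref{C1} the forward orbits of $x$ and $y$ only stay within a bounded distance of each other, so the termwise telescoping above diverges. Instead I would compare $\det Df^n|E^u(x)$ and $\det Df^n|E^u(y)$ through unstable-leaf volume and the commutation of the $cs$-holonomy with $f$. Here Assumption \ref{A1} enters: it gives $\Vl^u(x)\subset\L$ for $\tau$ small, so that the tangent space to a local unstable leaf agrees everywhere with $E^u$ (making the leaf-volume Jacobian of $f^n$ equal to $\det Df^n|E^u$) and so that the leaf pieces in \ref{A2} are genuinely full-dimensional. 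Fix a small radius $\rho$ and take $r_0'$ small enough that, whenever $y\in B_\L^\cs(x,r_0')$, condition \ref{C1} keeps $d(f^nx,f^ny)$ small for all $n\ge0$, so that $B^u(f^nx,\rho)\subset\Vl^u(f^nx)$ lies in a rectangle of diameter $<\sigma$ containing $f^ny$; let $\pi_n\colon\Vl^u(f^nx)\to\Vl^u(f^ny)$ and $\pi_0\colon\Vl^u(x)\to\Vl^u(y)$ be the corresponding $cs$-holonomies. Because $f^{-n}$ maps local unstable leaves into local unstable leaves (Proposition~\ref{prop:local-mfds}\ref{leaves-contract}) and maps center-stable leaves into center-stable leaves, one has $f^{-n}\circ\pi_n=\pi_0\circ f^{-n}$ on $B^u(f^nx,\rho)$, with $f^{-n}\big(B^u(f^nx,\rho)\big)$ a neighbourhood of $x$ in $\Vl^u(x)$. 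Computing the leaf-volume Jacobian of this composition in the two orders and bounding $\Jac\pi_n,\Jac\pi_0\in[C^{-1},C]$ leaf-volume-a.e.\ via \ref{A2}, I obtain
\[
\det Df^n|E^u(v)\in[C^{-2},C^2]\cdot\det Df^n|E^u(\pi_0(v))\quad\text{for leaf-volume-a.e.\ }v\in f^{-n}\big(B^u(f^nx,\rho)\big).
\]
Both sides are continuous in $v$ along the leaf and the full-measure set is dense, so the inclusion holds for every such $v$, in particular $v=x$, where $\pi_0(x)=y$; hence $|\log\det Df^n|E^u(x)-\log\det Df^n|E^u(y)|\le 2\log C$ and $|S_n\ph_q(x)-S_n\ph_q(y)|\le 2|q|\log C$, uniformly in $n$.

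This shows $\ph_q\in\CB(\L)$, and Theorem~\ref{thm:main} then yields \ref{m1t}--\ref{m4t} verbatim. The delicate points, all in the $cs$-Bowen argument, are: checking that \ref{C1} really lets one take the rectangles around $f^nx,f^ny$ of uniformly bounded diameter; checking that $f^{-n}$ maps local unstable (resp.\ $cs$-) leaves into local unstable (resp.\ $cs$-) leaves so that the commutation $f^{-n}\circ\pi_n=\pi_0\circ f^{-n}$ holds on a fixed-size piece $B^u(f^nx,\rho)$; and upgrading the almost-everywhere Jacobian identity to the single point $v=x$ by continuity.
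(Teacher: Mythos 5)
Your proposal is correct and reaches the conclusion by the same overall reduction as the paper — verify $\ph_1\in\CB(\L)$ and cite Theorem~\ref{thm:main} — and your $u$-Bowen argument (leafwise Hölder bound plus geometric-series contraction of backward iterates) is exactly what the cited \cite[Lemma 6.6]{CPZ} does. Where you diverge from the paper is the $cs$-Bowen step. The paper works with volumes rather than pointwise Jacobians: it sets $A=B_n^u(x,\sigma/4)$, $B=\pi_{xy}(A)$, writes $\vol_{f^nx}(f^nA)=\int_A\det Df^n|E^u\,d\vol_x$, uses the already-established $u$-Bowen property to pull the integrand out as $e^{\pm Q_u}\det Df^n|E^u(x)$, and then applies \ref{A2} to compare $\vol(A)$ with $\vol(B)$ and $\vol(f^nA)$ with $\vol(f^nB)$; dividing these ratios gives the pointwise estimate directly, with the a.e.\ bound on $\Jac\pi$ only ever used under an integral. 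Your route instead exploits the commutation $f^{-n}\circ\pi_n=\pi_0\circ f^{-n}$, applies the measure-theoretic chain rule for Jacobians, bounds $\Jac\pi_0,\Jac\pi_n$ via \ref{A2}, and then upgrades from an a.e.\ identity to the single point $v=x$ by continuity of $v\mapsto\det Df^n|E^u(v)/\det Df^n|E^u(\pi_0(v))$ (which requires $E^u$ to be defined on whole local unstable leaves, i.e.\ \ref{A1}, as you note). Both approaches are sound; yours has the small advantage of not needing the $u$-Bowen distortion bound inside the $cs$-argument, while the paper's integral formulation avoids the a.e.-to-everywhere passage entirely. You correctly identify the delicate points (uniform-size rectangles for $\pi_n$, the commutation identity, the continuity upgrade), and each of them does hold under \ref{C1}, \ref{A1}, \ref{A2}.
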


\begin{proof}
We need to verify that $\varphi_q\in C_B(\L)$; it suffices to show that $\ph_1\in C_B(\L)$ since $\ph_q = q\ph_1$.   First observe that $\varphi_1$ is H\"older continuous. By the argument presented in \cite[Lemma 6.6]{CPZ}, this implies that $\varphi_1$ has the $u$-Bowen property with some constant $Q_u>0$. Then observing that
\begin{equation}\label{eqn:Sn-det}
S_n\ph_1(x) = -\log\prod_{k=0}^{n-1} \det Df|E^u(f^kx) = -\log \det Df^n|E^u(x),
\end{equation}
we see that for every $x\in\L$, $n\in\mathbb{N}$, and $z\in B_n^u(x,\tau) \cap \L$, we have
\[
|\log \det Df^n|E^u(z) - \log \det Df^n|e^u(x)| = |S_n\ph_1(z) - S_n\ph_1(x)| \leq Q_u,
\]
and exponentiating gives
\begin{equation}\label{eqn:detdet}
\det Df^n|E^u(z) = e^{\pm Q_u} \det Df^n|E^u(x).
\end{equation}
Here and below we use the following notation: given $A,B,C,a\ge 0$, we write $A=C^{\pm a}B$ as shorthand to mean $C^{-a}B\le A\le C^a B$.

To show that $\ph_1$ has the $cs$-Bowen property we start by choosing $\sigma'\in (0,\sigma/4)$ sufficiently small (here $\sigma$ is as in \ref{A2}) that if $d(x,y) < \sigma'$ and $a\in B_\L^u(x,\sigma/4)$, then 
$\pi_{xy}(a)\in B_\L^u(y,\sigma/2)$. Then we let 
$r_0'$ be the value of $\delta$ given by \ref{C1} with $\theta = \sigma'$.

For every $x\in \L$, $y\in B_\L^\cs(x,r_0')$, and $n\in\NN$, Condition \ref{C1} gives $f^n(y)\in B_\L^\cs(f^n(x),\sigma')$. Writing $A := B_n^u(x,\sigma/4)$ and 
$B :=\pi_{xy}(A)$, we see that $B\subset B_n^u(y,\sigma/2)$ by our choice of $\sigma'$.
For any $a\in A$ and $b\in B$ we have $d(a,b)\le d(a,x)+d(x,y)+d(y,b)\le\sigma$ and similarly $d(f^n(a),f^n(b))\le\sigma$, so Condition \ref{A2} guarantees that
\[
\vol_x(A) = C^{\pm1} \vol_y(B)
\quad\text{and}\quad
\vol_{f^n(x)} (f^n(A)) = C^{\pm1} \vol_{f^n(y)}(f^n(B)),
\]
and thus
\begin{equation}\label{eqn:volvol}
\frac{\vol_{f^n(y)}(f^n(B))}{\vol_y(B)}
= C^{\pm 2} \frac{\vol_{f^n(x)}(f^n(A))}{\vol_x(A)}.
\end{equation}
On the other hand, \eqref{eqn:detdet} gives 
\[
\vol_{f^n(x)}(f^n(A)) = \int_A \det Df^n|E^u(z) \,d\vol_x(z)
= e^{\pm Q_u} (\det Df^n|E^u(x)) \vol_x(A),
\]
and similarly,
\[
\vol_{f^n(y)}(f^n(B)) = e^{\pm Q_u} (\det Df^n|E^u(y)) \vol_y(B),
\]
which yield
\begin{equation}\label{eqn:volvol2}
\frac{\vol_{f^n(y)}(f^n(B))}{\vol_y(B)}
= e^{\pm 2Q_u} \frac{\det Df^n|E^u(y)}{\det Df^n|E^u(x)} \frac{\vol_{f^n(x)}(f^n(A))}{\vol_x(A)}.
\end{equation}
Combining this with \eqref{eqn:volvol} and using \eqref{eqn:Sn-det} gives
\[
C^{\pm 2}
=
\frac{\vol_{f^n(y)}(f^n(B)) \vol_x(A)}{\vol_y(B) \vol_{f^n(x)}(f^n(A))}
= e^{\pm 2Q_u} e^{S_n\ph_1(x) - S_n\ph_1(y)},
\]
and upon taking logs we conclude that
$S_n\varphi_1(x)-S_n\varphi_1(y)=\pm 2\log C\pm 2Q_u$, and thus $\ph_1\in C_B(\L)$.  With this complete, the theorem follows immediately from Theorem \ref{thm:main}.
\end{proof}

\subsection{Particular classes of dynamical systems}\label{sec:app}
\subsubsection{Time-$1$ map of an Anosov flow}\label{sec:time1}

\begin{definition}
A $C^1$ flow $f^t\colon M \to M$ on a smooth compact manifold $M$ is called an \emph{Anosov flow} if there exists a Riemannian metric and a number $0<\lambda <1$ such that the tangent bundle splits into three subbundles $TM=E^s\oplus E^0\oplus E^u$, each invariant under the flow such that
\begin{enumerate}
\item $\frac{d}{dt}f^t(x)|_{t=0}  \in E^0(x)\setminus \{0 \}$ and $\dim E^0(x)=1$;
\item $\|Df^t|E^s\|\le\lambda^t$ and $\|Df^{-t}|E^u\|\le\lambda^t$ for all $t\geq 0$.
\end{enumerate} 
\end{definition}
It is well known that if an Anosov flow $f^t$ is of class $C^r$, $r\ge 1$, then for each $x\in M$ there are a pair of embedded $C^r$-discs $W^s(x)$ and $W^u(x)$ called \emph{local strong stable} and \emph{unstable manifolds}, and a number $C>0$ such that 
\begin{enumerate}
\item $T_x W^s(x)=E^s(x)$ and $T_x W^u(x)=E^u(x)$;
\item if $y\in W^u(x)$, then $d(f^{-t}(x),f^{-t}(y))\le C\lambda^t d(x,y)$ for all $t\geq 0$;
\item if $y\in W^s(x)$, then $d(f^t(x),f^t(y))\le C \lambda^t d(x,y)$ for all $t\geq 0$.
\end{enumerate}
We define \emph{weak-unstable} and \emph{weak-stable manifolds} through $x$ by
\[
W^{0u} = \bigcup_{t\in (-r,r)} W^u(f^t(x)), \qquad  W^{0s} = \bigcup_{t\in (-r,r)} W^s(f^t(x)).
\]
Given an Anosov flow $f^t\colon M\to M$ one can define a diffeomorphism 
$f\colon M\to M$ to be the time-$1$ map of the flow. That is, $f(x):=f^1(x)$. Observe that such an $f$ is partially hyperbolic in the broad sense with $\L=M$ and 
$\Ecs =E^0\oplus E^s$ and satisfies Assumptions \ref{C1} and \ref{C3} from \S\ref{def}.

We stress that even when the flow is known to have a unique equilibrium measure for a certain potential function, this does not automatically imply uniqueness for the time-$1$ map; the simplest example is a constant-time suspension flow over an Anosov diffeomorphism. In this case the flow is topologically transitive but the time-$1$ map  
need not be.\footnote{The time-$1$ map is transitive if and only if the constant value of the roof function is irrational.}
In fact for Anosov flows this is the only obstruction to transitivity: if the flow is not topologically conjugate to a constant-time suspension, then it is topologically mixing and in particular the time-$1$ map is transitive \cite{jP72,FH}.  Even in this case, there may be measures that are invariant for the map but not for the flow \cite[Corollary 4]{QS12}, so uniqueness for the map is in general a more subtle question.

Given a H\"older continuous function $\psi\colon M \to \mathbb{R}$
(thought of as a potential for the flow), consider $\varphi(x):= \int_0^1 \psi(f^{\ttt}(x))\, d\ttt$
(thought of as a potential for the map). 
When the time-$1$ map is transitive, we obtain the following result.

\begin{theorem}\label{thm:maina1}
Let $f^t\colon M \to M$ be an Anosov flow on a smooth compact manifold $M$. 
Let $f=f^1$ be the time-$1$ map of $f^t$, and let $m_x^\C$ be the reference measures on $\Vl^u(x)$ associated to the potential function $\ph=\int_0^1\psi\circ f^\ttt\,d\ttt$. If $f$ is topologically transitive, then the following statements hold:
\begin{enumerate}[label=\upshape{(\arabic{*})}]
\item\label{1} For every $x\in M$, the sequence of measures 
$\mu_n:=\frac1n\sum_{k=0}^{n-1} f_*^k m_x^\C / m_x^\C(\Vl^u(x))$ is weak* convergent as $n\to\infty$ to a measure $\mu_\ph$, which is independent of $x$.
\item\label{2} The measure $\mu_\ph$ is ergodic, gives positive weight to every open set in $M$, has the Gibbs property \eqref{gibbs2}, and is the unique equilibrium measure for 
$(M,f,\ph)$.
\item\label{3}
For every rectangle $R\subset M$ with $\mu_\ph(R)>0$, the conditional measures 
$\mu_y^u$ generated by $\mu_\ph$ on unstable sets $V_R^u(y)$ are equivalent for 
$\mu_\ph$-a.e.\ $y\in R$ to the reference measures $m_y^\C|_{V_R^u(y)}$.  Moreover, there exists $C_0>0$, independent of $R$ and $y$, such that for $\mu_\ph$-a.e.\ 
$y\in R$ we have
\begin{equation}\label{eqn:mu-ma1}
C_0^{-1}\leq \frac{d\mu_y^u}{dm_y^\C}(z) m_y^\C(R) \leq C_0 \text{ for $\mu_y^u$-a.e.}\ z\in V_R^u(y).
\end{equation}
\item\label{4} The measure $\mu_{\varphi}$ has local product structure.
\end{enumerate}
\end{theorem}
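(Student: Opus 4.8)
The plan is to obtain Theorem~\ref{thm:maina1} as a direct application of Theorem~\ref{thm:main}, so the only real task is to check that $f=f^1$ and $\ph=\int_0^1\psi\circ f^\ttt\,d\ttt$ satisfy the hypotheses of \S\ref{assumptions}. The structural hypotheses were already recorded just before the statement: with $\L=M$ and $\Ecs=E^0\oplus E^s$, the map $f$ is partially hyperbolic in the broad sense and satisfies \ref{C1} and \ref{C3}; indeed \ref{C1} holds because contraction on $E^s$, together with the uniformly bounded behaviour of $Df$ along the flow direction $E^0$ and the uniform transversality of the splitting, gives a uniform bound $\|Df^n|\Ecs\|\le\Lcs$ for all $n$, and \ref{C3} is automatic since $\L=M$. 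Condition \ref{C2} is precisely the standing hypothesis that $f$ is topologically transitive. I would also note at the outset that, by uniqueness of the invariant objects in Proposition~\ref{prop:local-mfds} and Theorem~\ref{thm:Wcs}, the leaf $\Vl^u(x)$ agrees near $x$ with the flow's strong unstable manifold $W^u(x)$, and $\Vl^\cs(x)$ with the weak stable manifold $W^{0s}(x)$; this is what lets me import the flow's hyperbolicity estimates into the Bowen-ball geometry. With all of this in place, everything reduces to showing $\ph\in\CB(M)$, after which Theorem~\ref{thm:main} yields statements (1)--(4) verbatim.

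The crucial first step for the Bowen properties is the elementary identity
\[
S_n\ph(x)=\sum_{k=0}^{n-1}\int_0^1\psi\bigl(f^{k+\ttt}x\bigr)\,d\ttt=\int_0^n\psi(f^sx)\,ds=:\Psi_n(x),\qquad x\in M,\ n\in\NN,
\]
which converts Birkhoff sums of $\ph$ into Birkhoff integrals of $\psi$ and lets me read the Bowen estimates off the geometry of the flow. Since $\psi$ is $\alpha$-H\"older and $\{f^\ttt:\ttt\in[0,1]\}$ is a uniformly Lipschitz family (with constant $L$, say), one has $|\psi(f^sx)-\psi(f^sy)|\le|\psi|_\alpha L^\alpha\,d(f^{\lfloor s\rfloor}x,f^{\lfloor s\rfloor}y)^\alpha$ for every $s\ge0$. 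For the $u$-Bowen property I would take $y\in B_n^u(x,\tau)\cap M$; then (for $\tau$ small) $y\in W^u(x)$ and $d(f^{n-1}x,f^{n-1}y)<\tau$, so backward contraction along $W^u$ gives $d(f^sx,f^sy)\le C\lambda^{\,n-1-s}\tau$ for $s\in[0,n]$ (with $C$ enlarged to absorb the Lipschitz interpolation on $[n-1,n]$), whence
\[
|S_n\ph(x)-S_n\ph(y)|=\Bigl|\int_0^n\bigl(\psi(f^sx)-\psi(f^sy)\bigr)\,ds\Bigr|\le|\psi|_\alpha(C\tau)^\alpha\int_{-1}^{\infty}\lambda^{\alpha u}\,du=:Q_u,
\]
which is finite and independent of $x$, $y$, $n$.

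For the $cs$-Bowen property I would first choose $r_0'$ small enough that every $y\in B_\L^\cs(x,r_0')$ can be written $y=f^\rho w$ with $w\in W^s(x)$ and $|\rho|,d(x,w)\le\eta$, using that the weak stable leaf through $x$ is parametrized bi-Lipschitzly near $x$ by $W^s(x)\times(-\e,\e)$ via $(w,\rho)\mapsto f^\rho w$. Then $S_n\ph(y)=\int_\rho^{n+\rho}\psi(f^sw)\,ds$, and
\[
S_n\ph(x)-S_n\ph(y)=\int_0^n\bigl(\psi(f^sx)-\psi(f^sw)\bigr)\,ds+\int_0^\rho\psi(f^sw)\,ds-\int_n^{n+\rho}\psi(f^sw)\,ds.
\]
The last two terms are bounded by $|\rho|\,\|\psi\|_\infty\le\eta\|\psi\|_\infty$, and the first by $|\psi|_\alpha(C\eta)^\alpha\int_0^\infty\lambda^{\alpha s}\,ds$ using the forward contraction $d(f^sx,f^sw)\le C\lambda^sd(x,w)$ along $W^s$. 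Summing these yields a uniform constant $Q_\cs$, so $\ph\in\CB(M)$ and the result follows from Theorem~\ref{thm:main}.

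I expect the main (and essentially only) obstacle to be the bookkeeping in the $cs$-estimate: because the weak stable leaf is foliated by strong stable leaves only after a flow reparametrization, the right object to compare with $\Psi_n(x)$ is the \emph{shifted} integral $\int_\rho^{n+\rho}\psi\circ f^s(w)\,ds$ rather than $\Psi_n(w)$, and the two boundary contributions of length $|\rho|$ must be tracked; once that is done, the estimate collapses to the same convergent-integral bound as in the $u$-direction. The remaining points — the identification of $\Vl^u(x),\Vl^\cs(x)$ with the flow's strong unstable and weak stable manifolds, and the verification that $f^1$ is partially hyperbolic in the broad sense (conveniently seen by passing to an adapted metric in which $E^0,E^s,E^u$ are mutually orthogonal and $\|X\|\equiv1$, so that $Df|E^0$ is an isometry and $\|Df|\Ecs\|\le1<\chi$) — are standard, but worth spelling out, since they are the only places one invokes something other than Theorem~\ref{thm:main} itself.
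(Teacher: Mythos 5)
Your proposal is correct and takes essentially the same route as the paper: check $\ph\in\CB(M)$ and invoke Theorem~\ref{thm:main}, verifying the $u$-Bowen property from H\"older continuity and unstable backward contraction, and obtaining the $cs$-Bowen property by combining strong-stable forward contraction with the flow-direction shift via the local product structure of the flow. The only difference is level of detail: the paper dispatches the strong stable and unstable directions with the one-line appeal ``Since $\psi$ is H\"older continuous, so is $\ph$; in particular, $\ph$ satisfies Bowen's property along strong stable and unstable leaves'' (implicitly citing the standard Lemma 6.6 of the CPZ survey), writes out only the flow-direction shift computation, and then says ``using the local product structure of the flow'' to glue the two; you instead make the gluing explicit by writing $y = f^\rho w$ with $w\in W^s(x)$, splitting the integral into the strong-stable term and the two boundary terms of length $|\rho|$, and bounding each by a convergent geometric integral. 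Your boundary-term estimate $\le 2|\rho|\,\|\psi\|_\infty$ is exactly the paper's $\le 2\epsilon\|\psi\|_\infty$ computation, and your treatment of the unstable direction is the proof, via the identity $S_n\ph=\int_0^n\psi\circ f^s\,ds$, of the H\"older-implies-Bowen fact the paper cites. So this is the same argument, carried out at the level of detail the paper delegates to the reader.
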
  
\begin{proof}
It is enough to check that $\varphi\in \CB(M)$ and then apply Theorem \ref{thm:main}. Since the function 
$\psi$ is H\"older continuous, so is $\varphi$. In particular, $\varphi$ satisfies Bowen's property along strong stable and unstable leaves.  Moreover,  $\varphi$ satisfies Bowen's property along the flow direction: indeed, consider two points $x=x(0)\in M$ and $y=x(\epsilon)$ for some $\epsilon>0$. We have 
$$
\begin{aligned}
|S_n\varphi(x)-S_n\varphi(y)|&=\Big|\int_0^n \psi(f^{\ttt}(x)) d\ttt - \int_0^n \psi(f^{\ttt}(y)) d\ttt  \Big| \\ 
&= \Big| \int_0^n \psi(f^{\ttt}(x)) d\ttt - \int_{\epsilon}^{n+\epsilon} \psi(f^{\ttt}(x)) d\ttt  \Big|   \\ 
 &=\Big|\int_0^{\epsilon}\psi(f^{\ttt}(x))\,d\ttt-\int_n^{n+\epsilon}\psi(f^{\ttt}(x))\,d\ttt\Big|   
\le 2\epsilon\|\psi\|_{\infty}.  
\end{aligned}
$$
Using the local product structure of the flow, this shows that $\ph \in \CB(M)$, and thus completes the proof of the theorem.
\end{proof}

\begin{remark}
Theorem \ref{thm:maina1} applies to the geometric potential $\ph(x)=-\log\det Df|E^u(x)$ and all its scalar multiples $q\ph$ for $q\in\RR$; indeed, taking 
$\psi(x)=\lim_{t\to 0}-\frac1t\log\det Df^t|E^u(x)$, we have 
$\ph=\int_0^1\psi\circ f^\ttt\,d\ttt$, and $\psi$ is H\"older continuous because the distribution $E^u$ is H\"older continuous. When $q=0$ the measure produced in Theorem \ref{thm:maina1} is the unique MME; when $q=1$ it is the unique $u$-measure, which is the unique SRB measure for the flow.
\end{remark}

We mention two alternate approaches to existence and/or uniqueness of equilibrium measures in the setting of Theorem \ref{thm:maina1}.  
First, the time-$1$ map of an Anosov flow has the entropy expansivity property \cite{EnExp}, which implies existence of an equilibrium measure for $f$ with respect to any continuous potential function $\varphi$. However, this approach does not say anything about uniqueness, the Gibbs property, or local product structure.

Substantially more information, including uniqueness, can be obtained by appealing to the corresponding result for the flow itself.  We are grateful to F.\ Rodriguez Hertz for the following argument, which uses a simple construction that goes back to Walters \cite[Corollary 4.12(iii)]{pW75}
and Dinaburg \cite{eD71}.

\begin{proposition}\label{prop:flow-map}
Let $X$ be a compact metric space and $\{f^t\colon X\to X\}_{t\in\RR}$ a continuous flow.  Suppose that $\psi\colon X\to \RR$ is continuous and that there is a unique equilibrium measure $\mu$ for $\psi$ with respect to the flow $f^t$.  Suppose moreover that $\mu$ is weak mixing.  Then $\mu$ is the unique equilibrium measure for the time-$1$ map $f=f^1$ and the potential function $\ph(x) = \int_0^1 \psi(f^t x)\,dt$.
\end{proposition}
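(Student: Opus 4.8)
The plan is to show first that the pressures coincide, $P(\ph)=P_{\mathrm{flow}}(\psi)$, and that the operation of averaging an $f$-invariant measure over the time interval $[0,1]$ sends equilibrium states for the map to equilibrium states for the flow. This yields existence at once, and reduces uniqueness for the map to showing that the averaging operation is injective on the fiber over $\mu$; this last point is where the weak mixing hypothesis enters and is the main obstacle.

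Set $f=f^1$ and, for an $f$-invariant probability measure $\nu$, note that $s\mapsto f^s_*\nu$ is weak${}^*$-continuous and $1$-periodic (as $f^1_*\nu=\nu$), so $\bar\nu:=\int_0^1 f^s_*\nu\,ds$ is well defined and flow-invariant; this is precisely the Walters--Dinaburg construction referenced above. Since $f^s$ commutes with $f$ one has $h_{f^s_*\nu}(f)=h_\nu(f)$, and by affinity of the entropy map under the (continuous) convex combination defining $\bar\nu$ we get $h_{\bar\nu}(f)=h_\nu(f)$; a Fubini computation gives $\int\psi\,d\bar\nu=\int_0^1\int\psi\circ f^s\,d\nu\,ds=\int\ph\,d\nu$, so $h_{\bar\nu}(f)+\int\psi\,d\bar\nu=h_\nu(f)+\int\ph\,d\nu$. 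Conversely, any flow-invariant $m$ is $f$-invariant with $\int\ph\,dm=\int\psi\,dm$ by flow-invariance. Taking suprema in both directions (and recalling that the entropy of the flow with respect to $m$ is $h_m(f)$, so $P_{\mathrm{flow}}(\psi)$ is computed from the time-$1$ entropy) yields $P(\ph)=P_{\mathrm{flow}}(\psi)$. In particular $\mu$ is flow-invariant with $h_\mu(f)+\int\ph\,d\mu=h_\mu(f)+\int\psi\,d\mu=P_{\mathrm{flow}}(\psi)=P(\ph)$, so $\mu$ is an equilibrium state for $(X,f,\ph)$ and existence is done.

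For uniqueness, let $\nu$ be any equilibrium state for $(X,f,\ph)$. The identity above gives $h_{\bar\nu}(f)+\int\psi\,d\bar\nu=P(\ph)=P_{\mathrm{flow}}(\psi)$, so $\bar\nu$ is an equilibrium state for the flow, whence $\bar\nu=\mu$ by hypothesis. It remains to deduce $\nu=\mu$ from $\int_0^1 f^s_*\nu\,ds=\mu$ and $f^1_*\nu=\nu$. First I would reduce to $\nu$ being $f$-ergodic: by affinity of entropy the $f$-ergodic components of an equilibrium state are again equilibrium states, so if uniqueness fails there is an $f$-ergodic equilibrium state $\nu\ne\mu$. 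Next, weak mixing of the flow implies that $f$ is ergodic (indeed weak mixing) with respect to $\mu$ — the standard fact that a flow is weakly mixing iff some/every time-$t$ map is. Now $f^s_*\nu$ is $f$-ergodic for every $s$ (conjugate to $\nu$ by $f^s$), and $f^s_*\nu\ne\mu$ for every $s$ (else $\nu=f^{-s}_*\mu=\mu$ by flow-invariance of $\mu$). Let $A$ be the set of points generic for $\mu$ with respect to $f$ along a fixed countable family of continuous functions that determines measures; then $\mu(A)=1$ by the Birkhoff ergodic theorem, while every $f$-ergodic measure distinct from $\mu$ gives $A$ measure zero, so $(f^s_*\nu)(A)=0$ for all $s$ and hence $\mu(A)=\int_0^1 (f^s_*\nu)(A)\,ds=0$, a contradiction. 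Therefore $\nu=\mu$.

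The routine parts are the pressure identity and existence; the main obstacle is the last paragraph, namely using weak mixing to make the averaging map injective over $\mu$. (An alternative to the genericity argument: $\Psi\colon\nu\mapsto\bar\nu$ is continuous and affine, and $\Psi^{-1}(\mu)$ is a \emph{face} of the simplex of $f$-invariant measures because $\mu$ is extreme among flow-invariant measures; Krein--Milman together with the ergodicity of $f$ with respect to $\mu$ and uniqueness of the ergodic decomposition then forces $\Psi^{-1}(\mu)=\{\mu\}$.) One should also verify the measure-theoretic technicalities in the reduction to ergodic $\nu$ and in the affinity of entropy under the continuous average $\int_0^1 f^s_*\nu\,ds$, but these are standard.
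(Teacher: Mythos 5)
Your argument is correct and follows essentially the same route as the paper: establish that the pressures agree, observe that averaging $\nu\mapsto\int_0^1 f^s_*\nu\,ds$ carries $f$-equilibrium states to flow-equilibrium states (so the average equals $\mu$ by uniqueness for the flow), and finally use that weak mixing of the flow makes $(f,\mu)$ ergodic. The two small differences are cosmetic rather than structural. For the pressure identity the paper simply cites Walters (Corollary 4.12(iii) of ``A variational principle for the pressure of continuous transformations''), whereas you supply a short direct proof via the averaging map and Jacobs's theorem on affinity of entropy; both are fine. For the last step, the paper compresses the deduction into the one-liner that $f$-ergodicity of $\mu$ forces $f^t_*\nu=\mu$ for a.e.\ $t$ — implicitly the extreme-point fact that an $f$-ergodic measure represented as a barycenter of $f$-invariant measures can only be the barycenter of the point mass at itself — and does not pass to an ergodic component of $\nu$. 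You instead first reduce to $\nu$ being $f$-ergodic and then run a genericity argument with a countable measure-determining family of continuous functions; this is a valid and more hands-on rendering of the same extremality principle. Both uses of the weak mixing hypothesis are identical: it is needed only to ensure that the time-$1$ map is $\mu$-ergodic.
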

\begin{proof}
First observe that the pressure of the flow (w.r.t.\ $\psi$) agrees with the pressure of the map (w.r.t.\ $\ph$) by \cite[Corollary 4.12(iii)]{pW75}, so $\mu$ is automatically an equilibrium measure for the map.  It remains only to prove uniqueness.

First note that since $(\{f^t\},\mu)$ is weak mixing, then $(f,\mu)$ is ergodic \cite[Proposition 3.4.40]{FH}.  Now let $\nu$ be any equilibrium measure for $(f=f^1,\ph)$; we claim that $\int_0^1 f_*^t \nu\,dt$ is an equilibrium measure for $(\{f^t\},\psi)$, and thus is equal to $\mu$; then ergodicity will imply that $f_*^t \nu = \mu$ for a.e.\ $t$, and thus $\nu=\mu$.

Since $\int_0^1 f_*^t \nu\,dt$ is flow-invariant (by $f$-invariance of $\nu$), to show that it is an equilibrium measure for the flow it suffices to prove that $h_{f_*^t\nu}(f) = h_\nu(f)$ and $\int\ph\,d(f_*^t\nu) = \int \ph\,d\nu$ for all $t$.  The first of these is standard.  For the second we observe that
\begin{align*}
\int \ph \,d(f_*^t\nu)
&= \int_M \int_t^{1+t} \psi(f^\ttt x)\,d\ttt\,d\nu 
= \int_M \Big( \int_t^1 \psi(f^\ttt x)\,d\ttt
+ \int_1^{1+t} \psi(f^\ttt x)\,d\ttt\Big) \,d\nu \\
&=\int_M \Big( \int_t^1 \psi(f^\ttt x)\,d\ttt
+ \int_0^{t} \psi(f^\ttt x)\,d\ttt\Big) \,d\nu 
= \int_M \int_0^1 \psi(f^\ttt x)\,d\ttt \,d\nu
= \int \ph\,d\nu,
\end{align*}
where the third equality uses $f$-invariance of $\nu$.  Then as argued above $\int_0^1 f_*^t \nu\,dt$ is an equilibrium measure for the flow, so it is equal to $\mu$, and ergodicity implies that $\nu=\mu$.
This proves that the unique equilibrium measure for the flow is also the unique equilibrium measure for the map.
\end{proof}

In the specific case when $f^t$ is a topologically mixing Anosov flow and $\psi$ is H\"older continuous, uniqueness of the equilibrium measure for the flow, together with the mixing property, was shown in \cite{BowRue}, and thus this argument establishes uniqueness for the time-$1$ map; moreover, the equilibrium measure is known to have the Gibbs property and local product structure \cite{nH94}, providing another proof of some of the statements in Theorem \ref{thm:maina1}.

\subsubsection{Time-$1$ map of the frame flow}
Let $M$ be a closed oriented $n$-dimensional manifold of negative sectional curvature. 
Consider the unit tangent bundle 
$$
SM = \{(x,v) : x\in M, ~v\in T_xM, ~ \|v\| = 1\}
$$ 
and the frame bundle
\begin{multline*}
FM  = \{ (x, v_0, v_1, \ldots, v_{n-1}) : x\in M, ~ v_i\in T_xM,~\text{and } \\
\{v_0,\dots,v_{n-1}\}\text{ is a positively oriented orthonormal frame at $x$}  \}.
\end{multline*}
We write $\vv = (x,v_0,v_1,\dots, v_{n-1})$ for an element of $FM$. 
The \emph{geodesic flow} $g^t\colon SM \to SM$ is defined by 
\[
g^t(x,v)=(\gamma_{(x,v)}(t), \dot{\gamma}_{(x,v)}(t)),
\]
where $\gamma_{(x,v)}(t)$
is the unique geodesic determined by the vector $(x,v)$, and the 
\emph{frame flow} $f^t\colon FM \to FM$ is given by
\[
f^t(x, v_0, v_1, \ldots, v_{n-1}) = (g^t(x,v_0), \Gamma^t_{\gamma}(v_1), \ldots, \Gamma^t_{\gamma}(v_{n-1})),
\]
where $\Gamma^t_{\gamma}$ is the parallel transport along the geodesic $\gamma(x,v_0)$.
The flow $f^t$ is partially hyperbolic with splitting $TFM=E^s\oplus E^0 \oplus SO(n-1)\oplus E^u,$ where $f^t$ acts isometrically on fibers of the center bundle $SO(n-1)$.\footnote{Note that for a partially hyperbolic \emph{flow}, the center bundle does not contain the flow direction.} Thus the time-1 map $f$ is partially hyperbolic with $\Ecs = E^0 \oplus SO(n-1) \oplus E^s$.

Given $x\in M$ and $v_0\in T_xM$ with $\|v_0\|=1$, denote by $N_{x,v_0}$ the compact set of positively oriented orthonormal $(n-1)$-frames in $T_xM$, which are orthogonal to $v_0$.
We will consider a class of H\"older potentials that are constant on each $N_{x,v_0}$; this class contains the geometric potentials.  

We need to restrict our attention to the case when the time-$1$ map $f$ is topologically transitive;
to this end, note that the frame flow $f^t$ preserves a smooth measure that is locally the product of the Liouville measure with normalized Haar measure on $SO(n-1)$. There are several cases in which the time-$1$ map $f$ is known to be ergodic with respect to this measure, and hence topologically transitive by \cite[Proposition 4.1.18]{Kat}.

\begin{proposition}[{\cite[Theorem 0.2]{BP03}}]\label{prop:ergodic}
Let $f^t$ be the frame flow on an $n$-dimensional compact smooth Riemannian manifold with sectional curvature between $-\Lambda^2$ and $-\lambda^2$ for some $\Lambda,\lambda>0$.
Then in each of the following cases the flow and its time-$1$ map are ergodic:
\begin{itemize}
\item if the curvature is constant,
\item for a set of metrics of negative curvature which is open and dense in the $C^3$ topology,
\item if $n$ is odd and $n\neq 7$,
\item if $n$ is even, $n\neq 8$, and $\lambda/\Lambda > 0.93$,
\item if $n=7$ or $8$ and $\lambda/\Lambda >0.99023\ldots$.
\end{itemize}
\end{proposition}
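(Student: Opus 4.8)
The stated ergodicity results are classical: they combine theorems of Brin--Gromov (odd $n\neq 7$, constant curvature, $C^3$-generic metrics), of Brin--Karcher and its later refinements (even $n$ under curvature pinching), and of Burns--Pollicott (the borderline dimensions $n=7,8$), so what I would write is an outline of their arguments rather than a new proof. The plan has three parts: reduce ergodicity of the frame flow $f^t$ to the statement that a \emph{transitivity group} $H\subseteq SO(n-1)$ equals the whole group; verify this case by case using pinched comparison geometry and the classification of transitive actions on spheres; and then deduce ergodicity of the time-$1$ map.

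\emph{Step 1: reduction to the transitivity group.} The frame bundle $FM\to SM$ is a principal $SO(n-1)$-bundle and $f^t$ is a lift of the geodesic flow $g^t$ commuting with the $SO(n-1)$-action; since the curvature is negative, $g^t$ is Anosov, hence Bernoulli and in particular mixing. The strong stable and unstable manifolds of $f^t$ are horizontal over those of $g^t$ for the parallel-transport connection (stable and unstable Jacobi fields are carried into one another by parallel transport along the connecting geodesics), so the holonomy of the stable and unstable foliations of $f^t$ around $g^t$-loops takes values in $SO(n-1)$ and generates a closed subgroup $H\le SO(n-1)$. Brin's structure theory for compact group extensions of hyperbolic flows then shows that the ergodic components of $f^t$ are indexed by $SO(n-1)/H$, so $f^t$ is ergodic if and only if $H=SO(n-1)$. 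Granting this, $f^t$ is automatically weak mixing — an eigenfunction of $f^t$ yields a character of the trivial group $SO(n-1)/H$ and hence descends to an eigenfunction of the weak-mixing flow $g^t$, which must be constant — and a standard spectral argument upgrades weak mixing of the flow to ergodicity of the time-$1$ map $f=f^1$. So everything reduces to proving $H=SO(n-1)$ in each listed case.

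\emph{Steps 2--3: the transitivity group, case by case.} For constant curvature one bypasses $H$ entirely: $FM=\Gamma\backslash SO_0(n,1)$ has finite volume, $f^t$ is right translation by a non-compact one-parameter subgroup, and Moore's ergodicity theorem gives ergodicity (indeed mixing) immediately. For variable curvature one first shows $H$ acts transitively on the fiber sphere $S^{n-2}$: compositions of stable and unstable holonomies realize parallel transport around geodesic quadrilaterals, whose rotation defect is bounded below in terms of the pinching ratio $\lambda/\Lambda$ by Jacobi-field comparison, producing rotations in arbitrary $2$-planes, so the $H$-orbit of any unit vector is open and hence all of $S^{n-2}$ by connectedness. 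One then invokes the Montgomery--Samelson and Borel classification: the closed connected subgroups of $SO(m)$ transitive on $S^{m-1}$ are $SO(m)$ together with $U(m/2)$, $SU(m/2)$, $Sp(m/4)$, $Sp(m/4)\cdot U(1)$, $Sp(m/4)\cdot Sp(1)$ and the exceptional $G_2\subset SO(7)$, $\mathrm{Spin}(7)\subset SO(8)$, $\mathrm{Spin}(9)\subset SO(16)$. For $n$ even with $n\neq 8$ the only such subgroup of $SO(n-1)$ is $SO(n-1)$ itself, so transitivity on $S^{n-2}$ already forces $H=SO(n-1)$, which is what $\lambda/\Lambda>0.93$ provides; for $n$ odd with $n\neq 7$ one must also exclude the remaining transitive subgroups, which is done by a structural argument (they would force an invariant complex or quaternionic structure on the fiber, incompatible with the real holonomies when $n\neq 7$). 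The dimensions $n=7,8$ are exactly those in which a proper transitive subgroup survives ($SU(3)\subset SO(6)$, resp.\ $G_2\subset SO(7)$), and ruling it out forces the sharper threshold $\lambda/\Lambda>0.99023\ldots$ in the comparison step. Finally, the $C^3$-open-and-dense statement follows because ``$H=SO(n-1)$'' is an open condition on the metric (holonomies depend continuously on it, and fullness is an open property) and can be achieved by an arbitrarily small perturbation that destroys any invariant complex or quaternionic structure for the holonomies.

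\emph{Main obstacle.} The technical heart is the comparison-geometry estimate for variable curvature: bounding below, in terms of $\lambda/\Lambda$, the size of the holonomy rotations obtained around geodesic polygons, sharply enough to produce the explicit constants $0.93$ and $0.99023\ldots$. The algebraic input (the classification of transitive subgroups of $SO(m)$, and the exclusion of the complex/quaternionic cases), the constant-curvature case (Moore's theorem), and the passage from the flow to the time-$1$ map (weak mixing plus a spectral argument) are comparatively routine.
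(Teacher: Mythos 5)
This proposition is not proved in the paper at all: it is stated as a verbatim import of \cite[Theorem 0.2]{BP03}, so there is no ``paper's own proof'' to compare your outline against. Judged as a reconstruction of the Brin--Gromov/Brin--Karcher/Burns--Pollicott argument underlying that citation, your sketch has the right skeleton (Brin's reduction of ergodicity of a compact-group extension of an Anosov flow to fullness of the transitivity group $H\subseteq SO(n-1)$; Moore's theorem in constant curvature; the Montgomery--Samelson--Borel classification of transitive subgroups of $SO(m)$ acting on $S^{m-1}$; comparison-geometry control of the holonomy to pin down $H$; genericity by an openness-plus-perturbation argument).

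Two points deserve correction. First, you attribute the step ``$H$ acts transitively on $S^{n-2}$'' universally to the pinching hypothesis. That cannot be the architecture, because the odd-dimensional result ($n$ odd, $n\neq 7$) has no pinching assumption at all: Brin and Gromov establish sphere-transitivity of $H$ for odd $n$ under negative curvature alone, and then rule out the proper transitive subgroups. Pinching enters only to force transitivity (and thereby $H=SO(n-1)$) in the even-dimensional case, and to exclude the exceptional transitive subgroups in the borderline dimensions $n=7,8$; as written, your account would still require pinching for odd $n$, which contradicts the statement it is trying to prove. Second, the passage from flow ergodicity to ergodicity of $f=f^1$ is under-argued. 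Your claim that an eigenfunction of $f^t$ ``yields a character of the trivial group $SO(n-1)/H$'' and descends does not quite parse: what one actually needs is that an eigenspace of the ergodic flow is one-dimensional and hence a character of $SO(n-1)$ (using that $f^t$ commutes with the right $SO(n-1)$-action), and then that $SO(n-1)$ has no nontrivial characters since it is semisimple for $n\geq 4$; this breaks for $n=3$, where $SO(2)$ is abelian. The clean and uniform route, and the one the literature actually uses, is Brin's theorem that an ergodic compact group extension of an Anosov flow preserving a smooth measure has the $K$-property (indeed Burns--Pollicott establish stable ergodicity, which subsumes this), and $K$ implies ergodicity of every time-$t$ map. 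Phrasing the deduction that way avoids the low-dimensional gap in your eigenfunction argument.
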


We therefore have the following result.

\begin{theorem}\label{thm:maina2}
Let $(FM,f)$ be the time-$1$ map of a frame flow from one of the cases in Proposition \ref{prop:ergodic}.
Suppose that $\ph\colon FM \to \RR$ is constant on fibers $N_{x,v_0}$ and given by $\varphi(\vv):= \int_0^1\psi(f^{\ttt}(\vv))d\ttt$, where $\psi\colon FM\to\mathbb{R}$ is H\"older continuous.
Then the following are true.
\begin{enumerate}[label=\upshape{(\arabic{*})}]
\item\label{F1} For every $\vv\in FM$, the sequence of measures 
$\mu_n := \frac 1n \sum_{k=0}^{n-1} f_*^k m_\vv^\C / m_\vv^\C(\Vl^u(\vv))$ from \eqref{seq-meas}
is weak$^*$ convergent as $n\to\infty$ to a measure $\mu_\ph$, which is independent of $\vv$.
\item\label{F2} The measure $\mu_\ph$ is ergodic, gives positive weight to every open set in $FM$, has the Gibbs property \eqref{gibbs2}, and is the unique equilibrium measure for 
$(FM,f,\ph)$.
\item\label{F3}
For every rectangle $R\subset FM$ with $\mu_\ph(R)>0$, the conditional measures $\mu_\vv^u$ generated by $\mu_\ph$ on unstable sets $V_R^u(\vv)$ are equivalent for $\mu_\ph$-a.e.\ 
$\vv\in R$ to the reference measures $m_\vv^\C|_{V_R^u(\vv)}$.  Moreover, there exists $C_0>0$, independent of $R$ and $\vv$, such that for $\mu_\ph$-a.e.\ $\vv\in R$ we have
\begin{equation}\label{eqn:mu-ma2}
C_0^{-1}\le\frac{d\mu_\vv^u}{dm_\vv^\C}(\ww)m_\vv^\C(R)\le C_0\text{ for $\mu_\vv^u$-a.e.}\ \ww\in V_R^u(\vv).
\end{equation}
\item\label{F4} The measure $\mu_{\varphi}$ has local product structure.
\end{enumerate}
\end{theorem}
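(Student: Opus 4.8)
The plan is to verify that $(FM,f,\ph)$ satisfies all the hypotheses of Theorem \ref{thm:main} and then read off conclusions \ref{m1}--\ref{m4}. Most of the structural conditions are already recorded in the text or are immediate. The time-$1$ map $f$ is partially hyperbolic in the broad sense, with center-stable bundle $E^0\oplus SO(n-1)\oplus E^s$ and with $E^u$ the strong unstable bundle of the flow. Condition \ref{C1} holds by the remark following its statement: $Df$ is isometric on the flow direction $E^0$ and on the fibers $SO(n-1)$, and contracting on $E^s$, so there is $L>0$ with $\|Df^n v\|\le L\|v\|$ for all $v$ in the center-stable bundle and all $n\ge 0$. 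Condition \ref{C2} is exactly the transitivity provided by Proposition \ref{prop:ergodic} together with \cite[Proposition 4.1.18]{Kat}. Condition \ref{C3} is trivial because $\L=FM$, so both unions in \ref{C3} equal $FM$. Thus it remains only to show that $\ph\in\CB(FM)$.

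Since $\psi$ is H\"older, so is $\ph=\int_0^1\psi\circ f^\ttt\,d\ttt$, and this gives the $u$-Bowen property exactly as in the proof of Theorem \ref{thm:maina1} (via \cite[Lemma 6.6]{CPZ}). The $cs$-Bowen property is the crux, and this is where the hypothesis that $\ph$ is constant on the fibers $N_{x,v_0}$ is used. Let $p\colon FM\to SM$, $p(\vv)=(x,v_0)$, be the bundle projection; its fibers are precisely the sets $N_{x,v_0}$, so $\ph=\bar\ph\circ p$ for some $\bar\ph\colon SM\to\RR$. Because the frame flow is built from the geodesic flow $g^\ttt$ by parallel transport in the fibers, $p$ intertwines $f^\ttt$ with $g^\ttt$, hence $f$ with $g:=g^1$, and so $S_n\ph=(S_n^g\bar\ph)\circ p$. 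Next I would check that $\bar\ph$ fits the hypotheses of Theorem \ref{thm:maina1} for the Anosov flow $g^\ttt$: letting $\bar\psi$ be the fiberwise average of $\psi$ over $N_{x,v_0}$ with respect to the invariant probability measure, and using that parallel transport maps $N_{x,v_0}$ isometrically onto $N_{g^\ttt(x,v_0)}$ and hence preserves this measure, one gets $\bar\ph=\int_0^1\bar\psi\circ g^\ttt\,d\ttt$ with $\bar\psi$ H\"older. Therefore the part of the proof of Theorem \ref{thm:maina1} establishing membership in $\CB$ (which does not use transitivity) gives $\bar\ph\in\CB(SM)$; in particular $\bar\ph$ has the $cs$-Bowen property for $g$ with some constants $\bar Q_\cs,\bar r_0'$. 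To transfer this back, note that $p$ is a smooth submersion, hence Lipschitz with some constant $L_p$, and that $dp$ sends the center-stable bundle $E^0\oplus SO(n-1)\oplus E^s$ of $f$ onto the weak-stable bundle $E^0\oplus E^s$ of $g$ with $SO(n-1)=\ker dp$; by unique integrability of the center-stable bundle (Theorem \ref{thm:Wcs}) this gives $p(\Vl^\cs(\vv))\subset\Vl^\cs(p\vv)$, where the left-hand leaf is taken in $FM$ and the right-hand one in $SM$. Choosing the constant $r_0'$ for $FM$ to be at most $\bar r_0'/L_p$, every $\ww\in B_\L^\cs(\vv,r_0')$ has $p\ww\in B^\cs_{SM}(p\vv,\bar r_0')$, whence $|S_n\ph(\vv)-S_n\ph(\ww)|=|S_n^g\bar\ph(p\vv)-S_n^g\bar\ph(p\ww)|\le\bar Q_\cs$. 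This proves $\ph\in\CB(FM)$, and Theorem \ref{thm:main} then applies verbatim.

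The main obstacle is the $cs$-Bowen property. The center-stable bundle of the frame flow contains the \emph{neutral} fiber direction $SO(n-1)$, and along this direction the Birkhoff sums of a generic H\"older potential need not remain boundedly close, since displacements in the fiber are preserved, not contracted, by $Df$; thus the fibers-constant hypothesis is genuinely required, and the real content is organizing the estimate through $p$ so that this hypothesis disposes of the fiber direction while the already-established Anosov case of Theorem \ref{thm:maina1} handles the flow and strong-stable directions. The remaining points --- that the fiberwise average $\bar\psi$ is H\"older, that $p$ maps local center-stable leaves into local center-stable leaves, and the bookkeeping of constants --- are routine.
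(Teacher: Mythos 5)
Your proposal is correct, and the structural verification of \ref{C1}--\ref{C3} matches the paper's implicit setup. Where you diverge from the paper is in the handling of the $cs$-Bowen property. The paper works entirely on $FM$: because $\ph$ is H\"older it has Bowen's property along the strong stable direction; the explicit integral-shift computation from the proof of Theorem~\ref{thm:maina1} gives Bowen's property along the flow direction; and constancy on fibers makes Bowen's property along the $SO(n-1)$ direction trivial (the Birkhoff-sum difference is identically zero). Combining these three via the local product structure of the $cs$-leaves yields the $cs$-Bowen property in two lines. Your route instead descends to the geodesic flow: you observe $\ph=\bar\ph\circ p$, verify that $\bar\ph=\int_0^1\bar\psi\circ g^\ttt\,d\ttt$ with $\bar\psi$ the H\"older fiberwise average (this uses that parallel transport is measure-preserving on $N_{x,v_0}$), invoke the $\CB$-membership portion of Theorem~\ref{thm:maina1}'s proof for the geodesic flow (correctly noting this part does not need transitivity), and pull the estimate back through $p$ using $S_n\ph=(S_n^g\bar\ph)\circ p$ and the fact that $p$ carries $cs$-leaves of $f$ into weak-stable leaves of $g$.

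Both arguments are sound. The paper's is shorter because it never leaves $FM$ and never needs to define $\bar\psi$ or check that $p$ intertwines the laminations; yours is more conceptual in that it makes fully explicit that the ``fibers-constant'' hypothesis reduces the frame flow to the already-solved Anosov case, at the cost of the routine but nontrivial bookkeeping you flag at the end (H\"older regularity of the fiberwise average via local trivializations, compatibility of the $cs$-foliations on $FM$ and $SM$, Lipschitz constant of $p$). One small point worth sharpening: the inclusion $p(\Vl^\cs(\vv))\subset\Vl^\cs(p\vv)$ is not just unique integrability on $FM$ in isolation; you also need that the preimage under $p$ of a weak-stable leaf of $g$ is an integral manifold of $E^0\oplus SO(n-1)\oplus E^s$, which is what uniqueness then identifies with the $cs$-leaf of $f$. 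As stated the appeal to Theorem~\ref{thm:Wcs} elides this intermediate step, but the fix is exactly of the ``routine'' flavor you acknowledge.
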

\begin{proof}
The same computation as in the proof of Theorem \ref{thm:maina1} shows that $\ph$ satisfies Bowen's property along strong stable and unstable leaves and along the flow direction; since $\ph$ is constant along fibers this establishes the $u$- and $cs$-Bowen properties, and thus Theorem \ref{thm:main} applies.
\end{proof}

We point out that because the action of $SO(n-1)$ is transitive on each fiber and commutes with the flow, the geometric potential is constant on fibers, and thus Theorem \ref{thm:maina2} applies to the geometric $q$-potential for every $q\in\RR$.
\begin{remark}
Equilibrium measures for frame flows were recently studied by Spatzier and Visscher \cite{SV}; we briefly compare Theorem \ref{thm:maina2} to their results.
\begin{enumerate}
\item 
When the manifold $M$ has an odd dimension other than $7$, it is shown in \cite{SV} that for any H\"older continuous potential which is constant on fibers $N_{x,v_0}$ the frame flow possesses a unique equilibrium measure. The authors show that this measure is ergodic, fully supported, and has local product structure.  
However, whether this measure is weak mixing remains unknown, and without this the argument in the previous section cannot be used to deduce uniqueness for the time-$1$ map.
\item For the equilibrium measures constructed in \cite{SV} it is shown that the conditional measures generated by the equilibrium measure on central leaves are invariant under the action of $SO(n-1)$. This is a corollary of the fact that the equilibrium measure has the local product property. Hence, a similar argument will work to establish the same property for any equilibrium measure in Theorem \ref{thm:maina2}.\footnote{We would like to thank Ralf Spatzier for this comment.}
\end{enumerate}
\end{remark}

If $\mu$ is the unique equilibrium measure for the time-1 map of the flow (w.r.t.\ $\ph$), then each $f_*^t \mu$ is also an equilibrium measure for $(f,\ph)$ by similar arguments to those in the proof of Proposition \ref{prop:flow-map}, and by uniqueness we see that $\mu$ is flow-invariant; thus $\mu$ is an equilibrium measure for the flow (w.r.t.\ $\psi$).  Conversely, any equilibrium measure for the flow is an equilibrium measure for the map, so $\mu$ is also the unique equilibrium measure for the flow.  Thus Theorem \ref{thm:maina2} has the following consequence, which extends \cite{SV} to a  broader class of manifolds.

\begin{corollary}\label{cor:frame-flow}
For manifold $M$ satisfying one of the conditions listed in Proposition \ref{prop:ergodic}, and for any H\"older continuous potential which is constant on fibers $N_{x,v_0}$, the frame flow possesses a unique equilibrium measure which is ergodic, fully supported, has the Gibbs property and local product structure and satisfies \eqref{eqn:mu-ma2}.
\end{corollary}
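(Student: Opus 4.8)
The plan is to read off the corollary from Theorem \ref{thm:maina2} by running the argument of Proposition \ref{prop:flow-map} in the reverse direction: we already know uniqueness for the time-$1$ map, and we want to transfer it (and the accompanying structure) to the flow. So I would start by letting $\mu=\mu_\ph$ be the measure produced by Theorem \ref{thm:maina2}, i.e.\ the unique equilibrium measure for $f=f^1$ with respect to $\ph(\vv)=\int_0^1\psi(f^\ttt(\vv))\,d\ttt$. From that theorem, $\mu$ is ergodic for $f$, fully supported on $FM$, has the Gibbs property \eqref{gibbs2}, has local product structure, and satisfies \eqref{eqn:mu-ma2}; what remains is to show $\mu$ is flow-invariant and is the \emph{unique} equilibrium measure for $(\{f^t\},\psi)$.

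The first step is to show $\mu$ is flow-invariant. Since each $f^t$ commutes with $f$, the pushforward $f_*^t\mu$ is again $f$-invariant; conjugation-invariance of entropy gives $h_{f_*^t\mu}(f)=h_\mu(f)$, and the computation in the proof of Proposition \ref{prop:flow-map}, using $f$-invariance of $\mu$ and Fubini, gives $\int\ph\,d(f_*^t\mu)=\int\ph\,d\mu$. Hence $f_*^t\mu$ is an equilibrium measure for $(f,\ph)$ for every $t$, and the uniqueness clause of Theorem \ref{thm:maina2}\ref{F2} forces $f_*^t\mu=\mu$; so $\mu\in\M(\{f^t\})$. The second step is the thermodynamic dictionary between map and flow: by \cite[Corollary 4.12(iii)]{pW75} the pressure of the flow with respect to $\psi$ equals the pressure of the map with respect to $\ph$, for any flow-invariant $\nu$ one has $h_\nu(\{f^t\})=h_\nu(f^1)$ by definition and $\int\psi\,d\nu=\int\ph\,d\nu$ by flow-invariance and Fubini. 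Consequently a flow-invariant measure is an equilibrium measure for $(\{f^t\},\psi)$ if and only if it is an equilibrium measure for $(f,\ph)$. Applying this to $\mu$ shows $\mu$ is an equilibrium measure for the flow; applying it to an arbitrary flow equilibrium measure $\nu$ (which is flow-invariant, hence $f$-invariant) and invoking uniqueness for the map gives $\nu=\mu$. This establishes existence and uniqueness for the flow.

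Finally I would transfer the structural properties. Ergodicity of $(\{f^t\},\mu)$ is immediate from ergodicity of $(f^1,\mu)$, since every flow-invariant set is $f^1$-invariant; full support is inherited verbatim. For the Gibbs property one compares the flow Bowen ball $\{\ww : d(f^t\vv,f^t\ww)<r,\ 0\le t\le T\}$ with the map Bowen ball $B_{\lceil T\rceil}(\vv,r')$: uniform continuity of the flow on the compact manifold $FM$ shows that, after adjusting the radius, each of these sets is contained in the other, while $|\int_0^T\psi(f^t\vv)\,dt - S_{\lfloor T\rfloor}\ph(\vv)|\le\|\psi\|_\infty$; combining these with \eqref{gibbs2} and the equality of pressures yields the analogous two-sided Gibbs bound for the flow. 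Local product structure for the flow (with local charts built from the weak-stable, weak-unstable and flow directions) follows from the local product structure of $\mu$ for the map together with flow-invariance of $\mu$, which makes the conditionals of $\mu$ along the flow direction locally Lebesgue; the estimate \eqref{eqn:mu-ma2} on unstable conditionals is literally the statement of Theorem \ref{thm:maina2}\ref{F3}.

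The genuinely new content beyond Theorem \ref{thm:maina2} is thin: the only real work is showing $\mu$ is flow-invariant (second paragraph) and the bookkeeping that rephrases the Gibbs and product-structure statements from integer time to real time. Of these, the slightly delicate point I expect to be the main obstacle is the Gibbs property for the flow, since one must track carefully how flow Bowen balls of real length $T$ compare to map Bowen balls of integer length, and check that the radius adjustments together with the bounded difference between the Birkhoff integral of $\psi$ and the Birkhoff sum of $\ph$ do not destroy the two-sided bound; everything else is a routine consequence of the uniqueness already proved for the time-$1$ map.
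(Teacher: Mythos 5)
Your uniqueness argument matches the paper's: push $\mu_\ph$ forward by $f^t$, check it is still an equilibrium measure for $(f,\ph)$ exactly as in Proposition~\ref{prop:flow-map}, invoke map-uniqueness to get flow-invariance, then use the Walters identity to identify flow equilibrium measures with map equilibrium measures and conclude $\mu_\ph$ is the unique one for the flow. The one place you drift from the paper is that you read the final clause of the corollary as asking for \emph{flow-time} versions of the Gibbs property and local product structure, and you flag the flow Bowen-ball comparison as the main obstacle; in fact the paper intends ``Gibbs property'', ``local product structure'', and \eqref{eqn:mu-ma2} in their already-established, map-based senses from Theorem~\ref{thm:maina2}, so these are inherited verbatim by the (same) measure and the extra comparison of flow Bowen balls to map Bowen balls, as well as the claim that conditionals along the flow direction are locally Lebesgue, is work the corollary does not require. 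Your extra paragraph is harmless but unnecessary; once flow-invariance and the pressure identity are in hand, the corollary follows immediately.
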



\subsubsection{Partially hyperbolic diffeomorphisms with compact center leaves}
Let $M$ be a compact smooth Riemannian manifold and $U\subset M$ an open set. Let 
$f\colon U\to M$ be a diffeomorphism onto its image and $\L\subset U$ a compact invariant set on which $f$ is topologically transitive and which has a partially hyperbolic invariant splitting $T_\L M = E^u \oplus E^c \oplus E^s$, where $E^u$ and $E^s$ are uniformly expanding and contracting, respectively.  Suppose moreover that the center distribution $E^c$ is integrable to a continuous foliation with smooth leaves which are compact and that $\sup_n \|Df^n|E^c\|<\infty$.  Then Theorem \ref{thm:main} gives the following result.

\begin{theorem}\label{thm:cpt-Wc}
Let $f,\L$ be as above and let $\ph\colon\L\to\RR$ be a H\"older continuous function that is constant on each center leaf. Then there is $Q>0$ such that for every $x\in\L$, the measures $m_x^\C = m_x^\C(\cdot, P(\ph))$ on $X = \Vl^u(x) \cap \L$ satisfy
\begin{enumerate}
\item $Q^{-1}< m^{\C}_{x}(\Vl^u(x) \cap \L)< Q$;         
\item $Q^{-1}<\Jac\pi_{xy}(z)< Q$ for all rectangles $R$ and $x,y\in R$, $z\in V_R^u(x)$;
\item the sequence \eqref{seq-meas} converges to a unique equilibrium measure 
$\mu=\mu_\ph$ for $\varphi$;
\item for $\mu$-almost every $y\in M$ we have that the conditional measure 
$\mu^u_{y}$ is equivalent to $m^{\C}_{y}$, with uniform bounds as in \eqref{eqn:mu-m};
\item $\mu$ has a local product structure.        
\end{enumerate} 
\end{theorem}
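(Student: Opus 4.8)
The plan is to deduce the theorem from Theorems~\ref{thm:finite}, \ref{thm:holonomy}, and \ref{thm:main} by verifying that $(\L,f,\ph)$ satisfies the standing hypotheses of \S\ref{assumptions}: that $f$ is partially hyperbolic in the broad sense with $T_\L M = \Ecs \oplus E^u$ for $\Ecs = E^c \oplus E^s$, that Conditions \ref{C1}--\ref{C3} hold, and that $\ph \in \CB(\L)$. Granting this, conclusions (1)--(5) are exactly the statements of Theorem~\ref{thm:finite}, Theorem~\ref{thm:holonomy}, and the four parts of Theorem~\ref{thm:main}, with $Q$ taken to be the maximum of the finitely many constants produced there.

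The structural hypotheses are quick to check. Since $E^s$ is uniformly contracted and $\sup_n \|Df^n|E^c\| < \infty$, the uniform lower bound on the angle $\angle(E^c,E^s)$ gives $\Lcs := \sup_n \|Df^n|\Ecs\| < \infty$, so Condition \ref{C1} follows from the remark after its statement. Because $E^u$ is uniformly expanded while $\sup_n\|Df^n|\Ecs\|$ stays bounded, $\Ecs \oplus E^u$ is a dominated splitting; replacing the metric on $\Ecs$ by $w \mapsto \sup_{k \ge 0}\|Df^k w\|$, then smoothing and extending to $M$, we may assume \eqref{dominated-splitting} holds with some $\nu$ close to $1$ and $\chi > 1$. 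Condition \ref{C2} is the assumed transitivity, and \ref{C3} is part of the standing hypotheses in force here (it is automatic when $\L = M$, the main case of interest, where one may also take $\L$ locally maximal). Finally, by Theorem~\ref{thm:Wcs} the bundle $\Ecs$ integrates uniquely to the $cs$-lamination, and since the given center foliation $W^c$ and the strong stable foliation $W^s$ are tangent to complementary, transverse sub-bundles of $\Ecs$, each local leaf splits as $\Vl^\cs(x) = \bigcup_{z \in \Vl^c(x)} \Vl^s(z)$.

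It remains to show $\ph \in \CB(\L)$. The $u$-Bowen property follows exactly as in \cite[Lemma 6.6]{CPZ}: $\ph$ is H\"older continuous, say with exponent $\alpha$ and constant $H$, and $f^{-1}$ contracts unstable leaves exponentially (Proposition~\ref{prop:local-mfds}, part~\ref{leaves-contract}), so Birkhoff sums along nearby $u$-orbits differ by a convergent geometric series. For the $cs$-Bowen property, fix a small $r_0' > 0$ and take $x \in \L$ and $y \in B_\L^\cs(x,r_0')$. Let $z := \Vl^c(x) \cap \Vl^s(y)$, so that $y \in \Vl^s(z)$, $z \in \Vl^c(x)$, and $z \in \L$. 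For each $k \ge 0$: by $f$-invariance of the center foliation $f^k z$ lies on the center leaf through $f^k x$, so $\ph(f^k z) = \ph(f^k x)$ because $\ph$ is constant on center leaves; and $f^k y \in \Vl^s(f^k z)$ with $d(f^k z, f^k y) \le C\lambda^k d(z,y)$ for suitable $C > 0$ and $\lambda \in (0,1)$, so H\"older continuity gives $|\ph(f^k y) - \ph(f^k z)| \le H(Cr_0')^\alpha \lambda^{k\alpha}$. Summing over $0 \le k < n$,
\[
|S_n\ph(x) - S_n\ph(y)| = \Big| \sum_{k=0}^{n-1} \big(\ph(f^k z) - \ph(f^k y)\big) \Big| \le \frac{H(Cr_0')^\alpha}{1 - \lambda^\alpha} =: Q_\cs,
\]
independently of $x$, $y$, and $n$. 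Hence $\ph \in \CB(\L)$, and the cited theorems apply.

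The single new ingredient is the $cs$-Bowen property, and it is also where the hypotheses are used in an essential way; I expect it to be the main point to get right. The center direction need not be contracted -- only $\sup_n\|Df^n|E^c\| < \infty$ is available -- so $|\ph(f^k x) - \ph(f^k y)|$ cannot be bounded using continuity of $\ph$ and closeness of the two orbits alone; it is essential both that $\ph$ is \emph{exactly} constant on center leaves and that the center foliation is $f$-invariant, so that the center contribution to the Birkhoff sums cancels identically and the estimate collapses onto the exponentially contracted stable direction. A secondary technical point, handled above, is confirming that $\Ecs = E^c \oplus E^s$ fits the broad-sense framework after an adapted change of metric and that each $cs$-leaf is the stable saturation of a center leaf.
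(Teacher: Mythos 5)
Your proposal is correct and follows exactly the route the paper intends: the paper gives no proof beyond the remark ``Then Theorem~\ref{thm:main} gives the following result,'' so the content is precisely the verification of \ref{C1}--\ref{C3} and of $\ph\in\CB(\L)$, which you carry out. The only nontrivial new point is the $cs$-Bowen property, and your argument for it is the right one: decompose $y\in B_\L^\cs(x,r_0')$ through the intermediate point $z=\Vl^c(x)\cap\Vl^s(y)$, kill the center contribution exactly because $\ph$ is constant on the $f$-invariant center leaves, and sum the geometric series coming from exponential contraction along $W^s$. Two small remarks, neither a gap in your argument but worth keeping in mind: first, the existence of the intermediate point $z\in\L$ uses that the $c$- and $s$-laminations have local product structure inside each $cs$-leaf, which you invoke implicitly when writing $\Vl^\cs(x)=\bigcup_{z\in\Vl^c(x)}\Vl^s(z)$; second, conclusion (2) of the theorem is stated for ``all rectangles $R$'' while Theorem~\ref{thm:holonomy} yields the uniform Jacobian bound only for rectangles of diameter $<\sigma$ (covering a large rectangle by finitely many small ones extends the bound, at the cost of the constant, since rectangles have a priori bounded diameter $\le 2\tau$). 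Neither issue is addressed by the paper either, so your proof is on par with what the authors had in mind.
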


In particular, Theorem \ref{thm:cpt-Wc} applies when $f$ is a  topologically transitive skew product over a uniformly hyperbolic set that acts along the fibers by isometries, and when $\ph$ is a H\"older continuous potential function that is constant along fibers.

\begin{remark}\label{rmk:base}
In this skew product case one can also give a proof of existence and uniqueness of equilibrium measures by considering the dynamics of the factor map $g$ on the original uniformly hyperbolic set with respect to the H\"older continuous potential $\Phi(x)=\ph(x,\cdot)$.  This has a unique equilibrium measure $\nu$ by classical results, and using topological transitivity one can argue that there is exactly one invariant measure on $\L$ that projects to $\nu$,\footnote{As described to us by Federico Rodriguez Hertz, the idea is to show that the conditional measures on fibers must be given by Haar measure of a compact group acting transitively on fibers.} 
which must be the unique equilibrium measure for $\ph$.  We note, though, that the other properties of the equilibrium measure $\mu_{\ph}$ stated in Theorem \ref{thm:cpt-Wc} are new.
\end{remark}

We describe a specific example of a partially hyperbolic diffeomorphism $f$ for which 
\begin{enumerate}
\item the central distribution $E^c$ integrates to a continuous foliation with smooth compact leaves;
\item $\|Df|E^c\|\le 1$;
\item $f$ is {\bf not} topologically conjugate via a H\"older continuous homeomorphism to a skew product. 
\end{enumerate}
For a fixed $\alpha\in (0,1)\setminus\mathbb{Q}$ consider a transformation 
$B\colon \mathbb{T}^3\to\mathbb{T}^3$ of the $3$-torus given by
\[
B(x,y,z):= (2x+y, x+y, z+\alpha).   
\]
One can easily see that $B$ commutes with $a(x,y,z)=(-x,-y,z+\frac{1}{2})$ and hence induces a map on $M=\mathbb{T}^3/a$. This map is topologically transitive and partially hyperbolic with center foliation being the Seifert fibration of $M$ (for definition and constructions of Seifert fibrations see for example \cite{Orlik}). Consequently, 
$B\colon M\to M$ is an example of a partially hyperbolic diffeomorphism to which Theorem \ref{thm:cpt-Wc} applies and which is not topologically conjugate to a skew product.\footnote{We would like to thank Andrey Gogolyev for this example.} 

Finally, we give an example where \ref{C1} fails and there are multiple equilibrium measures, even though the growth along $\Ecs$ is subexponential.

\begin{example}\label{eg:no-C1}
Consider the linear flow on the $2$-torus $\mathbb{T}^2$ generated by the system of differential equations: 
$$
\dot{x}=\alpha ~\text{ and }~\dot{y}=\beta
$$ 
for some positive numbers $\alpha, \beta$ whose ratio is irrational. Choose a small number $t_0>0$. One can find a function $\kappa\colon [0,1]\to [0,1]$ which is $C^{\infty}$ except at the origin and satisfies
\begin{enumerate}
\item $\kappa(0)=0$ and $\kappa(t)>0$ for $t\neq 0$;
\item $\kappa(t)=1$ for $t\geq t_0$;
\item $\int_{0}^{1}\frac{1}{\kappa(t)}dt <\infty$.
\end{enumerate}
Define a function $\psi_0\colon \mathbb{T}^2\to [0,1]$ by
\[
\psi_0(x,y)= \begin{cases}
\kappa(\sqrt{x^2+y^2}) & \text{ if } (x,y)\in B(0,t_0),\\
1 & \text{ otherwise}      \end{cases}
\]
and then choose a point $p=(x_0,y_0)\in\mathbb{T}^2$ and introduce a transformation 
$\chi\colon\mathbb{T}^2 \to \mathbb{T}^2$ given by $\chi=\chi_2\circ \chi_1$ where
\[
\chi_1(x,y)=(x-x_0,y-y_0), ~ \text{ and }~ \chi_2(x,y)= (x- y\alpha/\beta, y/\beta). 
\]
Roughly speaking, $\chi$ transforms the flow lines near $p$ into vertical lines near the origin.

Finally, consider a function $\psi=\psi_0\circ \chi$ and the vector field 
\[ 
\dot{x}=\psi \alpha ~ \text{ and } ~ \dot{y}= \psi \beta. 
\]
The corresponding flow $g_t$ has a fixed point at $p$ and therefore taking a vector 
$v\in T_p\mathbb{T}^2$ in the direction of the flow, we obtain for $g=g_1$ that 
$\|Dg^n_p v\|$ is unbounded. On the other hand, one can show that
$\lim_{n\to\infty}\frac1n\log \|Dg^n_p\|=0$. In addition, since $\psi_0(x,y)=\psi_0(x,-y)$,
one can show that for any $x\neq p$ there exists $L(x)>0$ such that 
$\|Dg^n_x u\| \leq L(x) \|u\|$ for any $u\in T_x \mathbb{T}^2$.

Property (3) of function $\kappa$ guarantees that the map $g$ preserves a probability measure $m_{\kappa}$ which is absolutely continuous with respect to area. Another invariant measure for $g$ is the delta measure at $p$, $\delta_p$.

Let now $A\colon \mathbb{T}^2\to \mathbb{T}^2$ be a hyperbolic toral automorphism and let $f\colon\mathbb{T}^4\to\mathbb{T}^4$ be given by $f(x,y) = (Ax,gy)$, where 
$x,y\in \mathbb{T}^2$. Then $f$ is partially hyperbolic on $\mathbb{T}^4$ with $E^u$ coming from the unstable eigenspace of $A$ and $\Ecs = E^s\oplus\mathbb{T}^2$ where $E^s$ is the stable eigenspace of $A$. Condition \ref{C3} is clearly satisfied and \ref{C2} holds because $A$ is topologically mixing and $g$ is topologically transitive. However, \ref{C1} fails and so does the conclusion of Theorem \ref{thm:main} for $\ph=0$: writing $m$ for Lebesgue measure on $\mathbb{T}^2$, the measures $m\times m_{\kappa}$ and 
$m\times \delta_p$ are both measures of maximal entropy for $f$.
\end{example}

Note that the above direct product construction of the map $f$ together with Theorem \ref{thm:cpt-Wc} allow us to obtain a new proof of the well known result that if 
$g\colon M\to M$ is a topologically transitive isometry, then $g$ is uniquely ergodic.

\part{Proofs}
\section{Basic properties of reference measures}\label{sec:measures}

Now we begin to prove the results from \S\ref{sec:main-results}, starting with Theorem \ref{thm:finite} in this section, and the remaining results in \S\S\ref{sec:scaling}--\ref{sec:proof-of-main}.

Statement \ref{fin4} of Theorem \ref{thm:finite} is immediate from the definitions.  Statement \ref{2} is proved in \S\ref{sec:Borel}.  Most of the rest of the section is devoted to the following result, which is proved in \S\S\ref{sec:unif-trans}--\ref{sec:uniform-control}. 

For convenience, given $x\in \Lambda$ and $\delta \in (0,\tau)$ we will write 
\[
B_\Lambda(x,\delta) = B(x,\delta) \cap \Lambda\quad\text{and}\quad 
B^u_\Lambda(x,\delta) := B^u(x,\delta) \cap \Lambda = B(x,\delta) \cap \Vl^u(x) \cap \Lambda.
\]
\begin{proposition}\label{prop:uniform}
For every $r_1\in (0,\tau)$ and $r_2 \in (0,\tau/3]$ there is $Q_1>1$ such that for every $x\in \Lambda$ and $n\in\NN$ we have
\begin{equation}\label{eqn:uniform}
Q_1^{-1}e^{nP(\ph)}\le\Zsep_n(B_\L^u(x,r_1),\ph,r_2)\le Q_1e^{nP(\ph)}.
\end{equation}
\end{proposition}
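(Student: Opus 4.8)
The plan is to estimate $\Zsep_n(B_\L^u(x,r_1),\ph,r_2)$ by comparing it to the global separated partition sums $\Zsep_n(\L,\ph,r_2)$, which we know grow like $e^{nP(\ph)}$ up to subexponential factors; the content of the proposition is precisely that the subexponential factors can be replaced by uniform multiplicative constants once we restrict to a single unstable ball, and this is where Conditions \ref{C1}--\ref{C3}, together with the $u$- and $cs$-Bowen properties of $\ph$, do the real work.

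For the \textbf{upper bound}, I would take a maximal $(n,r_2)$-separated set $E\subset B_\L^u(x,r_1)$; since $r_2\le\tau/3$ the points of $E$ lie on a single local unstable leaf, so the Bowen balls $B^u_{n}(y,r_2/2)$ for $y\in E$ are pairwise disjoint subsets of a bounded-geometry piece of leaf, and a standard volume/covering argument (using that $E^u$ is uniformly expanded, so that $f^n$ distorts these balls in a controlled way, combined with the $u$-Bowen property to replace $\vol$ by $e^{S_n\ph}$) bounds $\sum_{y\in E}e^{S_n\ph(y)}$ by a constant times the pressure term $e^{nP(\ph)}$. Alternatively, and perhaps more cleanly, one compares with the reference measure: by Corollary \ref{cor:Gibbs} the family $\{m_z^\C\}$ has the $u$-Gibbs property, so $e^{S_n\ph(y)-nP(\ph)}\asymp m_y^\C(B_n^u(y,r_2))$, and disjointness of these (suitably shrunk) Bowen balls inside $\Vl^u(x)$ together with the uniform total mass bound $m_x^\C(\Vl^u(x)\cap\L)\in[K^{-1},K]$ from Theorem \ref{thm:finite}\ref{fin3} gives $\sum_{y\in E}e^{S_n\ph(y)}\le Q_1 e^{nP(\ph)}$. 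I would use this second route since the reference measures are already built for exactly this bookkeeping.

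For the \textbf{lower bound}, the key point is that a single unstable ball $B_\L^u(x,r_1)$ carries ``enough'' of the dynamics. The strategy is: start from a near-optimal $(n,r_2)$-separated subset $E\subset\L$ with $\sum_{y\in E}e^{S_n\ph(y)}\gtrsim e^{nP(\ph)}$ (subexponential error); by topological transitivity (Condition \ref{C2}) there is a single orbit segment, of bounded length $N_0$ independent of $n$, that shadows representatives of all relevant rectangles, so by the local product structure (Condition \ref{C3}) and the holonomy argument one can ``project'' a definite proportion of the $\ph$-weight of $E$ onto the fixed unstable ball. Concretely: cover $\L$ by finitely many rectangles of small diameter (Lemma \ref{lem:rectangle-partition}); on each rectangle the $cs$-holonomy onto $\Vl^u(x)$ moves $(n,r_2)$-separated points to $(n, r_2')$-separated points with comparable $\ph$-weight (here the $cs$-Bowen property controls the change in $S_n\ph$ under the holonomy, and \ref{C1} guarantees that $cs$-curves do not expand, so separation is essentially preserved); finally, use transitivity to move everything into $B_\L^u(x,r_1)$ by a uniformly bounded power of $f$, paying only a constant factor. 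Assembling these pieces gives $\Zsep_n(B_\L^u(x,r_1),\ph,r_2)\ge Q_1^{-1}e^{nP(\ph)}$.

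The \textbf{main obstacle} is the lower bound, and within it the step that transports a near-maximizing separated set from all of $\L$ into one prescribed unstable ball while keeping both the separation property (so one still counts a genuine $(n,r_2)$-separated set) and the size of the exponential sum (so one does not lose an exponential factor). The $cs$-Bowen property is exactly the tool that keeps $S_n\ph$ under control during the holonomy slide, and \ref{C1} is exactly what keeps the $cs$-direction from ruining the separation; getting the quantifiers to line up uniformly in $x$ and $n$ — in particular choosing the rectangle diameter, the holonomy radius, and the transitivity time $N_0$ in the right order — is the delicate bookkeeping I would expect to occupy most of the argument.
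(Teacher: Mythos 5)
Your proposal identifies most of the correct \emph{ingredients} (uniform transitivity as in Lemma~\ref{lem:leaf-iterates}, holonomy control via the $cs$-Bowen property and \ref{C1}, covering $\L$ by rectangles), but the overall strategy for both bounds has a gap that the paper's argument is specifically designed to close, and the second suggested route for the upper bound is circular.

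The central difficulty is the promotion from a subexponential error to a uniform multiplicative constant. Your lower bound starts from a near-optimal $(n,r_2)$-separated set $E\subset\L$ with $\sum_{y\in E}e^{S_n\ph(y)}\gtrsim e^{nP(\ph)}$, but at that stage the implicit error is only known to be $e^{o(n)}$, not bounded. You then transport a definite proportion of the weight of $E$ onto $B_\L^u(x,r_1)$, losing only a constant. But a constant-factor projection applied to a quantity with $e^{o(n)}$ error still has $e^{o(n)}$ error, and the proposition demands a genuine constant $Q_1$ uniform in $n$ and $x$. The same problem afflicts any direct comparison for the upper bound: $\Zsep_n(\L,\ph,r_2)$ itself is not a priori bounded by $Q\,e^{nP(\ph)}$. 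This is exactly what the paper's near-multiplicativity machinery resolves: first Lemma~\ref{lem:submult} gives $\Zsep_{k+\ell}\le e^{Q_u}\Zsep_k\, Z^u_\ell$, and then, for the upper bound, an iterated construction of $((k+1)n+kn_2,r_2)$-separated sets produces the complementary \emph{super}-multiplicative-type inequality $\Zsep_{(k+1)n+kn_2}\ge Q_4^{-k}\bigl(\Zsep_n\bigr)^k$ after optimization, and sending $k\to\infty$ against Lemma~\ref{lem:P-on-W} converts the rate $P(\ph)$ into a two-sided bound with a bounded constant. Your write-up never produces any inequality that becomes sharp as an auxiliary index goes to infinity, so there is no mechanism by which $e^{o(n)}$ could collapse to a constant.

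The second route you suggest for the upper bound, comparing to $m_y^\C(B_n^u(y,r_2))$ via Corollary~\ref{cor:Gibbs} and using $m_x^\C(\Vl^u(x)\cap\L)\in[K^{-1},K]$, is circular: the finiteness and positivity of $m_x^\C$ (Theorem~\ref{thm:finite}\ref{fin3}, proved in \S\ref{sec:finite}) and the $u$-Gibbs property of $\{m_x^\C\}$ (Corollary~\ref{cor:Gibbs}, proved in \S\ref{sec:gibbs}) are \emph{consequences} of Proposition~\ref{prop:uniform}, which is established first precisely so that the measures $m_x^\C$ can be shown to be nontrivial. To make the proposal work you would have to replace that comparison with the submultiplicative/iterated-separated-set bookkeeping the paper uses (Lemmas~\ref{lem:submult}--\ref{lem:movable} and the induction constructing the sets $E_k$), which is the genuinely delicate part of the argument and not merely bookkeeping on top of the transitivity and holonomy steps you describe.
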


In the course of the proof of Proposition \ref{prop:uniform}, we establish Statement \ref{fin1} of Theorem \ref{thm:finite}; see \S\ref{sec:correct-growth-rate}.  Then in \S\ref{sec:finite} we prove Statement \ref{fin3}.

Throughout, we recall that $\Lambda$ is a compact $f$-invariant set that is partially hyperbolic in the broad sense, on which \ref{C1} and \ref{C2} are satisfied (so $\Ecs$ is Lyapunov stable and $f|\L$ is topologically transitive) and the local product structure condition \ref{C3} holds. We also assume that $\ph\colon \L\to \RR$ is a potential function satisfying the $u$- and $\cs$-Bowen properties with constants $Q_u$ and $Q_\cs$, as in \S\ref{assumptions}. Recall that occasionally we use the following notation: given $A,B,C,a\ge 0$, we write $A=C^{\pm a}B$ as shorthand to mean $C^{-a}B\le A\le C^a B$.

Many of the techniques used in the proof of Proposition \ref{prop:uniform} are adapted from Bowen's paper \cite{rB745}; the underlying principle is that if $Z_n$ is a `nearly multiplicative' sequence of numbers satisfying $Z_{n+k} = Q^{\pm1}Z_n Z_k$ for some $Q$ independent of $n,k$, then $P = \lim_{n\to\infty}\frac1n\log Z_n$ exists and 
$Z_n=Q^{\pm 1}e^{nP}$ (see \cite[Lemmas 6.2--6.4]{CPZ} for a proof of this elementary fact).
The proofs here are more involved than those in \cite{rB745} because the partition sums in \eqref{eqn:uniform} actually depend on $x,r_1,r_2$, so we must control how they vary when these parameters are changed.

\subsection{Reference measures are Borel}\label{sec:Borel}

An outer measure $m$ on a metric space $(X,d)$ is said to be a \emph{metric outer measure} if $m(E\cup F) = m(E) + m(F)$ whenever $d(E,F) := \inf \{d(x,y) : x\in E, y\in F\} > 0$.  By \cite[\S2.3.2(9)]{Fed}, every metric outer measure is Borel, so to prove Statement \ref{fin2} it suffices to show that $m_x^\C$ is metric.  To this end, note that given $x\in \L$ and $y\in X=\Vl^u(x)\cap \L$, we have $\diam B_n^u(y,r) \leq r\lambda^n \to 0$ as $n\to\infty$, and thus for any $E,F\subset X$ with $d(E,F)>0$, there is $N\in \NN$ such that $B_n(y,r) \cap B_k(z,r) = \emptyset$ whenever $y\in E$, $z\in F$, and $k,n\geq N$.  In particular, for this (and larger) $N$, every $\mathcal{G}$ used in \eqref{eqn:mCP2} splits into two disjoint subsets, one that covers $E$ and one that covers $F$, which implies that $m_x^\C(E\cup F) = m_x^\C(E) + m_x^\C(F)$, so $m_x^\C$ is a metric outer measure.

\subsection{Uniform transitivity of local unstable leaves}\label{sec:unif-trans}

We will need the following consequence of topological transitivity.

\begin{lemma}\label{lem:leaf-iterates}
For every $\delta>0$ there is $n\in\NN$ such that for every $x,y\in \Lambda$, there is 
$0\le k\le n$ such that $f^k(B_\Lambda^u(x,\delta))\cap B^\cs(y,\delta)\ne\emptyset$.
\end{lemma}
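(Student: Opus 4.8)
The plan is to derive Lemma~\ref{lem:leaf-iterates} from topological transitivity of $f|\L$ together with the contraction/expansion structure of the local manifolds, in a way that is uniform in the base points $x,y$. First I would fix $\delta>0$ and use a compactness argument to produce a \emph{single} $n$ that works for all pairs $(x,y)$. The starting observation is that, by Proposition~\ref{prop:local-mfds}\ref{leaves-contract}, $B^u_\Lambda(x,\delta)$ always contains a uniformly sized relatively open piece of the unstable leaf, and (using \ref{C3}) the union of the local $cs$-leaves through points of a small ball covers a neighborhood in $\L$; so it suffices to show that, for every $x,y$, some forward iterate $f^k(B^u_\Lambda(x,\delta))$ comes within distance $\delta$ of $y$ along the $cs$-direction.

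The key steps, in order, would be: (1) cover $\L$ by finitely many balls $B_\L(z_j,\delta/4)$, $j=1,\dots,m$. (2) Topological transitivity (\ref{C2}) gives, for each ordered pair $(i,j)$, a point whose orbit passes from $B_\L(z_i,\delta/4)$ into $B_\L(z_j,\delta/4)$; more precisely, transitivity implies that for each $i,j$ there is $k_{ij}\ge 0$ with $f^{k_{ij}}(B_\L(z_i,\delta/8))\cap B_\L(z_j,\delta/8)\ne\emptyset$ --- here one uses that transitivity of a continuous map on a compact space without isolated points yields, for any two nonempty open sets $V,W$, some $k$ with $f^k(V)\cap W\ne\emptyset$. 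Set $n := \max_{i,j} k_{ij}$, which is finite since there are finitely many pairs. (3) Given arbitrary $x,y\in\L$, pick $i,j$ with $x\in B_\L(z_i,\delta/4)$ and $y\in B_\L(z_j,\delta/4)$. Then $B_\L^u(x,\delta)$ contains a local unstable disc around $x$, and I need to argue that $f^{k_{ij}}$ maps this disc so that it intersects $B^{cs}(y,\delta)$: the point $w$ with $f^{k_{ij}}(w)\in B_\L(z_j,\delta/8)$ and $w\in B_\L(z_i,\delta/8)$ is close to $x$, so its local unstable leaf is close to $x$'s; applying $f^{k_{ij}}$ expands along $E^u$ but the \emph{$cs$-coordinate} of $f^{k_{ij}}(w)$ relative to $y$ is controlled because $f^{k_{ij}}(w)$ lies within $\delta/8+\delta/4<\delta$ of $y$. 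More carefully, one takes the point $[\,f^{k_{ij}}(x'),y\,]$ for a suitable $x'\in B^u_\Lambda(x,\delta)$ chosen so that $f^{k_{ij}}(x')$ lies on $V^{cs}_{\mathrm{loc}}(y)$; this is possible because $f^{k}(B^u_\Lambda(x,\delta))$ is a $u$-curve through a point $\delta$-close to $y$, and a $u$-curve through a point near $y$ must cross $V^{cs}_{\mathrm{loc}}(y)$ by the local product structure in Theorem~\ref{thm:Wcs}. (4) Relabel $k=k_{ij}\le n$ to conclude.

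The main obstacle, I expect, is making step (3) genuinely uniform: one needs that whenever a $u$-curve passes through a point within a fixed small distance of $y$, it actually intersects $V^{cs}_{\mathrm{loc}}(y)$ (not merely comes close). This is where the transversality angle $\kappa>0$ between $E^u$ and $\Ecs$, the uniform size $\tau$ of the local manifolds, and the cone structure $\Ku,\Kcs$ must be invoked: a $u$-curve of definite length through a point near $y$ stays in a cone around $E^u$ and therefore crosses the $\Kcs$-graph defining $V^{cs}_{\mathrm{loc}}(y)$, by the $[x,y]$ construction in Theorem~\ref{thm:Wcs}. One subtlety is that $f^k(B^u_\Lambda(x,\delta))$ need not be \emph{large} --- its $u$-length could shrink if $\delta$ is interpreted as Riemannian distance in $M$ rather than leaf distance --- but since $f$ expands $E^u$ and $B^u_\Lambda(x,\delta)$ already contains a leaf-disc of definite radius (by choosing $\delta$ below $\tau$ and using $x\in\L$ so $T_x V^u_{\mathrm{loc}}(x)=E^u(x)$), the forward image only gets longer along $E^u$, so crossing $V^{cs}_{\mathrm{loc}}(y)$ is assured once we are within $\delta$. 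Handling the case $x=y$ (or nearby, where we may need $k=0$) is immediate since $x\in B^u_\Lambda(x,\delta)$ and $x\in V^{cs}_{\mathrm{loc}}(x)$ trivially. I would also note that the allowable $\delta$ may need to be taken smaller than some fixed $\delta_0$ (depending on $\tau,\kappa,r_0$ from Theorem~\ref{thm:Wcs}); since the lemma is used with $\delta$ small this is harmless.
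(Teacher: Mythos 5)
Your overall scheme --- cover $\Lambda$ by finitely many balls, apply topological transitivity \ref{C2} to each pair to get a uniform $n$, then use the local product structure to convert ``a ball visits a ball'' into ``a $u$-ball visits a $cs$-ball'' --- is close in spirit to the paper's proof, and your explicit finite cover makes the uniformity of $n$ over $(x,y)$ cleaner than the paper's $\gamma_n\to0$ argument (which as written only treats one base point $x$ at a time). However, there is a genuine gap in step (3), and it is not the obstacle you flag. You worry about whether the expanded $u$-curve actually \emph{crosses} $V^{\cs}_{\mathrm{loc}}(y)$; that part is automatic from Theorem~\ref{thm:Wcs} once some point of the curve lies within $r_0$ of $y$. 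The real problem is that you never produce a point of $f^{k}(B^u_\Lambda(x,\delta))$ that is close to $y$ at all. Transitivity gives a point $w\in B_\L(x,\delta/2)$ with $f^{k}(w)\in B_\L(y,\delta/2)$, but $w$ need not lie on $\Vl^u(x)$; projecting $w$ onto $\Vl^u(x)$ along $cs$-leaves, say $x':=\Vl^u(x)\cap\Vl^{\cs}(w)\in B^u_\L(x,\delta)$, moves you along a short $cs$-arc from $w$ to $x'$, and after applying $f^{k}$ that arc may become arbitrarily long: nothing in your argument prevents $d(f^{k}(x'),f^{k}(w))$ from being enormous, in which case $f^{k}(x')$ is nowhere near $y$. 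This is precisely the failure mode that Condition \ref{C1} (Lyapunov stability of $\Ecs$) is designed to rule out, and \ref{C1} never appears in your proof. Without it the lemma is simply false: see Example~\ref{eg:no-C1}, where \ref{C2} and \ref{C3} hold but \ref{C1} fails.

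The paper's proof uses \ref{C1} at the analogous point, but it also arranges the Smale brackets in a more economical order, which is worth noting. Rather than projecting $w$ onto $\Vl^u(x)$ on the source side, it first brackets on the image side, forming $q=[y,p]\in \Vl^{\cs}(y)\cap \Vl^u(p)$ with $p\in f^k(U)\cap B(y,\delta_2)$ --- so the target point is already \emph{on} $\Vl^{\cs}(y)$, not merely near it. Then it pulls $q$ back along its $u$-leaf; $f^{-k}$ contracts there by Proposition~\ref{prop:local-mfds}\ref{leaves-contract}, so $f^{-k}(q)$ lands next to $f^{-k}(p)\in B(x,\delta_2)$. Only then does one bracket with $x$ to land on $\Vl^u(x)$ and push forward once, and the single short $cs$-arc whose image must stay small is where \ref{C1} is invoked. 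To repair your argument you must (i) bring in \ref{C1} to shrink the radii of your covering balls so that the relevant $cs$-arcs stay inside $B(y,\delta)$ after $k\le n$ iterates, and (ii) ensure that the point you exhibit in $f^{k}(B^u_\Lambda(x,\delta))$ genuinely lies on $\Vl^{\cs}(y)$, not just near it, since the lemma demands membership in $B^{\cs}(y,\delta)$.
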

\begin{proof}
Let $\Delta_\eps = \{(x,y)\in \Lambda\times\Lambda : d(x,y)\leq \eps\}$, where $\eps>0$ is small enough so that the Smale bracket $[x,y] = \Vl^\cs(x) \cap \Vl^u(y)$ defines a continuous map $\Delta_\eps\to \Lambda$.  The function $G(x,y) = \max\{d([x,y],x),d([x,y],y)\}$ is continuous on $\Delta_\eps$ and vanishes on the diagonal $\Delta_0$.  Thus there is $\delta_1\in (0,\delta/2)$ such that $d(x,y)<\delta_1$ implies $G(x,y)<\delta/2$, and similarly there is $\delta_2\in (0,\delta_1/2)$ such that $d(x,y)<\delta_2$ implies $G(x,y)<\delta_1/2$.

Now fix $x\in \Lambda$ and let $U = B_\Lambda(x,\delta_2)$.  Given $n\in \NN$, let
\[
\gamma_n := \sup\Big\{\gamma>0 : \text{there exists $y\in \L$ such that } B(y,\gamma) \cap \bigcup_{k=0}^n f^k(U) = \emptyset \Big\}.
\]
If $\gamma_n\not\to 0$, then there are $y_n\in \Lambda$ and $\gamma>0$ such that $B_\Lambda(y_n,\gamma)\subset \Lambda \setminus \bigcup_{k=0}^n f^k(U)$ for all $n$, and thus any limit point $y=\lim_{j\to\infty} y_{n_j}$ has 
$B(y,\gamma) \cap f^k(U)=\emptyset$ for all $k\in\NN$, contradicting topological transitivity of $f|\Lambda$. Thus $\gamma_n\to 0$, and in particular, 
$B(y,\delta_2)\cap\bigcup_{k=0}^n f^k(U)\ne\emptyset$.  

\begin{figure}[htbp]
\includegraphics[width=.9\textwidth]{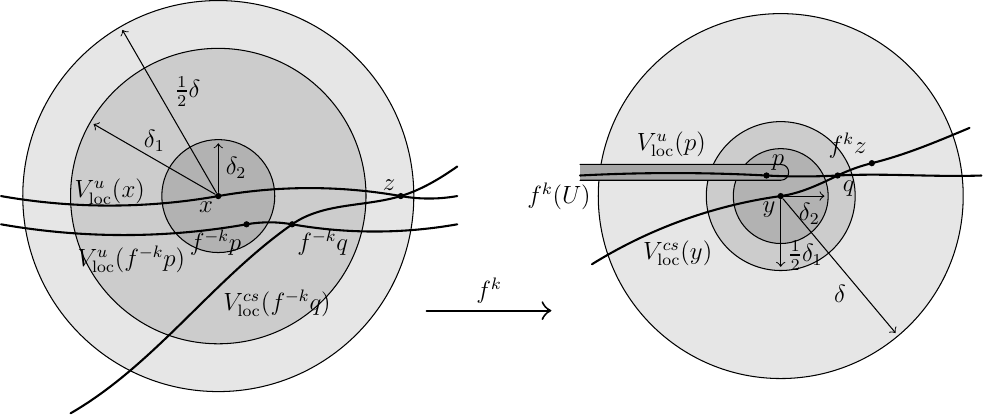}
\caption{Proving Lemma \ref{lem:leaf-iterates}.}
\label{fig:leaf-iterates}
\end{figure}

Now given $x,y\in \Lambda$, this argument gives $k\in [0,n]$ and $p\in f^k(U) \cap B(y,\delta_2)$; see Figure \ref{fig:leaf-iterates}.  Let $q=[y,p]$, so $q\in B^\cs(y,\delta_1/2) \cap B^u(p,\delta_1/2)$ by our choice of $\delta_2$.  It follows that 
\[
f^{-k}(q) \in B^u(f^{-k}(p),\delta_1/2) \subset B(x,\delta_1/2+\delta_2) \subset B(x,\delta_1).
\]
Now by our choice of $\delta_1$, we have $z := [f^{-k}(q),x]\in B^\cs(f^{-k}(q),\delta/2) \cap B^u(x,\delta/2)$, so $f^k(z) \in B^\cs(q,\delta/2) \subset B^\cs(y,\delta)$, which proves the lemma.
\end{proof}

\subsection{Preliminary partition sum estimates}
Now we need to compare the partition sums $\Zspan_n(B_\Lambda^u(x,r_1),\ph,r_2)$ and $\Zsep_n(B_\Lambda^u(x,r_1),\ph,r_2)$ from \eqref{eqn:part-sum} for various 
$x\in \Lambda$ and $0<r_1,r_2\leq \tau$. It will be useful to note that given $x\in\Lambda$ and $y\in B_n^u(x,r_2)$, we have
\[
d_n(x,y) = \max_{0\leq k\leq n} d(f^k(x), f^k(y)) = d(f^n(x), f^n(y)),
\]
so that in particular, $B_n^u(x,r_2) = f^{-n}(B^u(f^n x,r_2))$.

\subsubsection{Comparing spanning and separated sets}
\begin{lemma}\label{lem:spansep}
For every $x\in \Lambda$, $n\in \NN$, and $r_1,r_2 \in (0,\tau]$ we have
\[
\begin{aligned}
\Zspan_n(B_\Lambda^u(x,r_1),\ph,r_2) &\leq \Zsep_n(B_\Lambda^u(x,r_1),\ph,r_2) \leq e^{Q_u} \Zspan_n(B_\L^u(x,r_1),\ph,r_2/2).
\end{aligned}
\]
\end{lemma}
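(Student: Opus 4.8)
The plan is to follow the classical comparison between spanning and separated partition sums (cf.\ \cite[Theorem 9.4]{pW82}), the only new ingredient being that the $u$-Bowen property, rather than the uniform continuity of $\ph$ on Bowen balls, is what controls the change of $S_n\ph$ under a near-identity matching of points.

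For the first inequality, fix a \emph{maximal} $(n,r_2)$-separated set $E\subset B_\L^u(x,r_1)$; such a set exists and is finite since $B_\L^u(x,r_1)$ is totally bounded for the metric $d_n$ (it is totally bounded for $d$ and each $f^k$ is uniformly continuous). Maximality forces $E$ to be $(n,r_2)$-spanning for $B_\L^u(x,r_1)$, since any point farther than $r_2$ in the $d_n$-metric from all of $E$ could be adjoined. Hence $\Zspan_n(B_\L^u(x,r_1),\ph,r_2)\le\sum_{y\in E}e^{S_n\ph(y)}\le\Zsep_n(B_\L^u(x,r_1),\ph,r_2)$, the last step because $E$ is $(n,r_2)$-separated.

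For the second inequality, let $E$ be an arbitrary $(n,r_2)$-separated subset of $B_\L^u(x,r_1)$ and $F$ an arbitrary $(n,r_2/2)$-spanning set for $B_\L^u(x,r_1)$. For each $y\in E$ choose $\pi(y)\in F$ with $d_n(y,\pi(y))<r_2/2$. This assignment is injective: if $\pi(y)=\pi(y')$ then $d_n(y,y')\le d_n(y,\pi(y))+d_n(\pi(y'),y')<r_2$, forcing $y=y'$. Moreover $d(y,\pi(y))\le d_n(y,\pi(y))<r_2/2\le\tau/2$ and both points lie on $\Vl^u(x)$, so (for $\tau$ small) the consistency of the local unstable leaves gives $\pi(y)\in\Vl^u(y)$; combined with $d_n(y,\pi(y))<\tau$ this means $\pi(y)\in B^u_n(y,\tau)\cap\L$. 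The $u$-Bowen property then yields $S_n\ph(y)\le S_n\ph(\pi(y))+Q_u$, and using injectivity of $\pi$,
\[
\sum_{y\in E}e^{S_n\ph(y)}\le e^{Q_u}\sum_{y\in E}e^{S_n\ph(\pi(y))}\le e^{Q_u}\sum_{z\in F}e^{S_n\ph(z)}.
\]
Taking the infimum over $F$ and then the supremum over $E$ gives $\Zsep_n(B_\L^u(x,r_1),\ph,r_2)\le e^{Q_u}\Zspan_n(B_\L^u(x,r_1),\ph,r_2/2)$.

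The only point that needs attention is the assertion $\pi(y)\in\Vl^u(y)$: one must know that two points of a single local unstable leaf which are close in $M$ each lie on the local unstable leaf of the other, which holds once $\tau$ has been fixed sufficiently small (the analogue for $\Vl^\cs$ is used in \S\ref{sec:lps}). This is precisely what makes the $u$-Bowen property applicable to the pair $(y,\pi(y))$; everything else is bookkeeping with the definitions of $\Zspan$ and $\Zsep$.
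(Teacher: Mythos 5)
Your proof is correct and follows essentially the same two-step argument as the paper: maximal separated sets are spanning for the first inequality, and an injective nearest-point assignment from a separated set to a spanning set combined with the $u$-Bowen property for the second. The one place where you go beyond the paper's exposition is in flagging the leaf-consistency point — that $y$ and $\pi(y)$ lying on $\Vl^u(x)$ and being close implies $\pi(y)\in\Vl^u(y)$, so that the $u$-Bowen hypothesis $\pi(y)\in B_n^u(y,\tau)\cap\L$ actually applies; the paper's proof tacitly uses the same fact when it writes $y(z)\in F\cap B_n^u(z,r_2/2)$, so this is a welcome clarification rather than a genuine divergence in method.
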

\begin{proof}
If $E\subset B^u(x,r_1)$ is a maximal $(n,r_2)$-separated set, then it must be an
$(n,r_2)$-spanning set as well, otherwise we could add another point to it while
remaining $(n,r_2)$-separated.  Thus
\[
\Zspan_n(B_\L^u(x,r_1),\ph,r_2) \leq \sum_{z\in E} e^{S_n\ph(z)} \leq
\Zsep_n(B_\L^u(x,r_1),\ph,r_2),
\]
which proves the first inequality.
Now let $F\subset B_\L^u(x,r_1)$ be \emph{any} $(n,r_2/2)$-spanning set.  Given any
$(n,r_2)$-separated set $E\subset B_\L^u(x,r_1)$, every $z\in E$ has a point
$y(z)\in F \cap B_n^u(z,r_2/2)$, and the map $z\mapsto y(z)$ is injective, so
\[
\Zsep_n(B_\L^u(x,r_1),\ph,r_2)
\le\sum_{z\in E} e^{S_n\ph(z)}
\le\sum_{z\in E} e^{S_n\ph(y(z))+Q_u}\le e^{Q_u}\sum_{y\in F}e^{S_n\ph(y)}.
\]
Taking an infimum over all such $F$ gives the second inequality.
\end{proof}

\subsubsection{Changing leaves}
Let $\eps>0$ be such that $[y,z]$ exists whenever $d(y,z) < \eps$.  Without loss of generality we assume that $\eps \leq \tau/3$. The following two statements allow us to compare partition sums along different leaves.

\begin{lemma}\label{lem:comparison}
Given any $r_1\in (0,\eps)$ and $r_2\in (0,\tau/3]$, there are $n_1 = n_1(r_1)\in\NN$ and $Q_2=Q_2(r_1,r_2)>0$ such that
given any $x,y\in \Lambda$ and $n\ge n_1$ we have
\begin{align}
\label{eqn:Zspan-xy}
\Zspan_{n-n_1}(B_\L^u(y,r_1),\ph,3r_2) &\leq Q_2\Zspan_n(B_\L^u(x,r_1),\ph,r_2).
\end{align}
\end{lemma}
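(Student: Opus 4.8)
The plan is to start from a near\-optimal $(n,r_2)$\-spanning set $E$ for $B_\L^u(x,r_1)$ and manufacture from it an $(n-n_1,3r_2)$\-spanning set for $B_\L^u(y,r_1)$ of comparable weight. The device for moving between the two leaves is Lemma~\ref{lem:leaf-iterates}, playing the role of ``specification along unstable leaves'': after at most $n_1$ iterates a small piece of the unstable leaf of any point of $B_\L^u(y,r_1)$ comes $cs$\-close to the unstable leaf of $x$, and one $cs$\-holonomy then brings it onto $\Vl^u(x)$. Unwinding such a connection, a point $v'\in B_\L^u(y,r_1)$ is matched to a point $u\in\Vl^u(x)$ near $x$, which is $(n,r_2)$\-spanned by some $z\in E$, and the spanning point we assign to $v'$ is (a point of $B_\L^u(y,r_1)$ close, in the $(n-n_1)$\-dynamical metric, to) $f^{k}z$. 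The costs — loss of $n_1$ in the index, inflation of the scale from $r_2$ to $3r_2$, a multiplicative error $e^{\pm(Q_u+Q_\cs+2n_1\|\ph\|_\infty)}$, and a combinatorial factor — are absorbed into $Q_2=Q_2(r_1,r_2)$.

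Concretely, I would first fix a radius $\rho=\rho(r_1,r_2)\in(0,r_1)$ small enough that: (i) $cs$\-brackets exist within $3\rho$ (Theorem~\ref{thm:Wcs}), so the relevant holonomies are defined and bi\-Lipschitz with constant $\le 2$; (ii) the number $\theta$ produced by Condition~\ref{C1} on input $3\rho$ satisfies $\theta\le r_2/2$, so two orbits joined by a short $cs$\-curve stay $r_2$\-close forever; (iii) $(2C_1+1)\rho\le r_1$, where $C_1,\lambda$ are as in Proposition~\ref{prop:local-mfds} — this makes the ``bridge'' land inside $B_\L^u(x,r_1)$; and (iv) $3\rho<r_0'$ and $\max\{1,C_1\lambda\}(r_2+2\theta)<\tau$, so that the $cs$\- and $u$\-Bowen properties apply to the balls that arise. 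Apply Lemma~\ref{lem:leaf-iterates} with this $\rho$ in place of $\delta$ to get $n_1:=n_1(\rho)$; since $\rho$ depends only on $r_1,r_2$, so does $n_1$. Let $N_0=N_0(r_1,r_2)$ be a number of $\rho$\-balls sufficient to cover a radius\-$r_1$ ball inside a $\dim E^u$\-dimensional disc.

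Now fix $x,y\in\Lambda$ and $n\ge n_1$, and cover $B_\L^u(y,r_1)$ by balls $B_\L^u(\zeta_j,\rho)$, $1\le j\le N_0$. For each $j$, Lemma~\ref{lem:leaf-iterates} gives $0\le k_j\le n_1$ and $p_j\in B_\L^u(x,\rho)$ with $f^{k_j}p_j\in B^\cs(\zeta_j,\rho)$; let $\pi_j$ be the $cs$\-holonomy sliding $\Vl^u(f^{k_j}p_j)$ onto $\Vl^u(\zeta_j)=\Vl^u(y)$, so $\pi_j(f^{k_j}p_j)=\zeta_j$ and $\pi_j$ is defined and near the identity on $B_\L^u(f^{k_j}p_j,2\rho)$. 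Let $E$ be a near\-optimal $(n,r_2)$\-spanning set for $B_\L^u(x,r_1)$. Given $v'\in B_\L^u(\zeta_j,\rho)$, put $v^0:=\pi_j^{-1}(v')$ and $u:=f^{-k_j}v^0$; by Proposition~\ref{prop:local-mfds}\ref{leaves-contract}, $d(u,x)\le C_1\lambda^{k_j}d(v^0,f^{k_j}p_j)+\rho\le(2C_1+1)\rho\le r_1$, so some $z\in E$ has $d_n(z,u)<r_2$. Using $v^0=f^{k_j}u$ and $d(f^iv^0,f^iv')\le\theta$ for $i\ge0$ (Condition~\ref{C1}), one checks $d(f^iv',f^i(f^{k_j}z))<r_2+\theta$ for $0\le i<n-k_j$, hence $d_{n-n_1}(v',f^{k_j}z)<r_2+\theta$. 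Thus $z\in E_j:=\{z\in E:B^u_{n-n_1}(f^{k_j}z,r_2+\theta)\cap B_\L^u(\zeta_j,\rho)\ne\emptyset\}$; choosing for each $z\in E_j$ a point $w^*(z,j)\in B^u_{n-n_1}(f^{k_j}z,r_2+\theta)\cap B_\L^u(\zeta_j,\rho)$ and setting $F^*:=\{w^*(z,j):1\le j\le N_0,\ z\in E_j\}$, we get $F^*\subset B_\L^u(y,r_1)$, and for every $v'$ as above $d_{n-n_1}(w^*(z,j),v')\le 2(r_2+\theta)\le 3r_2$; so $F^*$ is an $(n-n_1,3r_2)$\-spanning set for $B_\L^u(y,r_1)$.

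For the weights, given $w=w^*(z,j)$ put $v^\flat:=\pi_j^{-1}(w)$ and $u^\flat:=f^{-k_j}v^\flat\in\Vl^u(x)$; the relations $d(f^iw,f^i(f^{k_j}z))<r_2+\theta$ $(0\le i<n-n_1)$, $d(f^iv^\flat,f^iw)\le\theta$ $(i\ge0)$, and backward contraction on $\Vl^u(x)$ give $d_{n-n_1+k_j}(u^\flat,z)<\max\{1,C_1\lambda\}(r_2+2\theta)<\tau$, so the $u$\-Bowen property gives $|S_{n-n_1+k_j}\ph(u^\flat)-S_{n-n_1+k_j}\ph(z)|\le Q_u$; combining this with $|S_{n-n_1}\ph(w)-S_{n-n_1}\ph(v^\flat)|\le Q_\cs$ (the $cs$\-Bowen property), $S_{n-n_1}\ph(v^\flat)=S_{n-n_1+k_j}\ph(u^\flat)\pm k_j\|\ph\|_\infty$, and $S_{n-n_1+k_j}\ph(z)=S_n\ph(z)\pm(n_1-k_j)\|\ph\|_\infty$ yields $S_{n-n_1}\ph(w)=S_n\ph(z)\pm(Q_u+Q_\cs+2n_1\|\ph\|_\infty)$. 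Summing over $F^*$ and using that each $z\in E$ occurs for at most $N_0$ values of $j$,
\[
\Zspan_{n-n_1}(B_\L^u(y,r_1),\ph,3r_2)\le\sum_{w\in F^*}e^{S_{n-n_1}\ph(w)}\le N_0e^{Q_u+Q_\cs+2n_1\|\ph\|_\infty}\Zspan_n(B_\L^u(x,r_1),\ph,r_2),
\]
which is the claim with $Q_2:=N_0e^{Q_u+Q_\cs+2n_1\|\ph\|_\infty}$. The step I expect to be the main obstacle is not the logical skeleton but the uniform control of the geometric constants: Lemma~\ref{lem:leaf-iterates} supplies $n_1$ only qualitatively, with no relation to the expansion rate of $Df|E^u$, so $\rho$ must be chosen so that (i)--(iv) hold — the decisive constraint being $(2C_1+1)\rho\le r_1$, which is what forces the bridged point back inside $B_\L^u(x,r_1)$ — and one must then verify that every holonomy used is defined on the neighbourhood where it is applied, is near the identity with constants depending only on $\rho$, and interacts correctly with the $d_n$\-metrics and Birkhoff sums; the $\le n_1$ ``time shift'' built into the bridge is harmless precisely because Condition~\ref{C1} together with the $cs$\- and $u$\-Bowen properties let one reroute through $\Vl^u(x)$ at a bounded cost. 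An analogous argument (interchanging spanning and separated sets, cf.\ Lemma~\ref{lem:spansep}, and reversing the dynamics) gives the companion inequality that, with this one, feeds the near\-multiplicativity argument behind Proposition~\ref{prop:uniform}.
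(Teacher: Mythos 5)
Your proof is correct and uses essentially the same strategy as the paper: bridge the two unstable leaves via Lemma~\ref{lem:leaf-iterates} in at most $n_1$ iterates, transport a near-optimal $(n,r_2)$-spanning set from $B_\L^u(x,r_1)$ to $B_\L^u(y,r_1)$ through that bridge, control the Birkhoff sums with the $u$- and $cs$-Bowen properties plus a $\le n_1\|\ph\|$ time-shift error, and absorb the overcounting into $Q_2$. The only cosmetic difference is the bookkeeping: you cover $B_\L^u(y,r_1)$ by $N_0(r_1,r_2)$ small balls and use one bridge parameter $k_j$ per ball (overcounting factor $N_0$), whereas the paper allows each point of $B_\L^u(y,r_1)$ its own $k\in[0,n_1]$ and unions the resulting spanning sets over $k$ (factor $n_1+1$); both yield a constant depending only on $r_1,r_2$.
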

\begin{proof}
Choose $\eps>0$ small enough that if $x\in \Lambda$, $y\in B_\Lambda^u(x,r_2)$, and $z\in B_\Lambda^\cs(x,\eps)$, then $\Vl^\cs(y) \cap B^u(z,2r_2) \neq\emptyset$; then let $\delta=\delta(\eps)>0$ be given by \ref{C1}.
By Lemma \ref{lem:leaf-iterates}, there is $n_1\in \NN$ such that for every $x,y\in \Lambda$ there is $k=k(x,y)\in [0,n_1]$ with $f^k(B_\L^u(x,r_1))\cap B^\cs(y,\delta)\neq\emptyset$. 

Now given $x,y\in \Lambda$, $n\geq n_1$, and any $(n,r_2)$-spanning set $E\subset B_\L^u(x,r_1)$, we will produce an $(n-n_1,3r_2)$-spanning set $E'\subset B_\L^u(y,r_1)$. To this end, let $U = \bigcup_{z\in B_\L^u(y,r_1+r_2)} \Vl^{cs}(z)$, and let $\pi\colon U\to B_\L^u(y,r_1+r_2)$ be projection along center-stable leaves.  We first claim that
\[
E_1 := \bigcup_{k=0}^{n_1} \pi(f^k(E) \cap U)
\subset B_\L^u(y,r_1+r_2)
\]
has the property that
\begin{equation}\label{eqn:E1}
\bigcup_{w\in E_1} B_{n-n_1}^u(w,2r_2) \supset B_\L^u(y,r_1).
\end{equation}
Indeed, given $z\in B_\L^u(y,r_1)$, by the choice of $n_1$ there are $k\in [0,n_1]$ and $p\in B_\L^u(x,r_1)$ such that $f^k(p) \in B^\cs(z,\delta)$, and since $E$ is an $(n,r_2)$-spanning set in $B_\L^u(x,r_1)$, we can choose a point $q\in E \cap B_n^u(p,r_2)$.  Then
\[
f^k(q) \in B_{n-k}^u(f^k(p),r_2) \subset B_{n-n_1}^u(f^k(p),r_2),
\]
so for all $0\leq j < n-n_1$ we have $d(f^j(f^k q), f^j(f^k p)) < r_2$. By \ref{C1} we also have $d(f^j(f^k q), f^j(\pi f^k q)) \leq \eps$, and thus our choice of $\eps$ gives 
$d(f^j(\pi f^k q),f^j(\pi f^k p)) < 2r_2$ for all such $j$.  Since $\pi(f^k p) = z$, we conclude that
$\pi(f^k(q)) \in B_{n-n_1}^u(z,2r_2)$, 
which
 proves \eqref{eqn:E1}.  To produce $E' \subset B_\L^u(y,r_1)$, consider the sets
\[
E_2 := E_1 \cap B_\L^u(y,r_1),\quad
E_3 := \{z\in E_1 \setminus E_2 : B_{n-n_1}^u(z,r_2) \cap B_\L^u(y,r_1) \neq\emptyset\}.
\]
Define a map $T\colon E_3 \to B_\L^u(y,r_1)$ by choosing for each $z\in E_3$ some 
$T(z) \in B_{n-n_1}^u(z,r_2) \cap \L$. Then $E' = E_2 \cup T(E_3)$ is an $(n-n_1,3r_2)$-spanning set in $B_\L^u(y,r_1)$.

Given $k\in [0,n_1]$ and $j\in \{2,3\}$, let $E_j^k = \{p\in E : \pi(f^k(p)) \in E_j\}$, so
\begin{equation}\label{eqn:E'-union}
E' = \bigcup_{k=0}^{n_1} \pi f^k(E_2^k) \cup T(\pi f^k(E_3^k))
\end{equation}
By the $\cs$-Bowen property, for each $p\in E$ we have
\begin{equation}\label{eqn:p-pi}
|S_{n-n_1}\ph(\pi(f^k(p))) - S_{n-n_1}\ph(f^k(p))| \leq Q_\cs.
\end{equation}
Since $T(z)\in B_{n-n_1}^u(z,r_2)$, the $u$-Bowen property gives
\begin{equation}\label{eqn:zTz}
|S_{n-n_1}\ph(T(z)) - S_{n-n_1}\ph(z)| \leq Q_u.
\end{equation}
Using the $(n-n_1,3r_2)$-spanning property of $E'$ together with \eqref{eqn:E'-union}--\eqref{eqn:zTz}, we obtain
\[
\begin{aligned}
\Zspan_{n-n_1}&(B_\L^u(y,r_1),\ph,3r_2)\leq 
\sum_{z\in E'} e^{S_{n-n_1}\ph(z)} \\
&\leq
\sum_{k=0}^{n_1}\bigg(\sum_{p\in E_2^k} e^{S_{n-n_1}\ph(\pi(f^k(p)))} +
\sum_{p\in E_3^k} e^{S_{n-n_1}\ph(T\pi(f^k(p)))}\bigg) \\
&\leq \sum_{k=0}^{n_1} \bigg(\sum_{p\in E_2^k} e^{S_{n-n_1}\ph(f^kp) + Q_\cs} + \sum_{p\in E_3^k} e^{S_{n-n_1} \ph(f^k(p0) + Q_\cs + Q_u}\bigg) \\
&\leq (n_1+1) \sum_{p\in E} e^{S_n\ph(p) + Q_\cs+Q_u + n_1 \|\ph\|}.
\end{aligned}
\]
Putting $Q_2:= (n_1 +1) e^{Q_\cs+Q_u + n_1\|\ph\|}$ and taking an infimum over all $E$ proves \eqref{eqn:Zspan-xy}.
\end{proof}

\subsubsection{Changing scales}
\begin{lemma}\label{lem:r3r2}
For every $r_2,r_3\in (0,\tau]$, there is $n_0\in \NN$ such that for every $x\in \Lambda$ and $r_1\in (0,\tau]$, we have
\[
\Zsep_n(B_\L^u(x,r_1), \ph,r_3) \leq e^{n_0\|\ph\|} \Zsep_{n+n_0}(B_\L^u(x,r_1),\ph,r_2).
\]
\end{lemma}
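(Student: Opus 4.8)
The plan is to reduce the lemma to a single combinatorial fact: for a suitable $n_0 = n_0(r_2,r_3)\in\NN$, every $(n,r_3)$-separated subset of $B_\L^u(x,r_1)$ is automatically an $(n+n_0,r_2)$-separated subset, for every $n$, $x$, and $r_1$. Crucially, $n_0$ will depend only on $r_2$, $r_3$, and the uniform constants $C_1,\lambda$ from Proposition~\ref{prop:local-mfds}, so no bookkeeping is needed to make the conclusion uniform in $x$ and $r_1$.

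Granting this, the lemma follows at once: if $E\subset B_\L^u(x,r_1)$ is $(n,r_3)$-separated then $E$ is a legitimate $(n+n_0,r_2)$-separated set, and since $|S_n\ph(z)-S_{n+n_0}\ph(z)|=\bigl|\sum_{j=n}^{n+n_0-1}\ph(f^jz)\bigr|\le n_0\|\ph\|$ for every $z$, we get
\[
\sum_{z\in E}e^{S_n\ph(z)}\le e^{n_0\|\ph\|}\sum_{z\in E}e^{S_{n+n_0}\ph(z)}\le e^{n_0\|\ph\|}\,\Zsep_{n+n_0}(B_\L^u(x,r_1),\ph,r_2);
\]
taking the supremum over all such $E$ finishes the proof.

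To establish the combinatorial fact, fix $x,r_1$ and suppose $a,b\in B_\L^u(x,r_1)$ satisfy $d_{n+n_0}(a,b)<r_2$; I claim $d_n(a,b)\le C_1\lambda^{n_0}r_2$. Fix $0\le k<n$. Since $a,b\in\Vl^u(x)$, their images $f^{k+n_0}a$ and $f^{k+n_0}b$ lie on a common unstable leaf, and since $d(f^{k+n_0}a,f^{k+n_0}b)<r_2\le\tau$ we have $f^{k+n_0}b\in\Vl^u(f^{k+n_0}a)\cap\L$; applying Proposition~\ref{prop:local-mfds}\ref{leaves-contract} to this pair with exponent $n_0$ gives
\[
d(f^ka,f^kb)=d\bigl(f^{-n_0}(f^{k+n_0}a),f^{-n_0}(f^{k+n_0}b)\bigr)\le C_1\lambda^{n_0}d(f^{k+n_0}a,f^{k+n_0}b)\le C_1\lambda^{n_0}r_2.
\]
As $k\in[0,n)$ was arbitrary, $d_n(a,b)\le C_1\lambda^{n_0}r_2$. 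Now choose $n_0$ with $C_1\lambda^{n_0}r_2<r_3$, which is possible since $\lambda<1$: then $d_{n+n_0}(a,b)<r_2$ forces $d_n(a,b)<r_3$, i.e.\ any $(n,r_3)$-separated set is $(n+n_0,r_2)$-separated, as needed.

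The only place requiring a little care is the assertion that two points lying on a common (global) unstable leaf at ambient distance $<\tau$ belong to a common \emph{local} unstable manifold $\Vl^u(\cdot)$ of the standard size; this is where $r_2\le\tau$ is used, and if one wants to be scrupulous about the discrepancy between intrinsic and extrinsic distances on the leaves, one simply shrinks the threshold (e.g.\ works with $<\tau/2$) at the cost of harmless adjustments to constants. I do not foresee a genuine obstacle: this ``absorb a change of scale into a bounded number of extra iterates'' device is exactly the mechanism by which the scale $r$ disappears from the pressure in \eqref{eqn:pressure}, and the uniformity of $C_1$ and $\lambda$ over $\Lambda$ automatically delivers an $n_0$ that works for all $x$ and $r_1$.
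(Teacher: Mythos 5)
Your proposal is correct and follows essentially the same route as the paper: choose $n_0$ so that $n_0$ steps of backward contraction along unstable leaves shrink scale $r_2$ below $r_3$, so that every $(n,r_3)$-separated set in $B_\L^u(x,r_1)$ is automatically $(n+n_0,r_2)$-separated, and then compare $S_n\ph$ with $S_{n+n_0}\ph$ via the trivial bound $n_0\|\ph\|$. You are in fact slightly more careful than the printed proof, which chooses $n_0$ via $r_2\lambda^{n_0}<r_3$ and silently drops the constant $C_1$ from Proposition~\ref{prop:local-mfds}\ref{leaves-contract}, whereas you correctly require $C_1\lambda^{n_0}r_2<r_3$.
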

\begin{proof}
Choose $n_0\in \NN$ such that $r_2 \lambda^{n_0} < r_3$, where $\lambda<1$ is as in Proposition \ref{prop:local-mfds}.  Then if $x\in \Lambda$ and $y,z\in \Vl^u(x)$ are such that $d_n(y,z)\geq r_3$, we must have $d_{n+n_0}(y,z)\geq r_2$.  This shows that any $(n,r_3)$-separated subset $E\subset B_\L^u(x,r_1)$ is $(n+n_0,r_2)$-separated.  Moreover, we have
\[
\sum_{y\in E} e^{S_n\ph(y)} \leq e^{n_0\|\ph\|} \sum_{y\in E} e^{S_{n+n_0}\ph(y)},
\]
and taking a supremum over all such $E$ completes the proof.
\end{proof}

\subsubsection{Correct growth rate}\label{sec:correct-growth-rate}
At this point we have enough machinery developed to prove that the leafwise partition sums have the same growth rate as the overall partition sums so that we can use the former to compute the topological pressure in \eqref{eqn:pressure}.  This is not yet quite enough to conclude Proposition \ref{prop:uniform}, but is an important step along the way.*\blfootnote{
The published version of this paper contains an error in the proof of Lemma \ref{lem:P-on-W} (an incorrect deduction involving lim sup and lim inf using Lemma \ref{lem:spansep}). We are grateful to Xue Liu for bringing this issue to our attention. The lemma remains correct as stated in the published paper, and the proof presented here corrects the problem.}

\begin{lemma}\label{lem:P-on-W}
For every $x\in \Lambda$, $r_1 \in (0,\eps)$, and $r_2 \in (0,\tau/3]$, we have
\begin{equation}\label{eqn:Pspansep}
P(\ph) = \lim_{n\to\infty}\frac 1n \log \Zspan_n(B_\L^u(x,r_1),\ph,r_2)\\
= \lim_{n\to\infty}\frac 1n \log \Zsep_n(B_\L^u(x,r_1),\ph,r_2).
\end{equation}
\end{lemma}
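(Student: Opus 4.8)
The plan is to pin the common exponential growth rate of the leafwise partition sums between $P(\ph)$ from above and from below. First I would record the consequences of the three preceding lemmas. By Lemma~\ref{lem:r3r2}, for a fixed $x$ and $r_1$ one may pass between any two scales in $\Zsep_n(B_\L^u(x,r_1),\ph,\cdot)$ at the cost of a bounded shift in $n$ and a multiplicative factor $e^{n_0\|\ph\|}$, so the numbers $\ulim_n\frac1n\log\Zsep_n(B_\L^u(x,r_1),\ph,r_2)$ and $\llim_n\frac1n\log\Zsep_n(B_\L^u(x,r_1),\ph,r_2)$ do not depend on $r_2\in(0,\tau]$; sandwiching $\Zspan_n(\cdot,r)\le\Zsep_n(\cdot,r)\le e^{Q_u}\Zspan_n(\cdot,r/2)$ via Lemma~\ref{lem:spansep} and using this scale-independence shows that the $\Zspan$ versions have the same $\ulim$ and $\llim$. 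Call these numbers $\overline P(x)$ and $\underline P(x)$. Applying Lemma~\ref{lem:comparison} with the roles of $x$ and $y$ interchanged gives $\overline P(x)=\overline P(y)$ and $\underline P(x)=\underline P(y)$ for all $x,y\in\L$, so I may write $\overline P$ and $\underline P$. It suffices to prove $\underline P\ge P(\ph)\ge\overline P$. The upper bound is immediate: since $B_\L^u(x,r_1)\subset\L$ we have $\Zsep_n(B_\L^u(x,r_1),\ph,r_2)\le\Zsep_n(\L,\ph,r_2)$, and as $r\mapsto\ulim_n\frac1n\log\Zsep_n(\L,\ph,r)$ is non-increasing with limit $P(\ph)$ as $r\to0$, this yields $\overline P\le\ulim_n\frac1n\log\Zsep_n(\L,\ph,r_2)\le P(\ph)$.

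For the lower bound $\underline P\ge P(\ph)$ I would transfer full-space separated sets onto unstable leaves by projecting along center-stable leaves, then use Lemma~\ref{lem:comparison} to move to the fixed leaf through $x$. Fix a small $\rho>0$. Using the rectangles of \eqref{rectangle} (or Lemma~\ref{lem:rectangle-partition}), cover $\L$ by finitely many rectangles $R_1,\dots,R_{N(\rho)}$, each of diameter less than $\delta'=\delta'(\rho)$, where $\delta'\le\min(\eps,r_0',r_0)$ is chosen small enough, using \ref{C1}, that every $cs$-curve of length at most $\delta'$ has all forward iterates of length at most $\rho/4$. Let $p_i$ be the center of $R_i$ and let $\pi_i\colon R_i\to V_{R_i}^u(p_i)$ be the projection along center-stable leaves; since $z$ and $\pi_i(z)$ lie on a common local center-stable leaf joined there by a short curve, Lyapunov stability gives $d_n(z,\pi_i(z))\le\rho/4$ for every $n$. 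Hence if $F\subset\L$ is $(n,\rho)$-separated, then after assigning each point of $F$ to a rectangle containing it, on each $R_i$ the map $\pi_i$ is injective on the assigned points and carries them to an $(n,\rho/2)$-separated subset of $B_\L^u(p_i,\delta')$; by the $cs$-Bowen property $|S_n\ph(z)-S_n\ph(\pi_i z)|\le Q_\cs$, and summing over $i$ gives
\[
\Zsep_n(\L,\ph,\rho)\le e^{Q_\cs}\,N(\rho)\,\max_{1\le i\le N(\rho)}\Zsep_n\big(B_\L^u(p_i,\delta'),\ph,\rho/2\big).
\]

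Applying Lemma~\ref{lem:comparison} with radius $\delta'$, together with Lemmas~\ref{lem:spansep} and \ref{lem:r3r2} to pass freely between $\Zspan$ and $\Zsep$ and between scales (all at the cost of bounded time-shifts and multiplicative constants), bounds the right-hand side by $C(\rho)\,\Zsep_{n+m}(B_\L^u(x,r_1),\ph,r_2)$ for some $m\in\NN$ and $C(\rho)>0$ independent of $n$ and of $i$. Therefore
\[
\Zsep_{n+m}\big(B_\L^u(x,r_1),\ph,r_2\big)\ \ge\ \big(e^{Q_\cs}N(\rho)C(\rho)\big)^{-1}\,\Zsep_n(\L,\ph,\rho)\qquad\text{for every }n,
\]
and taking $\llim_n\frac1n\log$ of both sides (the constants drop out, and $\frac{1}{n}\log\Zsep_{n+m}$ has the same $\llim$ as $\frac{1}{m}\log\Zsep_m$) gives $\underline P\ge\llim_n\frac1n\log\Zsep_n(\L,\ph,\rho)$. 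Letting $\rho\to0$ and using that $P(\ph)$ also equals $\lim_{r\to0}\llim_n\frac1n\log\Zsep_n(\L,\ph,r)$ (cf.\ the statement of \eqref{eqn:pressure}), we conclude $\underline P\ge P(\ph)$.

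Combining the three steps, $P(\ph)\le\underline P\le\overline P\le P(\ph)$, so $\underline P=\overline P=P(\ph)$; in particular both limits in \eqref{eqn:Pspansep} exist and equal $P(\ph)$. I expect the lower bound to be the main obstacle. The projection estimate by itself only compares the full-space partition sums to leafwise sums at a comparable scale, and—because the system need not be expansive (or even asymptotically entropy-expansive) in the center-stable direction—the full-space sums at a fixed positive scale need not yet see all of $P(\ph)$. The resolution is to feed the projection estimate through the leaf-to-leaf comparison of Lemma~\ref{lem:comparison} so as to obtain an inequality valid for every $n$ relating the full-space sum at time $n$ to the leafwise sum at time $n+m$, and only then to take $\llim$ in $n$ before sending the auxiliary scale $\rho$ to $0$; sending $\rho\to0$ first would recover only $\overline P$. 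The geometric fact that makes all of this work is Lyapunov stability \ref{C1}, which keeps the center-stable projections $d_n$-close for all $n$ and hence, via the $cs$-Bowen property, scarcely perturbs the Birkhoff sums.
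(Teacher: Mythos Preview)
Your proof is correct and essentially complete, though the lower-bound argument follows a route dual to the paper's. The paper builds an $(n-n_1,4r_2)$-\emph{spanning} set for all of $\Lambda$ out of leafwise spanning sets: it covers $\Lambda$ by finitely many ``thickened leaves'' $U_{y_i}=\bigcup_{z\in B_\L^u(y_i,r_1)}B_\L^\cs(z,\delta)$, uses Lemma~\ref{lem:comparison} to produce a spanning set for each $B_\L^u(y_i,r_1)$ from one on $B_\L^u(x,r_1)$, and then argues via \ref{C1} that each such set spans the thickening $U_{y_i}$. This yields $\Zspan_{n-n_1}(\Lambda,\ph,4r_2)\le Q_2 N\,\Zspan_n(B_\L^u(x,r_1),\ph,r_2)$ directly at the given radius $r_1$, and one finishes by sending $n\to\infty$, then $r_2\to 0$. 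Your argument instead pushes $(n,\rho)$-\emph{separated} sets in $\Lambda$ down onto leaves by $cs$-holonomy and bounds the loss in Birkhoff sums via the $cs$-Bowen property. Both are natural; the paper's version avoids introducing an auxiliary scale $\rho$ and the associated chain of conversions through Lemmas~\ref{lem:spansep} and~\ref{lem:r3r2}, while your version makes more direct use of the $cs$-Bowen hypothesis and is perhaps conceptually closer to how one usually thinks of ``projecting out'' the center-stable direction.

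One small wrinkle in your write-up: having projected into $B_\L^u(p_i,\delta')$, you invoke Lemma~\ref{lem:comparison} ``with radius $\delta'$'' but then claim a bound by $\Zsep_{n+m}(B_\L^u(x,r_1),\ph,r_2)$. Lemma~\ref{lem:comparison} keeps the outer radius fixed, so this step needs one extra line: also require $\delta'\le r_1$ when you choose it (harmless, since $r_1$ is fixed), use the trivial monotonicity $\Zsep_n(B_\L^u(p_i,\delta'),\ph,\rho/2)\le \Zsep_n(B_\L^u(p_i,r_1),\ph,\rho/2)$, and then apply Lemma~\ref{lem:comparison} at radius $r_1$ (together with Lemmas~\ref{lem:spansep} and~\ref{lem:r3r2} as you indicate). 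With that adjustment the chain goes through exactly as you describe.
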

\begin{proof}
Given $Y\subset M$, write
\[
\overline{P}^{\mathrm{span}}_{Y}(r_2) := \ulim_{n\to\infty} \frac 1n \log \Zspan_n(Y\cap \Lambda,r_2);
\]
define $\overline{P}^{\mathrm{sep}}_Y(r_2)$, $\underline{P}^{\mathrm{span}}_Y(r_2)$, and $\underline{P}^{\mathrm{sep}}_Y(r_2)$ similarly.  (Since $\ph$ is fixed throughout we omit it from the notation.)

Using Lemma \ref{lem:spansep} and the fact that any $(n,r_2)$-separated subset of $B_\L^u(x,r_1)$ is also an 
$(n,r_2)$-separated subset of $\Lambda$, we get
\[
\underline{P}^{\mathrm{span}}_{B_\L^u(x,r_1)}(r_2)
\leq \overline{P}^{\mathrm{span}}_{B_\L^u(x,r_1)}(r_2)
\leq \overline{P}^{\mathrm{sep}}_{B_\L^u(x,r_1)}(r_2)
\leq \overline{P}^{\mathrm{sep}}_\Lambda(r_2) \leq P(\ph).
\]
Thus to prove Lemma \ref{lem:P-on-W}, it suffices to show that $\underline{P}^{\mathrm{span}}_{B_\L^u(x,r_1)}(r_2) \geq P(\ph)$. Indeed, it will suffice to show that $\underline{P}^{\mathrm{span}}_{B_\L^u(x,r_1)}(r_2) \geq \underline{P}_\L^{\mathrm{span}}(r_3)$ for all $r_3>0$.

To this end, fix $r_3>0$. By Lemma \ref{lem:r3r2}, there is $n_0\in \NN$ such that for every $x\in \L$, we have
\[
\Zsep_n(B_\L^u(x,r_1),\ph,r_3/4) \leq e^{n_0\|\ph\|} \Zsep_{n+n_0}(B_\L^u(x,r_1),\ph,2r_2).
\]
Using this together with Lemma \ref{lem:spansep} gives
\[
\underline{P}^{\mathrm{span}}_{B_\L^u(x,r_1)}(r_2)
\geq \underline{P}^{\mathrm{sep}}_{B_\L^u(x,r_1)}(2r_2)
\geq \underline{P}^{\mathrm{sep}}_{B_\L^u(x,r_1)}(r_3/4)
\geq \underline{P}^{\mathrm{span}}_{B_\L^u(x,r_1)}(r_3/4),
\]
and so we can complete the proof of Lemma \ref{lem:P-on-W} by showing that $\underline{P}^{\mathrm{span}}_{B_\L^u(x,r_1)}(r_3/4) \geq \underline{P}_\L^{\mathrm{span}}(r_3)$ for all $r_3>0$.

For this, we need to use the Lyapunov stability of $\Ecs$ from Condition \ref{C1}
(see also Remark \ref{rmk:Lyap-stab}).
Let $\delta>0$ be given by \ref{C1} with $\eps = r_3/4$, and
consider for each $y\in \Lambda$ the (relatively) open set $U_y := \bigcup_{z\in B_\L^u(y,r_1)} B_\L^\cs(z,\delta)$.  Since $\Lambda$ is compact, we have $\Lambda \subset \bigcup_{i=1}^N U_{y_i}$ for some $\{y_1,\dots, y_N\}$.  Now for any $x\in \Lambda$, Lemma 6.4
gives $(n-n_1,3\eps)$-spanning sets $E_i$ for $B_\L^u(y_i,r_1)$ such that
\[
\sum_{z\in E_i} e^{S_n\ph(z)} \leq Q_2\Zspan_n(B_\L^u(x,r_1),\ph,\eps).
\]
We claim that $E_i$ is an $(n-n_1,4\eps)$-spanning set for $U_{y_i}$.  Indeed, for every $z\in U_{y_i}$ we have $[z,y_i] \in B_\L^u(y_i,r_1)$ and hence there is $p\in E_i$ such that $d_n(p,[z,y_i]) < 3\eps$.  Moreover, $d_n(z,[z,y_i]) < \eps$ using Condition \ref{C1} and the fact that $z \in B^\cs([z,y_i],\delta)$; then the triangle inequality proves the claim.  Now writing $E' = \bigcup_{i=1}^N E_i$, we see that $E'$ is an $(n-n_1,4\eps)$-spanning set for $\Lambda$, and hence,
\[
\Zspan_{n-n_1}(\Lambda,\ph,4\eps) \leq Q_2N \Zspan_n(B_\L^u(x,r_1),\ph,\eps).
\]
Taking logs, dividing by $n$, and sending $n\to\infty$ gives 
$\underline{P}^{\mathrm{span}}_{B_\L^u(x,r_1)}(\eps) \geq \underline{P}_\L^{\mathrm{span}}(4\eps)$, which proves Lemma \ref{lem:P-on-W}.
\end{proof}

\subsection{Uniform control of partition sums}\label{sec:uniform-control}

Now we are nearly ready to use the estimates from the preceding sections to prove Proposition \ref{prop:uniform}.  We need two more lemmas.

Given $n\in \NN$ and $r_1,r_2\in (0,\tau]$, consider the quantity
\begin{equation}\label{eqn:Znu}
Z_n^u(\ph,r_1,r_2) := \sup_{x\in \Lambda} \Zsep_n(B_\L^u(x,r_1),\ph,r_2).
\end{equation}
We have the following submultiplicativity result.

\begin{lemma}\label{lem:submult}
For every $x\in \Lambda$, $r_1,r_2 \in (0,\tau]$, and $k,\ell\in \NN$, we have
\begin{equation}\label{eqn:submult}
\Zsep_{k+\ell}(B_\L^u(x,r_1),\ph,r_2) \leq e^{Q_u} \Zsep_k(B_\L^u(x,r_1),\ph,r_2) 
Z^u_\ell(\ph,r_1,r_2).
\end{equation}
\end{lemma}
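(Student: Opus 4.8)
The plan is to run the standard Bowen-style submultiplicativity argument, exploiting the fact that everything here takes place on a single local unstable leaf so that no loss of scale is incurred. Fix $x\in\Lambda$, $r_1,r_2\in(0,\tau]$, $k,\ell\in\NN$, and let $E\subset B_\L^u(x,r_1)$ be an arbitrary $(k+\ell,r_2)$-separated set; since $\Zsep_{k+\ell}$ is a supremum it suffices to bound $\sum_{z\in E}e^{S_{k+\ell}\ph(z)}$ and then take the supremum over such $E$. First I would pick a maximal $(k,r_2)$-separated set $F\subset B_\L^u(x,r_1)$; by maximality $F$ is $(k,r_2)$-spanning for $B_\L^u(x,r_1)$, so each $z\in E$ has some $\pi(z)\in F$ with $d_k(z,\pi(z))<r_2$. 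Because $z$ and $\pi(z)$ lie on the common leaf $\Vl^u(x)$, the identity $B_k^u(y,r_2)=f^{-k}(B^u(f^ky,r_2))$ recorded above gives $f^kz\in B^u(f^k\pi(z),r_2)$. Setting $E_y:=\pi^{-1}(y)$ for $y\in F$, we get a partition $E=\bigsqcup_{y\in F}E_y$ with $f^k(E_y)\subset B_\L^u(f^ky,r_2)$.

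Next I would verify the two properties that make the estimate close up. The first is that $f^k(E_y)$ is $(\ell,r_2)$-separated: for distinct $z,z'\in E_y\subset E$ we have $d_{k+\ell}(z,z')\ge r_2$, and since $z,z'$ lie on a common local unstable leaf, monotonicity of distances along the leaf (the fact underlying the identity above) ensures that the maximum defining $d_{k+\ell}(z,z')$ is attained at a coordinate in $[k,k+\ell)$, whence $d_\ell(f^kz,f^kz')=d_{k+\ell}(z,z')\ge r_2$. Consequently $\sum_{w\in f^k(E_y)}e^{S_\ell\ph(w)}\le\Zsep_\ell(B_\L^u(f^ky,r_2),\ph,r_2)\le Z^u_\ell(\ph,r_1,r_2)$, using $r_2\le r_1$ as in the applications of this lemma. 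The second property is that each $z\in E_y$ satisfies $z\in B_k^u(y,r_2)\cap\Lambda\subset B_k^u(y,\tau)\cap\Lambda$, so the $u$-Bowen property gives $|S_k\ph(z)-S_k\ph(y)|\le Q_u$, i.e.\ $e^{S_k\ph(z)}\le e^{Q_u}e^{S_k\ph(y)}$.

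Combining these with the cocycle identity $S_{k+\ell}\ph(z)=S_k\ph(z)+S_\ell\ph(f^kz)$, I would estimate
\[
\sum_{z\in E}e^{S_{k+\ell}\ph(z)}
=\sum_{y\in F}\sum_{z\in E_y}e^{S_k\ph(z)}e^{S_\ell\ph(f^kz)}
\le e^{Q_u}Z^u_\ell(\ph,r_1,r_2)\sum_{y\in F}e^{S_k\ph(y)}
\le e^{Q_u}Z^u_\ell(\ph,r_1,r_2)\,\Zsep_k(B_\L^u(x,r_1),\ph,r_2),
\]
the last step using that $F$ is itself a $(k,r_2)$-separated subset of $B_\L^u(x,r_1)$. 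Taking the supremum over all $(k+\ell,r_2)$-separated $E\subset B_\L^u(x,r_1)$ yields \eqref{eqn:submult}.

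The one step requiring genuine care — and the only place the leafwise structure is essential — is the claim in the second paragraph that $f^k$ preserves $(\cdot,r_2)$-separation of subsets of a local unstable leaf \emph{without any loss of scale}. This is precisely where the uniform expansion of $E^u$ enters (equivalently, the fact that $d_n$ restricted to an unstable leaf is realized at the terminal coordinate), and it is what permits the clean constant $e^{Q_u}$ in \eqref{eqn:submult} rather than one involving a passage from $r_2$ to $r_2/2$. Everything else is bookkeeping with the $u$-Bowen property and the definitions of the partition sums.
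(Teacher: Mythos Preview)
Your argument is essentially the paper's Bowen-style submultiplicativity proof: pick a maximal $(k,r_2)$-separated anchor set, group $E$ by anchor, apply the $u$-Bowen property to the first $k$ iterates, and push forward by $f^k$ to bound the remaining $\ell$ iterates by $Z_\ell^u$. The one difference is that the paper takes its anchor set $E'\subset E$ (rather than inside the whole ball) and sets $E_y=E\cap B_k^u(y,r_1)$ with radius $r_1$, so that $f^k(E_y)\subset B_\L^u(f^ky,r_1)$ matches the definition of $Z_\ell^u(\ph,r_1,r_2)$ directly---this avoids your extra hypothesis $r_2\le r_1$, which is not among the lemma's assumptions.
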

\begin{proof}
Given $x\in \Lambda$ and $k,\ell\in \NN$, let $E\subset B_\L^u(x,r_1)$ be a
$(k+\ell,r_2)$-separated set.  Let $E'\subset E$ be a maximal $(k,r_2)$-separated set,
and given $y\in E'$ let $E_y = E \cap B_k^u(y,r_1)$. Then $f^k(E_y)$ is an
$(\ell,r_2)$-separated subset of $f^k(B_k^u(y,r_1)\cap\L)=B_\L^u(f^k(y),r_1)$, and we
conclude that
\begin{align*}
\sum_{y\in E} e^{S_{k+\ell}\ph(y)} &= \sum_{y\in E'} \sum_{z\in E_y}
e^{S_{k+\ell}\ph(z)}
= \sum_{y\in E'} \sum_{z\in E_y} e^{S_k\ph(z)} e^{S_\ell\ph(f^k(z))} \\
&\le\sum_{y\in E'} e^{S_k\ph(y) + Q_u} \sum_{z\in E_y} e^{S_\ell\ph(f^k(z))} \\
&\le e^{Q_u}\Zsep_k(B_\L^u(x,r_1),\ph,r_2)\max_{y\in E'} 
\Zsep_\ell(B_\L^u(f^k(y),r_1),\ph,r_2),
\end{align*}
where the first inequality uses the $u$-Bowen property. Taking a supremum over all choices of $E$ completes the proof.
\end{proof}
Now we can assemble Lemmas \ref{lem:spansep}, \ref{lem:comparison}, \ref{lem:r3r2}, and \ref{lem:submult} into the following result that lets us change parameters $x$ and $r_2$ in partition sums more or less at will.\footnote{With a little more work we could vary $r_1$ as well, but we will not need this.}

\begin{lemma}\label{lem:movable}
For every $r_1\in (0,\eps)$ and $r_2,r_2'\in (0,\tau/3]$ there is $Q_3$ such that for every $x,y\in \Lambda$ and $n\in \NN$, we have
\[
\Zsep_n(B_\L^u(x,r_1),\ph,r_2) \leq Q_3\Zsep_n(B_\L^u(y,r_1),\ph,r_2').
\]
\end{lemma}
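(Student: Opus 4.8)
The plan is to assemble the chain of inequalities made available by the four preceding lemmas, converting back and forth between $\Zsep$ and $\Zspan$, between the two leaves, and between the scales $r_2,r_2',\tfrac{r_2}{2},\tfrac{r_2}{6}$, while absorbing into $x$-independent constants the time shifts that Lemmas \ref{lem:comparison} and \ref{lem:r3r2} introduce. The observation that makes the bookkeeping work is that for each \emph{fixed} $\ell\in\NN$ the quantity $Z^u_\ell(\ph,r_1,r_2)$ from \eqref{eqn:Znu} is finite: indeed $Z^u_\ell(\ph,r_1,r_2)\le\Zsep_\ell(\L,\ph,r_2)<\infty$ because a maximal $(\ell,r_2)$-separated subset of the compact set $\L$ is finite and $S_\ell\ph$ is bounded, and this bound depends only on $\ell,r_1,r_2$. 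Consequently Lemma \ref{lem:submult}, applied with $k=n$ and a \emph{fixed} $\ell$, passes from a partition sum at time $n+\ell$ to one at time $n$ at the cost of a multiplicative factor $e^{Q_u}Z^u_\ell(\ph,r_1,r_2)$ that is independent of $x,y,n$.

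Concretely, fix $r_1\in(0,\eps)$ and $r_2,r_2'\in(0,\tau/3]$. Let $n_1=n_1(r_1)$ and $Q_2=Q_2(r_1,r_2/6)$ be supplied by Lemma \ref{lem:comparison} with $r_2/6$ in the role of its parameter ``$r_2$'', and let $n_0$ be supplied by Lemma \ref{lem:r3r2} with the pair of scales $r_2/6$ (as ``$r_3$'') and $r_2'$ (as ``$r_2$''). For any $x,y\in\L$ and $n\in\NN$ I would then chain, applying in turn Lemmas \ref{lem:spansep}, \ref{lem:comparison}, \ref{lem:spansep}, \ref{lem:r3r2}, \ref{lem:submult}:
\[
\begin{aligned}
\Zsep_n(B_\L^u(x,r_1),\ph,r_2)
&\le e^{Q_u}\,\Zspan_n(B_\L^u(x,r_1),\ph,\tfrac{r_2}{2})\\
&\le e^{Q_u}Q_2\,\Zspan_{n+n_1}(B_\L^u(y,r_1),\ph,\tfrac{r_2}{6})\\
&\le e^{Q_u}Q_2\,\Zsep_{n+n_1}(B_\L^u(y,r_1),\ph,\tfrac{r_2}{6})\\
&\le e^{Q_u}Q_2\,e^{n_0\|\ph\|}\,\Zsep_{n+n_1+n_0}(B_\L^u(y,r_1),\ph,r_2')\\
&\le e^{2Q_u}Q_2\,e^{n_0\|\ph\|}\,Z^u_{n_1+n_0}(\ph,r_1,r_2')\;\Zsep_n(B_\L^u(y,r_1),\ph,r_2').
\end{aligned}
\]
For the second step Lemma \ref{lem:comparison} is applied with the roles of $x$ and $y$ interchanged and with $n+n_1$ in the role of its ``$n$'', so that its left-hand side is $\Zspan_n(B_\L^u(x,r_1),\ph,3\cdot\tfrac{r_2}{6})=\Zspan_n(B_\L^u(x,r_1),\ph,\tfrac{r_2}{2})$; for the last step Lemma \ref{lem:submult} is applied with $k=n$ and $\ell=n_1+n_0$. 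Every scale occurring lies in $(0,\tau/3]\subset(0,\tau]$ since $r_2,r_2'\le\tau/3$, and $n+n_1\ge n_1$, so each invocation is legitimate. Hence the lemma holds with $Q_3:=e^{2Q_u}Q_2\,e^{n_0\|\ph\|}\,Z^u_{n_1+n_0}(\ph,r_1,r_2')$, which depends only on $r_1,r_2,r_2'$ (and on the fixed data $f,\L,\ph$), as required.

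I do not anticipate any genuinely hard step; the one thing requiring care is exactly what was flagged above — arranging that each time shift is introduced by a lemma whose associated constant ($n_1$, $n_0$, $Q_2$) depends only on the scales $r_1,r_2,r_2'$, so that the concluding application of submultiplicativity removes all of the shifts at once, uniformly in $x,y,n$. Recording the finiteness of $Z^u_\ell(\ph,r_1,r_2)$ for fixed $\ell$ (immediate from compactness of $\L$ and boundedness of $\ph$) is the only auxiliary fact needed beyond the four lemmas.
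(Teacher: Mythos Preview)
Your proof is correct and follows essentially the same chain of inequalities as the paper's own proof: convert $\Zsep$ to $\Zspan$ via Lemma~\ref{lem:spansep}, change leaves via Lemma~\ref{lem:comparison} (picking up a time shift $n_1$), return to $\Zsep$ and change scales via Lemma~\ref{lem:r3r2} (picking up $n_0$), and finally absorb the accumulated shift $n_0+n_1$ using Lemma~\ref{lem:submult} and the finiteness of $Z^u_{n_0+n_1}$. The paper's version carries the scale $r_2'$ (rather than $r_2$) through the intermediate steps, but since the roles of $(x,r_2)$ and $(y,r_2')$ are symmetric this is an immaterial difference.
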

\begin{proof}
Let $n_1$ be as in Lemma \ref{lem:comparison} (note that it only depends on $r_1$, not on $r_2$) and let $n_0$ be as in Lemma \ref{lem:r3r2}, with $r_3=r_2'/6$.  Then for all $x,y\in\Lambda$, we have
\begin{alignat*}{3}
\Zsep_n&(B_\L^u(y,r_1),\ph,r_2') \leq e^{Q_u}\Zspan_n(B_\L^u(x,r_1),\ph,r_2'/2) 
\qquad\qquad &&\text{(Lemma \ref{lem:spansep})} \\
&\leq Q_2e^{Q_u} \Zspan_{n+n_1}(B_\L^u(x,r_1),\ph,r_2'/6) 
&&\text{(Lemma \ref{lem:comparison})} \\
&\leq Q_2e^{Q_u} \Zsep_{n+n_1}(B_\L^u(x,r_1),\ph,r_2'/6)
&&\text{(Lemma \ref{lem:spansep})} \\
&\leq Q_2e^{Q_u + n_0\|\ph\|} \Zsep_{n+n_0+n_1}(B_\L^u(x,r_1),\ph,r_2).
&&\text{(Lemma \ref{lem:r3r2})}
\end{alignat*}
By Lemma \ref{lem:submult}, this gives
\[
\Zsep_n(B^u(y,r_1),\ph,r_2')\le Q_2e^{2Q_u + n_0\|\ph\|}\Zsep_n(B^u(x,r_1),\ph,r_2)Z_{n_0+n_1}^u(\ph,r_1,r_2).
\]
Putting $Q_3=Q_2e^{2Q_u+n_0\|\ph\|}Z_{n_0+n_1}^u(\ph,r_1,r_2)$ completes the proof.
\end{proof}
\begin{proof}[Proof of Proposition \ref{prop:uniform}]
For the lower bound, we apply Lemma \ref{lem:submult} iteratively to get
\[
\Zsep_{nk}(B_\L^u(x,r_1),\ph,r_2) \leq e^{(n-1)Q_u} 
Z^u_k(\ph,r_1,r_2)^n.
\]
Taking logs, dividing by $nk$, and sending $n\to\infty$ gives
\[
\frac 1k \log Z^u_k(\ph,r_1,r_2)
\geq -\frac{Q_u}k + P(\ph)
\]
by Lemma \ref{lem:P-on-W}.  Thus for every $x\in \Lambda$ and $k\in \NN$, Lemma \ref{lem:movable} gives
\[
\Zsep_k(B^u(x,r_1),\ph,r_2) \geq Q_3^{-1} Z_k^u(\ph,r_1,r_2)
\geq Q_3^{-1} e^{-Q_u} e^{kP(\ph)},
\]
which proves the lower bound in \eqref{eqn:uniform} by taking $Q_1\ge Q_3e^{Q_u}$.

For the upper bound in \eqref{eqn:uniform}, start by letting $n_2\in \NN$ be such that $r_2 \lambda^{-n_2} \geq r_2 + 2r_1$, where once again $\lambda<1$ is as in Proposition \ref{prop:local-mfds}\ref{leaves-contract}.  Now fix $x\in \Lambda$ and $n\in \NN$, and let $E_0\subset B_\L^u(x,r_1)$ be any $(n,r_2)$-separated set. By Lemma \ref{lem:movable}, for every $z\in \Lambda$ there is a $(n,r_2)$-separated set $G(z) \subset B_\L^u(z,r_1)$ with
\begin{equation}\label{eqn:Gz}
\sum_{p\in G(z)} e^{S_n\ph(p)}\ge Q_3^{-1}\sum_{y\in E_0}e^{S_n\ph(y)}.
\end{equation}
Given $k=1,2,\dots$, construct $E_k\subset B_\L^u(x,r_1)$ iteratively by
\begin{equation}\label{eqn:Ek}
E_{k}=\bigcup_{y\in E_{k-1}}f^{-k(n+n_2)}(G(f^{k(n+n_2)}(y))).
\end{equation}
We prove by induction that $E_k$ is a $((k+1)n + kn_2,r_2)$-separated set.  The case $k=0$ is true by our assumption on $E_0$.  For $k\geq 1$, suppose that the set $E_{k-1}$ is $(kn+(k-1)n_2),r_2)$-separated.  Then given any $p_1,p_2\in E_k$ we have one of the following two cases.
\begin{enumerate}
\item There is $z\in E_{k-1}$ with $p_1,p_2 \in f^{-k(n+n_2)}(G(f^{k(n+n_2)}z))$.  By
the definition of $G(f^{k(n+n_2)}(z))$, this gives
\[
d_{(k+1)n + kn_2}(p_1,p_2) \ge d_n(f^{k(n+n_2)}(p_1),f^{k(n+n_2)}(p_2))\ge r_2.
\]
\item There are $z_1\ne z_2\in E_{k-1}$ such that $p_i\in f^{-k(n+n_2)}(G(f^{k(n+n_2))}(z_i))$ for $i=1,2$. Then $f^\ell(p_i)\in B^u(f^\ell(z_i),r_1)$ for all $0\leq \ell < k(n+n_2)$, and  there is $0\leq j < kn + (k-1)n_2$ such that $f^j(z_2) \in \Vl^u(f^j(z_1))$ and $d(f^j(z_1),f^j(z_2)) > r_2$. By our choice of $n_2$, we have
\[
d(f^{j+n_2}(z_1),f^{j+n_2}(z_2)) > r_2 + 2r_1\ge r_2 + \sum_{i=1}^2 d(f^{j+n_2}(z_i), f^{j+n_2}(p_i)),
\]
and the triangle inequality gives $d(f^{j+n_2}(p_1), f^{j+n_2}(p_2))\ge r_2$.
\end{enumerate}
This completes the induction, and gives the following estimate:
\begin{equation}\label{eqn:Zsep-kn}
\Zsep_{(k+1)n+kn_2}(B_\L^u(x,r_1),\ph,r_2)
\geq \sum_{y\in E_k} e^{S_{(k+1)n + kn_2}\ph(y)}.
\end{equation}
Write $\tilde Z_k$ for the sum on the right-hand side of \eqref{eqn:Zsep-kn}.
The definition of $E_k$ in \eqref{eqn:Ek} gives
\begin{equation}\label{eqn:Zk}
\begin{aligned}
\tilde Z_k
&= \sum_{y\in E_{k-1}}
\Bigg(\sum_{z\in f^{-k(n+n_2)}(G(f^{k(n+n_2)}(y)))}e^{S_{(k+1)n+kn_2}\ph(z)}\Bigg)
\\
&\ge\sum_{y\in E_{k-1}} e^{S_{kn+(k-1)n_2}\ph(z)-Q_u}e^{-n_2\|\ph\|}\sum_{p\in G(f^{k(n+n_2)}(y))}e^{S_n\ph(p)} \\
&\geq Q_3^{-1} e^{-Q_u - n_2\|\ph\|} \tilde Z_{k-1} \sum_{y\in E_0} e^{S_n\ph(y)},
\end{aligned}
\end{equation}
where the last inequality uses \eqref{eqn:Gz}. Writing $Q_4:= Q_3e^{Q_u+n_2\|\ph\|}$ and applying \eqref{eqn:Zk} $k$ times yields
\[
\tilde Z_k \geq Q_4^{-k} \Big(\sum_{y\in E_0} e^{S_n\ph(y)} \Big)^k.
\]
Then \eqref{eqn:Zsep-kn} gives
\[
\Zsep_{(k+1)n+kn_2}(B_\L^u(x,r_1),\ph,r_2)
\geq Q_4^{-k} \Big( \sum_{y\in E_0} e^{S_n\ph(y)} \Big)^k.
\]
Taking logs and dividing by $k$ gives
\[
\frac{n+\frac nk + n_2}{(k+1)n + kn_2} \log\Zsep_{(k+1)n+kn_2}(B_\L^u(x,r_1),\ph,r_2)
\geq -\log Q_4 + \log \sum_{y\in E_0} e^{S_n\ph(y)}.
\]
Sending $n\to\infty$ and taking a supremum over all choices of $E_0$, Lemma \ref{lem:P-on-W} yields 
\[
(n+n_2)P(\ph)\ge-\log Q_4+\log\Zsep_n(B_\L^u(x,r_1),\ph,r_2),
\]
and so $\Zsep_n(B_\L^u(x,r_1),\ph,r_2) \leq Q_4 e^{(n+n_2)P(\ph)}$.  Choosing 
$Q_1\ge Q_4 e^{n_2P(\ph)}$ completes the proof of Proposition \ref{prop:uniform}.
\end{proof}

\subsection{Proof of Theorem \ref{thm:finite}}\label{sec:finite}
Fix $x\in\Lambda$ and set $X:=\Vl^u(x) \cap \Lambda$.
We showed in \S\ref{car.measure} that $m_x^\C$ defines a metric outer measure on $X$, and hence gives a Borel measure. Note that the final claim in Theorem \ref{thm:finite} about agreement on intersections is immediate from the definition. Thus it remains to prove that $m_x^\C(X) \in [K^{-1},K]$, where $K$ is independent of $x$; this will complete the proof of Theorem \ref{thm:finite}.

We start with the following basic fact about local unstable leaves, which follows easily from Proposition \ref{prop:local-mfds}\ref{unif-holder}. 

\begin{lemma}\label{lem:bounded-cover}
For all $r_1,r_2 \in (0,\tau]$ there is $Q_5>0$ such that for all $y\in \Lambda$, there are $k\leq Q_5$ and $z_1,\dots, z_k\in B_\L^u(y,r_2)$ such that $\bigcup_{i=1}^k B_\L^u(z_i,r_1) \supset B_\L^u(y,r_2)$.
\end{lemma}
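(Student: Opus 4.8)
The plan is to reduce the problem to elementary Euclidean packing on the $p$-dimensional ball $B_y^u(0,\tau)\subset E^u(y)$ by means of the graph chart $\Phi_y(v)=\exp_y(v+\psi_y^u(v))$ from Proposition~\ref{prop:local-mfds}(2). The first step is to record that $\Phi_y$ is a bijection onto $\Vl^u(y)$ which is uniformly bi-Lipschitz: since $T_y\Vl^u(y)=E^u(y)$ forces $D\psi_y^u(0)=0$, the uniform Hölder bound $|D\psi_y^u|_\alpha\le C_2$ from Proposition~\ref{prop:local-mfds}\ref{unif-holder} gives $|D\psi_y^u(v)|\le C_2\tau^\alpha$ for every $v\in B_y^u(0,\tau)$ (which, after shrinking $\tau$, we may take to be $<1$), and combining this with the uniform bi-Lipschitz behavior of $\exp_y$ on $\tau$-balls (valid by compactness of $M$) yields a constant $L\ge 1$, \emph{independent of $y$}, with $L^{-1}|v-w|\le d(\Phi_y(v),\Phi_y(w))\le L|v-w|$ for all $v,w\in B_y^u(0,\tau)$. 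In particular $B^u(y,r_2)$ is contained in $\Phi_y$ of a Euclidean ball of radius $\min(Lr_2,\tau)\le\tau$ centered at the origin.

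The second step builds the cover by a maximality argument. Let $\rho_0\in(0,\tau]$ be a uniform radius below which local unstable leaves through nearby points of a common leaf coincide (this is the leaf-coincidence property of the lamination of local unstable manifolds, already used implicitly in the proof of Theorem~\ref{thm:finite}\ref{fin4}), put $\rho:=\min(r_1,\rho_0)$, and let $\{z_1,\dots,z_k\}\subset B_\L^u(y,r_2)$ be a maximal $\rho$-separated set. The points $\Phi_y^{-1}(z_i)$ are then $L^{-1}\rho$-separated points lying in a Euclidean ball of radius $\le\tau$ in $\RR^p$, so a standard volume-packing estimate bounds $k$ by a constant $Q_5=(3L\tau/\rho)^p$ depending only on $r_1$ and on data of the system ($p$, $L$, $\tau$, $\rho_0$) that is independent of $y$. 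For the covering property, given $w\in B_\L^u(y,r_2)$, maximality produces some $z_i$ with $d(w,z_i)<\rho\le r_1$; since also $d(w,z_i)<\rho_0$ and $w\in\Vl^u(y)$, leaf coincidence gives $w\in\Vl^u(z_i)$, whence $w\in B(z_i,r_1)\cap\Vl^u(z_i)\cap\Lambda=B_\L^u(z_i,r_1)$. Thus $\bigcup_{i=1}^k B_\L^u(z_i,r_1)\supseteq B_\L^u(y,r_2)$, as required.

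The only real content is the uniformity of the bi-Lipschitz constant $L$ in $y$, and this is exactly what Proposition~\ref{prop:local-mfds}\ref{unif-holder} is designed to supply, so that step should go through cleanly; the leaf-coincidence input is standard and already tacitly invoked elsewhere. Everything else is elementary, so I do not anticipate a serious obstacle here.
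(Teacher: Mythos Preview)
Your proof is correct and is essentially a careful fleshing-out of the paper's one-line justification: the paper merely asserts that the lemma ``follows easily from Proposition~\ref{prop:local-mfds}\ref{unif-holder},'' and your uniform bi-Lipschitz chart plus a maximal-separated-set packing argument is precisely the standard way to unpack that assertion. Your explicit treatment of the leaf-coincidence issue (so that $w\in\Vl^u(y)$ with $d(w,z_i)$ small implies $w\in\Vl^u(z_i)$) is a genuine technical point that the paper leaves implicit, so your version is in fact more complete.
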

Together with Proposition \ref{prop:uniform} this leads to the following.

\begin{lemma}\label{lem:sep-in-Bn}
For every $r\in (0,\tau/3]$, there is a constant $Q_6>0$ such that for every $y\in \Lambda$, $n\in \NN$, and $N\geq n$, we have
\[
\Zsep_N(B_n^u(y,r) \cap \Lambda, \ph, r) \leq Q_6 e^{(N-n)P(\ph)} e^{S_n\ph(y)}.
\]
\end{lemma}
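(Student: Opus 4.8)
The plan is to reduce the lemma to Proposition~\ref{prop:uniform} by transporting a separated set inside the Bowen ball forward under $f^n$. Recall from the preliminary estimates above that $B_n^u(y,r)=f^{-n}(B^u(f^n(y),r))$, and since $\Lambda$ is $f$-invariant this gives $B_n^u(y,r)\cap\Lambda=f^{-n}\bigl(B_\Lambda^u(f^n(y),r)\bigr)$. Without loss of generality I take $C_1\ge 1$ in Proposition~\ref{prop:local-mfds}.

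First I would check that forward iteration turns a separated set in the Bowen ball into one on a controlled scale in the honest ball. Fix $N\ge n$ and an $(N,r)$-separated set $E\subset B_n^u(y,r)\cap\Lambda$, and let $p\ne q\in E$. Because $r<\tau$, the orbit pieces $p,f(p),\dots,f^n(p)$ and $q,f(q),\dots,f^n(q)$ remain on local unstable leaves, so Proposition~\ref{prop:local-mfds}\ref{leaves-contract} applied backwards from time $n$ yields $d(f^k(p),f^k(q))\le C_1\,d(f^n(p),f^n(q))$ for every $0\le k\le n$. Combining this with the obvious inequality $d(f^n(p),f^n(q))\le d_{N-n}(f^n(p),f^n(q))$ (valid for $N>n$), I get
\[
r\le d_N(p,q)=\max\Bigl(\,\max_{0\le k<n}d(f^k(p),f^k(q)),\ d_{N-n}(f^n(p),f^n(q))\Bigr)\le C_1\,d_{N-n}(f^n(p),f^n(q)).
\]
Hence $f^n(E)$ is an $(N-n,c_0r)$-separated subset of $B_\Lambda^u(f^n(y),r)$, where $c_0:=1/C_1\in(0,1]$.

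Next I would run the Birkhoff-sum bookkeeping. For $p\in E\subset B_n^u(y,\tau)\cap\Lambda$ the $u$-Bowen property gives $S_n\ph(p)\le S_n\ph(y)+Q_u$, so $S_N\ph(p)=S_n\ph(p)+S_{N-n}\ph(f^n(p))\le S_n\ph(y)+Q_u+S_{N-n}\ph(f^n(p))$. Summing over $E$ and using the injectivity of $f^n$ on $E$,
\[
\sum_{p\in E}e^{S_N\ph(p)}\le e^{Q_u}e^{S_n\ph(y)}\sum_{a\in f^n(E)}e^{S_{N-n}\ph(a)}\le e^{Q_u}e^{S_n\ph(y)}\,\Zsep_{N-n}\bigl(B_\Lambda^u(f^n(y),r),\ph,c_0r\bigr).
\]
When $N>n$, Proposition~\ref{prop:uniform} (with $r_1=r\in(0,\tau)$ and $r_2=c_0r\in(0,\tau/3]$) bounds the last factor by $Q_1e^{(N-n)P(\ph)}$, and taking a supremum over $E$ gives the lemma with $Q_6:=Q_1e^{Q_u}$. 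The case $N=n$ I would handle directly: by the first step, distinct points of $f^n(E)$ lie in $B^u(f^n(y),r)$ at mutual distance at least $r/C_1$, and since the local unstable leaves have uniformly bounded geometry (Proposition~\ref{prop:local-mfds}\ref{unif-holder}) such a set has at most $M=M(r)$ points; thus $\sum_{p\in E}e^{S_n\ph(p)}\le Me^{Q_u}e^{S_n\ph(y)}$, and enlarging $Q_6$ to $\max\{Q_1e^{Q_u},Me^{Q_u}\}$ finishes the argument.

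The hard part is really just the first step --- making sure that pushing an $(N,r)$-separated set forward by $f^n$ produces a set that is still separated, on a scale comparable to $r$, inside $B_\Lambda^u(f^n(y),r)$. This is exactly where uniform contraction along unstable leaves (in the quantitative form of Proposition~\ref{prop:local-mfds}\ref{leaves-contract}) enters, and one has to be slightly careful that the length-$n$ orbit segments involved stay inside local unstable leaves so that the contraction estimate applies. Everything after that is routine manipulation of Birkhoff sums via the $u$-Bowen property together with the uniform bound from Proposition~\ref{prop:uniform}.
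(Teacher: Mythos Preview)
Your proof is correct and follows the same strategy as the paper's: push the separated set forward along the unstable leaf and invoke Proposition~\ref{prop:uniform} together with the $u$-Bowen property. The only difference is in bookkeeping: the paper iterates to time $n-\ell$ (with $\lambda^\ell<\tfrac12$) so that $f^{n-\ell}(E)$ is $(N-n+\ell,r)$-separated at the \emph{original} scale $r$, then uses Lemma~\ref{lem:bounded-cover} to cover $B_\L^u(f^{n-\ell}(y),r)$ by smaller balls before applying Proposition~\ref{prop:uniform}; you instead iterate all the way to time $n$, accept the smaller separation scale $r/C_1$, and apply Proposition~\ref{prop:uniform} directly with $r_2=r/C_1$, which avoids Lemma~\ref{lem:bounded-cover} at the minor cost of handling $N=n$ separately.
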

\begin{proof}
Let $\ell\in \NN$ be such that $\lambda^\ell < \frac 12$, where $\lambda<1$ is as in Proposition \ref{prop:local-mfds}\ref{leaves-contract}.  Given any $(N,r)$-separated set $E\subset B_n^u(y,r) \cap \Lambda$, we have $d(f^n(z_1), f^n(z_2))<2r$ for every 
$z_1,z_2\in E$, and thus $d(f^{n-\ell}(z_1), f^{n-\ell}(z_2))<r$, so $f^{n-\ell}(E)$ is an 
$(N-n+\ell,r)$-separated subset of 
$f^{n-\ell}(B_{n}^u(y,r)\cap\L)\subset B_\L^u(f^{n-\ell}(y),r)$.

Applying Proposition \ref{prop:uniform} with $r_1 = \eps/2$ and $r_2=r$, gives 
$Q_1= Q_1(r)$ such that
\[
\Zsep_{N-n+\ell}(B_\L^u(f^{n-\ell}(y),\eps/2),\ph,r) \leq Q_1e^{(N-n+\ell)P(\ph)},
\]
and so, applying Lemma \ref{lem:bounded-cover} with $r_1 = \eps/2$ and $r_2=4$, gives $Q_5=Q_5(r)$ such that 
\[
\sum_{z\in f^{n-\ell}(E)} e^{S_{N-n+\ell}\ph(z)} 
\le\Zsep_{N-n+\ell}(B_\L^u(f^{n-\ell}(y),r),\ph,r) \leq Q_5Q_1e^{(N-n+\ell)P(\ph)}.
\]
Thus we get
\begin{align*}
\sum_{z\in E}e^{S_N\ph(z)} &=\sum_{z\in E}e^{S_{n-\ell}\ph(z)} 
e^{S_{N-n+\ell}\ph(f^{n-\ell}(z))} \\
&\le e^{Q_u}e^{S_{n-\ell}\ph(y)}\sum_{z\in f^{n-\ell}(E)}e^{S_{N-n+\ell}\ph(z)} \\
&\le e^{Q_u}e^{\ell\|\ph\|}e^{S_n\ph(y)}Q_5Q_1e^{(N-n)P(\ph)}e^{\ell P(\ph)},
\end{align*}
and so putting $Q_6=e^{Q_u}e^{\ell(\|\ph\|+P(\ph))}Q_5Q_1$ proves the result.
\end{proof}
Now we can complete the proof of Theorem \ref{thm:finite}. Fix $r_1 \in (0,\tau)$, and note that Proposition \ref{prop:uniform} and Lemmas \ref{lem:spansep} and \ref{lem:bounded-cover}
apply with $r_2 = r$. We will find $Q_7>0$ such that
\begin{equation}\label{eqn:Q5}
Q_7^{-1} \leq m_x^\C(B_\L^u(x,r_1)) = m_x^\C(B^u(x,r_1)) \leq Q_7
\end{equation}
for every $x\in \Lambda$; then Lemma \ref{lem:bounded-cover} will complete the proof of the theorem by taking $K=Q_7 Q_5$.

For the upper bound in \eqref{eqn:Q5}, let $Q_1$ be given by Proposition \ref{prop:uniform} with $r_2=r$.
By Lemma \ref{lem:spansep} and Proposition \ref{prop:uniform}, for every $N\in\NN$ there is an $(N,r)$-spanning set $E_N\subset B_\L^u(x,r_1)$ with 
$\sum_{y\in E_N}e^{S_N\ph(y)}\leq Q_1e^{NP(\ph)}$. Then \eqref{car0} gives
\begin{equation}\label{eqn:upper}
m_x^\C(B_\L^u(x,r_1))\le\lim_{N\to\infty}\sum_{y\in E_N}e^{-NP(\ph)}e^{S_N\ph(y)}\le Q_1.
\end{equation}
For the lower bound in \eqref{eqn:Q5}, let $\{(y_i,n_i)\}_i \subset (\Vl^u(x) \cap \Lambda) \times \NN $ be any finite or countable set such that $B_\L^u(x,r_1) \subset \bigcup_i B_{n_i}^u(y_i,r)$. By compactness, there is $k \in\NN$ such that 
$\overline{B_\L^u(x,r_1/2)} \subset \bigcup_{i=1}^k B_{n_i}^u(y_i,r)$.  
Fix $N\ge\max\{n_1,\dots,n_k\}$ and for each 
$1\le i\leq k$, let $E_i\subset B_{n_i}^u(y_i,r)\cap\Lambda$ be a maximal $(N, r)$-separated set. Then $\bigcup_{i=1}^k E_i$ is an $(N, r)$-spanning set for $B_\L^u(x,r_1/2)$, and we conclude that
\begin{equation}\label{eqn:sumsum}
\begin{aligned}
\sum_{i=1}^k \sum_{z\in E_i} e^{S_N\ph(z)}&\ge\Zspan_N(B^u(x,r_1/2),\ph, r)  \\
&\ge e^{-Q_u} \Zsep_N(B_\L^u(x,r_1/2),\ph,2 r)\ge Q_1^{-1} e^{-Q_u} e^{NP(\ph)},
\end{aligned}
\end{equation}
where the second inequality uses Lemma \ref{lem:spansep}, and the third uses Proposition \ref{prop:uniform} with $Q_1=Q_1(r_1/2,2 r)$.
 
Now we can use Lemma \ref{lem:sep-in-Bn} to get the bound
$$
\sum_{z\in E_i} e^{S_N\ph(z)}\le\Zsep_N(B_{n_i}^u(y_i,r)\cap\Lambda,\ph, r) \\
\le Q_6 e^{(N-n_i) P(\ph)} e^{S_{n_i} \ph(y_i)}
$$
for each $1\leq i\leq k$. Summing over $i$ and using \eqref{eqn:sumsum} gives 
\begin{align*}
Q_1^{-1} e^{-Q_u} e^{NP(\ph)} \leq \sum_{i=1}^k \sum_{z\in E_i} e^{S_{N}\ph(z)}
\leq \sum_{i=1}^k Q_6 e^{(N-n_i) P(\ph)} e^{S_{n_i} \ph(y_i)},
\end{align*}
and dividing both sides by $e^{NP(\ph)}$ yields 
\[
Q_1^{-1} e^{-Q_u} \leq Q_6 \sum_{i=1}^k e^{-n_i P(\ph)} e^{S_{n_i} \ph(y_i)}.
\]
Taking an infimum over all choices of $\{(y_i,n_i)\}_i$ and sending $N\to\infty$ gives $m_x^\C(B_\L^u(x,r_1))\ge Q_1^{-1}e^{-Q_u}Q_6^{-1}$.  Thus we can prove \eqref{eqn:Q5} and complete the proof of Theorem \ref{thm:finite} by putting 
$Q_7 = \max\{Q_1, Q_1e^{Q_u}Q_6\}$.

\section{Behavior of reference measures under iteration and holonomy}\label{sec:scaling}

\subsection{Proof of Theorem \ref{thm:Gibbs}}\label{sec:Gibbs}

We will prove that for every Borel $A \subset \Vl^u(f(x)) \cap \L$, 
\begin{equation}\label{eqn:scaling}
m_{f(x)}^\C(A) = \int_{f^{-1}A} e^{P(\ph) - \ph(y)} \,dm_x^\C(y),
\end{equation}
which shows that $f_*^{-1} m_{f(x)}^\C \ll m_x^\C$ and that the Radon--Nikodym derivative is $g := e^{P(\ph)-\ph}$.   Given such an $A$, we approximate the integrand on the right-hand side of \eqref{eqn:scaling} by simple functions; for every $T\in\NN$ there are real numbers
\[
\inf_{y\in f^{-1}(A)}g(y)=a_1^T < a_2^T < \cdots < a_T^T = \sup_{y\in f^{-1}(A)}(g(y) + 1)
\]
and disjoint sets
\[
E_i^T := \{y\in f^{-1}(A) : a_i^T \leq g(y) < a_{i+1}^T \} \text{ for } 1\leq i < T
\]
such that $f^{-1}(A) = \bigcup_{i=1}^{T-1} E_i^T$;   since the union is disjoint we have
\begin{equation}\label{eqn:limit-of-simple}
\int_{f^{-1}(A)} g(y)\,dm_x^\C(y) = \lim_{T\to\infty} \sum_{i=1}^{T-1} a_i^T m_x^\C(E_i^T) = \lim_{T\to\infty} \sum_{i=1}^{T-1} a_{i+1}^T m_x^\C(E_i^T).
\end{equation}
To prove \eqref{eqn:scaling}, start by using the first equality in \eqref{eqn:limit-of-simple} and the definition of $m_x^\C$ in \eqref{car0} to write
\begin{equation}\label{eqn:scaling-1}
\int_{f^{-1}(A)} g\,dm_x^\C = \lim_{T\to\infty} \sum_{i=1}^{T-1} a_i^T \lim_{N\to\infty} \inf \sum_j e^{-n_jP(\ph)} e^{S_{n_j}\varphi(z_j)} ,
\end{equation}
where the infimum is taken over all collections $\{B^u_{n_j}(z_j,r)\}$ of 
$u$-Bowen balls with $z_j\in \Vl^u(x) \cap \Lambda$, $n_j\ge N$ that cover $E_i^T$.  Without loss of generality we can assume that 
\begin{equation}\label{eqn:intersects}
B^u_{n_j}(z_j,r) \cap E_i^T \neq\emptyset \text{ for all } j.
\end{equation}
Consider the quantity
\begin{equation}\label{eqn:R}
R_N := \sup \{ |\ph(y) - \ph(z)| : y\in \Lambda, z\in B^u_N(y,\tau/3)\},
\end{equation}
and note that $R_N\to 0$ as $N\to \infty$ using uniform continuity of $\ph$ together with the fact that $\diam B^u_N(y,\tau/3) \leq \tau\lambda^N \to 0$.  Now by \eqref{eqn:intersects} we have 
\[
a_i^T \geq g(z_j) e^{-R_N} = e^{-R_N} e^{P(\ph)} e^{-\ph(z_j)}
\] 
for all $j$, and thus \eqref{eqn:scaling-1} gives
\[
\int_{f^{-1}(A)} g \,dm_x^\C
\geq \lim_{T\to\infty} \sum_{i=1}^{T-1} \lim_{N\to\infty} \inf \sum_j e^{-R_N} e^{-(n_j -1 )P(\ph)} e^{S_{n_j-1} \ph(f(z_j))} ,
\]
where again the infimum is taken over all collections $\{B^u_{n_j}(z_j,r)\}$ of 
$u$-Bowen balls with $z_j\in \Vl^u(x) \cap \Lambda$, $n_j\ge N$ that cover $E_i^T$.  Observe that to each such collection there is associated a cover of $f(E_i^T)$ by the $u$-Bowen balls $f(B^u_{n_j}(z_j,r)) = B^u_{n_j-1}(f(z_j),r)$, and vice versa, so we get
\[
\int_{f^{-1}(A)} g(y)\,dm_x^\C(y) \geq \lim_{T\to\infty} \sum_{i=1}^{T-1} m_{f(x)}^\C(f(E_i^T)) = m_{f(x)}^\C(A).
\]
The reverse inequality is proved similarly by replacing $a_i^T$ with $a_{i+1}^T$ in \eqref{eqn:scaling-1} and using the second equality in \eqref{eqn:limit-of-simple}.  This completes the proof of \eqref{eqn:scaling}, and hence of Theorem \ref{thm:Gibbs}.

\subsection{Proof of Corollary \ref{cor:Gibbs}}\label{sec:gibbs}
Iterating \eqref{eqn:scaling}, we obtain
\begin{equation}\label{eqn:iterated-scaling}
m_{f^n(x)}^\C(A) = \int_{f^{-n}(A)} e^{nP(\ph) - S_n\ph(y)}\,dm_x^\C(y)
\end{equation}
for all $A\subset \Vl^u(f^n(x))$.  Fix $\delta \in (0,\tau)$.  Putting $A=B^u(f^n(x),\delta)$ and observing that $f^{-n}(B^u(f^n(x),\delta)) = B_n^u(x,\delta)$, we can use Theorem \ref{thm:finite} and the $u$-Bowen property to get
\begin{multline*}
K \geq m_{f^n(x)}^\C(B^u(f^n(x),\delta)) = \int_{B_n^u(x,\delta)} e^{nP(\ph) - S_n\ph(y)} \,dm_x^\C(y) \\
\geq e^{-Q_u} e^{nP(\ph) - S_n\ph(x)} m_x^\C(B_n^u(x,\delta)).
\end{multline*}
This gives $m_x^\C(B_n^u(x,\delta)) \leq e^{Q_u} K e^{nP(\ph) + S_n\ph(x)}$, proving the upper bound in \eqref{eqn:u-gibbs}.  For the lower bound, let $k\in\NN$ be such that $\delta \lambda^{-k} > \tau$; then $B^u(y,\delta) \supset f^{-k}(\Vl^u(f^k(y)))$ for all $y\in \L$, and again Theorem \ref{thm:finite} and the $u$-Bowen property give
$$
\begin{aligned}
K^{-1}\leq m_{f^{n+k}(x)}^\C(\Vl^u(f^{n+k}(x)))&\le\int_{B_n^u(x,\delta)}e^{(n+k)P(\ph) - S_{n+k} \ph(y)} \,dm_x^\C(y) \\
&\le e^{Q_u} e^{k(P(\ph) + \|\ph\|)}e^{nP(\ph) - S_n\ph(x)} m_x^\C(B_n^u(x,\delta)).
\end{aligned}
$$
This proves Corollary \ref{cor:Gibbs}.

\subsection{Proof of Theorem \ref{thm:holonomy}}\label{sec:holonomy}

Let $k\in \NN$ be such that for every $y\in \L$ there are $z_1,\dots, z_{k} \in B_\L^u(y,2r)$ for which $B_\L^u(y,2r) \subset \bigcup_{i=1}^k B_\L^u(z_i,r)$.
We can choose $\theta>0$ satisfying: if $x,y\in \L$ are such that $d(x,y)<\theta$, and $a,b\in \Vl^u(x) \cap \L$, $p,q\in \Vl^u(y) \cap \L$ are such that $p\in \Vl^\cs(a)$, $q\in \Vl^\cs(b)$, then $d(p,q) \leq d(a,b) + r$. 

Now let $\delta = \delta(\theta)>0$ be given by \ref{C1}, and let $R\subset \L$ be any rectangle with $\diam(R)<\delta$.  Given $y,z\in R$ and $E\subset V_R^u(y)$, we must compare $m_y^\C(E)$ and $m_z^\C(\pi_{yz}(E))$.  
In this case, for any cover $\{B^u_{n_i}(x_i,r) \}$ of $E$ with $x_i \in V_R^u(y)$ and $n_i\in \NN$, we have $d(f^{n_i}x_i, f^{n_i}(\pi_{yz} x_i)) < \eps$ by Condition \ref{C1},
and our choice of $\eps$ gives
\begin{align*}
\pi_{yz}(B^u_{n_i}(x_i,r)\cap \L) &= \pi_{yz}(f^{-n_i}(B_\L^u(f^{n_i}(x_i), r)))
= f^{-n_i}(
\pi_{f^{n_i} x_i, f^{n_i}(\pi_{yz}x_i)}(B_\L^u(f^{n_i}(x_i), r))) \\
&\subset f^{-n_i} (B_\L^u(f^{n_i}(\pi_{yz}(x_i)), 2r)).
\end{align*}
By our choice of $k$, for each $i$ there are points $x_i^1,\dots, x_i^k \subset B_\L^u(f^{n_i} (\pi_{yz}(x_i)), 2r)$ with $B_\L^u(f^{n_i}(\pi_{yz}(x_i)), 2r) \subset \bigcup_{j=1}^k B_\L^u(x_i^j,r)$.  Thus $\{B_{n_i}^u(f^{-n_i}(x_i^j),r)\}_{i,j}$ is a cover of 
$\pi_{yz}(E)$, and moreover for each $i,j$, the $u$- and $\cs$-Bowen properties give
\begin{multline*}
|S_{n_i} \ph(x_i) - S_{n_i} \ph(x_i^j)| \\
\leq |S_{n_i} \ph(x_i) - S_{n_i} \ph(\pi_{yz}(x_i))| + |S_{n_i} \ph(\pi_{yz}(x_i)) - S_{n_i} \ph(x_i^j)|\leq Q_u + Q_\cs.
\end{multline*}
Now we have
\[
\sum_{i,j} e^{-n_i P(\ph)} e^{S_{n_i} \ph(x_i^j)}
\leq k e^{Q_u + Q_\cs}  \sum_i e^{-n_i P(\ph)} e^{S_{n_i} \ph(x_i)},
\]
and taking an infimum over all such covers $\{B_{n_i}^u(x_i,r)\}$ of $E$ gives
\[
m_z^\C(\pi_{yz}(E))\le k e^{Q_u + Q_\cs}m_y^\C(E).
\]
By symmetry, we also get the reverse inequality, which completes the proof (we put $\sigma=\delta$).

\section{Proof of Theorem \ref{thm:main}}\label{sec:proof-of-main}

To prove items \ref{m1}--\ref{m4} from Theorem \ref{thm:main}, first observe that each measure $\mu_n$ has $\mu_n(\L) = 1$, and thus by weak*-compactness, there  is a  subsequence $\mu_{n_k}$ that converges to an $f$-invariant limiting probability measure. 

The first step in the proof is to show that every limit measure $\mu$ has conditional measures satisfying Statement \ref{m3}, which we do in \S\ref{cond-mes}.  By Theorem \ref{thm:holonomy}, this implies that $\mu$ has local product structure, so it satisfies Statement \ref{m4} as well.

The second step is to use \eqref{eqn:mu-m} to show that every limit measure $\mu$ satisfies the Gibbs property and gives positive weight to every open set; this is relatively straightforward and is done in \S\ref{sec:Gibbs-supported}.

The third step is to use the local product structure together with a variant of the Hopf argument to show that every limit measure $\mu$ is ergodic; see \S\ref{sec:hopf}.

For the fourth step, we recall that in the setting of an expansive homeomorphism, an ergodic Gibbs measure was shown by Bowen to be the unique equilibrium measure; see \cite[Lemma 8]{rB745}.  In our setting, $f$ may not be expansive, but we can adapt Bowen's argument (as presented in \cite[Theorem 20.3.7]{Kat}) so that it only requires expansivity along the unstable direction, which still holds; see \S\ref{sec:unique}.
Once this is done, it follows that $(\L,f,\ph)$ has a unique equilibrium measure $\mu_\ph$, and that every limit measure of $\{\mu_n\}$ is equal to $\mu_\ph$.  In particular, $\mu_n$ converges to this measure as well, which establishes Statement \ref{m1} and completes the proof of Theorem \ref{thm:main}.

\subsection{Conditional measures of limit measures}\label{cond-mes}

To produce the equilibrium measure $\mu_\ph$ using the reference measure $m_x^\C$, we start by writing the measures $f_*^n m_x^\C$ in terms of \emph{standard pairs}
$(\Vl^u(y),\rho)$, where $y\in f^n(\Vl^u(x))$ and $\rho\colon \Vl^u(y) \to [0,\infty)$ is a $m_y^\C$-integrable density function; each such pair determines a measure $\rho \,dm_y^\C$ on $\Vl^u(y)$.  By controlling the density functions that appear in the standard pairs representing $f_*^n m_x^\C$, we can guarantee that every limit measure $\mu$ of the sequence of measures $\mu_n = \frac 1n \sum_{k=0}^{n-1} f_*^k m_x^\C$ has conditional measures that satisfy part \ref{3} of Theorem \ref{thm:main}.  

Fix $x\in\Lambda$ and $n\in \NN$; let $W = \Vl^u(x)$ and $W_n = f^n(W\cap \Lambda)$.
Then the iterate $f_*^n m_x^\C$ is supported on $W_n$, and $W_n$ can be covered by finitely many local leaves $\Vl^u(y)$.  Iterating the formula for the Radon--Nikodym derivative in Theorem 
\ref{thm:Gibbs}, we obtain for every $y\in W_n$ and $z\in W_n\cap \Vl^u(y)$ that 
\begin{equation}\label{eqn:RN}
\frac{d(f_*^n m_x^\C)}{d m_y^\C}(z) = e^{-nP(\ph) + S_n\ph(f^{-n}z)} =: g_n(z).
\end{equation}
Write $\rho_n^y(z) := g_n(z) / g_n(y)$; then the $u$-Bowen property gives
\begin{equation}\label{eqn:eQu}
\rho_n^y(z) = e^{S_n\ph(f^{-n} z) - S_n\ph(f^{-n} y)} \in [e^{-Q_u},e^{Q_u}].
\end{equation}
Now suppose $y_1,\dots, y_s\in W_n$ are such that the local leaves $\Vl^u(y_i)$ are disjoint.  Then for every Borel set $E\subset \bigcup_{i=1}^s \Vl^u(y_i)$, we have
\begin{equation}\label{eqn:convex}
f^n_*m_x^\C(E)=\sum_{i=1}^{s}\int_E\,g_n(z)\,d m_{y_i}^\C(z)  
=\sum_{i=1}^{s} g_n(y_i)\int_E\rho_n^{y_i}(z)\,d m_{y_i}^\C(z).
\end{equation}
In other words, one can write $f_*^n m_x^\C$ on $\bigcup_{i=1}^n \Vl^u(y_i)$ as a linear combination of the measures  $\rho_n^{y_i}\,dm_{y_i}^\C$ associated to the standard pairs $(\Vl^u(y_i), \rho_n^{y_i})$, with coefficients given by $g_n(y_i)$.  The crucial properties that we will use are the following.
\begin{enumerate}
\item The uniform bounds given by \eqref{eqn:eQu} on the density functions $\rho_n^{y_i}$ allow us to control the limiting behavior of $f_*^n m_x^\C$.
\item When $\bigcup_{i=1}^s \Vl^u(y_i)$ covers ``enough'' of $W_n$, the sum of the weights $\sum_{i=1}^s g_n(y_i)$ can be bounded away from $0$ and $\infty$.
\end{enumerate}

Given a rectangle $R$, the intersection $W_n \cap R$ is contained in a disjoint union of local leaves, so that $f_*^n m_x^\C|_R$ is given by \eqref{eqn:convex}.  We will use this to prove the following result.

\begin{lemma}\label{lem:mu-m}
If $\mu$ is any limit point of the sequence $\mu_{n}$ from \eqref{seq-meas}, then 
$\mu$ satisfies Statement \ref{3} of Theorem \ref{thm:main}: given any rectangle $R$ with $\mu(R)>0$, the conditional measures of $\mu$ are equivalent to the reference measures $m_x^\C$ and satisfy the bound 
\begin{equation}\label{eqn:mu-m2}
C_0^{-1}\leq \frac{d\mu_y^u}{dm_y^\C}(z) m_y^\C(R) \leq C_0 \text{ for $\mu_y^u$-a.e.}\ z\in V_R^u(y).
\end{equation}
\end{lemma}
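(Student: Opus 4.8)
The plan is to first prove a uniform comparison estimate for the measures $\mu_n$ at the level of sub-rectangles, then transport it to the weak* limit $\mu$, and finally recover the pointwise density bound \eqref{eqn:mu-m2} by an increasing-martingale argument for conditional measures. Fix $r\in(0,\tau/3)$ so that Theorems \ref{thm:finite}, \ref{thm:Gibbs} and \ref{thm:holonomy} apply, fix a rectangle $R$ with $\diam R<\sigma$ (with $\sigma$ as in Theorem \ref{thm:holonomy}) and a reference point $p\in R$; by Lemmas \ref{lem:compare-cond} and \ref{lem:rectangle-partition} it suffices to treat rectangles $R$ with $\mu(\partial R)=0$ and $\mu(R)>0$. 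For $Q\subseteq V_R^\cs(p)$ and $A\subseteq V_R^u(p)$ with $\bigcup_{y\in Q}\pi_{py}(A)$ a rectangle, call $R(Q,A):=\bigcup_{y\in Q}\pi_{py}(A)$ a \emph{sub-rectangle of $R$ over $Q$ with unstable shape $A$}. The object to control is the ratio $\mu_n(R(Q,A_1))/\mu_n(R(Q,A_2))$ for $A_1\subseteq A_2$.

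First I would show: there is $C_1=e^{2Q_u}C^2$, with $C$ the holonomy Jacobian constant of Theorem \ref{thm:holonomy}, such that for every $n$, every $Q\subseteq V_R^\cs(p)$, and all $A_1\subseteq A_2\subseteq V_R^u(p)$ with $\mu_n(R(Q,A_2))>0$,
\begin{equation*}
C_1^{-1}\,\frac{m_p^\C(A_1)}{m_p^\C(A_2)}\ \le\ \frac{\mu_n(R(Q,A_1))}{\mu_n(R(Q,A_2))}\ \le\ C_1\,\frac{m_p^\C(A_1)}{m_p^\C(A_2)}.
\end{equation*}
This follows from the standard-pair decomposition \eqref{eqn:convex}: on each local leaf $\Vl^u(y)$ meeting $R$ one has $f_*^k m_x^\C|_{\Vl^u(y)}=g_k(\cdot)\,dm_y^\C$ with $g_k(z)/g_k(y)\in[e^{-Q_u},e^{Q_u}]$ by \eqref{eqn:eQu}, so $f_*^k m_x^\C(R(Q,A_i)\cap\Vl^u(y))=e^{\pm Q_u}g_k(y)\,m_y^\C(\pi_{py}(A_i))$ for $y\in Q$, and Theorem \ref{thm:holonomy} gives $m_y^\C(\pi_{py}(A_i))=C^{\pm1}m_p^\C(A_i)$ uniformly in $y$. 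Summing over the (at most countably many) leaves $\Vl^u(y)$, $y\in Q$, that meet $W_k$, and over $0\le k<n$, the common factor $S_n:=\frac1{n\,m_x^\C(W)}\sum_{k,y}g_k(y)$ is the same for $A_1$ and $A_2$, so $\mu_n(R(Q,A_i))=e^{\pm Q_u}C^{\pm1}S_n\,m_p^\C(A_i)$ and the distortion in the ratio is at most $e^{\pm2Q_u}C^{\pm2}$.

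Next I would pass this to the limit. Choose a refining sequence of finite partitions $\mathcal Q_1\prec\mathcal Q_2\prec\cdots$ of $V_R^\cs(p)$ with $\diam\mathcal Q_m\to0$ whose sub-rectangles $R(Q,V_R^u(p))$ are $\mu$-continuity sets, and a countable algebra $\mathcal A$ of subsets of $V_R^u(p)$ generating its Borel $\sigma$-algebra such that every $R(Q,A)$ with $Q\in\bigcup_m\mathcal Q_m$, $A\in\mathcal A$ is a $\mu$-continuity set; this is possible since each of the relevant boundaries can charge $\mu$ for at most countably many parameter values. Along the subsequence $\mu_{n_j}\to\mu$, weak* convergence turns the displayed estimate for $\mu_{n_j}$ into the same estimate for $\mu$ (for the chosen $Q$'s and $A$'s). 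Now, by the increasing-martingale theorem for conditional measures (e.g.\ \cite[\S5.3]{EW11}), for $\tilde\mu$-a.e.\ $y$ one has $\mu_y^u=\lim_{m\to\infty}\mu|_{R_m(y)}/\mu(R_m(y))$ weakly, where $R_m(y):=R(Q_m(y),V_R^u(p))$ and $Q_m(y)\in\mathcal Q_m$ is the element containing $y$. Applying the estimate to $R(Q_m(y),A)\subseteq R_m(y)$ gives $\mu(R(Q_m(y),A))/\mu(R_m(y))=C_1^{\pm1}m_p^\C(A)/m_p^\C(V_R^u(p))$, \emph{independent of $m$}; letting $m\to\infty$ yields $\mu_y^u(\pi_{py}(A))=C_1^{\pm1}m_p^\C(A)/m_p^\C(V_R^u(p))=C_1^{\pm1}m_y^\C(\pi_{py}(A))/m_y^\C(R)$ (using Theorem \ref{thm:holonomy} once more and $m_y^\C(V_R^u(y))=m_y^\C(R)$) for all $A\in\mathcal A$. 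Since $\mathcal A$ generates, $\mu_y^u$ and $m_y^\C|_{V_R^u(y)}/m_y^\C(R)$ are comparable with constant $C_1$, which is \eqref{eqn:mu-m2} with $C_0=C_1$.

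The main obstacle is the middle step: the conditional-measure statement does not pass to weak* limits directly — the $\mu_n$ are carried by a thin, leaf-discrete subset of each rectangle, whereas the limit $\mu$ need not be — so one must isolate a consequence of the leafwise standard-pair structure that is \emph{both} verifiable for each $\mu_n$ \emph{and} stable under weak* limits. Phrasing everything as ratios of measures of $\mu$-continuity sub-rectangles (where the unwieldy leaf-weights $S_n$ cancel) does this; the remaining work is the bookkeeping to set up the refining partitions and the generating algebra with null boundaries, and the invocation of martingale convergence of conditional measures along a refining sequence.
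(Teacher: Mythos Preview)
Your overall strategy is essentially the paper's: both use the standard-pair decomposition \eqref{eqn:convex}, the uniform density bound \eqref{eqn:eQu}, Theorem \ref{thm:holonomy}, and a limit-of-ratios description of conditional measures (the paper's Proposition \ref{prop:conditionals} is exactly your martingale step). The order of operations differs slightly---you first prove a ratio estimate for $\mu_n$, pass to $\mu$, then let the $\cs$-partition refine; the paper fixes the refining partition $\xi_\ell$ first and computes $\int_{\xi_\ell(y)}\psi\,d\mu$ as a limit---but the substance is the same.

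There is, however, a genuine gap in your first step. The formula
\[
f_*^k m_x^\C\bigl(R(Q,A_i)\cap \Vl^u(y)\bigr)=e^{\pm Q_u}g_k(y)\,m_y^\C(\pi_{py}(A_i))
\]
is only valid when $V_R^u(y)\subset W_k$. The measure $f_*^k m_x^\C$ is supported on $W_k$, so on a ``partial'' leaf (one with $V_R^u(y)\cap W_k$ a proper nonempty subset of $V_R^u(y)$) the left side equals $\int_{\pi_{py}(A_i)\cap W_k} g_k\,dm_y^\C$, and holonomy does not turn $m_y^\C(\pi_{py}(A_i)\cap W_k)$ into $C^{\pm1}m_p^\C(A_i)$, since $W_k$ is not holonomy-invariant. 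Consequently the single quantity $S_n$ you write down does not exist, and the displayed $C_1^{\pm1}$ ratio estimate can fail for small $n$ (e.g.\ $k=0$ gives a single leaf that may intersect $R$ only in a small piece of $\pi_{py}(A_1)$, forcing the ratio to $1$ regardless of $m_p^\C(A_1)/m_p^\C(A_2)$).

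The fix is exactly what the paper does in \eqref{eqn:fk-eta} and the lines following it: restrict to the index set $I(k)$ of full leaves, observe that the complement lies in $Z_k=f^k(Y_k)$ with $Y_k=\{y\in \Vl^u(x)\cap\L: B_\L^u(y,2\tau\lambda^k)\not\subset \Vl^u(x)\}$, and use $m_x^\C(Y_k)\to0$ to conclude that the partial-leaf contribution to $\mu_n$ is $o(1)$ by Ces\`aro. Your ratio estimate then holds for $\mu_n$ up to an additive $o(1)$, which is all you need for the weak* limit since $\mu_n(R(Q,A_2))\to\mu(R(Q,A_2))>0$. With this correction (and the observation that your final holonomy comparison costs $C^{\pm2}$, not $C^{\pm1}$, so $C_0=e^{2Q_u}C^4$ rather than $C_1$), the argument goes through.
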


Before starting the proof, we observe that although \eqref{eqn:convex} gives good control of the conditional measures of $\mu_n$, it is not in general true that the conditionals of a limit are the limits of the conditionals; that is, one does not automatically have $(\mu_n)_y^\xi \to \mu_y^\xi$ whenever $\mu_n\to \mu$.  In order to establish the desired properties for the conditional measures of $\mu$, we will need to use the fact that the conditionals of $\mu_n$ are represented by density functions for which we have uniform bounds as in \eqref{eqn:eQu}.  We will also need the following characterization of the conditional measures, which is an immediate consequence of \cite[Corollary 5.21]{EW11}.

\begin{proposition}\label{prop:conditionals}
Let $\mu$ be a finite Borel measure on $\L$ and let $R\subset \L$ be a rectangle with $\mu(R)>0$.  Let $\{\xi_\ell\}_{\ell\in \NN}$ be a refining sequence of finite partitions of $R$ that converge to the partition $\xi$ into local unstable sets $V_R^u(x) = \Vl^u(x)\cap R$.
Then there is a set $R' \subset R$ with $\mu(R')=\mu(R)$ such that for every $y\in R'$ and every continuous $\psi\colon R\to \RR$, we have
\begin{equation}\label{eqn:conditional}
\int_{V_R^u(y)} \psi(z) \,d \mu_{V_R^u(y)}^\xi(z)
= \lim_{\ell\to\infty} \frac 1{\mu(\xi_\ell(y))} \int_{\xi_\ell(y)} \psi(z)\,d\mu(z),
\end{equation}
where $\xi_n(y)$ denotes the element of the partition $\xi_\ell$ that contains $y$.
\end{proposition}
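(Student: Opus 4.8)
The plan is to deduce this directly from the increasing martingale theorem for conditional measures, in the form given by \cite[Corollary 5.21]{EW11}. First I would fix the setup: normalizing if necessary, we may assume $\mu(R)=1$ and regard $(R,\mu)$ as a probability space; the partition $\xi$ of $R$ into local unstable sets $V_R^u(x)$ is measurable, being the refining limit of the finite partitions $\xi_\ell$ (cf.\ the discussion of measurable partitions in \S\ref{sec:lps}), so it admits a system of conditional measures $\{\mu_{V_R^u(x)}^\xi\}$. Writing $\mathcal{A}_\ell=\sigma(\xi_\ell)$ and $\mathcal{A}=\sigma(\xi)$ for the associated $\sigma$-algebras, the hypothesis that $\{\xi_\ell\}$ refines to $\xi$ says exactly that $\mathcal{A}_\ell$ increases to $\mathcal{A}$ modulo $\mu$, i.e.\ $\mathcal{A}=\sigma\big(\bigcup_\ell \mathcal{A}_\ell\big)$ up to $\mu$-null sets.

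Next I would apply \cite[Corollary 5.21]{EW11} to this increasing sequence: it produces a set $R'\subset R$ with $\mu(R')=\mu(R)$ such that for every $y\in R'$ the conditional measures $\mu_y^{\mathcal{A}_\ell}$ converge in the weak* topology to $\mu_y^{\mathcal{A}}$ as $\ell\to\infty$. The advantage of invoking the weak* statement is that the exceptional null set is independent of the test function: the separability of $C(R)$ (valid since $R$ is a compact metric space) is used once, inside the cited result, rather than by hand here.

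Finally I would unwind both sides. Since each $\xi_\ell$ is a finite partition, for $\mu$-a.e.\ $y$ we have $\mu(\xi_\ell(y))>0$ for all $\ell$, and then $\mu_y^{\mathcal{A}_\ell}$ is the normalized restriction $\frac{1}{\mu(\xi_\ell(y))}\mu|_{\xi_\ell(y)}$, so that $\int_{\xi_\ell(y)}\psi\,d\mu = \mu(\xi_\ell(y))\int\psi\,d\mu_y^{\mathcal{A}_\ell}$ for every continuous $\psi$. On the other side, $\mu_y^{\mathcal{A}}$ is supported on $\xi(y)=V_R^u(y)$ and agrees, for $\mu$-a.e.\ $y$, with $\mu_{V_R^u(y)}^\xi$ by uniqueness of the system of conditional measures. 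Intersecting $R'$ with the two full-measure sets on which these identifications are valid gives the desired set, and the weak* convergence then yields \eqref{eqn:conditional} for every continuous $\psi\colon R\to\RR$. The only point requiring a little care — the step I would flag as the main (and it is a minor) obstacle — is ensuring that all three exceptional null sets (positivity of $\mu(\xi_\ell(y))$, the weak* convergence, and the identification of $\mu_y^{\mathcal{A}}$ with $\mu_{V_R^u(y)}^\xi$) are discarded simultaneously, so that the resulting $R'$ is genuinely independent of $\psi$; the rest is bookkeeping.
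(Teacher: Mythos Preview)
Your proposal is correct and follows exactly the route taken in the paper: the proposition is stated there as an immediate consequence of \cite[Corollary 5.21]{EW11}, with no further argument given. Your write-up simply supplies the details of how that corollary is applied and how the conditional measures are identified, which is precisely the intended derivation.
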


\begin{proof}[Proof of Lemma \ref{lem:mu-m}]
Note that it suffices to prove the lemma when $\diam(R)<\sigma$, where $\sigma>0$ is as in Theorem \ref{thm:holonomy}, because any rectangle $R$ can be covered by a finite number of such rectangles, and Lemma \ref{lem:compare-cond} gives the relationship between the conditional measures associated to two different rectangles.

Given a rectangle $R\subset \L$ with $\mu(R)>0$ and $\diam(R)<\sigma$, let $\xi_\ell$ be a refining sequence of finite partitions of $R$ such that for every $y\in R$ and $\ell\in \NN$, the set $\xi_\ell(y)$ is a rectangle, and $\bigcap_{\ell\in \NN} \xi_\ell(y) = V_R^u(y)$.

Let $R'\subset R$ be the set given by Proposition \ref{prop:conditionals}.  We prove that \eqref{eqn:mu-m2} holds for each $y\in R'$.  Recall that $W_k = f^k(\Vl^u(x)\cap\Lambda)$, so that $f_*^k m_x^\C$ is supported on $W_k$. Given $y\in R'$ and $\ell,k\in \NN$, the set $W_k \cap \xi_\ell(y)$ is contained in $\bigcup_{i=1}^s V_R^u(z_{k,\ell}^{(i)})$ for some $s\in \NN$ and $z_{k,\ell}^{(1)},\dots,z_{k,\ell}^{(s)} \in W_k \cap \xi_\ell(y)$.  Without loss of generality we assume that the sets $V_R^u(z_{k,\ell}^{(i)})$ are disjoint.
Following \eqref{eqn:convex}, we want to write $f_*^k m_x^\C$ as a linear combination of measures supported on these sets; the only problem is that some of these sets may not be completely contained in $W_k$.


To address this, 
let $I(k) = \{i \in \{1,\dots, s\} : V_R^u(z_{k,\ell}^{(i)}) \subset W_k\}$, and let  $\nu_{k,\ell}$ be the restriction of $m_x^\C$ to the set $\bigcup_{i\in I(k)} f^{-k}V_R^u(z_{k,\ell}^{(i)}) \subset \Vl^u(x)$.  
Then we have 
\begin{equation}\label{eqn:fk-eta}
(f_*^k m_x^\C - f_*^k \nu_{k,\ell})|_{\xi_\ell(y)}= (f_*^k m_x^\C)|_{Z_k}
\quad\text{for}\quad
Z_k := \bigcup_{i\in I(k)^c} W_k\cap\L\cap \Vl^u(z_{k,\ell}^{(i)}),
\end{equation}
and since $V_R^u(z_{k,\ell}^{(i)}) \subset B_\L^u(z_{k,\ell}^{(i)},\tau)$, we obtain that 
\[
Z_k \subset \{ z\in W_k : B_\L^u(z,2\tau) \not\subset W_k\}.
\]
Taking the preimage gives
\[
Y_k := f^{-k} Z_k \subset \{y\in \Vl^u(x) \cap \L : B_\L^u(y,2\tau \lambda^k) \not\subset \Vl^u(x)\},
\]
so $\bigcap_{k=1}^\infty Y_k = \emptyset$; we conclude that
$f_*^km_x^\C(Z_k) = m_x^\C(Y_k) \to 0$ as $k\to\infty$, so \eqref{eqn:fk-eta} gives
\[
\lim_{k\to\infty} \| (f_*^k m_x^\C - f_*^k \nu_{k,\ell})|_{\xi_\ell(y)} \| = 0.
\]
It follows that $\frac 1{n_j} \sum_{k=0}^{n_j-1} f_*^k \nu_{k,\ell}$ converges to $\mu|_{\xi_\ell(y)}$ in the weak* topology, and thus for every continuous $\psi\colon \xi_\ell(y)\to \RR$, \eqref{eqn:convex} gives
\begin{equation}\label{eqn:int-psi}
\int_{\xi_\ell(y)} \psi\,d\mu = \lim_{j\to\infty} \frac 1{n_j}
\sum_{k=0}^{n_j-1} \sum_{i\in I(k)} g_k\big(z_{k,\ell}^{(i)}\big)
\int_{V_R^u(z_{k,\ell}^{(i)})} \psi(z) \rho_k^{z_{k,\ell}^{(i)}}(z)\,d(m_{z_{k,\ell}^{(i)}}^\C)(z).
\end{equation}
Given $p,q\in R$ and a continuous function $\psi\colon R\to \RR$, 
\eqref{eqn:eQu} and Theorem \ref{thm:holonomy} give
\[
\int_{V_R^u(p)} \psi(z) \rho_k^p(z) \,dm_p^\C(z)
= e^{\pm Q_u} \int_{V_R^u(p)} \psi(z) \,dm_p^\C(z)
= e^{\pm Q_u}C^{\pm1} \int_{V_R^u(q)} \psi(\pi_{pq} z') \,dm_q^\C(z').
\]
Now assume that $\psi>0$; then when the leaves $V_R^u(p)$ and $V_R^u(q)$ are sufficiently close, we have $\psi(\pi_{pq} z') = 2^{\pm1} \psi(z')$, and thus for all sufficiently large $\ell$, \eqref{eqn:int-psi} gives
\begin{equation}\label{eqn:int-psi-2}
\int_{\xi_\ell(y)} \psi\,d\mu = (2e^{Q_u}C)^{\pm 1} \Big(\ulim_{j\to\infty} \frac 1{n_j} \sum_{k=0}^{n_j-1} \sum_{i\in I(k)} g_k(z_{k,\ell}^{(i)})\Big) \int_{V_R^u(y)} \psi\,dm_y^\C.
\end{equation}
When $\psi\equiv 1$ this gives
\[
\mu(\xi_\ell(y)) = (2e^{Q_u}C)^{\pm 1} 
\Big(\ulim_{j\to\infty} \frac 1{n_j} \sum_{k=0}^{n_j-1} \sum_{i\in I(k)}
g_k(z_{k,\ell}^{(i)})\Big) m_y^\C(V_R^u(y)),
\]
and so \eqref{eqn:conditional} yields 
\[
\int_{V_R^u(y)} \psi\,d\mu_{V_R^u(y)}^\xi = (2e^{Q_u}C)^{\pm 2} \frac 1{m_y^\C(V_R^u(y))} \int_{V_R^u(y)} \psi\,dm_y^\C.
\]
Since $\psi>0$ was arbitrary, this proves \eqref{eqn:mu-m2} and completes the proof of Lemma \ref{lem:mu-m}.
\end{proof}

\subsection{Local product structure, Gibbs property, and full support}\label{sec:Gibbs-supported}

The fact that $\mu$ has local product structure, is fully supported, and has the Gibbs property follows by the same argument as in \cite[\S6.3.2]{CPZ}; here we outline the argument and prove two Lemmas that were stated in \cite{CPZ} without proof.  We point out that although \cite{CPZ} considers uniformly hyperbolic systems, the proofs in \cite[\S6.3.2]{CPZ} work in our setting of a partially hyperbolic set satisfying \ref{C1}--\ref{C3}, with one exception:
that section also includes a proof of ergodicity using the standard Hopf argument, which requires uniform contraction in the stable direction, a strictly stronger condition than our Condition \ref{C1}.  Since we only assume \ref{C1} here, we prove ergodicity using a modified Hopf argument in \S\ref{sec:hopf}. 

Now we give the arguments.
Local product structure follows from Theorem \ref{thm:holonomy} and \eqref{eqn:mu-m}: given a rectangle $R$ with $\mu(R)>0$ and $\diam(R)<\sigma$, for $\mu$-a.e.\ $y,z\in R$ and every $A\subset V_R^u(z)$, we have
\begin{equation}\label{eqn:lps}
\mu_y^u(\pi_{zy}A) = C_0^{\pm 1} m_y^\C(\pi_{zy} A)/m_y^\C(R) = C_0^{\pm 1} C^{\pm 2} m_z^\C(A) / m_z^\C(R) = (C_0 C)^{\pm 2} \mu_z^u(A).
\end{equation}
Thus the properties in Lemma \ref{lem:lps} hold, establishing local product structure.  The proofs of full support and the Gibbs property 
use the following rectangles:
\[
R_n(x,\delta) := [\overline{B_n^u(x,\delta) \cap \L}, \overline{B_\L^\cs(x,\delta)}]
= \{[y,z] : y\in \overline{B_n^u(x,\delta)\cap \L}, z\in \overline{B_\L^\cs(x,\delta)} \}.
\]
Note that when $n=0$ we get
\[
R_0(x,\delta) = R(x,\delta) = [\overline{B_\L^u(x,\delta)},\overline{B_\L^\cs(x,\delta)}] = \{[y,z] : y\in \overline{B_\L^u(x,\delta)}, z\in \overline{B_\L^\cs(x,\delta)}\}
\]
as in \eqref{rectangle}.  The rectangles $R_n(x,\delta)$ are related to the Bowen balls $B_n(x,\delta)$ as follows.

\begin{lemma}\label{lem:BnRn}
For every sufficiently small $\delta>0$, there are $\delta_1,\delta_2>0$ such that
\begin{equation}\label{eqn:BnRn}
R_n(x,\delta_1) \subset B_n(x,\delta) \cap \L \subset R_n(x,\delta_2)
\end{equation}
for every $x\in \L$ and $n\in \NN$.  Moreover, $\delta_1,\delta_2\to 0$ as $\delta\to 0$.
\end{lemma}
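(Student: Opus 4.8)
The plan is to unwind the definition of $R_n(x,\delta)$ into a characterization via local product coordinates and then estimate. Recall that by Theorem~\ref{thm:Wcs} (together with \ref{C3}), for $d(p,q)<r_0$ the bracket $[p,q]=\Vl^u(p)\cap\Vl^\cs(q)$ is a well-defined point depending continuously on $(p,q)$, and $[p,q]\in\L$ when $p,q\in\L$. Writing $[x,w]$ for the unstable coordinate of $w$ (its projection to $\Vl^u(x)$ along center-stable leaves) and $[w,x]$ for its center-stable coordinate, one checks directly — exactly as for \eqref{rectangle} in the case $n=0$ — that for all small $\delta$,
\[
R_n(x,\delta)=\{w\in\L:\ [x,w]\in\overline{B_n^u(x,\delta)\cap\L}\ \text{ and }\ [w,x]\in\overline{B_\L^\cs(x,\delta)}\}.
\]
I would record at the outset two elementary facts. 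First, uniform transversality of the splitting ($\angle(E^u,\Ecs)\ge\kappa$) and the uniform $C^1$-bounds on the local manifolds (Proposition~\ref{prop:local-mfds}, Theorem~\ref{thm:Wcs}) give a constant $C_*\ge1$ with $d(p,[p,q])+d(q,[p,q])\le C_* d(p,q)$ for $d(p,q)<r_0$, and such that $\diam R_n(x,\delta)\le C_*\delta$. Second, \ref{C1} — used exactly as in the proof of Lemma~\ref{lem:comparison} — provides a nondecreasing modulus $\omega$ with $\omega(t)\to0$ as $t\to0$ such that if $q\in\Vl^\cs(p)\cap\L$ with $d(p,q)\le t$ then $d(f^kp,f^kq)\le\omega(t)$ for every $k\ge0$; the point is that moving along a center-stable fiber keeps all forward iterates close, uniformly in $p$ and $k$.

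For the inclusion $R_n(x,\delta_1)\subset B_n(x,\delta)\cap\L$, take $w\in R_n(x,\delta_1)$ and set $y=[x,w]$, $z=[w,x]$. Since $y\in\Vl^\cs(w)$ we have $w\in\Vl^\cs(y)$, and $d(y,w)\le C_*\delta_1$ by the first fact above (both $y$ and $w$ lie in $R_n(x,\delta_1)$), so $d(f^ky,f^kw)\le\omega(C_*\delta_1)$ for all $k\ge0$; meanwhile $d(f^kx,f^ky)\le d_n(x,y)\le\delta_1$ for $0\le k<n$. The triangle inequality gives $d(f^kx,f^kw)\le\delta_1+\omega(C_*\delta_1)$ for $0\le k<n$, and since $R_n(x,\delta_1)\subset\L$ we conclude $w\in B_n(x,\delta)\cap\L$ as soon as $\delta_1+\omega(C_*\delta_1)<\delta$, which can be arranged with $\delta_1\to0$ as $\delta\to0$.

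For the inclusion $B_n(x,\delta)\cap\L\subset R_n(x,\delta_2)$, assume $\delta<r_0$ and take $w\in B_n(x,\delta)\cap\L$. Put $y=[x,w]\in\Vl^u(x)\cap\L$ and $z=[w,x]\in\Vl^\cs(x)\cap\L$; then $w$ is the unique point with unstable coordinate $y$ and center-stable coordinate $z$. The center-stable coordinate is immediate: $d(x,z)\le C_* d(x,w)<C_*\delta$. For the unstable coordinate, $w\in\Vl^\cs(y)$ and $d(y,w)\le d(y,x)+d(x,w)\le(1+C_*)\delta$, so $d(f^ky,f^kw)\le\omega((1+C_*)\delta)$ for all $k$, whence $d(f^kx,f^ky)\le d(f^kx,f^kw)+\omega((1+C_*)\delta)<\delta+\omega((1+C_*)\delta)$ for $0\le k<n$, i.e.\ $d_n(x,y)<\delta+\omega((1+C_*)\delta)$. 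Taking $\delta_2:=\max\{C_*\delta,\ \delta+\omega((1+C_*)\delta)\}$ gives $w\in R_n(x,\delta_2)$, and $\delta_2\to0$ as $\delta\to0$.

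I expect the only genuinely delicate step to be extracting the uniform modulus $\omega$ of the second preliminary fact from the curve-based statement \ref{C1} (with the correct uniformity in $x$, in the iterate $k$, and in the displacement); but this is precisely the way \ref{C1} is already invoked in Lemma~\ref{lem:comparison}, so no new argument is required, and everything else is bookkeeping with the local product coordinates.
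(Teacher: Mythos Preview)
Your argument is correct and follows the same strategy as the paper: write points of $R_n(x,\cdot)$ in local product coordinates and control each coordinate under iteration, invoking \ref{C1} (packaged as your modulus $\omega$) for the center-stable displacement. The only difference is in the second inclusion, where the paper avoids \ref{C1} entirely by using $B_\L(f^k x,\delta)\subset R(f^k x,\delta_2)$ at each iterate together with the identity $[f^k x,f^k w]=f^k[x,w]$ to read off directly that the unstable coordinate lies in $\overline{B_n^u(x,\delta_2)}$, whereas you bound $d(f^k x,f^k y)$ via $\omega$ and the triangle inequality; both routes work and give $\delta_2\to 0$.
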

\begin{proof}
By Condition \ref{C1}, there is $\delta_1>0$ such that if $\gamma$ is a curve with length $\leq \delta_1$ and $n\geq 0$ is such that $f^n\gamma$ is a $cs$-curve, then $f^n\gamma$ has length $\leq \delta/3$.  Then given $p\in R_n(x,\delta_1)$, we have $p=[y,z]$ for some $y\in \overline{B_n^u(x,\delta_1) \cap \L}$ and $z\in \overline{B_\L^\cs(x,\delta_1)}$, and thus for every $0\leq k< n$, Condition \ref{C1} gives
\[
f^k(p) = [f^k(y), f^k (z)] \in [\overline{B_\L^u(f^k(x),\delta_1)},
\overline{B_\L^\cs(f^k(x),\delta/3)}]
\subset R(f^k(x),\delta/3) \subset B(f^k(x),\delta).
\]
This proves the first inclusion in \eqref{eqn:BnRn}.  For the second inclusion, observe that since $\Vl^u(x)$ and $\Vl^\cs(x)$ depend continuously on $x$, for every sufficiently small $\delta>0$ there is $\delta_2>0$ such that $B_\L(x,\delta) \subset R(x,\delta_2)$ for all $x\in \L$, and $\delta_2 \to 0$ as $\delta \to 0$.  Then given $p\in B_n(x,\delta) \cap \L$, for each $0\leq k < n$ we have
\[
f^k(p) \in B_\L(f^k(x),\delta) \subset R(f^k(x),\delta_2),
\]
so $f^k(p) = [y_k,z_k]$ for some $y_k \in \overline{B_\L^u(f^k x,\delta_2)}$ and $z_k \in \overline{B_\L^\cs(f^k x, \delta_2)}$.  We must have $y_k = [f^k(p), f^k(x)] = f^k([p,x]) = f^k(y_0)$ for each $k$, and thus $y_0 \in \overline{B_n^u(x,\delta_2)}$, so $p\in R_n(x,\delta_2)$.
\end{proof}

\begin{lemma}[{\cite[Lemma 6.8]{CPZ}}]\label{lem:pre-gibbs}
Given $\delta>0$, there is $Q_8>0$ such that for every $x,\delta,n$ as above, we have
\begin{equation}\label{eqn:muRn}
\mu(R_n(x,\delta)) = Q_8^{\pm 1} e^{-nP(\ph) + S_n\ph(x)} \mu(R(x,\delta)).
\end{equation}
\end{lemma}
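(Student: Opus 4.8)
The plan is to disintegrate $\mu|_{R(x,\delta)}$ along its partition into local unstable sets and to reduce every quantity to reference measures on the single leaf $\Vl^u(x)$ by using the holonomy. We may assume $\mu(R(x,\delta))>0$; otherwise $R_n(x,\delta)\subset R(x,\delta)$ forces both sides of \eqref{eqn:muRn} to vanish. Since $B_n^u(x,\delta)\subset B^u(x,\delta)$ (because $d_n\ge d$), we have $R_n(x,\delta)\subset R(x,\delta)$, so applying \eqref{eqn:R-split} with $\psi=\one_{R_n(x,\delta)}$ yields
\[
\mu(R_n(x,\delta)) = \int_{V_R^\cs(x)} \mu_y^u\big(V_{R_n}^u(y)\big)\,d\tilde\mu(y),
\qquad
\mu(R(x,\delta)) = \tilde\mu\big(V_R^\cs(x)\big),
\]
where $V_{R_n}^u(y) = \Vl^u(y)\cap R_n(x,\delta)\subset V_R^u(y)$. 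Thus it is enough to produce a constant $Q_8=Q_8(\delta)$, independent of $x$, of $n$, and of $y\in V_R^\cs(x)$, such that $\mu_y^u(V_{R_n}^u(y)) = Q_8^{\pm1}e^{-nP(\ph)+S_n\ph(x)}$ for $\mu$-a.e.\ $y$; integrating against the probability measure $d\tilde\mu/\tilde\mu(V_R^\cs(x))$ then gives \eqref{eqn:muRn}.

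To estimate $\mu_y^u(V_{R_n}^u(y))$ I would chain together three facts already in hand. First, Statement \ref{m3} of Theorem \ref{thm:main} (the bound \eqref{eqn:mu-m}) says that $d\mu_y^u/dm_y^\C$ lies between $C_0^{-1}m_y^\C(V_R^u(y))^{-1}$ and $C_0 m_y^\C(V_R^u(y))^{-1}$, so that
\[
\mu_y^u(V_{R_n}^u(y)) = C_0^{\pm1}\, \frac{m_y^\C(V_{R_n}^u(y))}{m_y^\C(V_R^u(y))}.
\]
Second, for $\delta$ small enough that $\diam R(x,\delta)<\sigma$, unwinding the description \eqref{rectangle} of rectangles identifies $V_{R_n}^u(y)$ with the cs-holonomy image $\pi_{xy}^\cs(\overline{B_n^u(x,\delta)\cap\L})$ and $V_R^u(y)$ with $\pi_{xy}^\cs(\overline{B_\L^u(x,\delta)})$; by Theorem \ref{thm:holonomy} the holonomy has $m^\C$-Jacobian in $[C^{-1},C]$, so $m_y^\C(V_{R_n}^u(y)) = C^{\pm1} m_x^\C(\overline{B_n^u(x,\delta)\cap\L})$ and $m_y^\C(V_R^u(y)) = C^{\pm1} m_x^\C(\overline{B_\L^u(x,\delta)})$. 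Third, the $u$-Gibbs property of Corollary \ref{cor:Gibbs} — applied at radii $\delta$ and $2\delta$, sandwiching the closure between $B_n^u(x,\delta)\cap\L$ and $B_n^u(x,2\delta)\cap\L$ — gives $m_x^\C(\overline{B_n^u(x,\delta)\cap\L}) = Q_0^{\pm1}e^{-nP(\ph)+S_n\ph(x)}$ with $Q_0=Q_0(\delta)$; and evaluating the same property at $n=1$, together with the upper bound $m_x^\C(\Vl^u(x)\cap\L)\le K$ from Theorem \ref{thm:finite}\ref{fin3}, bounds $m_x^\C(\overline{B_\L^u(x,\delta)})$ above and below by positive constants depending only on $\delta$, $\|\ph\|$ and $P(\ph)$, uniformly in $x$. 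Multiplying these three estimates produces the required two-sided bound on $\mu_y^u(V_{R_n}^u(y))$, with $Q_8$ absorbing $C_0$, $C$, $Q_0(\delta)$, $K$ and the bounds on $m_x^\C(\overline{B_\L^u(x,\delta)})$.

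Two subsidiary points must be checked but involve no new idea: that $\overline{B_n^u(x,\delta)\cap\L}\subset V_R^u(x)$, so that $\pi_{xy}^\cs$ is defined on it and its image is exactly $V_{R_n}^u(y)$ (a direct unwinding of \eqref{rectangle}, using $B_n^u(x,\delta)\subset B^u(x,\delta)$ and $x\in\overline{B_n^u(x,\delta)\cap\L}$); and that replacing $B_n^u(x,\delta)\cap\L$ by its closure only enlarges the radius slightly, since limit points lie in $\L\cap\Vl^u(x)$ and satisfy $d_n(x,\cdot)\le\delta$.

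The main obstacle here is organizational rather than conceptual: one must verify that each constant that enters the chain ($C_0$ from \eqref{eqn:mu-m}, $C$ from Theorem \ref{thm:holonomy}, $Q_0(\delta)$ from the $u$-Gibbs property, and the two-sided bound on $m_x^\C(\overline{B_\L^u(x,\delta)})$) is genuinely independent of $x$, of the slice label $y$, and of $n$, so that it passes through the integral over $V_R^\cs(x)$ unchanged. The structural observation that makes this work is that cs-holonomy reduces the unstable slices of $R_n(x,\delta)$ to dynamical balls on the \emph{fixed} leaf $\Vl^u(x)$; all $y$-dependence then disappears and the $u$-Gibbs estimate need only be applied once, on that one leaf. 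In particular the $cs$-Bowen property is not used here directly, having already been absorbed into Theorem \ref{thm:holonomy}.
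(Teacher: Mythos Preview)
Your argument is correct and follows essentially the same route as the paper: disintegrate $\mu|_{R(x,\delta)}$ along unstable sets, use the bound \eqref{eqn:mu-m} to pass from conditional measures to reference measures, apply the holonomy estimate (Theorem \ref{thm:holonomy}) to reduce everything to the single leaf $\Vl^u(x)$, and finish with the $u$-Gibbs property of $m_x^\C$. The paper compresses these steps into a single chain of equalities and is slightly more casual about the denominator (it cites the $K$-bound of Theorem \ref{thm:finite} rather than your more explicit use of the $u$-Gibbs estimate at $n=1$), but the content is the same.
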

\begin{proof}
Writing $\mu_y^u$ for the conditional measures of $\mu$ on unstable leaves in 
$R(z,\delta)$, we have 
\begin{align*}
\mu(R_n&(x,\delta)) = \int_{R(x,\delta)} \mu_y^u(R_n(x,\delta)) \,d\mu(y)
=C_0^{\pm 1} \int_{R(x,\delta)} \frac{m_y^\C(R_n(x,\delta))}{m_y^\C(R(x,\delta))} \,d\mu(y) \\
&= (KC_0)^{\pm 1} \int_{R(x,\delta)} m_y^\C (\pi_{xy} \overline{B_n^u(x,\delta)}) \,d\mu(y)
= (KC_0C)^{\pm 1} m_x^\C(\overline{B_n^u(x,\delta)}) \mu(R(x,\delta)),
\end{align*}
where the first equality uses the definition of conditional measures, the second uses \eqref{eqn:mu-m}, the third uses Theorem \ref{thm:finite}, and the fourth uses Theorem \ref{thm:holonomy}.
Since $B_n^u(x,\delta) \subset \overline{B_n^u(x,\delta)} \subset B_n^u(x,2\delta)$, the result follows from the $u$-Gibbs property of $m_x^\C$.
\end{proof}

To prove full support and the Gibbs property, it is enough to show that $\inf_{x\in \L} R(x,\delta)>0$ for every $\delta>0$.  For this we need the following.

\begin{lemma}\label{lem:Rzx}
For every sufficiently small $\delta>0$, there is $\delta'>0$ such that for every $z\in \L$ and $x\in R(z,\delta')$, we have $R(z,\delta') \subset R(x,\delta)$.
\end{lemma}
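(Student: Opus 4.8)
The plan is to reduce the statement to uniform continuity of the local product–structure map $[\cdot,\cdot]$ on a compact set, using the following description of membership in a rectangle: if $x,p\in\Lambda$ are close enough and $\delta$ is small, then having $[p,x]\in\overline{B_\L^\cs(x,\delta)}$ together with $[x,p]\in\overline{B_\L^u(x,\delta)}$ forces $p\in R(x,\delta)$. Indeed, put $y:=[p,x]=\Vl^u(p)\cap\Vl^\cs(x)$; then $y\in\Vl^u(p)\cap\L$ with $d(y,x)<\delta$, so $y\in\overline{B_\L^\cs(x,\delta)}$, and since $y$ is close to $p$ we get $p\in\Vl^u(y)$, i.e.\ $p$ lies in the first factor of \eqref{rectangle}. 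Symmetrically $y':=[x,p]\in\Vl^u(x)\cap\L$ with $d(y',x)<\delta$ and $p\in\Vl^\cs(y')$, so $p$ lies in the second factor. (All bracket points lie in $\Lambda$ by \ref{C3}, and $[\cdot,\cdot]$ is defined on pairs at distance $<r_0$ by Theorem \ref{thm:Wcs}.)

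First I would, as in the proof of Lemma \ref{lem:leaf-iterates}, fix $\eps\in(0,r_0)$ so small that $[\cdot,\cdot]$ is continuous on $\Delta_\eps=\{(a,b)\in\Lambda^2:d(a,b)\le\eps\}$; then $G(a,b)=\max\{d([a,b],a),d([a,b],b)\}$ is continuous on $\Delta_\eps$ and vanishes on the diagonal, hence admits a modulus of continuity $\beta$ with $\beta(t)\to0$ as $t\to0$ and $G(a,b)\le\beta(d(a,b))$. Next I would use the elementary identity $q=[[q,z],[z,q]]$ (valid when the brackets are defined, since $[q,z]\in\Vl^u(q)$ and $[z,q]\in\Vl^\cs(q)$) to show that $\diam R(z,\delta')\to0$ as $\delta'\to0$, uniformly in $z$: any $q\in R(z,\delta')$ has $d([q,z],z)<\delta'$ and $d([z,q],z)<\delta'$, so $d([q,z],[z,q])<2\delta'$ and $d(q,[q,z])\le G([q,z],[z,q])\le\beta(2\delta')$, whence $d(q,z)<\omega(\delta'):=\beta(2\delta')+\delta'$ and $\diam R(z,\delta')\le2\omega(\delta')$.

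Finally, given small $\delta>0$ I would choose $\delta'>0$ with $2\omega(\delta')<\eps$ and $\beta(2\omega(\delta'))<\delta$. Then for any $z\in\Lambda$, $x\in R(z,\delta')$, and $p\in R(z,\delta')$, both $x$ and $p$ lie within $\omega(\delta')$ of $z$, so $d(p,x)<2\omega(\delta')<\eps$; hence $[x,p]$ and $[p,x]$ are defined, lie in $\Lambda$ by \ref{C3}, and satisfy $d([p,x],x)\le G(p,x)\le\beta(2\omega(\delta'))<\delta$ and likewise $d([x,p],x)<\delta$. By the description in the first paragraph, $p\in R(x,\delta)$, which proves the lemma.

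I expect the only real work to be bookkeeping around the product structure: verifying the ``implies'' direction of the rectangle description carefully (including the standard fact ``$y\in\Vl^u(p)$ and $d(y,p)$ small $\Rightarrow p\in\Vl^u(y)$'' and the use of \ref{C3} to keep all bracket points inside $\Lambda$), and making the uniform bound $\diam R(z,\delta')\to0$ precise. There is no dynamical input beyond the existence and continuity of the $\Vl^u$ and $\Vl^\cs$ laminations (Proposition \ref{prop:local-mfds}, Theorem \ref{thm:Wcs}) and their local product structure; in particular \ref{C1} is used only through the existence of the center-stable lamination.
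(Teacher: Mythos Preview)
Your argument is correct. Both your proof and the paper's rest on the same underlying fact---uniform continuity of the bracket map, equivalently continuous dependence of $\Vl^u,\Vl^\cs$ on the base point---but the paper packages it more tersely by passing through metric balls: it quotes (from Lemma~\ref{lem:BnRn}) that for small $\delta$ there is $\delta'>0$ with $B_\L(x,6\delta')\subset R(x,\delta)$ for all $x$, notes that $R(z,\delta')\subset B_\L(z,3\delta')$, and then chains
\[
R(z,\delta')\subset B_\L(z,3\delta')\subset B_\L(x,6\delta')\subset R(x,\delta)
\]
once $x\in R(z,\delta')$. Your route instead unpacks this explicitly via the modulus $\beta$ of $[\cdot,\cdot]$ and a direct membership criterion for $R(x,\delta)$, avoiding any appeal to the ball--rectangle comparison but at the cost of several lines of bookkeeping (the identity $q=[[q,z],[z,q]]$, the symmetry ``$y\in\Vl^u(p)\Rightarrow p\in\Vl^u(y)$'', and the diameter estimate for $R(z,\delta')$). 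The tradeoff is explicitness versus brevity; there is no real difference in content, and in particular neither proof uses any dynamics beyond the existence and continuity of the two laminations and \ref{C3}.
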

\begin{proof}
As in Lemma \ref{lem:BnRn}, given $\delta>0$ small, there is $\delta'>0$ such that $B_\L(x,6\delta') \subset R(x,\delta)$ for all $x\in \L$.  Then for all $z\in \L$ and $x\in R(z,\delta')$, we have $x\in B(z,3\delta')$ and thus
\[
R(z,\delta') \subset B_\L(z,3\delta') \subset B_\L(x,6\delta') \subset R(x,\delta).\qedhere
\]
\end{proof}

\begin{lemma}\label{lem:dense-orbit}
If $y\in \L$ has a backwards orbit that is dense in $\L$, then $\mu(R(y,\theta)) > 0$ for all $\theta>0$.
\end{lemma}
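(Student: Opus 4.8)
The plan is to work with $\mu$ directly rather than with the approximating measures $\mu_n$: the mass $\mu_n$ places near $y$ at any fixed time scales like $e^{S_n\ph-nP(\ph)}$ and need not be bounded below, so instead I would bound $\mu(R(y,\theta))$ from below by the $\mu$-measure of a thin dynamical rectangle lying inside $f^{-n}(R(y,\theta))$, convert that quantity by Lemma~\ref{lem:pre-gibbs} into a positive multiple of $\mu(R(f^{-n}(y),\delta))$, and finally show this last quantity is positive for some $n$ using a pigeonhole over a rectangle partition together with density of the backward orbit of $y$.

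Fix $\theta>0$. First I would use Condition~\ref{C1}, exactly as in the proof of Lemma~\ref{lem:BnRn}, to pick $\delta\in(0,\theta]$ small enough that Lemma~\ref{lem:pre-gibbs} applies and that $f^n(z)\in\overline{B_\L^\cs(f^n(w),\theta)}$ whenever $w\in\L$, $n\ge 0$, and $z\in\overline{B_\L^\cs(w,\delta)}$. Since $B_n^u(f^{-n}(y),\delta)=f^{-n}(B^u(y,\delta))$, applying $f^n$ to the rectangle $R_n(f^{-n}(y),\delta)=[\overline{B_n^u(f^{-n}(y),\delta)\cap\L},\overline{B_\L^\cs(f^{-n}(y),\delta)}]$ and using that $f^n$ commutes with the bracket gives
\[
f^n\big(R_n(f^{-n}(y),\delta)\big)\subseteq[\overline{B_\L^u(y,\delta)},\overline{B_\L^\cs(y,\theta)}]\subseteq[\overline{B_\L^u(y,\theta)},\overline{B_\L^\cs(y,\theta)}]=R(y,\theta),
\]
so $R_n(f^{-n}(y),\delta)\subseteq f^{-n}(R(y,\theta))$ for every $n\ge 0$. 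By $f$-invariance of $\mu$ and Lemma~\ref{lem:pre-gibbs} applied with base point $f^{-n}(y)$,
\[
\mu(R(y,\theta))=\mu\big(f^{-n}(R(y,\theta))\big)\ge\mu\big(R_n(f^{-n}(y),\delta)\big)\ge Q_8^{-1}e^{-nP(\ph)+S_n\ph(f^{-n}(y))}\,\mu\big(R(f^{-n}(y),\delta)\big),
\]
and since the exponential factor is strictly positive it now suffices to exhibit a single $n$ with $\mu(R(f^{-n}(y),\delta))>0$.

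For this, I would choose $\epsilon>0$ small enough that any rectangle of diameter less than $\epsilon$ containing a point $w$ is contained in $R(w,\delta)$; this is possible because uniform continuity of the local leaves (as in the proof of Lemma~\ref{lem:BnRn}) yields some $\delta_3>0$, independent of $w$, with $B_\L(w,\delta_3)\subseteq R(w,\delta)$, so one takes $\epsilon\le\delta_3$. Applying Lemma~\ref{lem:rectangle-partition} to $\mu$ with this $\epsilon$ produces rectangles $R_1,\dots,R_N$ with disjoint relative interiors, $\mu(\partial R_i)=0$, and $\bigcup_iR_i=\L$, whence $\sum_i\mu(R_i)=1$ and $\mu(R_{i_0})>0$ for some $i_0$; by Lemma~\ref{lem:rectangle-partition}(1) the relative interior of $R_{i_0}$ is nonempty. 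Since the backward orbit $\{f^{-n}(y):n\ge 0\}$ is dense in $\L$, some $f^{-n}(y)$ lies in that relative interior, and then $R_{i_0}\subseteq R(f^{-n}(y),\delta)$ by the choice of $\epsilon$, so $\mu(R(f^{-n}(y),\delta))\ge\mu(R_{i_0})>0$. Plugging this $n$ into the displayed inequality gives $\mu(R(y,\theta))>0$.

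The step I expect to be most delicate is the geometric bookkeeping of the second paragraph — locating the thin rectangle $R_n(f^{-n}(y),\delta)$ inside $f^{-n}(R(y,\theta))$ and, above all, controlling its center-stable extent uniformly in $n$ via Condition~\ref{C1}; once that is set up, Lemma~\ref{lem:pre-gibbs} and the partition/density argument finish the proof routinely.
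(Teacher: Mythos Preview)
Your proposal is correct and follows essentially the same route as the paper: you establish the inclusion $f^n R_n(f^{-n}(y),\delta)\subset R(y,\theta)$ via Condition~\ref{C1}, invoke Lemma~\ref{lem:pre-gibbs} to reduce to showing $\mu(R(f^{-n}(y),\delta))>0$ for some $n$, and then use a finite cover by small rectangles together with density of the backward orbit. The only cosmetic difference is that the paper uses a finite cover by rectangles $R(z,\delta')$ and Lemma~\ref{lem:Rzx}, whereas you use Lemma~\ref{lem:rectangle-partition} and the inclusion $B_\L(w,\delta_3)\subset R(w,\delta)$; these amount to the same thing.
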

\begin{proof}
Let $\delta = \delta(\theta)>0$ be given by Condition \ref{C1}, and let $\delta' = \delta'(\delta)>0$
as in Lemma \ref{lem:Rzx}.  Since $\L$ is compact, there is a finite set $E\subset \L$ such that $\bigcup_{z\in E} R(z,\delta') = \L$, and thus there is $z\in E$ with $\mu(R(z,\delta'))>0$.  Since the backwards orbit of $y$ is dense, there is $n\geq 0$ such that $x := f^{-n}(y) \in R(z,\delta')$.  By Lemma \ref{lem:Rzx} and our choice of $x$, we have
\[
\mu(R(x,\delta)) \geq \mu(R(z,\delta')) > 0.
\]
By Lemma \ref{lem:pre-gibbs}, we conclude that $\mu(R_n(x,\delta))>0$.  Moreover, we have
\[
f^n R_n(x,\delta) \subset f^n [\overline{B_n^u(x,\delta) \cap \L}, \overline{B_\L^\cs(x,\theta)}]
\subset [\overline{B_\L^u(y,\delta)}, \overline{B_\L^\cs(y,\theta)}] = R(y,\theta),
\]
where the first inclusion uses Condition \ref{C1}.  Since $\mu$ is $f$-invariant, this gives $\mu(R(y,\theta)) \geq \mu(R_n(x,\delta))>0$, and since $\delta>0$ was arbitrary, this completes the proof.
\end{proof}


Lemma \ref{lem:dense-orbit} proves that $\mu$ has full support.  The Gibbs property follows from Lemmas \ref{lem:pre-gibbs} and \ref{lem:dense-orbit}.

\subsection{Ergodicity via a modified Hopf argument}\label{sec:hopf}

In this section, we prove that if $f|\Lambda$ is topologically transitive and if $\mu$ is an $f$-invariant probability measure on $\L$ with local product structure, then $\mu$ is ergodic.

\begin{definition}\label{def:regular}
A point $z\in \L$ is \emph{Birkhoff regular} if the Birkhoff averages
\[ 
\psi^-(z) = \lim_{n\to \infty} \frac{1}{n} \sum_{k=0}^{n-1} \psi(f^{-k}z) \text{ and }\psi^+(z) = \lim_{n\to  \infty} \frac{1}{n} \sum_{k=0}^{n-1} \psi(f^kz) 
\]
are defined and equal to each other for every continuous function $\psi$ on $\L$.  In this case we write $\bpsi(z) = \psi^-(z) = \psi^+(z)$ for their common value.  The set of Birkhoff regular points is denoted $\mathcal{B}$.
\end{definition}

\begin{lemma}\label{lem:hopf}
Let $\psi\colon \L\to\RR$ be continuous.  Then
\begin{enumerate}
\item for every $x,y\in \mathcal{B}$ with $y\in \Vl^u(x)$, we have 
$\bpsi(x) = \bpsi(y)$; 
and
\item for every $\zeta>0$, there is $\epsilon>0$ such that for every $x,y\in \mathcal{B}$ with $y\in B^\cs(x,\epsilon)$, we have 
$|\bpsi(x) - \bpsi(y)| < \zeta$.
\end{enumerate}
\end{lemma}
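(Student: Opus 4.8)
The plan is to prove the two parts of Lemma \ref{lem:hopf} using the standard Hopf-type estimates, adapted so that only Condition \ref{C1} (Lyapunov stability) is required rather than uniform contraction along $\Ecs$.

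For part (1), suppose $y\in \Vl^u(x)$ with $x,y\in \mathcal{B}$. By Proposition \ref{prop:local-mfds}\ref{leaves-contract}, we have $d(f^{-n}x,f^{-n}y)\le C_1\lambda^n d(x,y)\to 0$ as $n\to\infty$. Given $\zeta>0$, uniform continuity of $\psi$ gives $\rho>0$ with $|\psi(a)-\psi(b)|<\zeta$ whenever $d(a,b)<\rho$; then for all large $N$ we have $d(f^{-k}x,f^{-k}y)<\rho$ for all $k\ge N$, so
\[
\Big| \frac1n\sum_{k=0}^{n-1}\psi(f^{-k}x) - \frac1n\sum_{k=0}^{n-1}\psi(f^{-k}y) \Big| \le \frac{2N\|\psi\|_\infty}{n} + \zeta.
\]
Letting $n\to\infty$ and using that both points are Birkhoff regular gives $|\psi^-(x)-\psi^-(y)|\le\zeta$; since $\zeta$ was arbitrary and $\bpsi=\psi^-$ on $\mathcal{B}$, we conclude $\bpsi(x)=\bpsi(y)$.

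For part (2), suppose $y\in B^\cs(x,\epsilon)$ with $x,y\in \mathcal{B}$. The key point is that we do \emph{not} have $d(f^k x,f^k y)\to 0$; instead we use Condition \ref{C1} to control forward orbits: given $\zeta>0$, pick $\rho>0$ as above from uniform continuity of $\psi$, then let $\epsilon=\epsilon(\rho)>0$ be small enough that (shrinking if necessary) the $cs$-curve in $\Vl^\cs(x)$ joining $x$ to $y$ has length $\le\delta$, where $\delta$ is given by \ref{C1} applied with $\theta=\rho$. Since $\Vl^\cs(x)$ is $f$-invariant in the sense that $f^k$ maps the connecting curve to a $cs$-curve (its tangent vectors stay in the cone $\Kcs$), Condition \ref{C1} gives that $f^k$ of this curve has length $\le\rho$ for all $k\ge 0$; in particular $d(f^k x,f^k y)\le\rho$ for all $k\ge 0$. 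Hence $|\psi(f^k x)-\psi(f^k y)|<\zeta$ for every $k\ge 0$, so
\[
\Big| \frac1n\sum_{k=0}^{n-1}\psi(f^{k}x) - \frac1n\sum_{k=0}^{n-1}\psi(f^{k}y) \Big| \le \zeta
\]
for every $n$, and letting $n\to\infty$ gives $|\psi^+(x)-\psi^+(y)|\le\zeta$, i.e.\ $|\bpsi(x)-\bpsi(y)|\le\zeta$. Choosing $\epsilon$ in advance so that this holds with strict inequality (e.g.\ working with $\zeta/2$) completes the proof.

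The main obstacle is the bookkeeping in part (2): one must be careful that the curve joining $x$ to $y$ inside $\Vl^\cs(x)$ really is a $cs$-curve (tangent vectors in $\Kcs$) and remains so under all forward iterates, so that \ref{C1} applies uniformly in $k$ and $x$. This is where the replacement of uniform contraction by Lyapunov stability enters, and is the only genuinely new ingredient relative to the classical Hopf argument; the forward-contraction estimate in the uniformly hyperbolic case is here replaced by the uniform bound $d(f^k x,f^k y)\le\rho$ coming from \ref{C1}, which suffices because $\psi$ is only required to be continuous (not Hölder).
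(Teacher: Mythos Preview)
Your proof is correct and follows essentially the same approach as the paper's: part (1) uses backward contraction along $\Vl^u$ from Proposition \ref{prop:local-mfds}\ref{leaves-contract}, and part (2) uses Condition \ref{C1} to bound $d(f^k x,f^k y)$ uniformly for $k\ge 0$, with both parts finishing via uniform continuity of $\psi$. Your argument is simply a more explicit version of the paper's, which compresses both parts into a single observation about sequences with bounded $\limsup d(x_k,y_k)$.
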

\begin{proof}
Both statements rely on the following consequence of uniform continuity:
for every continuous $\psi\colon \L\to\RR$ and every $\zeta>0$, there is $\theta>0$ such that if $x_k,y_k\in \L$ are sequences with $\limsup_{k\to\infty} d(x_k,y_k) \leq \theta$, then $\limsup_{n\to\infty} |\frac 1n \sum_{k=1}^n f(x_k) - \frac 1n\sum_{k=1}^n f(y_k)| < \zeta$.  For the first claim in the lemma, put $x_k = f^{-k}(x)$ and $y_k=f^{-k}(y)$ so that $d(x_k,y_k)\to 0$ and $\theta>0$ can be taken arbitrarily small.
For the second claim in the lemma, use Condition \ref{C1} to get $\eps>0$ such that $y\in B^{cs}(x,\eps)$ implies $d(f^k x, f^k y) \leq \theta$ for all $k \geq 0$, so that in particular $\limsup_{k\to\infty} d(f^k(x),f^k(y)) \leq \theta$.
\end{proof}

Now let $\mu$ have local product structure, and consider the set
\begin{equation}\label{eqn:A}
A_\mu := \{x\in \mathcal{B} : \mu_x^u(\Vl^u(x) \setminus \mathcal{B} )=0 \}
\end{equation}
of all points $x$ for which $\mu_x^u$-a.e.\ point in $\Vl^u(x)$ is Birkhoff regular for 
$\mu$. By the Birkhoff ergodic theorem, we have $\mu(\mathcal{B})=1$, so 
$\mu(M\setminus \mathcal{B})=0$, and thus the disintegration into conditional measures in \eqref{gibbs-split} gives $\mu_x^u(M\setminus \mathcal{B})=0$ for $\mu$-a.e.\ $x$; in other words, $\mu(A_\mu)=1$. Thus to prove that $\mu$ is ergodic, it suffices to prove that $\bpsi$ is constant on $A_\mu$, which we do in the next lemma.

\begin{figure}[htbp]
\includegraphics[width=.6\textwidth]{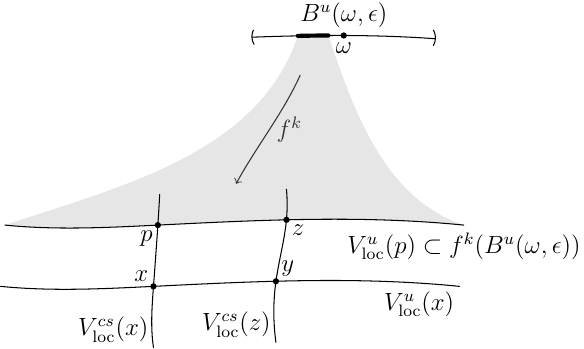}
\caption{Birkhoff averages are essentially constant.}
\label{fig:hopf}
\end{figure}

\begin{lemma}\label{lem:ae-const}
Given any $x,\omega \in A_\mu$, we have $\bpsi(\omega) = \bpsi(x)$.
\end{lemma}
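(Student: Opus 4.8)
The plan is to run a Hopf-type argument, adapted to the fact that under Condition~\ref{C1} the Birkhoff averages are only \emph{approximately} constant along center-stable leaves (Lemma~\ref{lem:hopf}(2)) rather than exactly constant. Write $\phi := \bpsi$ (defined pointwise on $\mathcal{B}$) and $N := \L \setminus \mathcal{B}$, so $\mu(N)=0$; note that $\mathcal{B}$ is $f$-invariant and $\phi$ is $f$-invariant on $\mathcal{B}$, since the boundary terms of the Birkhoff averages vanish in the limit. Since it suffices for ergodicity to show $\phi$ is $\mu$-a.e.\ constant, we may prove the statement for $x,\omega$ ranging over a conull subset of $A_\mu$. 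To set this up, fix a countable family $\mathcal{R}$ of rectangles $R=R(p,\delta)$ as in \eqref{rectangle} that contains rectangles of arbitrarily small diameter around every point of a countable dense set; by Lemma~\ref{lem:mu-m}, Theorem~\ref{thm:holonomy}, and \eqref{eqn:conditionals}, for each $R\in\mathcal{R}$ with $\mu(R)>0$ and $\diam R<\sigma$ (with $\sigma$ from Theorem~\ref{thm:holonomy}) there is a conull $G_R\subset R$ such that for all $y,z\in G_R$ one has $\mu_y^u\sim m_y^\C|_{V_R^u(y)}$, $(\pi_{yz})_*\mu_y^u\sim\mu_z^u$ (using that Theorem~\ref{thm:holonomy} holds for \emph{every} pair of points in a small rectangle), and $\mu_y^u(N)=0$. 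Intersecting $A_\mu$ with all the $G_R$ and with the complements of the (countably many, $\mu$-null) rectangle boundaries yields a conull set $\mathcal{A}\subset A_\mu$ on which we will prove constancy of $\phi$.

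\emph{Step 1 (local near-constancy).} Fix $\zeta>0$ and let $\epsilon=\epsilon(\zeta)>0$ be as in Lemma~\ref{lem:hopf}(2). I claim that for any $R\in\mathcal{R}$ with $\mu(R)>0$ and $\diam R<\min(\epsilon,\sigma)$, and any $y,z\in\mathcal{A}\cap R$, one has $|\phi(y)-\phi(z)|<\zeta$. Indeed, $y\in A_\mu$ gives $\mu_y^u(V_R^u(y)\setminus\mathcal{B})=0$ (via Lemma~\ref{lem:compare-cond}, relating the conditional on $\Vl^u(y)$ to that on $R$), and since $(\pi_{yz})_*\mu_y^u\sim\mu_z^u$ with $\mu_z^u(V_R^u(z)\setminus\mathcal{B})=0$, we also get $\mu_y^u(\{a\in V_R^u(y):\pi_{yz}(a)\notin\mathcal{B}\})=0$. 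Hence there is $a\in V_R^u(y)\cap\mathcal{B}$ with $b:=\pi_{yz}(a)=[z,a]\in\mathcal{B}$. Since $a\in\Vl^u(y)$ and $b\in\Vl^u(z)$, Lemma~\ref{lem:hopf}(1) gives $\phi(y)=\phi(a)$ and $\phi(z)=\phi(b)$; since $b\in\Vl^\cs(a)$ with $d(a,b)\le\diam R<\epsilon$, Lemma~\ref{lem:hopf}(2) gives $|\phi(a)-\phi(b)|<\zeta$. The claim follows.

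\emph{Step 2 (globalization via transitivity).} Let $x,\omega\in\mathcal{A}$ and $\zeta>0$, and put $\epsilon=\epsilon(\zeta)$. Choose $R_x,R_\omega\in\mathcal{R}$ of diameter $<\min(\epsilon,\sigma)$ with $x\in\Int R_x$ and $\omega\in\Int R_\omega$; since $\mu$ is fully supported (\S\ref{sec:Gibbs-supported}) these rectangles have positive measure. By topological transitivity (Condition~\ref{C2}) there is $n\ge 0$ with $O:=f^n(\Int R_x)\cap\Int R_\omega\ne\emptyset$; as $O$ is open, $\mu(O)>0$ by full support. The conditions ``$w\in\mathcal{A}$'' and ``$f^{-n}(w)\in\mathcal{A}$'' cut out a conull subset of $O$ (using $\mu(f^n\mathcal{A})=\mu(\mathcal{A})=1$), so we may pick such a $w$; then $v:=f^{-n}(w)\in\Int R_x\cap\mathcal{A}$. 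By $f$-invariance of $\phi$ we have $\phi(w)=\phi(v)$, and Step~1 (applied in $R_x$ to $v,x$ and in $R_\omega$ to $w,\omega$) gives $|\phi(v)-\phi(x)|<\zeta$ and $|\phi(w)-\phi(\omega)|<\zeta$, whence $|\phi(x)-\phi(\omega)|<2\zeta$. Letting $\zeta\to0$ yields $\phi(x)=\phi(\omega)$, which completes the proof.

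\emph{Main obstacle.} The crux is Step~1: the argument must combine the \emph{exact} invariance of $\phi$ along unstable leaves with only \emph{approximate} invariance along center-stable leaves, which forces the whole argument to be carried out ``up to $\zeta$'' and then passed to the limit in Step~2. The key supporting point is that absolute continuity of the center-stable holonomy is needed for the specific pair of leaves used to build $a$ and $b$; this is exactly why we pass to the good sets $G_R$ and exploit the fact that Theorem~\ref{thm:holonomy} (absolute continuity of holonomies with respect to the reference measures $m^\C$, with uniform Jacobian bounds) holds for \emph{all} pairs of points in a small rectangle, not merely for $\mu$-a.e.\ pair.
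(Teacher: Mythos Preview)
Your argument is correct and suffices for the purpose (ergodicity of $\mu$), though it proves the lemma only on a conull subset $\mathcal{A}\subset A_\mu$ rather than on all of $A_\mu$ as literally stated; you acknowledge this, and since $A_\mu$ is itself defined via conditional measures (hence only up to null sets), this is harmless.

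The route is genuinely different from the paper's. The paper fixes $\epsilon$, then uses Lemma~\ref{lem:leaf-iterates} to iterate a piece of $\Vl^u(\omega)$ forward into $B^\cs(x,\epsilon)$, obtaining a point $p$ whose local unstable leaf lies inside $f^k(\Vl^u(\omega))$. It then transports the property ``$\mu^u$-a.e.\ point is Birkhoff regular'' from $\Vl^u(\omega)$ to $V_R^u(p)$ via Lemma~\ref{lem:cond-inv}, and applies local product structure \emph{once} in a single rectangle around $x$ to find a matched pair $(y,z)$ with $z\in V_R^u(p)$, $y=\pi_{px}(z)$; this yields $|\bpsi(x)-\bpsi(\omega)|<\zeta$ directly. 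By contrast, you first isolate a clean local statement (Step~1: near-constancy of $\bpsi$ within any small rectangle intersected with $\mathcal{A}$), then globalize via Condition~\ref{C2} together with \emph{full support} of $\mu$ (from \S\ref{sec:Gibbs-supported}) to find an orbit point $w$ with $w\in R_\omega\cap\mathcal{A}$ and $f^{-n}(w)\in R_x\cap\mathcal{A}$, and apply Step~1 twice to get $|\bpsi(x)-\bpsi(\omega)|<2\zeta$. Your approach is more modular and avoids Lemma~\ref{lem:cond-inv} entirely, at the cost of invoking full support (proved earlier) and of passing to the auxiliary conull set $\mathcal{A}$; the paper's approach is more direct, uses the leaf-iterates lemma as the transitivity input, and handles the transport of regularity along the orbit via the invariance of conditionals rather than by intersecting with an invariant good set.
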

\begin{proof}
Fix $\epsilon>0$ small enough that $B(x,\epsilon)\cap \L$ is contained in a rectangle $R$ on which $\mu$ has local product structure.  By Lemma \ref{lem:leaf-iterates},  there exists $k\in\mathbb{N}$ such that
\[
\tau \lambda^{k} < \epsilon/2 \text{ and }f^k(B^u(\omega,\epsilon/2))\cap B^\cs(x,\epsilon)\ne\emptyset.
\]
Let $p$ denote a point in the intersection; then $p\in B^\cs(x,\epsilon)$ and 
$f^{-k}(p)\in B^u(\omega,\epsilon)$, so by our choice of $k$ we have 
$f^{-k}(V_R^u(p)) \subset B^u(f^{-k} p,\epsilon/2) \subset B^u(\omega,\epsilon)$;  see Figure \ref{fig:hopf}. Since 
$\mu_\omega^u(B_\L^u(\omega,\epsilon)\setminus \mathcal{B})=0$, we conclude that 
$\mu_p^u(V_R^u(p)\setminus\mathcal{B})=0$ by Lemma \ref{lem:cond-inv}. Similarly, 
$\mu_x^u(V_R^u(x)\setminus\mathcal{B})=0$; since $\mu$ has local product structure, this implies that $\mu_p^u(\pi_{xp}(V_R^u(x)\setminus\mathcal{B}))=0$, and thus 
$\mu_p(V_R^u(p)\setminus (\mathcal{B}\cap\pi_{xp}\mathcal{B}))=0$. In particular, there exists $z\in V_R^u(p)\cap\mathcal{B}\cap\pi_{xp}\mathcal{B}$, so that 
$y=\pi_{px}(z)\in V_R^u(x)\cap\mathcal{B}$. Then we have
\[
\bpsi(x) = \bpsi(y) \text{ and } \bpsi(\omega) = \bpsi(f^{-k}z) = \bpsi(z)
\]
where the first two equalities use the first part of Lemma \ref{lem:hopf}. This gives 
$|\bpsi(\omega)-\bpsi(x)|=|\bpsi(z)-\bpsi(y)|$. Moreover, given any $\zeta>0$, we can use the second part of Lemma \ref{lem:hopf} to choose $\epsilon>0$ so small that 
$|\bpsi(z)-\bpsi(y)|<\zeta$. Letting $\zeta\to 0$ we conclude that 
$\bpsi(\omega)=\bpsi(x)$.
\end{proof}

\subsection{Uniqueness via Bowen's argument}\label{sec:unique}

In this section we prove the following result.

\begin{proposition}\label{prop:unique}
Let $\L,f,\ph$ be as in \S\ref{assumptions}, and let $\mu$ be an ergodic $f$-invariant probability measure on $\L$ such that the conditional measures $\mu_x^u$ are equivalent to the Carath\'eodory measures $m_x^\C$ for $\mu$-a.e.\ $x$. Then $\mu$ is the unique equilibrium measure for $\ph$.
\end{proposition}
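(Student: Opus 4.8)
The plan is to prove two things: that $\mu$ is an equilibrium measure for $\ph$, and that it is the only one. The first part is a computation of $h_\mu(f)$ using Lyapunov stability together with the $u$-Gibbs property of the reference measures; the second part adapts Bowen's uniqueness argument for expansive systems (as in \cite[Theorem 20.3.7]{Kat}) to a setting in which $f$ is expansive only along $E^u$.

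For the first part, I would first note that $\mu$ has local product structure: by Theorem \ref{thm:holonomy} the holonomies are absolutely continuous with respect to the family $\{m_y^\C\}$, and since $\mu_y^u\sim m_y^\C$ this gives condition \ref{cs-ac} of Lemma \ref{lem:lps}. Using Condition \ref{C1}, the center-stable direction carries no entropy, so for any finite partition $\mathcal P$ with sufficiently small diameter one has $h_\mu(f)=h_\mu(f,\mathcal P)$, and moreover $h_\mu(f)$ equals the leafwise (unstable) entropy, which --- since $\diam B_n^u(x,r)\le r\lambda^n\to 0$, so that $f$ is expansive along unstable leaves --- admits a Brin--Katok/Ledrappier-type description $h_\mu(f)=\int\big(\lim_n -\tfrac1n\log\mu_x^u(B_n^u(x,r))\big)\,d\mu(x)$. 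By Corollary \ref{cor:Gibbs}, $m_x^\C(B_n^u(x,r))=Q_0^{\pm1}e^{-nP(\ph)+S_n\ph(x)}$; the Radon--Nikodym density $d\mu_x^u/dm_x^\C$ affects $\mu_x^u(B_n^u(x,r))$ only subexponentially (by a Lebesgue-density argument along leaves, together with $f$-invariance, which cancels the contribution of $\log(d\mu^u_{f(x)}/dm^\C_{f(x)})-\log(d\mu^u_x/dm^\C_x)$). Combining this with Birkhoff's theorem for $\tfrac1n S_n\ph\to\int\ph\,d\mu$ gives $h_\mu(f)=P(\ph)-\int\ph\,d\mu$, so $\mu$ is an equilibrium measure.

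For uniqueness, let $\nu$ be any ergodic equilibrium measure; it suffices to prove $\nu=\mu$. Fix a finite partition $\mathcal P$ with $\diam\mathcal P$ small and $\mu(\partial\mathcal P)=\nu(\partial\mathcal P)=0$, and set $\mathcal P_n=\bigvee_{k=0}^{n-1}f^{-k}\mathcal P$, so that $\mathcal P_n(x)\subset B_n(x,\diam\mathcal P)$. As above, Lyapunov stability gives $h_\nu(f,\mathcal P)=h_\nu(f)$, so Shannon--McMillan--Breiman, Birkhoff's theorem, and the equilibrium condition for $\nu$ give, for each $\gamma>0$ and (by Egorov) on a set $Y$ with $\nu(Y)>1-\gamma$ and all large $n$, the bound $\nu(\mathcal P_n(x))\ge e^{-n\gamma}e^{-nP(\ph)+S_n\ph(x)}$. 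On the other hand, comparing $\mathcal P_n(x)\subset B_n(x,\diam\mathcal P)$ with the rectangles $R_n(x,\delta)$ via Lemma \ref{lem:BnRn}, and then using Lemma \ref{lem:pre-gibbs} together with the $u$-Gibbs property of $m_x^\C$ and $\mu_x^u\sim m_x^\C$, one obtains $\mu(\mathcal P_n(x))\le Ce^{-nP(\ph)+S_n\ph(x)}$, hence $\mu(\mathcal P_n(x))\le Ce^{n\gamma}\nu(\mathcal P_n(x))$ on $Y$ for all large $n$. A covering argument in the spirit of \cite[Theorem 20.3.7]{Kat} --- summing this over the elements of $\mathcal P_n$ that meet a given null set, letting $n\to\infty$ and then $\gamma\to0$ --- shows that $\mu\ll\nu$ on the $\sigma$-algebra generated by $\bigcup_n\mathcal P_n$. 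Since this $\sigma$-algebra separates points along unstable leaves, this already forces the unstable conditionals of $\nu$ to be equivalent to those of $\mu$ (hence to the $m_x^\C$); then the local product structure of both measures, the equivalence of their unstable conditionals, and the ergodicity of $\mu$ yield $\nu=\mu$.

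The step I expect to be the main obstacle is the handling of the failure of expansivity, in two related places. First, one must justify that $h_\mu(f)=h_\mu(f,\mathcal P)$ and the leafwise entropy formula hold using only Condition \ref{C1} (Lyapunov stability of $\Ecs$) rather than uniform contraction; this requires care with measurability of partitions subordinate to the center-stable lamination and with the fact that $\mathcal P_n$ fails to be generating precisely --- and only --- in the center-stable direction. Second, the comparison $\mu(\mathcal P_n(x))\lesssim e^{n\gamma}\nu(\mathcal P_n(x))$ lives only at the level of the non-generating $\sigma$-algebra $\bigvee_n\sigma(\mathcal P_n)$, so passing from it to the global identity $\nu=\mu$ must be routed through the local product structure and the identification of unstable conditionals with the reference measures, rather than through a direct martingale argument on the full Borel $\sigma$-algebra.
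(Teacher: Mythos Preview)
Your plan correctly identifies the need to adapt Bowen's argument to handle the lack of expansivity along $E^{cs}$, but your adaptation of the uniqueness step has a genuine gap, and it differs from the paper's route in a way that matters.

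The paper does not compare $\mu(\mathcal P_n(x))$ with $\nu(\mathcal P_n(x))$ globally and then try to descend to leaves. It works leafwise from the start: by the result of \cite{HH} (Proposition~\ref{prop:condentropy}) one has $h_\nu(f)=h_\nu(f,\alpha\mid\gamma)$ for any $\gamma\in\mathcal Q^u$, and a short computation (Lemma~\ref{lem:entropy}) bounds $nP_\nu$ by $H_\nu(\alpha_n\mid\mathcal Q^u(\mathcal R))+n\int\ph\,d\nu+O(1)$, an integral over leaves $\eta$ of sums involving the leafwise conditionals $\nu_\eta$. The decisive step (Lemma~\ref{lem:setB}) is that for any ergodic $\nu\perp\mu$ the measures $\nu_\eta$ and $m_\eta^\C$ are mutually singular on $\eta$; this uses only $\mu_x^u\sim m_x^\C$ and a Hopf-type comparison of Birkhoff averages, nothing structural about $\nu$. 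Singularity yields, on each leaf, unions $C_n^\eta$ of $\alpha_n$-atoms with $\nu_\eta(C_n^\eta)\to1$ and $m_\eta^\C(C_n^\eta)\to0$; then the elementary inequality $\sum x_i(b_i-\log x_i)\le(\sum x_i)\log\sum e^{b_j}+e^{-1}$ combined with the $u$-Gibbs property of $m_\eta^\C$ (not of $\mu$) gives $nP_\nu-nP(\ph)\to-\infty$.

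In your approach, the assertion that ``$\mu\ll\nu$ on $\sigma(\bigcup_n\mathcal P_n)$ forces the unstable conditionals of $\nu$ to be equivalent to $m_x^\C$'' is unjustified. The atoms of that $\sigma$-algebra are local $cs$-sets, so $\mu\ll\nu$ there compares the \emph{transverse} measures to the $cs$-lamination. For $\mu$, local product structure identifies this transverse measure with $\mu_p^u$; but you have no product structure for $\nu$, and without it there is no link between the $cs$-transverse measure of $\nu$ and its unstable conditional $\nu_p^u$. Concluding that $\nu$ has local product structure is then circular. A secondary point: your bound $\mu(\mathcal P_n(x))\le Ce^{-nP(\ph)+S_n\ph(x)}$ via Lemma~\ref{lem:pre-gibbs} needs the uniform density bound~\eqref{eqn:mu-m}, which is stronger than the hypothesis $\mu_x^u\sim m_x^\C$ of Proposition~\ref{prop:unique}; the paper's argument avoids this by using only the $u$-Gibbs bound on $m_\eta^\C$, which holds unconditionally by Corollary~\ref{cor:Gibbs}.
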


We start by recalling some definitions and facts from \cite{HH} regarding entropy along the unstable foliation.

For a partition $\alpha$ of $\L$, let $\alpha(x)$ denote the element of $\alpha$ containing $x$. If $\alpha$ and $\beta$ are two partitions such that $\alpha(x)\subset\beta(x)$ for all $x\in\L$, we then write $\alpha\geq \beta$. For a measurable partition $\beta$, we denote $\beta_m^n=\bigvee_{i=m}^{n}f^{-i}\beta$. Take $\epsilon_0>0$ small.  Let $\mathcal{Q}=\mathcal{Q}_{\epsilon_0}$ denote the set of finite measurable partitions of $\L$ whose elements have diameters not exceeding $\epsilon_0$. For each $\beta\in\mathcal{Q}$ we define a finer partition $\eta=\mathcal{Q}^u(\beta)$ such that 
$\eta(x)=\beta(x)\cap\Vl^u(x)$ for each $x\in\L$. Let 
$\mathcal{Q}^u=\mathcal{Q}^u_{\epsilon_0}$ denote the set of all partitions obtained this way.

Given a measure $\nu$ and measurable partitions $\alpha$ and $\eta$, let
\begin{equation}\label{eqn:Hnu}
H_{\nu}(\alpha | \eta) := - \int_{\L} \log \nu_{\eta(x)}(\alpha(x))\,d\nu(x) 
\end{equation}
denote the \emph{conditional entropy of $\alpha$ given $\eta$ with respect to $\nu$}, where $\{ \nu_{\eta(x)}   \}$ is a family of (normalized) conditional measures of $\nu$
relative to $\eta$.

The \emph{conditional entropy of $f$ with respect to a measurable partition $\alpha$ given $\eta\in\mathcal{Q}^u$} is defined as  
\begin{equation}\label{eqn:cond-ent}
h_{\nu}(f, \alpha | \eta) = \limsup_{n\to \infty}  \frac{1}{n} H_{\nu}(\alpha_0^{n-1} | \eta).
\end{equation}
The following is a direct consequence of \cite[Theorem A and Corollary A.1]{HH} stated in our setting.

\begin{proposition}\label{prop:condentropy}
Suppose $\nu$ is an ergodic measure. Then for any $\alpha\in \mathcal{Q}$ and $\gamma \in \mathcal{Q}^u$ one has that
\begin{equation}\label{eqn:entropymain}
 h_\nu(f) = h_{\nu}(f, \alpha |\gamma ).
\end{equation}
\end{proposition}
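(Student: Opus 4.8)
The plan is to derive \eqref{eqn:entropymain} from the unstable-entropy machinery of \cite{HH}: the content of the statement is that $E^u$ carries \emph{all} of the entropy of $\nu$, and that any sufficiently fine partition, conditioned on unstable plaques, already detects it. I would prove the two inequalities separately. The bound $h_\nu(f,\alpha|\gamma)\le h_\nu(f)$ is elementary and does not use \cite{HH}: conditioning decreases entropy, so $H_\nu(\alpha_0^{n-1}\mid\gamma)\le H_\nu(\alpha_0^{n-1})$ for every $n$; dividing by $n$ and letting $n\to\infty$ gives $h_\nu(f,\alpha|\gamma)\le h_\nu(f,\alpha)$, where $\lim_n\tfrac1n H_\nu(\alpha_0^{n-1})=h_\nu(f,\alpha)$ exists by subadditivity, and $h_\nu(f,\alpha)\le h_\nu(f)$ since $h_\nu(f)$ is the supremum of $h_\nu(f,\beta)$ over finite partitions $\beta$.

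For the reverse bound $h_\nu(f,\alpha|\gamma)\ge h_\nu(f)$ I would invoke \cite{HH}. Two ingredients are needed. First, because $\alpha\in\mathcal Q$ has diameter at most $\epsilon_0$ and $E^u$ is uniformly expanded, the restrictions of $\alpha_0^{n-1}$ to an unstable plaque $\gamma(x)$ shrink to points as $n\to\infty$, so $\alpha$ is \emph{generating along the unstable leaves}; by \cite[Theorem~A]{HH} the quantity $h_\nu(f,\alpha|\gamma)$ is then independent of the choices of $\alpha\in\mathcal Q$ and $\gamma\in\mathcal Q^u$ and equals the \emph{unstable metric entropy} $h^u_\nu(f)$. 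Second, by \cite[Corollary~A.1]{HH} one has $h^u_\nu(f)=h_\nu(f)$ provided $\Ecs$ carries no entropy — more precisely, provided the Lyapunov exponents of $\nu$ along $\Ecs$ are all non-positive, so that $E^u$ captures every positive exponent. This last hypothesis is exactly what Condition \ref{C1} is designed to provide: Lyapunov stability of $\Ecs$ prevents exponential growth along $\Ecs$. Chaining the two ingredients gives $h_\nu(f,\alpha|\gamma)=h^u_\nu(f)=h_\nu(f)$.

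The step I expect to be the main obstacle is not any of these computations but the verification that the results of \cite{HH} apply ``in our setting.'' They are formulated for a $C^1$ partially hyperbolic diffeomorphism of a closed manifold, whereas here $f$ is only a diffeomorphism of an open set $U$ onto its image, $\Lambda\subsetneq M$ is a compact invariant set carrying the product structure of Conditions \ref{C1}--\ref{C3}, and $\Ecs$ is merely Lyapunov stable rather than uniformly contracted. So I would have to check: that the measurable partitions subordinate to $\Vl^u$ and their systems of conditional measures exist and behave as in \cite{HH}, using the uniform local-manifold estimates of Proposition \ref{prop:local-mfds} together with \ref{C3}; that the relevant Shannon--McMillan--Breiman-type estimates along unstable plaques remain valid; and, crucially, that wherever \cite{HH} uses control on the non-expanding directions one may substitute Condition \ref{C1} — exactly as in the passage from \cite{Kat} to Theorem \ref{thm:Wcs} and in the Remark following it.
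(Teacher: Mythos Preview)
Your approach is essentially the paper's: the paper does not prove this proposition at all but simply states that it ``is a direct consequence of \cite[Theorem A and Corollary A.1]{HH} stated in our setting,'' which is exactly the pair of results you invoke. Your extended discussion of the two inequalities and of the applicability issues is more explicit than anything the paper provides; the paper does not address your concerns about the generality of \cite{HH}, leaving ``stated in our setting'' as the only justification.
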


We recall the following technical result.

\begin{lemma}\label{lem:condentropy}
Let $\alpha$, $\beta$, and $\gamma$ be measurable partitions with 
$H_{\nu}(\alpha|\gamma), H_{\nu}(\beta|\gamma)<\infty$.
\begin{enumerate}[label=\textup{(\arabic{*})}]
\item If $\gamma\ge\beta,$ then $H_{\nu}(\alpha|\gamma)\le H_{\nu}(\alpha|\beta)$;
\item $H_{\nu}(\alpha^{n-1}_0|\gamma)=H_{\nu}(\alpha|\gamma)+\sum_{i=1}^{n-1} H_{\nu} (\alpha |f^i (\alpha_0^{i-1}\vee\gamma)).$
\end{enumerate}
\end{lemma}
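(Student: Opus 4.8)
The plan is to derive both statements from the standard calculus of Rokhlin's conditional entropy, phrasing everything in terms of the sub-$\sigma$-algebras $\mathcal{A}_\alpha,\mathcal{A}_\beta,\mathcal{A}_\gamma$ generated by the (possibly uncountable) measurable partitions and the associated systems of conditional measures, rather than in terms of finite-partition formulas; this is forced on us because partitions such as $\gamma\in\mathcal{Q}^u$ have uncountably many elements. The hypotheses $H_\nu(\alpha|\gamma),H_\nu(\beta|\gamma)<\infty$ are precisely what makes all the quantities below finite and the manipulations legitimate (and in our applications the partitions $\alpha,\beta$ are finite, so no finiteness issue actually arises). The relevant background is in \cite{Roh52} and \cite[\S5.3]{EW11}.

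For (1), I would write the conditional entropy as the integral of a concave functional. Let $\Psi(p)=-\sum_{A\in\alpha}p_A\log p_A$ for $p$ ranging over the probability simplex indexed by $\alpha$; then $H_\nu(\alpha|\gamma)=\int_\L\Psi\big((\nu(A\mid\mathcal{A}_\gamma)(x))_{A\in\alpha}\big)\,d\nu(x)$, and likewise with $\gamma$ replaced by $\beta$. Since $\gamma\ge\beta$ means $\mathcal{A}_\beta\subset\mathcal{A}_\gamma$, the tower property gives $\nu(A\mid\mathcal{A}_\beta)=\mathbb{E}_\nu[\nu(A\mid\mathcal{A}_\gamma)\mid\mathcal{A}_\beta]$ for every $A\in\alpha$. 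Concavity of $\Psi$ and the conditional Jensen inequality then yield $\mathbb{E}_\nu\big[\Psi(\nu(\cdot\mid\mathcal{A}_\gamma))\mid\mathcal{A}_\beta\big]\le\Psi\big(\nu(\cdot\mid\mathcal{A}_\beta)\big)$ $\nu$-a.e., and integrating against $\nu$ gives $H_\nu(\alpha|\gamma)\le H_\nu(\alpha|\beta)$. (If one wishes to avoid the conditional Jensen inequality for uncountable $\gamma$, the same bound follows by approximating $\gamma$ and $\beta$ from above by increasing sequences of finite partitions, using the elementary finite case, and passing to the limit with the martingale convergence theorem.)

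For (2), I would telescope the chain rule $H_\nu(\xi\vee\zeta\mid\gamma)=H_\nu(\zeta\mid\gamma)+H_\nu(\xi\mid\zeta\vee\gamma)$, valid under our finiteness hypotheses. Applying it with $\zeta=\alpha_0^{k-1}$ and $\xi=f^{-k}\alpha$, and using $\alpha_0^{k-1}\vee f^{-k}\alpha=\alpha_0^{k}$, gives
\[
H_\nu(\alpha_0^{k}\mid\gamma)-H_\nu(\alpha_0^{k-1}\mid\gamma)=H_\nu\big(f^{-k}\alpha\mid\alpha_0^{k-1}\vee\gamma\big)\qquad(1\le k\le n-1);
\]
summing over $k$ from $1$ to $n-1$ telescopes, and adding $H_\nu(\alpha_0^{0}\mid\gamma)=H_\nu(\alpha\mid\gamma)$ yields $H_\nu(\alpha_0^{n-1}\mid\gamma)=H_\nu(\alpha\mid\gamma)+\sum_{k=1}^{n-1}H_\nu(f^{-k}\alpha\mid\alpha_0^{k-1}\vee\gamma)$. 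Finally, since $f$ is invertible and $f_*\nu=\nu$, for any measurable partitions $\sigma,\rho$ one has $H_\nu(f^{-k}\sigma\mid\rho)=H_\nu(\sigma\mid f^{k}\rho)$ — apply $f^{k}$ to every partition appearing and use $f$-invariance of $\nu$, noting $f^{k}f^{-k}\sigma=\sigma$. Taking $\sigma=\alpha$ and $\rho=\alpha_0^{k-1}\vee\gamma$ rewrites each summand as $H_\nu(\alpha\mid f^{k}(\alpha_0^{k-1}\vee\gamma))$, which is the claimed identity.

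Neither step is deep; the only real point of care, and the main thing I expect to spend time on, is handling the uncountable measurable partition $\gamma$ cleanly via generated $\sigma$-algebras and Rokhlin conditional measures instead of the more familiar finite-partition identities, and checking that every entropy in sight is finite — e.g.\ $H_\nu(\alpha_0^{n-1}\mid\gamma)<\infty$ follows from subadditivity of conditional entropy together with the hypotheses, or simply from $H_\nu(\alpha_0^{n-1})\le(n-1)H_\nu(\alpha)<\infty$ in the case relevant to our application, where $\alpha$ is finite — so that the subtractions in the telescoping are between finite quantities.
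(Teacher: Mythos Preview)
Your argument is correct. The paper does not actually prove this lemma: it simply remarks that statement~(1) is classical and refers to \cite{Roh67}, and that statement~(2) is \cite[Lemma~2.6(i)]{HH}. Your proof via conditional Jensen for~(1) and the chain rule plus $f$-invariance for~(2) is precisely the standard argument one finds in those references, so you have supplied what the paper leaves to citation.
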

Statement $(1)$ of Lemma \ref{lem:condentropy} is well known and can be found for example in \cite{Roh67}. Statement $(2)$ is proved in \cite{HH} as Lemma 2.6(i).  We use these to prove the following lemma.
\begin{lemma}\label{lem:entropy}
For any $\alpha\in \mathcal{Q}$ and $\gamma \in \mathcal{Q}^u$ one has that
\begin{equation}\label{eqn:entropy}
h_{\mu}(f, \alpha |\gamma ) \leq H_{\mu}(\alpha | \mathcal{Q}^u(f\alpha)).
\end{equation}
\end{lemma}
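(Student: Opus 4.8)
The plan is to estimate $h_\mu(f,\alpha\mid\gamma) = \limsup_n \frac1n H_\mu(\alpha_0^{n-1}\mid\gamma)$ by repeatedly applying the chain rule, Lemma~\ref{lem:condentropy}(2), to split off the contributions of the individual refinements, and then controlling each term by $H_\mu(\alpha\mid\mathcal{Q}^u(f\alpha))$. First I would write, using Lemma~\ref{lem:condentropy}(2),
\[
H_\mu(\alpha_0^{n-1}\mid\gamma) = H_\mu(\alpha\mid\gamma) + \sum_{i=1}^{n-1} H_\mu\big(\alpha \mid f^i(\alpha_0^{i-1}\vee\gamma)\big),
\]
so it suffices to bound each summand $H_\mu(\alpha\mid f^i(\alpha_0^{i-1}\vee\gamma))$ by $H_\mu(\alpha\mid\mathcal{Q}^u(f\alpha))$. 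Since the left-hand term only appears once and is finite (bounded by $\log|\alpha|$), it contributes $o(n)$ and drops out in the limsup.

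The key step is then to show that for every $i\ge 1$ the partition $f^i(\alpha_0^{i-1}\vee\gamma)$ is \emph{finer} than $\mathcal{Q}^u(f\alpha)$, so that Lemma~\ref{lem:condentropy}(1) applies and gives
\[
H_\mu\big(\alpha\mid f^i(\alpha_0^{i-1}\vee\gamma)\big) \le H_\mu\big(\alpha\mid\mathcal{Q}^u(f\alpha)\big).
\]
To verify the refinement, note that $f^i(\alpha_0^{i-1}\vee\gamma) \ge f^i(f^{-1}\alpha\vee\gamma) = f(\alpha)\vee f^i\gamma$ (using $i\ge 1$, so $f^{-1}\alpha$ is one of the factors of $\alpha_0^{i-1}$). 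Now $\mathcal{Q}^u(f\alpha)(x) = f\alpha(x)\cap\Vl^u(x)$, so I need to check that $(f(\alpha)\vee f^i\gamma)(x) \subset f\alpha(x)\cap\Vl^u(x)$. The inclusion in $f\alpha(x)$ is clear; for the inclusion in $\Vl^u(x)$ I use that $\gamma\in\mathcal{Q}^u$, so $\gamma(y)\subset\Vl^u(y)$, and local unstable leaves are $f$-invariant in the appropriate sense: $f^i(\Vl^u(y))$ is contained in a single unstable leaf, namely $\Vl^u(f^i y)$, since $f$ maps unstable leaves into unstable leaves (Proposition~\ref{prop:local-mfds}). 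Hence $f^i\gamma(x)$ is contained in $\Vl^u(x)$, and therefore so is the finer element $(f\alpha\vee f^i\gamma)(x)$. Summing over $i$ and dividing by $n$ then yields
\[
h_\mu(f,\alpha\mid\gamma) \le \limsup_{n\to\infty}\frac1n\Big( H_\mu(\alpha\mid\gamma) + (n-1) H_\mu(\alpha\mid\mathcal{Q}^u(f\alpha))\Big) = H_\mu(\alpha\mid\mathcal{Q}^u(f\alpha)),
\]
which is \eqref{eqn:entropy}.

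The main obstacle I anticipate is the bookkeeping around the refinement claim: one must be careful that $f^i$ of a local unstable leaf is contained in (rather than merely crossing) a single local unstable leaf, which requires the local leaves to be chosen consistently small and the diameter bound $\epsilon_0$ to be small enough that $f\Vl^u(y)$ stays within the chart of $\Vl^u(fy)$; alternatively one phrases everything in terms of the global unstable foliation, for which invariance is automatic. A secondary point is ensuring all the conditional entropies involved are finite so that Lemma~\ref{lem:condentropy} applies, but this is immediate since $\alpha$ and $\beta$ range over finite partitions and conditioning only decreases entropy. Once the refinement is established the rest is the routine chain-rule computation above.
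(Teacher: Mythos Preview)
Your overall strategy---apply the chain rule from Lemma~\ref{lem:condentropy}(2), show the refinement $f^i(\alpha_0^{i-1}\vee\gamma)\ge\mathcal{Q}^u(f\alpha)$, then invoke monotonicity (1)---is exactly the paper's. But your refinement argument has a genuine gap, and one of your proposed fixes does not work.

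First, a minor slip: $f^{-1}\alpha$ is a factor of $\alpha_0^{i-1}$ only for $i\ge 2$, and in any case $f^i(f^{-1}\alpha)=f^{i-1}\alpha$, not $f\alpha$. The factor you want is $f^{-(i-1)}\alpha$, which is always present and satisfies $f^i(f^{-(i-1)}\alpha)=f\alpha$.

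The real issue is the inclusion $(f^i\gamma)(x)\subset\Vl^u(x)$. Since $f$ is uniformly \emph{expanding} along unstable leaves, $f^i(\Vl^u(y))$ has diameter growing like $\chi^i$ and is certainly not contained in any local leaf for large $i$; no choice of $\epsilon_0$ can rescue this uniformly in $i$. Passing to the global foliation does give $(f^i\gamma)(x)\subset W^u(x)$, but then $(f\alpha)(x)\cap W^u(x)$ need not be contained in $\Vl^u(x)$: the long connected arc $(f^i\gamma)(x)\subset W^u(x)$ can exit the small set $(f\alpha)(x)$ and re-enter it along a different local plaque, so the intersection can meet $\Vl^u(x')$ for $x'\ne x$ on the same global leaf. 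Hence $f\alpha\vee f^i\gamma\ge\mathcal{Q}^u(f\alpha)$ is not justified.

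The paper closes this gap by keeping \emph{all} factors of $\alpha_0^{i-1}$, not just one. Because each $f^{-j}\alpha$ ($j=0,\dots,i-1$) has atoms of diameter $<\epsilon_0$, an atom of $\alpha_0^{i-1}$ has diameter $<\epsilon_0$ in the dynamical metric $d_i$, hence lies in a Bowen ball $B_i(x,\epsilon_0)$; intersecting with $\gamma$ forces it into the $u$-Bowen ball $B_i^u(x,\epsilon_0)$. Then $f^i(B_i^u(x,\epsilon_0))$ is a $u$-ball of radius $\epsilon_0$, which does sit inside a single local leaf, and the factor $f^{-(i-1)}\alpha$ gives the containment in $(f\alpha)(y)$. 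This is precisely the missing control of the $f^i$-image that your single-factor reduction loses.
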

\begin{proof}
We start with an observation regarding the partition $f^i(\alpha_0^{i-1}\vee\gamma)$ from Lemma \ref{lem:condentropy}(2): because $\alpha \in \mathcal{Q}$, every element of 
$\alpha_0^{i-1}$ has diameter less than $\epsilon_0$ in the dynamical metric $d_i$, and is thus contained in $B_i(x,\epsilon_0)$ for some $x$. Since $\gamma\in\mathcal{Q}^u$, we have $\gamma(x)\subset \Vl^u(x)$, and thus every element of 
$\alpha_0^{i-1}\vee \gamma$ is contained in 
$B_i(x,\epsilon_0)\cap\Vl^u(x)=B_i^u(x,\epsilon_0)$ for some $x$. It follows that every element of $f^i(\alpha_0^{i-1}\vee\gamma)$ is contained in 
$f^i(B_i^u(x,\epsilon_0))\subset B^u(f^{i-1} x,\epsilon_0)$ for some $x$.  This gives
\[
(f^i(\alpha_0^{i-1}\vee\gamma))(y) \subset (f\alpha)(y) \cap \Vl^u(y) = (\mathcal{Q}^u(f\alpha))(y),
\]
and thus $f^i(\alpha_0^{i-1}\vee\gamma) \geq \mathcal{Q}^u(f\alpha)$.
We deduce that
\begin{align*}
H_\mu(\alpha_0^{n-1}|\gamma) &= H_\mu(\alpha|\gamma) + \sum_{i=1}^{n-1} H_\mu(\alpha | f^i(\alpha_0^{i-1}\vee\gamma) ) \\
&\leq H_\mu(\alpha|\gamma) + (n-1) H_\mu(\alpha | \mathcal{Q}^u(f\alpha)),
\end{align*}
where the first line uses Statement (2) of Lemma \ref{lem:condentropy}, and the second line uses Statement (1) of that lemma.
Dividing both sides by $n$ and sending $n\to\infty$ concludes the proof of Lemma \ref{lem:entropy}.
\end{proof}
Now we prove Proposition \ref{prop:unique}.  Since every ergodic component of an equilibrium measure is itself an equilibrium measure, it suffices to prove that $\mu$ is the only \emph{ergodic} equilibrium measure for $\ph$.  Moreover, if $\nu\ll \mu$ is ergodic, then ergodicity of $\mu$ implies that $\nu=\mu$; thus it suffices to consider an ergodic measure $\nu$ such that $\nu\perp\mu$, and prove that
$P_\nu:=h_{\nu}(\ph)+\int_M\ph \,d\nu< P(\varphi)$.

\begin{figure}[htbp]
\includegraphics[width=.7\textwidth]{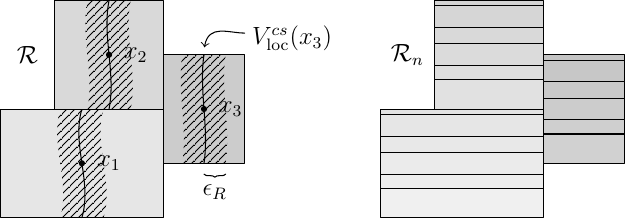}
\caption{The partitions $\mathcal{R}$ and $\mathcal{R}_n$, and the choice of $\epsilon_R$.}
\label{fig:RRn}
\end{figure}

With $\epsilon_0$ as above, let $\mathcal{R}\in \mathcal{Q}_{\epsilon_0}$ be a finite partition by $m$ rectangles as in Lemma \ref{lem:rectangle-partition}, so that each rectangle has diameter $<\epsilon_0$, is the closure of its interior, and moreover has $(\mu+\nu)$-null boundary, where we use the relative topology on $\L$.  Choose one point from the interior of each rectangle and enumerate them as $x_1,\dots, x_m\in \L$.
Fix $\epsilon_R>0$ such that
\begin{equation}\label{eqn:eps-R}
B_\L^u(y,\epsilon_R)\subset \mathcal{R}(x_i)\text{ for all }i=1,\dots,m \text{ and } y\in V_{\mathcal{R}(x_i)}^\cs(x_i),
\end{equation}
as in  Figure \ref{fig:RRn}.
As shown in that figure, for $n\in\mathbb{N}$ let $\mathcal{R}_n$ be a partition by rectangles obtained by dividing each rectangle in $\mathcal{R}$ further into rectangles in such a way that
\begin{itemize}
\item for every $R\in \mathcal{R}_n$ and $R_0\in \mathcal{R}$ with $R \subset R_0$, and every $x\in R$, we have $V_R^u(x) = V_{R_0}^u(x)$;
\item for every $R\in \mathcal{R}_n$, every $x\in R$, and every $k=0,\dots, n$, the diameter of $f^{-k}(V_R^\cs(x))$ does not exceed $\epsilon_R$.
\end{itemize} 
The first of these items guarantees that $\mathcal{Q}^u(\mathcal{R}) = \mathcal{Q}^u(\mathcal{R}_n)$ for every $n$, and the second guarantees that
the partition by rectangles $\alpha_n:= f^{-n}\mathcal{R}_n$ is also contained in $\mathcal{Q}$.  Observe that
\begin{equation}\label{eqn:hnu}
\begin{aligned}  
n h_{\nu}(f)=h_{\nu}(f^n)&=h_\nu(f^n, \alpha_n | \mathcal{Q}^u(\alpha_n))\\
&\leq H_{\nu}(\alpha_n | \mathcal{Q}^u(\mathcal{R}_n))
= H_{\nu}(\alpha_n | \mathcal{Q}^u(\mathcal{R})),
\end{aligned}
\end{equation}
where the first equality is a standard fact about entropy, the second equality uses Proposition \ref{prop:condentropy}, and the inequality uses Lemma \ref{lem:entropy}.

For every element $\eta\in\mathcal{Q}^u(\mathcal{R})$, let $\nu_{\eta}$  denote the conditional measure of $\nu$ on $\eta$ (relative to the measurable partition $\mathcal{Q}^u(\mathcal{R})$ of $\L$); we also write this as $\nu_x^u$ when $\eta$ is the partition element containing $x$. Let also $m^{\mathcal{C}}_{\eta}$ denote the Carath\'eodory measure on $\eta$. 

As in Definition \ref{def:regular}, let $\mathcal{B}$ denote the set of Birkhoff regular points, and consider as in \eqref{eqn:A} the sets
\[
A_\nu = \{x\in \mathcal{B} : \nu_x^u(\Vl^u(x) \setminus \mathcal{B}) = 0\},
\quad
A_\mu = \{x\in \mathcal{B} : \mu_x^u(\Vl^u(x) \setminus \mathcal{B}) = 0\}.
\]
The same argument as given there shows that $\nu(A_\nu) = 1$ and $\mu(A_\mu)=1$.  Let $\psi\colon \L\to\RR$ be a continuous function such that $\int\psi\,d\nu \neq \int\psi\,d\mu$, and consider the following sets of generic points:
\[
G^\psi_\nu = \bigg\{x\in \mathcal{B} : \bpsi(x) = \int \psi\,d\nu \bigg\},
\quad
G^\psi_\mu = \bigg\{x\in \mathcal{B} : \bpsi(x) = \int \psi\,d\mu \bigg\}.
\]
By the Birkhoff ergodic theorem and the fact that $\nu,\mu$ are ergodic, we have $\nu(G^\psi_\nu) = \mu(G^\psi_\mu)=1$. 

\begin{lemma}\label{lem:setB}
For every $x\in A_\nu \cap G_\nu^\psi$, writing $\eta = \eta(x)$, the measures $\nu_{\eta} = \nu_x^u$ and $m_\eta^\C$ are mutually singular, and in particular, satisfy $\nu_\eta(\eta \setminus \mathcal{B})=0$ and $m_x^\C(\eta \cap \mathcal{B})=0$.
\end{lemma}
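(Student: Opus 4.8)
The plan is to verify the two displayed identities $\nu_\eta(\eta\setminus\mathcal{B})=0$ and $m_x^\C(\eta\cap\mathcal{B})=0$; once both hold, mutual singularity is automatic, since $\nu_\eta$ is then carried by $\eta\cap\mathcal{B}$ and $m_\eta^\C=m_x^\C|_\eta$ by the complementary set $\eta\setminus\mathcal{B}$. The first identity is immediate from $x\in A_\nu$: by definition of $A_\nu$ we have $\nu_x^u(\Vl^u(x)\setminus\mathcal{B})=0$, and since $\eta\subset\Vl^u(x)$ and $\nu_\eta=\nu_x^u$ (Lemma \ref{lem:compare-cond}, both normalized), this restricts to $\nu_\eta(\eta\setminus\mathcal{B})=0$.

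For the second identity, note first that $x\in\eta\cap\mathcal{B}\cap G_\nu^\psi$, so by Lemma \ref{lem:hopf}(1) the function $\bpsi$ is constantly equal to $\int\psi\,d\nu$ on the Birkhoff-regular points of $\Vl^u(x)$; in particular $\eta\cap\mathcal{B}\subseteq G_\nu^\psi$. Hence it suffices to prove the leafwise statement
\begin{equation}\label{eq:setBleaf}
m_y^\C(K\cap G_\nu^\psi)=0\qquad\text{for every }y\in\L\text{ and every compact }K\subset\Vl^u(y)\cap\L.
\end{equation}
The point of \eqref{eq:setBleaf} is that $\{m_y^\C\}$ is (equivalent to) the conditional system of $\mu$, so on leaves through $\mu$-generic points it lives on $G_\mu^\psi$; the absolute continuity of the $cs$-holonomies (Theorem \ref{thm:holonomy}) together with the full support of $\mu$ spreads this to every leaf, while the uniform continuity of Birkhoff averages along $cs$-leaves (Lemma \ref{lem:hopf}(2)) prevents a holonomic image of $G_\mu^\psi$ from ever meeting $G_\nu^\psi$.

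To make this precise, fix $y\in\L$ and compact $K\subset\Vl^u(y)\cap\L$, set $\zeta:=\tfrac12\big|\int\psi\,d\mu-\int\psi\,d\nu\big|>0$ (positive by the choice of $\psi$ in the proof of Proposition \ref{prop:unique}), let $\epsilon_*>0$ be given by Lemma \ref{lem:hopf}(2) for this $\zeta$, and put $\rho:=\min\{\sigma,\epsilon_*\}$ with $\sigma$ as in Theorem \ref{thm:holonomy}. Since each point of $K$ lies in the interior of a rectangle of diameter $<\rho$, one can cover $K$ by finitely many sets $V_{R_j}^u(w_j)$ with $w_j\in\Vl^u(y)$ and $\diam R_j<\rho$. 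Because $\mu$ has full support — which follows from ergodicity, the equivalence $\mu_x^u\sim m_x^\C$ (so the conditionals are fully supported on unstable leaves, by Corollary \ref{cor:Gibbs}), and the uniform transitivity of unstable leaves (Lemma \ref{lem:leaf-iterates}), exactly as in \S\ref{sec:Gibbs-supported} — we have $\mu(R_j)>0$. Disintegrating $\mu|_{R_j}$ along unstable leaves and using $\mu(G_\mu^\psi)=1$, pick $y_j\in R_j$ with $\mu_{y_j}^u\sim m_{y_j}^\C$ on $V_{R_j}^u(y_j)$ and $\mu_{y_j}^u\big(V_{R_j}^u(y_j)\setminus G_\mu^\psi\big)=0$; then $m_{y_j}^\C\big(V_{R_j}^u(y_j)\setminus G_\mu^\psi\big)=0$. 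Since $\diam R_j<\sigma$, Theorem \ref{thm:holonomy} gives $(\pi_{y_jw_j})_*m_{y_j}^\C\sim m_{w_j}^\C$, so $m_{w_j}^\C$-a.e.\ point of $V_{R_j}^u(w_j)$ lies in $\pi_{y_jw_j}\big(G_\mu^\psi\cap V_{R_j}^u(y_j)\big)$. But this set is disjoint from $G_\nu^\psi$: if $z=\pi_{y_jw_j}(w)$ with $w\in G_\mu^\psi$ and also $z\in G_\nu^\psi$, then $z,w\in\mathcal{B}$, $w\in\Vl^\cs(z)$, and $d(z,w)\le\diam R_j<\epsilon_*$, so Lemma \ref{lem:hopf}(2) forces $\big|\int\psi\,d\mu-\int\psi\,d\nu\big|=|\bpsi(w)-\bpsi(z)|<\zeta$, a contradiction. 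Hence $m_{w_j}^\C\big(V_{R_j}^u(w_j)\cap G_\nu^\psi\big)=0$ for each $j$; since the reference measures agree on overlapping unstable leaves (Theorem \ref{thm:finite}\ref{fin4}) and the $V_{R_j}^u(w_j)$ cover $K$, summing over $j$ proves \eqref{eq:setBleaf}. Taking $y=x$ and $K=\eta$ (which is compact), and recalling $\eta\cap\mathcal{B}\subseteq G_\nu^\psi$, we obtain $m_x^\C(\eta\cap\mathcal{B})=0$, which completes the proof.

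The step I expect to require the most care is the transfer of information from $\mu$-generic leaves, where $m^\C$ visibly lives on $G_\mu^\psi$, to the leaf through the $\nu$-generic point $x$: there is no direct link between $m_x^\C$ near a $\nu$-generic point and the measure $\mu$, so the argument must route through a nearby rectangle of positive $\mu$-measure, which is exactly where full support of $\mu$ and the bounded holonomy Jacobians of Theorem \ref{thm:holonomy} are both essential; and the rectangles must be taken of diameter below the threshold $\epsilon_*$ so that Lemma \ref{lem:hopf}(2) genuinely separates the holonomic image of $G_\mu^\psi$ from $G_\nu^\psi$.
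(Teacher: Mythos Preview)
Your proof is correct and uses essentially the same approach as the paper: both arguments use full support of $\mu$ to find a nearby $\mu$-generic point, the equivalence $\mu_p^u\sim m_p^\C$ to see that $m_p^\C$ lives on $G_\mu^\psi$ there, absolute continuity of the $cs$-holonomy (Theorem \ref{thm:holonomy}) to transfer this information, and Lemma \ref{lem:hopf}(2) to separate $G_\mu^\psi$ from $G_\nu^\psi$ across $cs$-leaves. The only difference is organizational: you argue directly and prove the slightly stronger leafwise statement $m_y^\C(G_\nu^\psi)=0$ for every leaf (with a careful covering by small rectangles), whereas the paper argues by contradiction, assuming $m_x^\C(\eta\cap\mathcal{B})>0$ and producing a single pair $(y,z)$ of Birkhoff-regular points linked by holonomy whose averages are too far apart.
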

\begin{proof}
Suppose to the contrary that $\nu_x^u$ and $m_x^\C$ are not mutually singular; then since $\nu_x^u(\eta(x) \setminus \mathcal{B}) = 0$, we must have $m_x^\C(\eta(x) \cap \mathcal{B}) > 0$.
Fix $0<\zeta < |\int\psi\,d\mu-\int\psi\,d\nu|$ and let $\epsilon>0$ be given by Lemma \ref{lem:hopf}. Since $\mu$ gives positive measure to every open set in $\L$, there exist
$p\in B(x,\epsilon) \cap A_\mu \cap G_\mu^\psi$. In particular, $\eta(p)\cap\mathcal{B}$ has full $\mu_p^u$-measure, and thus also has full $m_p^\C$-measure by \eqref{eqn:mu-m}. As in the proof of Lemma \ref{lem:ae-const} (see also Figure \ref{fig:hopf}), we conclude that  $m^{\mathcal{C}}_{\eta(p)}(\pi_{xp}(\eta(x)\cap\mathcal{B})\cap \mathcal{B})>0$, and thus there exists $y\in \mathcal{B}\cap\eta$ such that $z := \Vl^\cs(y) \cap \eta(p) \in \mathcal{B}$, and Lemma \ref{lem:hopf} gives
\[
\bpsi(x) = \bpsi(y) = \bpsi(z)\pm \zeta = \bpsi(p)\pm \zeta.
\]
Since $x\in G_\nu^\psi$ and $p\in G_\mu^\psi$, we conclude that $\int\psi\,d\nu = \int\psi\,d\mu \pm \zeta$, contradicting our choice of $\zeta$, and we conclude that $\nu_x^u$ and $m_x^\C$ are mutually singular, as claimed.
\end{proof}
We continue with the proof of uniqueness. 
By Lemma \ref{lem:setB}, for every $x\in A_\nu \cap G_\nu^\psi$, we have $\nu_{\eta(x)}(\eta(x)\cap \mathcal{B})=1$ and $m^{\mathcal{C}}_{\eta(x)}(\eta(x)\cap \mathcal{B})=0$.
Thus for every $k\in \NN$ there are sets $G_k \subset U_k \subset \eta(x)$ such that $G_k$ is compact, $U_k$ is (relatively) open, $\nu_{\eta(x)}(G_k) \to 1$, and $m_{\eta(x)}^\C(U_k) \to 0$.  Since the diameter of every element in $\alpha_n \cap \eta(x)$ goes to $0$ uniformly as $n\to\infty$, we can choose for each $n$ a 
set $C_n^{\eta(x)} \subset \eta(x)$ and a 
value of $k=k(n)$ such that
\begin{enumerate}
\item $C_n^{\eta(x)}$ is a union of elements of $\alpha_n \cap \eta(x)$, 
\item $G_k \subset C_n^{\eta(x)} \subset U_k$, and
\item $k\to \infty$ as $n\to\infty$.
\end{enumerate}
In particular, the sequence of sets $C_n^{\eta(x)}$ satisfies
\begin{equation}\label{eqn:nu+m}  
\lim_{n\to\infty} \nu_{\eta(x)}(C_n^{\eta(x)}) = 1
\quad\text{and}\quad
\lim_{n\to\infty} m_{\eta(x)}^\C(C_n^{\eta(x)}) = 0.
\end{equation}
By our construction of $\alpha_n$ and the definition of $\epsilon_R$ in \eqref{eqn:eps-R}, for every $A\in \alpha_n$ there is a point $x_A\in A$ with the property that
\begin{equation}\label{eqn:Bnuy}
B_n^u(y,\epsilon_R) \cap \L \subset A \text{ for all } y \in V_A^\cs(x_A).
\end{equation}
Given $x\in \L$ and $A\in \alpha_n$ such that the intersection
$\Vl^\cs(x_A) \cap \Vl^u(x)$ is nonempty, denote the unique point in this intersection by $x_A^{\eta(x)}$.  Note that in particular, $x_A^\eta$ is defined whenever $A\cap \eta \neq\emptyset$.  With the convention that $0\log 0=0$, \eqref{eqn:hnu} gives
\begin{align*}
nP_\nu &= n\bigg( h_\nu(f) + \int\ph\,d\nu\bigg) \\
&\leq \int_\L \Big(-\log \nu_x^u\big(\alpha_n(x)\cap\eta(x)\big)+S_n\varphi(x) \Big)\, d\nu(x) \\
&= \int_{\L/\mathcal{Q}^u(\mathcal{R})} \sum_{A\in \alpha_n} \bigg( -\nu_\eta(A\cap \eta) \log \nu_\eta(A\cap \eta) + \int_{A\cap \eta} S_n\ph \,d\nu_\eta \bigg) \,d\tilde\nu(\eta).
\end{align*}
Whenever $A\cap \eta \neq\emptyset$, the point $x_A^\eta$ exists, lies on $V_A^\cs(x_A)$, and the $u$-Bowen property gives $S_n\ph(y) \leq S_n\ph(x_A^\eta) + Q_u$ for all $y\in A\cap \eta$.  Thus we obtain
\begin{equation}\label{eqn:nPnu}
nP_\nu \leq \int_{\L/\mathcal{Q}^u(\mathcal{R})} \sum_{A\in \alpha_n} \nu_\eta(A\cap \eta) \Big( S_n\ph(x_A^\eta) -\log \nu_\eta(A\cap\eta)  \Big) \,d\tilde\nu(\eta) + Q_u.
\end{equation}
The next step is to separate the sum into two pieces, one corresponding to $A\subset C_n^\eta$ and the other to $A\not\subset C_n^\eta$, and then bound each piece from above by the following general inequality from  \cite[(20.3.5)]{Kat}, which holds for all $x_1,\dots, x_m\geq 0$ and $b_1,\dots, b_m \in \RR$:
\begin{equation}\label{eqn:xibi}
\sum_{i=1}^m x_i (b_i - \log x_i) \leq \Big( \sum_{i=1}^m x_i \Big) \log \Big( \sum_{j=1}^m e^{b_j} \Big)  + \frac 1e.
\end{equation}
Fix a choice of $\eta$ and $\alpha_n^\eta = \{A \in \alpha_n : A\cap \eta \subset C_n^\eta \}$.  Applying \eqref{eqn:xibi} to the sum over $\alpha_n^\eta$, with $\nu_\eta(A\cap \eta)$ in place of the $x_i$ terms and $S_n\ph(x_A^\eta)$ in place of the $b_i$ terms, we obtain 
\begin{equation}\label{eqn:alpha-n-eta}
\sum_{A\in \alpha_n^\eta} \nu_\eta(A\cap\eta) \Big( S_n\ph(x_A^\eta) -\log \nu_\eta(A\cap\eta)  \Big)
\leq
\nu_\eta(C_n^\eta) \log  \sum_{A\in \alpha_n^\eta} e^{S_n\ph(x_A^\eta)} + \frac 1e.
\end{equation}
Since the measures $m_x^\C$ satisfy the $u$-Gibbs property, there is a constant $Q_0$ such that $e^{S_n\ph(x)} \leq Q_0 e^{nP(\ph)} m_x^\C(B_n^u(x,\epsilon_R))$ for all $x\in \L$ and $n\in \NN$, and thus \eqref{eqn:Bnuy} gives
\[
\sum_{A\in \alpha_n^\eta} e^{S_n\ph(x_A^\eta)} \leq \sum_{A\in \alpha_n^\eta} Q_0 e^{nP(\ph)} m_\eta^\C(B_n^u(x_A^\eta,\epsilon_R))
\leq Q_0 e^{nP(\ph)} m_\eta^\C(C_n^\eta).
\]
Together with \eqref{eqn:alpha-n-eta}, we obtain
\begin{multline*}
\int_{\L / \mathcal{Q}^u(\mathcal{R})}
\sum_{A\in \alpha_n^\eta} \nu_\eta(A\cap\eta) \Big( S_n\ph(x_A^\eta) -\log \nu_\eta(A\cap\eta)  \Big) \,d\tilde\nu(\eta)\\
\leq \int_{\L / \mathcal{Q}^u(\mathcal{R})}
\Big(\nu_\eta(C_n^\eta) \big(\log m_\eta^\C(C_n^\eta) + nP(\ph) + \log Q_0\big) + \frac 1e\Big) \,d\tilde\nu(\eta).
\end{multline*}
A similar estimate holds if we replace $\alpha_n^\eta$ by $\alpha_n \setminus \alpha_n^\eta$, and thus \eqref{eqn:nPnu} gives
\begin{multline*}
nP_\nu \leq \int_{\L / \mathcal{Q}^u(\mathcal{R})} 
\Big( \nu_\eta(C_n^\eta) \log m_\eta^\C(C_n^\eta)
+ \nu_\eta(\eta \setminus C_n^\eta) \log m_\eta^C ( \eta \setminus C_n^\eta) \Big)\,d\tilde\nu(\eta) 
\\
+ nP(\ph) + \log Q_0 + \frac 2e.
\end{multline*}
Recalling \eqref{eqn:nu+m}, we see that the second term inside the integral is uniformly bounded independent of $n$, while the first term goes to $-\infty$ as $n\to\infty$ for every $\eta$; we conclude that the integral goes to $-\infty$ as $n\to\infty$, and thus
\[
\lim_{n\to\infty} \Big(n(P_\nu - P(\ph)) - \log Q_0 - \frac 2e\Big) = -\infty.
\]
This implies that $P_\nu < P(\ph)$, and thus $\nu$ is not an equilibrium measure for $\ph$.  We conclude that $\mu$ is the unique equilibrium measure for $\ph$, as claimed.

\appendix

\section{Lemmas on rectangles and conditional measures}\label{appendix}

\begin{proof}[Proof of Lemma \ref{lem:rectangle-partition}]
Let $\epsilon>0$ be small enough that $R(x,\delta)$ as in \eqref{rectangle} is defined for all $x\in \L$ and $\delta \in (0,\epsilon)$.  Then given $x\in \L$, the function $\delta \mapsto \mu(R(x,\delta))$ is monotonic on $(0,\epsilon)$, and hence continuous at all but countably many values of $\delta$.  Let $\delta(x)$ be a point of continuity; then $\mu(\partial R(x,\delta))=0$.  Note that $\{\Int R(x,\delta(x)) : x\in \L\}$ is an open cover for the compact set $\L$, so there are $x_1,\dots, x_n\in \L$ such that writing $\delta_i = \delta(x_i)$, the rectangles $\{R(x_i,\delta_i)\}_{i=1}^n$ cover $\L$.  Given $I \subset \{1,\dots, n\}$, the set $R_I = \bigcap_{i\in I} R(x_i,\delta_i)$ is either empty or is itself a rectangle that is the closure of its interior and has $\mu$-null (relative) boundary.  Taking $\mathcal{I}$ to be the collection of all subsets $I$ of $\{1,\dots, n\}$ for which $R_I$ is nonempty, the set of rectangles $\{R_I : I\in \mathcal{I}\}$ satisfies the properties required by the lemma.
\end{proof}

\begin{proof}[Proof of Lemma \ref{lem:compare-cond}]
First we prove the lemma when $R_1 \subset R_2$.  In this case every $\psi \in L^1(R_1,\mu)$ extends to a function in $L^1(R_2,\mu)$ by setting it to $0$ on $R_2\setminus R_1$; then fixing $x\in R_2$ and defining measures $\tilde\mu^i$ on $V_{R_i}^\cs(x)$ by
\[
\tilde\mu^i(A) = \mu \Bigg(\bigcup_{y\in A} V_{R_i}^u(y)\Bigg),
\]
we see from \eqref{eqn:R-split} that
\begin{equation}\label{eqn:R1R2}
\begin{aligned}
\int_{R_1} \psi\,d\mu &= \int_{R_2} \psi\,d\mu = \int_{V_{R_2}^\cs(x)} \int_{V_{R_2}^u} \psi\,d\mu_y^2 \,d\tilde\mu^2(y) \\
&= \int_{V_{R_1}^\cs(x)} \int_{V_{R_1}^u} \psi \,d\mu_y^2 \,d\tilde\mu^2(y),
\end{aligned}
\end{equation}
where the last equality uses the fact that $\psi$ vanishes on $R_2\setminus R_1$.  Let $N = \{y\in R_1 : \mu_y^2(R_1) = 0\}$; then $N$ is a union of unstable sets in $R_1$ and thus
\[
\tilde \mu^1(N) = \mu(N) = \int_{N\cap V_{R_1}^\cs(x)} \mu_y^2(R_1) \,d\tilde\mu^2(y) = 0,
\]
so the function $c(y) = \mu_y^2(R_1)$ is positive $\mu$-a.e.\ on $R_1$, and
$\tilde\mu^1$-a.e.\ on $V_{R_1}^\cs(x)$.  Given $A\subset V_{R_1}^\cs(x)$, we have
\[
\tilde\mu^1(A) = \mu\Bigg(\bigcup_{y\in A} V_{R_1}^u(y)\Bigg)
= \int_A \mu_y^2(V_{R_1}^u(y)) \,d\tilde\mu^2(y)
= \int_A c(y) \,d\tilde \mu^2(y);
\]
we conclude that $\tilde\mu^1 \ll \tilde\mu^2$ with Radon--Nikodym derivative given by $c$.  Since $c$ is positive $\tilde\mu^1$-a.e., we conclude that $\tilde\mu^2 \ll \tilde\mu^1$ with derivative $1/c$, so \eqref{eqn:R1R2} gives
\[
\int_{R_1} \psi\,d\mu = \int_{V_{R_1}^\cs(x)} \int_{V_{R_1}^u} \frac{\psi}{c(y)} \,d\mu_y^2 \,d\tilde\mu^1(y),
\]
By a.e.-uniqueness of the system of conditional measures, this shows that $\mu_y^1 = \frac 1{c(y)}\mu_y^2|_{V_{R_1}^u(y)}$ for $\mu$-a.e.\ $y\in R_1$.

Now consider two arbitrary rectangles $R_1,R_2$.  The set $R_3 = R_1 \cap R_2$ is also a rectangle, and by the argument given above, given $x\in R_3$, the functions $c_i\colon V_{R_3}^\cs(x) \to [0,1]$ defined by $c_i(y) = \mu_y^2(R_3)$ are positive $\tilde\mu^i$-a.e., and $\mu_y^3 = \frac 1{c_i(y)} \mu_y^i|_{V_{R_3}^u(y)}$ for $\mu$-a.e.\ $y\in R_3$, from which we conclude that
\[
\mu_y^1|_{V_{R_3}^u(y)} = \frac{c_1(y)}{c_2(y)} \mu_y^2|_{V_{R_3}^u(y)},
\]
completing the proof of Lemma \ref{lem:compare-cond}.
\end{proof}

\begin{proof}[Proof of Lemma \ref{lem:cond-inv}]
From Lemma \ref{lem:compare-cond}, it suffices to prove the result for a single choice of rectangles.  Thus we let $R$ be any rectangle containing $x$, small enough that $f(R)$ is also a rectangle.  Then since $\mu$ is $f$-invariant, for every $\psi\in L^1(f(R),\mu)$, we have $\psi\circ f \in L^1(R,\mu)$, and \eqref{eqn:conditionals} gives 
\begin{align*}
\int_{f(R)} \psi \,d\mu &= \int_R \psi\circ f \,d\mu
= \int_R \int_{V_R^u(x)} \psi\circ f \,d\mu_x^u \,d\mu(x) \\
&= \int_R \int_{f(V_R^u(x))} \psi \,d(f_* \mu_x^u) \,d\mu(x)
= \int_{f(R)} \int_{V_R^u(y)} \psi\,d(f_* \mu_x^u) \,d\mu(y).
\end{align*}
By uniqueness of the system of conditional measures, this shows that for the rectangles $R$ and $f(R)$, we have $f_* \mu_x^u = \mu_{f(x)}^u$ for $\mu$-a.e.\ $x\in R$, and by Lemma \ref{lem:compare-cond}, this completes the proof of Lemma \ref{lem:cond-inv}.
\end{proof}

\begin{proof}[Proof of Lemma \ref{lem:lps}]
Since $y,z$ play symmetric roles in \ref{cs-ac}, we conclude that \ref{cs-ac} and \ref{cs-sim} are equivalent, and similarly for \ref{u-ac} and \ref{u-sim}.  We prove that \ref{cs-sim} is equivalent to \ref{prod-sim-0} and \ref{prod-sim}; the proof for \ref{u-sim} is similar.
Clearly \ref{prod-sim} implies \ref{prod-sim-0}.  We prove that \ref{cs-sim} implies \ref{prod-sim-0}, and then that \ref{prod-sim-0} implies both \ref{cs-sim} and \ref{prod-sim}.

First suppose that \ref{cs-sim} holds, fix $p\in R$ such that $(\pi_{pz}^\cs)_* \mu_p^u \sim \mu_z^u$ for $\mu$-a.e.\ $z\in R$, and define $h\colon R\to (0,\infty)$ by
\begin{equation}\label{eqn:h}
h(z) = \frac{d\mu_z^u}{d(\pi_{pz}^\cs)_*\mu_p^u}(z).
\end{equation}
Define $\tilde\mu_p$ on $V_R^\cs(p)$ by $\tilde\mu_p(E) = \mu(\bigcup_{y\in E} V_R^u(y))$, so that \eqref{eqn:R-split} and \eqref{eqn:h} give
\begin{multline}\label{eqn:intpsi}
\int\psi\,d\mu = \int_{V_R^\cs(p)} \int_{V_R^u(y)} \psi(z) \,d\mu_y^u(z) \,d\tilde\mu_p(y) \\
= \int_{V_R^\cs(p)} \int_{V_R^u(y)} \psi(z) h(z) \,d(\pi_{zp}^* \mu_p^u)(z) \,d\tilde\mu_p(y)
= \int_R \psi(z) h(z) \,d(\mu_p^u\otimes \tilde\mu_p)(z).
\end{multline}
This proves \ref{prod-sim-0}.  Now we suppose that \ref{prod-sim-0} holds, so that writing
\begin{equation}\label{eqn:h2}
h(z) = \frac{d\mu}{d(\tilde\mu_p^u \otimes \tilde\mu_p^\cs)}(z),
\end{equation}
we have
\begin{equation}\label{eqn:intpsi-2}
\int\psi\,d\mu = \int_{V_R^\cs(p)} \int_{V_R^u(y)} \psi(z) h(z) \,d(\pi_{yp}^* \tilde\mu_p^u)(z) \,d\tilde\mu_p^\cs(y).
\end{equation}
Recall that $\mu_y^u$ is uniquely determined (up to a scalar) for $\mu$-a.e.\ $y$ by the condition that there is a measure $\nu$ on $V_R^u(p)$ with
\[
\int\psi\,d\mu = \int_{V_R^\cs(p)} \int_{V_R^u(y)} \psi(z) \,d\mu_y^u(z) \,d\nu(y)
\]
for every integrable $\psi$.  Comparing this to \eqref{eqn:intpsi-2} we conclude that $\mu_y^u \sim \pi_{yp}^* \tilde\mu_p^u$ for $\mu$-a.e.\ $y\in R$, which establishes both \ref{cs-sim} and \ref{prod-sim}.
\end{proof}

\bibliographystyle{amsplain}
\bibliography{ph-car-ref}
\end{document}